\definecolor{lavender}{rgb}{0.59, 0.44, 0.84}
\titleformat{\subsection}[hang]{\large\bfseries}{\thesubsection\hsp\textcolor{gray!75}{|}\hsp}{0pt}{\large\bfseries}
\titleformat{\subsubsection}[hang]{\bfseries}{\thesubsubsection\hsp\textcolor{gray!75}{|}\hsp}{0pt}{\bfseries}
\titleformat{\part}[display]{\Huge\bfseries}{\partname~\thepart:}{20pt}{}{}
\newcommand{\hsp}{\hspace{20pt}}
\titleformat{\section}[hang]{\Large\bfseries}{\thesection\hsp\textcolor{gray!75}{|}\hsp}{0pt}{\Large\bfseries}
\newcommand{\defn}[1]{\textbf{\emph{#1}}\index{#1}}
\crefname{section}{\S \!\!}{\S\S \!\!}
\crefname{paragraph}{\S \!\!}{\S\S \!\!}
\crefname{appendix}{\S \!\!}{\S\S \!\!}
\crefname{equation}{}{}
\crefname{enumi}{}{}
\theoremstyle{plain}
\newtheorem{theorem}{Theorem}[subsection]
\newtheorem{mainthm}{Theorem}
\newtheorem{proposition}[theorem]{Proposition}
\newtheorem{lemma}[theorem]{Lemma}
\newtheorem{corollary}[theorem]{Corollary}
\theoremstyle{definition}
\newtheorem{definition}[theorem]{Definition}
\newtheorem{localass}[theorem]{Local Assumption}
\newtheorem{remark}[theorem]{Remark}
\newtheorem{example}[theorem]{Example}
\newtheorem{observation}[theorem]{Observation}
\newtheorem{convention}[theorem]{Convention}
\newtheorem{notation}[theorem]{Notation}
\theoremstyle{remark}
\newcommand\scl{\mathscr L}
\newcommand\sco{\mathscr O}
\newcommand\fr{ { \mathfrak r}}
\newcommand\fB{ { \mathfrak B}}
\DeclareMathOperator\Spec{Spec}
\renewcommand{\lim}{{\sf lim}}
\newcommand{\cat}[1]{\mathsf{#1}}
\newcommand{\Fun}{\cat{Fun}}
\newcommand{\Alg}{\cat{Alg}}
\newcommand{\Spaces}{\mathcal{S}}
\newcommand{\spectra}{{\mathcal{S}\cat{p}}}
\newcommand{\Spectra}{{\mathcal{S}\cat{p}}}
\newcommand{\Sp}{{\Spectra}}
\newcommand{\chom}{\cat{hom}}
\renewcommand{\hom}{\cat{hom}}
\renewcommand{\lim}{{\sf lim}}
\newcommand{\Mot}{\cat{Mot}}
\newcommand{\fgt}{\cat{fgt}}
\newcommand{\St}{\cat{St}}
\newcommand{\Cat}{\cat{Cat}}
\newcommand{\op}{\cat{op}}
\newcommand{\fin}{\cat{fin}}
\newcommand{\Yo}{\cat{Yo}}
\newcommand{\ex}{\cat{ex}}
\newcommand{\twoex}{{2\text{-}\ex}}
\newcommand{\bDelta}{\mathbf{\Delta}}
\newcommand{\cotensor}{\pitchfork}
\newcommand{\tensor}{\odot}
\newcommand{\Ab}{{\sf Ab}}
\newcommand{\CMon}{{\sf CMon}}
\newcommand{\THH}{{\sf THH}}
\newcommand{\enr}{{\text{-}{\sf enr}}}
\def\cA{\mathcal A}\def\cB{\mathcal B}\def\cC{\mathcal C}\def\cD{\mathcal D}
\def\cE{\mathcal E}\def\cF{\mathcal F}
\def\cI{\mathcal I}\def\cK{\mathcal K}\def\cL{\mathcal L}
\def\cM{\mathcal M}\def\cO{\mathcal O}\def\cP{\mathcal P}
\def\cT{\mathcal T}
\def\cV{\mathcal V}\def\cW{\mathcal W}\def\cX{\mathcal X}
\def\cY{\mathcal Y}\def\cZ{\mathcal Z}
\def\CC{\mathbb C}
\def\EE{\mathbb E}
\def\KK{\mathbb K}
\def\NN{\mathbb N}
\def\QQ{\mathbb Q}\def\SS{\mathbb S}\def\TT{\mathbb T}
\def\sB{\mathsf B}\def\sC{\mathsf C}
\def\fB{\mathfrak B}
\newcommand{\ms}{\mathscr}
\newcommand{\K}{\cat{K}}
\newcommand{\Kto}{\K^{(2,1)}}
\newcommand{\Ktwo}{\K^{(2)}}
\newcommand{\oneLoc}{{\sf{1Loc}}}
\newcommand{\Adj}{{\sf{Adj}}}
\newcommand{\zeroLoc}{{\sf 0Loc}}
\newcommand{\Perf}{{\sf Perf}}
\newcommand{\Ind}{{\sf Ind}}
\newcommand{\cofib}{{\sf cofib}}
\newcommand{\fib}{{\sf fib}}
\newcommand{\CAlg}{{\sf CAlg}}
\newcommand{\dzbl}{{\sf dzbl}}
\newcommand{\TC}{\cat{TC}}
\newcommand{\tr}{{\sf tr}}
\newcommand{\QCtwo}{\QC^{(2)}}
\newcommand{\THHtwo}{\THH^{(2)}}
\newcommand{\TCtwo}{\TC^{(2)}}
\newcommand{\Perftwo}{{\sf Perf^{(2)}}}
\newcommand{\Cyclo}{{\sf Cyc}}
\newcommand{\Cyclic}{{\sf C}}
\newcommand{\trtwo}{\tr^{(2)}}
\newcommand{\colim}{{\sf colim}}
\renewcommand{\Spec}{{\sf Spec}}
\newcommand{\SH}{{\sf SH}}
\newcommand{\ra}{\rightarrow}
\newcommand{\la}{\leftarrow}
\newcommand{\longhookra}{\lhook\joinrel\longrightarrow}
\newcommand{\adjarr}{\rightleftarrows}
\newcommand{\xra}{\xrightarrow}
\newcommand{\xla}{\xleftarrow}
\newcommand{\longla}{\longleftarrow}
\newcommand{\longra}{\longrightarrow}
\newcommand{\xlongra}[1]{\stackrel{#1}{\longra}}
\newcommand{\xlongla}[1]{\stackrel{#1}{\longla}}
\newcommand{\Env}{{\sf Env}}
\newcommand{\adj}{\dashv}
\newcommand{\ul}[1]{\underline{#1}}
\newcommand{\bit}[1]{\textbf{\textit{#1}}}
\newcommand{\End}{{\sf end}}
\newcommand{\fCat}{\cat{fCat}}
\newcommand{\Mod}{{\sf Mod}}
\newcommand{\univ}{\cL}
\newcommand{\const}{{\sf const}}
\newcommand{\uno}{\mathbbm{1}}
\newcommand{\Add}{\sf Add}
\newcommand{\ev}{{\sf ev}}
\newcommand{\idem}{{\sf idem}}
\newcommand{\id}{{\sf id}}
\newcommand{\da}{\downarrow}
\newcommand{\coCart}{{\sf coCart}}
\newcommand{\htpy}{{\sf h}}
\newcommand{\Gr}{{\sf Gr}}
\newcommand{\Fin}{{\sf Fin}}
\renewcommand{\th}{^{\text{th}}}
\newcommand{\longmapsfrom}{\raisebox{3.5pt}{\rotatebox{180}{$\longmapsto$}}}
\newcommand{\QC}{{\sf QC}}
\newcommand{\PrLSt}{{\sf Pr}^{L,{\sf st}}}
\newcommand{\PrL}{{\sf Pr}^L}
\newcommand{\PrR}{{\sf Pr}^R}
\newcommand{\PrRSt}{{\sf Pr}^{R,{\sf st}}}
\newcommand{\what}[1]{\widehat{#1}}
\newcommand{\oneop}{{1\text{-}\sf{op}}}
\newcommand{\twoop}{{2\text{-}\sf op}}
\newcommand{\kop}{k\text{-}\op}
\newcommand{\onetwoop}{{1\&2\text{-}\sf{op}}}
\newcommand{\LMod}{{\sf LMod}}
\newcommand{\pt}{{\sf pt}}
\newcommand{\weak}{\flat}
\newcommand{\sadd}{{\sf semiadd}}
\newcommand{\Aut}{{\sf aut}}
\renewcommand{\fr}{{\sf fr}}
\newcommand{\sat}{{\sf sat}}
\newcommand{\inn}{{\sf inn}}
\title{
\huge \thetitle
}
\author{\theauthor}
\date{\thedate}
\newcommand{\thetitle}{A universal characterization of noncommutative motives and secondary algebraic K-theory}
\newcommand{\thedate}{\today}
\newcommand{\theauthor}{Aaron Mazel-Gee and Reuben Stern}
\begin{document}

\maketitle

\begin{abstract}
We provide a universal characterization of the construction taking a scheme $X$ to its stable $\infty$-category $\Mot(X)$ of noncommutative motives, patterned after the universal characterization of algebraic K-theory due to Blumberg--Gepner--Tabuada. As a consequence, we obtain a corepresentability theorem for secondary K-theory. We envision this as a fundamental tool for the construction of trace maps from secondary K-theory.

Towards these main goals, we introduce a preliminary formalism of ``stable $(\infty, 2)$-categories''; notable examples of these include (quasicoherent or constructible) sheaves of stable $\infty$-categories. We also develop the rudiments of a theory of presentable enriched $\infty$-categories -- and in particular, a theory of presentable $(\infty, n)$-categories -- which may be of independent interest.
\end{abstract}

\setcounter{tocdepth}{2}
\tableofcontents

\setcounter{section}{-1}

\section{Introduction}

\subsection{Overview}
\label{subsection.overview}

\bit{Secondary algebraic K-theory} is an algebro-geometric analogue of elliptic cohomology. It was introduced by To{\"e}n--Vezzosi in \cite{TV2009} as a categorification of ordinary (``primary'') algebraic K-theory: whereas primary K-theory is built from sheaves of vector spaces, secondary K-theory is built from sheaves of categories.

In this paper, we prove a corepresentability result for secondary K-theory (\Cref{mainthm:locating.K.2}). The main ingredient is a corepresentability result for noncommutative motives (\Cref{mainthm:universal.property.of.K.2.1}). These results pave the way for the construction and study of a \bit{secondary cyclotomic trace} from secondary K-theory.

Our theorems are supported by a number of foundational results on enriched and higher categories that we establish herein. Notably, we develop the rudiments of a theory of \textit{presentable} enriched $\infty$-categories, which specializes in particular to a theory of presentable $(\infty, n)$-categories.

The remainder of this introductory section is organized as follows.
\begin{itemize}

\item[\Cref{subsection.intro.background}:] We give some background and motivation for secondary K-theory.
\item[\Cref{subsection.intro.main.results}:] We describe our main results.

\item[\Cref{subsection.intro.horizons}:] We discuss a number of directions for future research that stem from the present work: the notion of stability for $(\infty,2)$-categories (\Cref{subsubsection.true.defn}), types of secondary K-theory (\Cref{subsubsection.flavors.of.Kthy}), a secondary $S_\bullet$-construction (\Cref{subsubsection.S.dot.constrn}), and secondary trace maps (\Cref{subsubsection.secondary.cylo.trace}).

\item[\Cref{subsection.intro.cat.thy}:] We survey the foundational results on enriched and higher categories that we establish in this paper. (This material can be read independently of the material preceding it.)

\item[\Cref{subsection.intro.outline}:] We give a linear overview of the paper.

\item[\Cref{subsection.intro.notn}:] We discuss our notation and conventions.

\item[\Cref{subsection.intro.ack}:] We give acknowledgments.

\end{itemize}

\noindent Henceforth, we take the ``implicit $\infty$ convention''; for instance, we simply use the term ``$n$-category'' to mean an $(\infty,n)$-category. Moreover, we use the term ``category'' to mean ``$1$-category'' (meaning $(\infty,1)$-category).

\subsection{Background and motivation}
\label{subsection.intro.background}

\bit{Chromatic homotopy theory} is a highly successful framework for organizing cohomological invariants of spaces. Specifically, it organizes cohomology theories into a hierarchy of increasing complexity according to their \textit{chromatic heights}. The canonical examples of cohomology theories at heights 0, 1, and 2 are respectively rational cohomology, complex K-theory, and elliptic cohomology. 

The first two terms in this hierarchy admit direct analogs in algebraic geometry: \'etale cohomology plays the role of rational cohomology, while algebraic K-theory plays the role of complex K-theory. This led To{\"e}n--Vezzosi to introduce \bit{secondary algebraic K-theory} as an analog of elliptic cohomology \cite{TV2009,toenDGcatlectures, ToenDGcats}. Their construction was inspired by the correspondence between chromatic height and categorical dimension that is indicated in the table below.\footnote{Although cohomology theories exist at all heights, it is a longstanding open problem to describe them in terms of cocycles at heights 2 and above \cite{StolzTeichner-ell}.}
\begin{figure}[h]
\def\arraystretch{1.2}
\begin{tabular}{ @{\quad} c @{\quad} | @{\quad} c @{\quad} | @{\quad} c @{\quad} | @{\quad} c @{\quad} }
At chromatic
&
... the prototypical
&
... cocycles for which
&
... which are
\\
height
&
cohomology theory is
&
are families of
&
objects of
\\ \hline \hline
0,
&
rational cohomology,
&
rational numbers,
&
the 0-category $\QQ$.
\\ \hline
1,
&
complex K-theory,
&
$\CC$-vector spaces,
&
the 1-category ${\sf Vect}_\CC$.
\\ \hline
2,
&
elliptic cohomology,
&
[...as-yet unknown...],
&
[...some 2-category...].
\end{tabular}
\end{figure}
Namely, given a scheme $X$, whereas its primary algebraic K-theory $\K(X)$ is constructed from its 1-category $\QC(X)$ of quasicoherent sheaves of (complexes of) vector spaces, they constructed its \textit{secondary} algebraic K-theory $\Ktwo(X)$ from its 2-category $\QCtwo(X)$ of quasicoherent sheaves of (small stable) \textit{categories} (\Cref{examples.of.st.2}\Cref{item.ShvCat.is.a.stable.two.cat}). A fundamental source of such sheaves is schemes over $X$: a map $Y \ra X$ determines the quasicoherent sheaf $U \mapsto \Perf(Y_{|U}) \in \Mod_{\Perf(U)}(\St)$.

We note here that secondary K-theory, categorified sheaf theory, and stable 2-categories (see \Cref{defn.stable.2.cat.in.intro} below) connect with a wide range of areas of mathematics in algebraic geometry and beyond: motivic measures and the K-theory of varieties \cite{BondalLarsenLunts, Campbell2019, CampbellZakharevichDevissage,looijenga-motivic,Poonen-notadomain}; secondary traces and higher character theory \cite{BZNsecondarytraces,CampbellPontoTraces,GanterKapranov2008,BZNnonlineartraces,Bartlett-geometry}; chromatic redshift \cite{AusRog-rational, Rog-White,KleinRog-fib,hahn2020redshift,Baas2004};\footnote{Note that there is a canonical map $\K \circ \K \ra \Ktwo$ from iterated K-theory to secondary K-theory, which is nontrivial \cite[Remark 6.23]{HStwo}. Indeed, it is expected that secondary K-theory is a substantially richer invariant than iterated K-theory, which is itself already very interesting (e.g.\! that of a separably closed field is related to the K-theory of topological K-theory \cite{Sus-algebraically, AusRog-topological}).} Floer theory and Fukaya categories \cite{NadZas,Douglas-thesis,Bottman-assoc,BC-Ainftytwocats}; and quantum algebra and categorification \cite{Lauda-intro,Mazurchuk-lectures,nakano2021noncommutative,DR-fusiontwocats}. Secondary K-theory itself is the topic of much recent work, including \cite{HStwo, HSthree, tabuada2016note, tabuada2016note2, tabuada-embedding}.

To{\"e}n--Vezzosi originally defined secondary K-theory as the primary K-theory of the category $\Perftwo(X)$ of fully dualizable quasicoherent sheaves of categories,\footnote{The category $\Perftwo(X)$ is often denoted $\Cat^\sat(X)$ (as its objects are often referred to as ``saturated'' sheaves of categories).} which carries a natural Waldhausen structure. This was refined by Hoyois--Scherotzke--Sibilla in \cite{HStwo}, who instead studied the K-theory of the stable category $\Mot(X) := \Mot(\Perftwo(X))$ of \bit{noncommutative motives} over $X$ (using \Cref{defn.motives.over.a.stable.two.cat}).\footnote{The two definitions are closely related; see \cite[Remark 6.18]{HStwo}.} It is this latter notion of secondary K-theory that we study here (\Cref{def.secondary.K.theory}).

We are motivated by a desire to compute secondary K-theory via trace methods. Specifically, we envision a \bit{secondary cyclotomic trace} map
\[
\Ktwo
\longra
\TCtwo
\]
to \bit{secondary topological cyclic homology}. As explained in \Cref{subsubsection.secondary.cylo.trace} below, our work provides a key step towards constructing such a map. Namely, as we explain in \Cref{subsection.intro.main.results}, our \Cref{mainthm:locating.K.2} is a categorification of the following universal characterization of primary algebraic K-theory of Blumberg--Gepner--Tabuada \cite{BGT} (see also \cite{Tabuada-HigherKTheory, CisTab-Nonconnective,BarwickAlgKTheoryofHigherCats}), which determines the primary cyclotomic trace map
\[
\K
\longra
\TC
\]
by Yoneda, as explained in \cite[\S 10.3]{BGT}.

\begin{theorem}
\label{thm:bgt-main}
Let $\cA,\cB \in \St$ be small stable categories, and let $\univ(\cA),\univ(\cB) \in \Mot$ denote their corresponding (additive) motives.\footnote{Using \Cref{defn.motives.over.a.stable.two.cat}, we have $\Mot := \Mot(\St^\omega)$. Note that \cite{BGT} states this theorem for idempotent-complete stable categories, but this restriction is unnecessary (see \Cref{mainthm.homs.in.Mot.of.X.are.K.theory.are.K.theory}).} Assuming $\cA$ is compact, there is a canonical equivalence of spectra 
\[
\hom_{\Mot}(\univ(\cA), \univ(\cB)) \simeq \K(\Fun^\ex(\cA, \cB))
~.
\]
In particular, taking $\cA = \uno_{\St} = \Spectra^\fin$ gives a canonical equivalence 
\[
\hom_{\Mot}(\univ(\uno_\St), \univ(\cB)) \simeq \K(\cB)
~.
\]
\end{theorem}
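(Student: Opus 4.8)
The plan is to deduce the general statement from the special case $\cA = \uno_\St$, and to prove that special case by recognizing both sides as the universal spectrum-valued additive invariant extending the moduli-of-objects functor — which, by Waldhausen, is precisely (connective) algebraic $\K$-theory.

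First we would treat the case $\cA = \uno_\St$. Fix $\cB \in \St$ and compare the two functors $\K(-)$ and $\hom_\Mot(\univ(\uno_\St), \univ(-))$ from $\St$ to $\Sp$. Each is an additive invariant that preserves filtered colimits: for $\K$ this is Waldhausen's additivity theorem together with the usual continuity of $\K$; for the other, $\hom_\Mot(\univ(\uno_\St), -)$ automatically preserves limits, it preserves filtered colimits because $\univ(\uno_\St) = \uno_\Mot$ is a compact object of $\Mot$, and it is additive because $\univ$ carries split-exact sequences to split cofiber sequences (by the construction of $\Mot$ as the target of the universal additive invariant, \Cref{defn.motives.over.a.stable.two.cat}). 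Next one would write down a natural comparison map $\K(\cB) \to \hom_\Mot(\univ(\uno_\St), \univ(\cB))$: applying $\univ$ to objects produces a map from the moduli space $\cB^\simeq$ of objects of $\cB$ to $\Omega^\infty \hom_\Mot(\uno_\Mot, \univ(\cB))$; as the target is already grouplike this factors through the group completion of $\cB^\simeq$, and applying $\univ$ levelwise to the $S_\bullet$-construction upgrades it to a natural map of spectra. Finally one checks it is an equivalence. The mechanism is that, since $\cB^\simeq$ is corepresented by $\uno_\St$ in $\St$, Yoneda identifies $\hom_\Mot(\uno_\Mot, \univ(-))$ with the image of the functor $\cB \mapsto \cB^\simeq$ under the universal additive localization defining $\Mot$; that localization imposes exactly split additivity, hence — via the additivity theorem — exactly the relations that Waldhausen's $S_\bullet$-construction uses to build $\K$ out of $\cB \mapsto \cB^\simeq$. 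So both spectra realize the same universal additive invariant extending the moduli-of-objects functor, and the comparison map is the identification of these two realizations.

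To bootstrap to general compact $\cA$, we would use the closed symmetric monoidal structure on $\Mot$. The functor $\univ$ is symmetric monoidal (the monoidal structure of $\St$ descending along it), so $\univ(\uno_\St) = \uno_\Mot$, and $\univ$ is colimit-preserving. For $\cA$ compact, $\univ(\cA)$ is dualizable in $\Mot$; since the internal hom of $\St$ is $\Fun^\ex(-,-)$ and any dualizable $\cA$ satisfies $\cA^\vee \otimes \cB \simeq \Fun^\ex(\cA, \cB)$ in $\St$, monoidality of $\univ$ gives
\[
\ulhom_\Mot\bigl(\univ(\cA), \univ(\cB)\bigr) \;\simeq\; \univ(\cA)^\vee \otimes \univ(\cB) \;\simeq\; \univ\bigl(\Fun^\ex(\cA, \cB)\bigr)
~.
\]
Applying $\hom_\Mot(\uno_\Mot, -)$ and the $\otimes$--$\ulhom$ adjunction then yields $\hom_\Mot(\univ(\cA), \univ(\cB)) \simeq \hom_\Mot(\univ(\uno_\St), \univ(\Fun^\ex(\cA,\cB))) \simeq \K(\Fun^\ex(\cA,\cB))$ by the previous paragraph. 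Setting $\cA = \uno_\St$ recovers the final display of the statement.

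The hard part will be the verification in the second paragraph that the comparison map is an equivalence at the level of spectra and not merely of underlying spaces: one must match the deloopings supplied by the iterated $S_\bullet$-construction on the $\K$-side with those coming from the stable structure of $\Mot$, i.e.\ show that $\Mot$ encodes neither more nor less than the additivity relations that define $\K$. This is the technical heart of \cite{BGT}; the work particular to our setting is to run the argument for the present construction of $\Mot$ and, along the way, to remove the idempotent-completeness hypothesis of \cite{BGT} — for which one checks that the comparison map and the entire argument make sense verbatim for all small stable $\cB$ (cf.\ \Cref{mainthm.homs.in.Mot.of.X.are.K.theory.are.K.theory}).
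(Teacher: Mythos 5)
The step that breaks is the reduction from general compact $\cA$ to $\cA = \uno_\St$. You assert that if $\cA$ is compact then $\univ(\cA)$ is dualizable in $\Mot$, and you use dualizability of $\cA$ in $\St$ (to write $\Fun^\ex(\cA,\cB) \simeq \cA^\vee \otimes \cB$). Neither holds: compactness of $\cA$ in $\St$ (or of $\univ(\cA)$ in $\Mot$) is strictly weaker than dualizability. Dualizable objects of $\St^\idem$ are precisely the smooth and proper stable categories, while compact objects are merely the finitely-presented ones --- for instance $\Perf(R)$ for a non-smooth ring $R$ is compact but not dualizable, and $\univ(\Perf(R))$ is compact but not dualizable in $\Mot$. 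Consequently there is no reason for $\ulhom_\Mot(\univ(\cA),\univ(\cB))$ to be $\univ(\Fun^\ex(\cA,\cB))$; the whole content of the BGT theorem is precisely that the mapping \emph{spectrum} (not the internal hom motive) is $\K(\Fun^\ex(\cA,\cB))$ for merely compact $\cA$, which is much weaker than the dualizability formula. So your bootstrapping argument proves a genuinely weaker statement (for smooth proper $\cA$), not the one stated.

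The paper's own argument (for the generalization \Cref{thm.homs.in.Mot.of.X.are.K.theory}, which specializes to \Cref{thm:bgt-main}) takes a different route that avoids duality entirely. It defines the spectral presheaf $\K_B := \K(\hom_\cX(-,B))$, checks that $\K_B$ lies in $\Mot(\cX)$ (since $\K$ is an additive invariant), and then proves $\K_B \simeq \univ_\cX B$ directly as objects of $\Mot(\cX)$, by realizing the evident 1-localization sequence $\const(B) \to {\sf Path}(S_\bullet B) \to S_\bullet B$ so that $|\univ_\cX(S_\bullet B)| \simeq \Sigma(\univ_\cX B)$ and following the argument of \cite{HStwo}. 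With $\univ_\cX B \simeq \K_B$ in hand, the theorem for arbitrary $\cA$ (compactness of $\cA$ is only needed so that $\cA$ is an object of the indexing 2-category $\cX = (\St)^\omega$) follows by the Yoneda lemma: $\hom_{\Mot(\cX)}(\univ_\cX \cA, \univ_\cX \cB) \simeq \hom(\Sigma^\infty_+\Yo(\cA), \K_B) \simeq \K_B(\cA) = \K(\Fun^\ex(\cA,\cB))$. This is a ``co-Yoneda'' strategy, handling all compact $\cA$ at once without factoring through the unit case. Your first paragraph is aiming in roughly the right direction --- comparing the two functors as additive invariants and invoking the $S_\bullet$-construction is exactly where the substance lies, as you note --- but the bootstrapping step should be replaced by the Yoneda identification above.
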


\noindent More precisely, the equivalence of \Cref{thm:bgt-main} arises from the canonical functor
\[
\iota_1 \St
\xlongra{\univ}
\Mot
\]
from the 1-category of small stable categories.\footnote{Our choice of notation $\univ$ is explained in \Cref{rmk.K.and.Kto.and.Ktwo.as.linearizations}. Note that \Cref{thm:bgt-main} may be interpreted as the assertion that K-theory categorifies stable homotopy: we have the categorification
\[
\iota_1 \St
\xlongra{\univ}
\Mot
\qquad
\text{of}
\qquad
\Spaces
\xra{\Sigma^\infty_+}
\Spectra
~,
\]
and thereafter we have the categorification
\[
\iota_1 \St \xra{\hom_\Mot(\univ(\uno_\St),\univ(-))} \Spectra
\qquad
\text{of}
\qquad
\Spaces \xra{\hom_\Spectra(\Sigma^\infty_+ (\uno_\Spaces) , \Sigma^\infty_+(-))} \Spectra
~.
\]
}

\subsection{Main results}
\label{subsection.intro.main.results}

In this work, we provide a universal mapping property for secondary K-theory analogous to that for primary K-theory of \Cref{thm:bgt-main}, as we now explain.

The first step is to establish a categorical context for considering sheaves of categories, analogous to the context that the 2-category $\St$ of stable categories provides for quasicoherent sheaves. For this, we introduce the following.

\begin{definition}
\label{defn.stable.2.cat.in.intro}
A \bit{stable 2-category} is a 2-category $\cX$ satisfying the following conditions.
\begin{enumerate}
    \item\label{condition.in.intro.defn.X.is.enriched.in.stable.cats} $\cX$ is a \textit{stably-enriched} 2-category: its hom-categories are stable and its composition bifunctors are biexact.
    \item $\cX$ is \textit{semiadditive}: it admits finite coproducts and finite products, and these agree.\footnote{In fact, condition \Cref{condition.in.intro.defn.X.is.enriched.in.stable.cats} implies that finite coproducts are automatically finite products and reversely (\Cref{obs.products.and.coproducts.in.stably.enriched.two.cat}).}
\end{enumerate}
Small stable 2-categories assemble into a 3-category $\St_2$, which is closed symmetric monoidal (\Cref{lem.omnibus.in.section.one.point.one}). In particular, for any stable 2-categories $\cX, \cY \in \St_2$, we have an internal hom $\Fun^\twoex(\cX, \cY) \in \St_2$, the stable 2-category of \bit{2-exact functors}.
\end{definition}

\noindent Namely, whereas quasicoherent sheaves on $X$ assemble into a stable category $\QC(X)$, quasicoherent sheaves of categories on $X$ assemble into a stable 2-category $\QCtwo(X)$. We list some further basic examples of stable 2-categories in \Cref{examples.of.st.2}; many more examples arise throughout the literature, as discussed in \Cref{subsection.intro.background}.

\begin{remark}
\label{rmk.intro.defn.of.stable.2.cat.is.provisional}
We consider \Cref{defn.stable.2.cat.in.intro} to be provisional; we discuss possible refinements in \Cref{subsubsection.true.defn}. However, we find it extremely likely that any improved definition of stable 2-categories will define a full sub-3-category $\St_2' \subseteq \St_2$ whose inclusion admits a symmetric monoidal left adjoint that preserves compact objects, and in this situation our results apply equally well to $\St_2'$.
\end{remark}

The provisionality of \Cref{defn.stable.2.cat.in.intro} is illustrated by the following.

\begin{example}
\label{example.unit.in.St.two}
The unit object of $\St_2$ is the full sub-2-category $\uno_{\St_2} \subset \St$ on the objects $\{ (\Spectra^\fin)^{\oplus n} \}_{n \in \NN}$.\footnote{This identification follows easily from \Cref{lem.omnibus.in.section.one.point.one}: by construction it is given by $\Env_2(\uno_{\Cat(\St)}) = \Env_2(\fB \Spectra^\fin)$, an explicit description of which follows from the proof of \Cref{prop.Env.V}.} More informally, it can be described as follows: its objects are natural numbers; its hom-object $\hom_{\uno_{\St_2}}(m,n) \in \St$ is the stable category of $n$-by-$m$ matrices of finite spectra; composition is multiplication of matrices; and the symmetric monoidal structure is given on objects by multiplication of natural numbers.
\end{example}

Primary K-theory interacts closely both with 0-localization sequences (i.e.\! co/fiber sequences) \textit{within} a stable category as well as with 1-localization sequences (i.e.\! split Verdier sequences) \textit{among} stable categories (see \Cref{def.0.loc.seq,def:localization.sequence}). These relationships categorify: secondary K-theory interacts closely both with 1-localization sequences \emph{within} a stable 2-category as well as with 2-localization sequences \textit{among} stable 2-categories (see \Cref{def:2.localization.sequence}).

Given any small stable 2-category $\cX \in \St_2$, we can form the stable 1-category $\Mot(\cX)$ of (\bit{noncommutative}) \bit{motives} over $\cX$: by definition, this is the target of the universal functor
\[
\iota_1 \cX
\xra{\univ_\cX}
\Mot(\cX)
\]
to a presentable stable 1-category that carries 1-localization sequences to exact sequences. Our first main result is a version of \Cref{thm:bgt-main} that applies to any small stable 2-category.

\begin{mainthm}[Theorem \ref{thm.homs.in.Mot.of.X.are.K.theory}]
	\label{mainthm.homs.in.Mot.of.X.are.K.theory.are.K.theory}
	Fix a small stable 2-category $\cX \in \St_2$. For any objects $A, B \in \mathcal{X}$, there is a canonical equivalence of spectra
	\[
	\chom_{\Mot(\mathcal{X})}(\univ_\cX(A), \univ_\cX(B)) \simeq \K(\chom_{\mathcal{X}}(A, B))
	~.
	\]
\end{mainthm}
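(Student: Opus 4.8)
\textbf{The plan} is to categorify the proof of \Cref{thm:bgt-main} from \cite{BGT}. Fix $A\in\cX$; both sides of the asserted equivalence are functors of $B$ on $\iota_1\cX$ valued in $\Spectra$, and I will produce a natural equivalence between them.

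\textbf{Step 1 (both sides are additive invariants of $\cX$):} that is, both send $1$-localization sequences to exact sequences of spectra. The representable $2$-functor $\hom_\cX(A,-)\colon\iota_1\cX\to\iota_1\St$ sends a $1$-localization sequence in $\cX$ to a split Verdier sequence of stable categories, since biexactness of the composition bifunctors (condition \Cref{condition.in.intro.defn.X.is.enriched.in.stable.cats} of \Cref{defn.stable.2.cat.in.intro}) makes post-composition exact and preserves the adjoints witnessing the splitting. Post-composing with $\K$, which sends split Verdier sequences to cofiber sequences (Waldhausen additivity) and preserves filtered colimits, shows $F_A:=\K(\hom_\cX(A,-))$ is an additive invariant. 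On the other side, $\univ_\cX$ sends $1$-localization sequences to exact sequences by its defining universal property, and $\univ_\cX(A)$ is compact in $\Mot(\cX)$ (which, per \Cref{defn.motives.over.a.stable.two.cat}, is compactly generated by the image of $\univ_\cX$), so $\hom_{\Mot(\cX)}(\univ_\cX(A),-)$ is exact; hence $G_A:=\hom_{\Mot(\cX)}(\univ_\cX(A),\univ_\cX(-))$ is an additive invariant too.

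\textbf{Step 2 (reduce to corepresentability):} By the universal property of $\Mot(\cX)$, $F_A$ factors essentially uniquely as $\widehat F_A\circ\univ_\cX$ with $\widehat F_A\colon\Mot(\cX)\to\Spectra$ colimit-preserving, and $G_A=\hom_{\Mot(\cX)}(\univ_\cX(A),-)\circ\univ_\cX$ with the first factor colimit-preserving (as $\univ_\cX(A)$ is compact). It thus suffices to prove $\widehat F_A\simeq\hom_{\Mot(\cX)}(\univ_\cX(A),-)$, equivalently $F_A\simeq G_A$ as additive invariants.

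\textbf{Step 3 (the crux):} I would present $\Mot(\cX)$ (via \Cref{defn.motives.over.a.stable.two.cat}) as the accessible localization $L_\cX$ of a category of spectral presheaves on $\iota_1\cX$ at the maps forcing $1$-localization sequences to become exact, with $\univ_\cX\simeq L_\cX\circ\Yo$. The Yoneda lemma then gives $G_A(B)\simeq\bigl(L_\cX\,\Sigma^\infty_+\iota_0\hom_\cX(-,B)\bigr)(A)$: thus $G_A$ is computed by applying the additive reflection to the representable presheaf of $B$ and evaluating at $A$. One then matches this with $F_A$ by transporting along the $2$-exact functor $\hom_\cX(A,-)\colon\cX\to\St$ the purely $1$-categorical fact that \emph{$\K$-theory is the additive reflection of the ``object-space'' functor} $\scc\mapsto\Sigma^\infty_+\iota_0\scc$ on $\iota_1\St$ --- equivalently, that $\K$ corepresents evaluation at $\uno_\St=\Spectra^\fin$ among additive invariants --- a consequence of Waldhausen's additivity theorem (via the $S_\bullet$-construction and group completion) that needs no idempotent-completeness hypothesis. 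This transport identifies $\bigl(L_\cX\,\Sigma^\infty_+\iota_0\hom_\cX(-,B)\bigr)(A)$ with $\K(\hom_\cX(A,B))$, giving $G_A\simeq F_A$; in particular, taking $\cX=(\St)^\omega$ recovers \Cref{thm:bgt-main} without its idempotent-completeness restriction.

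\textbf{The main obstacle} is Step 3; everything else is formal. One must (i) prove, without idempotent-completeness, that $1$-categorical $\K$-theory is the additivization of the object-space functor --- essentially the additivity theorem, but packaged in this precise universal form --- and (ii) show that the presheaf-localization presentation of $\Mot(-)$ is natural enough in $\St_2$ to be transported along representable $2$-functors. Setting up $\Mot(-)$ as such a localization of presentable enriched presheaf categories, functorially in $\cX$, is exactly where the paper's theory of presentable enriched $\infty$-categories is brought to bear.
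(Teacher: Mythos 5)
Your overall plan --- prove an equivalence of presheaves $\univ_\cX(B)\simeq\K_B:=\K\circ\hom_\cX(-,B)$ in $\Mot(\cX)$ and finish by Yoneda --- is the one the paper uses. Step~1 and your Yoneda identification $G_A(B)\simeq(L_\cX\Sigma^\infty_+\Yo(B))(A)$ are correct (though Step~2 is circular: by \Cref{rmk.U.X.is.univ.additive.invt}, ``$\widehat F_A\simeq\hom_{\Mot(\cX)}(\univ_\cX A,-)$'' literally \emph{is} ``$F_A\simeq G_A$''; and the functor you should be working with is $\hom_\cX(-,B)\colon\cX^\op\to\St$, not $\hom_\cX(A,-)$).

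The gap is in Step~3. The decomposition into (i) an unenriched corepresentability theorem for $\K$ plus (ii) a ``naturality/transport'' claim is a misdiagnosis, because ingredient (ii) is false as a formal meta-statement. A Bousfield localization of $\Fun(D,\Spectra)$ does not commute with restriction $F^*$ along $F\colon C\to D$ just because $F^*$ carries local objects to local objects (take $F=\id_\Spectra$, trivial localization on the source, rationalization on the target); and here the localizing sets are genuinely different, since the one governing $\K$ is built from $1$-localization sequences \emph{in $\St$} that need not lie in the image of $\hom_\cX(-,B)$. What actually ``transports'' is not the universal property of $\K$ but the $S_\bullet$-\emph{construction} itself, as a functorial simplicial $1$-localization sequence of endofunctors of $\St$. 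Post-composing $\hom_\cX(-,B)\in\Fun^\twoex(\cX^\op,\St)$ with it gives a simplicial $1$-localization sequence $\const(B)\to{\sf Path}(S_\bullet B)\to S_\bullet B$ in $\Fun^\twoex(\cX^\op,\St)$; the functor $L_\cX\Sigma^\infty_+\iota_0\colon\iota_1\Fun^\twoex(\cX^\op,\St)\to\Mot(\cX)$ extending $\univ_\cX$ sends this level-wise to exact sequences in $\Mot(\cX)$; realizing yields $\Sigma\,\univ_\cX(B)\simeq|\univ_\cX(S_\bullet B)|$; and iterating, as in \cite[Lemma 5.22]{HStwo}, identifies the result with $\K_B$. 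You thus have to \emph{re-run} the $S_\bullet$-proof that $\K$ is the additive reflection of $\Sigma^\infty_+\iota_0$ pointwise in the presheaf variable, not cite its conclusion; your parenthetical about the $S_\bullet$-construction shows you sense the right mechanism, but the (i)+(ii) framing would have you hunting for a formal lemma that does not exist.
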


Now, as an auxiliary notion, we define the \bit{(2,1)-ary K-theory} of $\cX \in \St_2$ to be
\[
\Kto(\cX) := \Mot(\cX)^\omega,
\]
the small stable category of compact objects in $\Mot(\cX)$ (note that $\Mot(\cX)$ is compactly generated). So by definition, we have a factorization 
\[ \begin{tikzcd}
\iota_1 \St_2
\arrow{rr}{\Ktwo}
\arrow{rd}[sloped, anchor=north, swap]{\iota_1\Kto}
&
&
\Spectra
\\
&
\iota_1 \St
\arrow{ru}[sloped, anchor=north, swap]{\K}
\end{tikzcd}
~.\footnote{Our notation is such that the functor $\K^{(n,m)}$ carries stable $n$-categories to stable $m$-categories, where we write $\K^{(n)} := \K^{(n,0)}$ and $\K := \K^{(1)}$ for simplicity (and take the convention that a stable 0-category is a spectrum).}
\]
Our second main result is a corepresentability theorem for (2,1)-ary K-theory.\footnote{Via the commutative diagram
\[ \begin{tikzcd}[ampersand replacement=\&, row sep=0.25cm]
\&
\PrL_\omega
\arrow[leftarrow, xshift=-0.9ex]{dd}[sloped, anchor=south]{\sim}[swap]{\Ind}
\arrow[xshift=0.9ex]{dd}{(-)^\omega}
\\
\iota_1 \St_2
\arrow{ru}[sloped, anchor=south]{\Mot}
\arrow{rd}[sloped, anchor=north, swap]{\Kto}
\\
\&
\St^\idem
\end{tikzcd}~,\]
this may be equivalently thought of as a corepresentability theorem for motives.} For this, we introduce the stable 2-category $\Mot_{2,1}$ of \bit{(2,1)-motives} (\Cref{def.2.1.motives}): this is the target of the universal functor
\[
\iota_2\St_2 \xra{\univ_{2,1}} \Mot_{2,1}
\]
to a presentable stable 2-category that carries 2-localization sequences to 1-localization sequences.\footnote{We are confident that this characterization of $\Mot_{2,1}$ is correct, but we do not prove it (see \Cref{rmk.U.2.1.is.univ.2.1.additive.invt}).}

\begin{mainthm}[Theorem \ref{thm:universal.property.of.K.2.1}]
	\label{mainthm:universal.property.of.K.2.1}
	Let $\cX,\cY \in \St_2$ be small stable 2-categories, and assume that $\cX$ is compact. There is a canonical equivalence of stable categories
	\[
	\hom_{\Mot_{2,1}}(\univ_{2,1}(\cX), \univ_{2,1}(\cY)) \simeq \Kto(\Fun^\twoex(\cX, \cY))
	~.
	\]
\end{mainthm}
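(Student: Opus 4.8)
The plan is to categorify the proof of \Cref{thm:bgt-main}: reduce the general assertion to the special case $\cX = \uno_{\St_2}$ by exploiting the closed symmetric monoidal structures on $\St_2$ and $\Mot_{2,1}$ and the symmetric monoidality of $\univ_{2,1}$, and then establish that special case --- which is precisely the statement that $(2,1)$-ary K-theory is corepresented by the unit of $\Mot_{2,1}$. I expect this last step, the ``first delooping of $\Kto$'', to be the main obstacle.

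First, fix a compact $\cX \in \St_2$ and consider the adjunction $(-\otimes\cX) \dashv \Fun^\twoex(\cX,-)$ on $\St_2$. The left adjoint preserves colimits and is $2$-exact, hence carries $2$-localization sequences to $2$-localization sequences; the right adjoint is likewise $2$-exact, so also preserves $2$-localization sequences, and --- because $\cX$ is compact and the tensor unit $\uno_{\St_2}$ is compact, so that compact objects of $\St_2$ are closed under $\otimes$ --- it preserves filtered colimits. (This is the only place the compactness hypothesis enters.) Postcomposing with $\univ_{2,1}$, both composites are finitary $(2,1)$-additive invariants valued in $\Mot_{2,1}$ (using that $\univ_{2,1}$ is finitary and sends $2$-localization sequences to $1$-localization sequences), hence by the construction of $\Mot_{2,1}$ they factor through $\univ_{2,1}$ as colimit-preserving endofunctors of $\Mot_{2,1}$: the first is $-\otimes\univ_{2,1}(\cX)$, and we write $T$ for the second, so that $T\circ\univ_{2,1} \simeq \univ_{2,1}\circ\Fun^\twoex(\cX,-)$. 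The unit and counit of the $\St_2$-adjunction are carried by $\univ_{2,1}$ to natural transformations witnessing $(-\otimes\univ_{2,1}(\cX)) \dashv T$; by uniqueness of adjoints, $T \simeq \underline{\hom}_{\Mot_{2,1}}(\univ_{2,1}(\cX),-)$, so evaluating at $\univ_{2,1}(\cY)$ gives a canonical equivalence
\[
\univ_{2,1}\bigl(\Fun^\twoex(\cX,\cY)\bigr) \simeq \underline{\hom}_{\Mot_{2,1}}\bigl(\univ_{2,1}(\cX), \univ_{2,1}(\cY)\bigr)
~.
\]
Applying $\hom_{\Mot_{2,1}}(\uno_{\Mot_{2,1}}, -)$, then using the tensor--hom adjunction in $\Mot_{2,1}$ and $\uno_{\Mot_{2,1}} \simeq \univ_{2,1}(\uno_{\St_2})$, yields
\[
\hom_{\Mot_{2,1}}\bigl(\univ_{2,1}(\cX), \univ_{2,1}(\cY)\bigr) \simeq \hom_{\Mot_{2,1}}\bigl(\univ_{2,1}(\uno_{\St_2}), \univ_{2,1}(\Fun^\twoex(\cX,\cY))\bigr)
~.
\]

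It thus remains to prove the case $\cX = \uno_{\St_2}$: that $\hom_{\Mot_{2,1}}(\univ_{2,1}(\uno_{\St_2}), \univ_{2,1}(\cZ)) \simeq \Kto(\cZ) = \Mot(\cZ)^\omega$ naturally in $\cZ \in \St_2$, after which the displayed reduction (applied with $\cZ = \Fun^\twoex(\cX,\cY)$) finishes the argument. I would prove this base case in parallel with the Blumberg--Gepner--Tabuada identification of $\K$ with the functor corepresented by $\univ(\uno_\St)$: analyze the mapping category $\hom_{\Mot_{2,1}}(\univ_{2,1}(\uno_{\St_2}), -)$ via the explicit construction of $\Mot_{2,1}$ as a localization of enriched presheaves, recognize it as a ``group completion'' built from the diagram of mapping categories $\iota_1\Fun^\twoex(\uno_{\St_2}, -) \simeq \iota_1(-)$, and identify the result with $\Mot((-))^\omega$ by means of a $(2,1)$-categorical additivity (secondary $S_\bullet$-)argument. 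The two points I expect to require real effort are: (i) carrying out the descent step above rigorously inside the presentable-enriched formalism of the paper --- in particular checking that $\Fun^\twoex(\cX,-)$ is a finitary $(2,1)$-additive invariant for compact $\cX$ and that the $\St_2$-adjunction descends to one of presentable stable $2$-categories; and (ii) the base case itself, which is where the genuinely new input (a categorified group-completion/additivity theorem) is needed.
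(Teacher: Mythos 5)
Your proposal takes a genuinely different route from the paper, but it has two gaps that the paper explicitly avoids.

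\textbf{Gap 1 (the main one): the universal property of $\univ_{2,1}$ is not available.} Your reduction hinges on the claim that because $\univ_{2,1}\circ(-\otimes\cX)$ and $\univ_{2,1}\circ\Fun^\twoex(\cX,-)$ are $(2,1)$-additive invariants, they ``factor through $\univ_{2,1}$ as colimit-preserving endofunctors of $\Mot_{2,1}$.'' This is precisely the assertion that $\univ_{2,1}$ is the \emph{universal} $(2,1)$-additive invariant, which the authors state in \Cref{rmk.U.2.1.is.univ.2.1.additive.invt} that they \emph{do not prove}: doing so would require knowing that for a small 2-category $\cC$ the Yoneda embedding $\cC \to \Fun(\cC^\op, \Cat_2)$ is the free cocompletion in a suitable $2$-categorical sense, and that point is left open. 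Related to this, your use of a closed symmetric monoidal structure on $\Mot_{2,1}$ with $\univ_{2,1}$ symmetric monoidal is also not established anywhere in the paper; $\Mot_{2,1}$ is defined as a Bousfield localization of $\St$-valued presheaves, and the paper never checks the localization is compatible with Day convolution.

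\textbf{Gap 2: the base case is not easier than the general case.} Even granting the reduction, the case $\cX = \uno_{\St_2}$ --- identifying $\hom_{\Mot_{2,1}}(\univ_{2,1}(\uno_{\St_2}), \univ_{2,1}(\cZ))$ with $\Kto(\cZ) := \Mot(\cZ)^\omega$ --- is deferred with only a sketch of a plan. But this is the entire content of the theorem; all the structure of $\Mot_{2,1}$ and the interaction between $1$- and $2$-localization sequences must still be unraveled. The paper's actual proof does not reduce to a base case at all: it works directly with the defining localization $L_{2,1}\dashv i$, identifies $\hom_{\Mot_{2,1}}(\univ_{2,1}\cX, \univ_{2,1}\cY) \simeq (L_{2,1}\cP_\cY)(\cX)$ using compactness of $\cX$, exhibits an explicit candidate map $u\colon \cP_\cY \to \Kto_\cY := \Kto(\Fun^\twoex(-,\cY))$, observes $\Kto_\cY$ is already a $(2,1)$-motive (since $\Kto$ is a $(2,1)$-additive invariant, \Cref{lem.Mot.and.K.2.1.add.invar}) and $u$ is a componentwise epimorphism, and then verifies the unit condition via two explicit factorization claims --- the second of which uses the prototypical $2$-localization sequence \Cref{x.to.1Loc.x.to.x.is.2.loc} and the compactness of $\cX$ and $\oneLoc$ in $\St_2$. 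This argument handles all compact $\cX$ simultaneously, so the BGT-style reduction to the unit is not load-bearing here; and crucially, it only requires $\univ_{2,1}$ to \emph{be} a $(2,1)$-additive invariant, not to be the universal one.

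In short: your high-level strategy is the natural categorification of the BGT reduction-to-corepresentability argument, but it requires two pieces of infrastructure (universality of $\univ_{2,1}$ and monoidality of $\Mot_{2,1}$) that the paper deliberately routes around, and the step you flagged as ``requiring real effort'' is where the theorem actually lives.
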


Our corepresentability theorem for secondary K-theory arises by daisy-chaining Theorems \ref{mainthm.homs.in.Mot.of.X.are.K.theory.are.K.theory} and \ref{mainthm:universal.property.of.K.2.1} as follows. We define the category $\Mot_2 := \Mot(\Mot^\omega_{2,1})$ of \bit{2-motives}; by construction, this comes equipped with a functor
\[
\iota_1\St_2^\omega \xra{\univ_2} \Mot_2
\]
from the 1-category of compact stable 2-categories.

\begin{mainthm}[Theorem \ref{thm:locating.K.2}]
	\label{mainthm:locating.K.2}
	Let $\cX,\cY \in \St_2^\omega$ be compact stable 2-categories.
	There is a canonical equivalence of spectra
	\[
	\hom_{\Mot_2}(\univ_2(\cX), \univ_2(\cY)) \simeq \Ktwo(\Fun^\twoex(\cX, \cY))
	~.
	\]
	In particular, taking $\cX = \uno_{\St_2}$ gives a canonical equivalence
	\[
	\hom_{\Mot_2}(\univ_2(\uno_{\St_2}),\univ_2(\cY)) \simeq \Ktwo(\cY)
	~.
	\]
\end{mainthm}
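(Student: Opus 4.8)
The plan is to deduce Theorem~\ref{mainthm:locating.K.2} by composing Theorems~\ref{mainthm.homs.in.Mot.of.X.are.K.theory.are.K.theory} and~\ref{mainthm:universal.property.of.K.2.1}, using the definition $\Mot_2 := \Mot(\Mot_{2,1}^\omega)$ together with the identity $\Ktwo = \K \circ \Kto$. First I would observe that $\univ_2$ factors as $\iota_1 \St_2^\omega \xra{\iota_1 \Kto} \iota_1 \St^\idem \xra{\univ} \Mot_2$, since $\Kto(\cX) = \Mot_{2,1}(\cX)^\omega \in \St^\idem$ and $\Mot_2 = \Mot(\Mot_{2,1}^\omega) = \Mot((\St_2^\omega)\text{-motives restricted to compacts})$; the point is that $\univ_2$ is the universal functor inverting $1$-localization sequences among the categories $\Kto(\cX)$. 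Then for compact $\cX, \cY \in \St_2^\omega$ I would compute
\[
\hom_{\Mot_2}(\univ_2(\cX), \univ_2(\cY))
\simeq
\hom_{\Mot(\Mot_{2,1}^\omega)}(\univ(\Kto(\cX)), \univ(\Kto(\cY)))
\simeq
\K\!\left( \Fun^\ex(\Kto(\cX), \Kto(\cY)) \right),
\]
where the second equivalence is \Cref{mainthm.homs.in.Mot.of.X.are.K.theory.are.K.theory} applied to the stable $2$-category $\Mot_{2,1}^\omega$ (noting $\Kto(\cX) = \univ_{2,1}(\cX)$'s compact-object category, an object of $\Mot_{2,1}^\omega$), using that $\cX$ compact makes $\Kto(\cX)$ a compact object.

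The remaining task is to identify $\Fun^\ex(\Kto(\cX), \Kto(\cY))$ with $\Kto(\Fun^\twoex(\cX, \cY))$. Here I would invoke \Cref{mainthm:universal.property.of.K.2.1}: it gives $\hom_{\Mot_{2,1}}(\univ_{2,1}(\cX), \univ_{2,1}(\cY)) \simeq \Kto(\Fun^\twoex(\cX, \cY))$. On the other hand, $\hom_{\Mot_{2,1}}(\univ_{2,1}(\cX), \univ_{2,1}(\cY))$ is by definition the hom-category in $\Mot_{2,1}$, which — since $\Mot_{2,1}$ is a presentable stable $2$-category with $\univ_{2,1}(\cX)$ compact — should be computed as $\Fun^\ex(\Kto(\cX), \Kto(\cY))$ once one knows that the hom-categories of $\Mot_{2,1}$ between compactly-generated objects are recovered from their compact-object categories. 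Concretely: $\Mot_{2,1}$ being presentable stable means its hom-categories are presentable stable, and for compact $\univ_{2,1}(\cX)$ the functor $\hom_{\Mot_{2,1}}(\univ_{2,1}(\cX), -)$ is colimit-preserving and exact, hence determined on compacts by $\Kto(\cX)$; combining this with the fact (from the $\Ind/(-)^\omega$ adjunction displayed in the excerpt, applied $2$-categorically) that $\Mot_{2,1} \simeq \Ind$-completion of $\Mot_{2,1}^\omega$ yields $\hom_{\Mot_{2,1}}(\univ_{2,1}(\cX), \univ_{2,1}(\cY)) \simeq \Fun^\ex(\Kto(\cX), \Kto(\cY))$. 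Chaining the two displays then gives
\[
\hom_{\Mot_2}(\univ_2(\cX), \univ_2(\cY))
\simeq
\K\!\left( \Kto(\Fun^\twoex(\cX, \cY)) \right)
=
\Ktwo(\Fun^\twoex(\cX, \cY)),
\]
using $\Ktwo = \K \circ \Kto$ from the factorization diagram in the excerpt. The special case $\cX = \uno_{\St_2}$ follows because $\Fun^\twoex(\uno_{\St_2}, \cY) \simeq \cY$ (the unit is dualizing for the internal hom), so the right-hand side becomes $\Ktwo(\cY)$.

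I expect the main obstacle to be the identification of $\hom_{\Mot_{2,1}}(\univ_{2,1}(\cX), \univ_{2,1}(\cY))$ with $\Fun^\ex(\Kto(\cX), \Kto(\cY))$ — i.e. verifying that hom-categories in the presentable stable $2$-category $\Mot_{2,1}$ are genuinely computed by passing to compact-object categories and taking exact functors. This requires the structural fact that a presentable stable $2$-category is $\Ind$-generated by its compact objects in a way compatible with its enrichment, so that $\Fun^\twoex$ restricts correctly to compacts; and that the equivalence $\Kto(\cX) = \Mot_{2,1}(\cX)^\omega$ is compatible with the internal-hom structure on $\Mot_{2,1}$. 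This is essentially a matter of assembling the enriched-presentability machinery developed elsewhere in the paper (the $\PrL_\omega$ vs. $\St^\idem$ correspondence, now one categorical level up), but getting the $2$-categorical bookkeeping right — particularly the interaction of compactness with the closed symmetric monoidal structure on $\St_2$ and hence on $\Mot_{2,1}$ — is where the real care is needed. Everything else is formal: the factorization of $\univ_2$ through $\iota_1 \Kto$ is immediate from the definition of $\Mot_2$, and the final assembly is just substitution.
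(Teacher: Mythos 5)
Your high-level strategy is right — chain Theorem~\ref{mainthm.homs.in.Mot.of.X.are.K.theory.are.K.theory} applied to the stable $2$-category $\Mot_{2,1}^\omega$ with Theorem~\ref{mainthm:universal.property.of.K.2.1}, then use $\Ktwo \simeq \K \circ \Kto$. But the way you execute the first step has a genuine error, and then the second paragraph tries to justify an identification that is both unnecessary and, as stated, wrong.

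The root of the problem is the identification $\univ_{2,1}(\cX) \mathrel{\text{``$=$''}} \Kto(\cX)$, which appears already in your claimed factorization of $\univ_2$ through $\iota_1 \Kto\colon \iota_1\St_2^\omega \to \iota_1\St^\idem$. That is not the factorization in \Cref{notn.U.2}. The actual factorization is
\[
\univ_2\colon \iota_1 \St_2^\omega \xra{\iota_1 \univ_{2,1}^\omega} \iota_1 \Mot_{2,1}^\omega \xra{\univ_{\Mot_{2,1}^\omega}} \Mot(\Mot_{2,1}^\omega) =: \Mot_2~,
\]
and the two objects being plugged into Theorem~\ref{mainthm.homs.in.Mot.of.X.are.K.theory.are.K.theory} are $A = \univ_{2,1}^\omega(\cX)$ and $B = \univ_{2,1}^\omega(\cY)$ in $\Mot_{2,1}^\omega$. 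These are $\St$-valued presheaves on $\iota_2\St_2^\omega$ (see \Cref{def.2.1.motives}), living in a full sub-$2$-category of $\Fun((\iota_2\St_2^\omega)^\op, \St)$. By contrast, $\Kto(\cX) := \Mot(\cX)^\omega$ is a stable category, an object of $\St^\idem$. These objects live in entirely different categories and are not the same thing; $\Kto$ is recovered from $\univ_{2,1}$ only after mapping out of the unit, via Theorem~\ref{mainthm:universal.property.of.K.2.1}.

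Because of this conflation, the displayed equivalence $\hom_{\Mot(\Mot_{2,1}^\omega)}(\univ(\Kto(\cX)), \univ(\Kto(\cY))) \simeq \K(\Fun^\ex(\Kto(\cX), \Kto(\cY)))$ is not what Theorem~\ref{mainthm.homs.in.Mot.of.X.are.K.theory.are.K.theory} gives you. Applied to $\Mot_{2,1}^\omega$, it gives $\K\big(\hom_{\Mot_{2,1}^\omega}(\univ_{2,1}^\omega(\cX), \univ_{2,1}^\omega(\cY))\big)$, and since $\Mot_{2,1}^\omega \subseteq \Mot_{2,1}$ is a full sub-$2$-category, the inner object is $\hom_{\Mot_{2,1}}(\univ_{2,1}(\cX), \univ_{2,1}(\cY))$, which Theorem~\ref{mainthm:universal.property.of.K.2.1} identifies with $\Kto(\Fun^\twoex(\cX,\cY))$ — \emph{not} $\Fun^\ex(\Kto(\cX), \Kto(\cY))$. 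Your second paragraph therefore sets itself the task of proving an identity that is not needed and is almost certainly false: $\Kto(\Fun^\twoex(\cX,\cY)) = \Mot(\Fun^\twoex(\cX,\cY))^\omega$ is a category of motives, whereas $\Fun^\ex(\Kto(\cX), \Kto(\cY))$ is a functor category between compact-object categories; equating them would be a strong lax-monoidality/closedness statement about $\Kto$ that has no basis in the paper. Once you delete that detour and simply substitute the output of Theorem~\ref{mainthm:universal.property.of.K.2.1} directly into the hom computed via Theorem~\ref{mainthm.homs.in.Mot.of.X.are.K.theory.are.K.theory}, you recover exactly the paper's short five-line chain.
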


\begin{remark}
\label{rmk.K.and.Kto.and.Ktwo.as.linearizations}
Our choice to denote various morphisms by $\univ$ stems from the fact that these are \textit{linearization} maps, in two senses. Most primitively, given a stable category $\cC \in \St$, (the infinite loopspace of) its K-theory spectrum receives a canonical map
\[
\iota_0 \cC
\xra{\univ_\cC}
\K(\cC)
\]
from its maximal subgroupoid: the universal Euler characteristic for objects of $\cC$. One categorical dimension higher, given a stable 2-category $\cX \in \St_2$, (the underlying category of) its (2,1)-ary K-theory stable category receives a canonical map
\[
\iota_1 \cX
\xra{\univ_\cX}
\Kto(\cX)
\]
from its maximal sub-1-category, and thereafter (the infinite loopspace of) its secondary K-theory spectrum receives a canonical composite map
\[
\iota_0 \cX
\xra{\iota_0 \univ_\cX}
\iota_0 \Kto(\cX)
\xra{\univ_{\Kto(\cX)}}
\Ktwo(\cX)
\]
from its maximal subgroupoid. Of course, the latter maps may also be thought of as assigning universal Euler characteristics to objects of $\cX$. At the same time, the functor $\univ_\cX$ carries each object of $\iota_1 \cX$ to a sort of linearization thereof; for instance, in the case that $\cX = \St$ it carries each object $\cC \in \St$ to an object $\univ(\cC) \in \Mot$ that (by \Cref{thm:bgt-main}) records the K-theory of $\cC$.\footnote{Note that in all cases these linearization operations make reference to higher-categorical structure that has been discarded from the source (e.g.\! the construction of $\K(\cC)$ makes reference to bifiber (i.e.\! fiber and cofiber) sequences in $\cC$), in contrast with those indicated in \Cref{rmk.sigma.infty.n.plus}. Indeed, using the notation suggested there, by adjunction we obtain canonical morphisms
\[
\Sigma^{(\infty,0)}_+ \iota_0 \cC
\longra
\K(\cC)
~,
\qquad
\Sigma^{(\infty,1)}_+ \iota_1 \cX
\longra
\Kto(\cX)
~,
\qquad
\text{and}
\qquad
\Sigma^{(\infty,2)}_+ \iota_2 \St_2^\omega
\longra
\Mot_{2,1}
\]
of stable $n$-categories for $0 \leq n \leq 2$.}
\end{remark}

\subsection{Questions and further directions}
\label{subsection.intro.horizons}

In this highly speculative subsection, we lay out a number of directions for further research that our work raises. It is organized as follows.
\begin{itemize}

\item[\Cref{subsubsection.true.defn}:] We discuss the notion of stability for 2-categories.

\item[\Cref{subsubsection.flavors.of.Kthy}:] We discuss types of localization sequences and types of secondary K-theory.

\item[\Cref{subsubsection.S.dot.constrn}:] We discuss the possibility of a secondary $S_\bullet$-construction.

\item[\Cref{subsubsection.secondary.cylo.trace}:] We discuss secondary traces.

\end{itemize}
We note once and for all that there are a number of obvious connections between these discussions, which we leave implicit for lack of anything useful to say.

\subsubsection{The notion of stability for 2-categories}
\label{subsubsection.true.defn}

Recall from \Cref{rmk.intro.defn.of.stable.2.cat.is.provisional} that we consider our definition of stable 2-categories to be provisional. We would be interested to see a refined definition,\footnote{In \cite[\S 5]{LinSatWes-Twisted}, Lind--Sati--Westerland give a hypothetical definition of cocomplete stable $n$-categories as those objects of $\Cat_n(\Spectra)$ that admit all $\Cat_{n-1}(\Spectra)$-weighted colimits.} which we would hope to have the following features and consequences.
\begin{enumerate}

    \item It closely reflects specific properties of the stable 2-category $\St$ of stable categories, particularly those involving fiber and cofiber sequences. Moreover, the fundamental examples given in \Cref{examples.of.st.2} remain examples of stable 2-categories.
        
    \item It does not refer to an \emph{a priori} enrichment in $\St$, analogously to how the definition of a stable category does not refer to an \emph{a priori} enrichment in $\Spectra$.
    
    \item It does not refer to specific diagram shapes but instead is ``coordinate-free'', corresponding to how a 1-category is stable if and only if it admits finite limits and finite colimits and moreover these commute \cite{Groth-stable}.
    
    \item The unit object is closely related to $\St$ (recall \Cref{example.unit.in.St.two}), analogously to how the unit object of $\St$ is the category $\Spectra^\fin$ of finite spectra.
    
    \item The Cauchy-complete stable envelope (akin to \Cref{defn.Env.two}) of a stably-enriched 2-category (\Cref{defn.stably.enriched.two.cat}) is its category of (say right-)adjointable modules \cite{Heine-Weighted}.\footnote{Here we refer to adjoints in a Morita 3-category, whose objects are stably-enriched 2-categories and whose morphisms from $\cX$ to $\cY$ are 2-exact functors $\cX^\op \otimes \cY \ra \St$.}$^,$\footnote{We expect that a higher-categorical notion of idempotent-completeness should be invoked here \cite{DR-fusiontwocats,GJF-Condensations}.}
    
\end{enumerate}

\subsubsection{Variants of secondary K-theory}
\label{subsubsection.flavors.of.Kthy}

Primary algebraic K-theory has both connective and nonconnective variants, which are characterized in terms of their interactions with different types of localization sequences \cite{BGT}. Similarly, secondary K-theory admits a host of possible variants, and we would be interested in seeing a systematic treatment of these. To explain them, we discuss variants of primary K-theory in \Cref{flavors.of.primary.K.thys}, after a preliminary discussion of localization sequences in \Cref{pgh.localizn.seqs}; those discussions indicate that the notion of secondary K-theory that we study here is merely the simplest nontrivial variant. We would also be interested to see these notions related via descent, analogously to how nonconnective K-theory arises by enforcing (noncommutative) Nisnevich descent on connective K-theory \cite[Theorem 4.4]{Robalo-bridge}.

\paragraph{Localization sequences.}
\label{pgh.localizn.seqs}

As indicated in \Cref{subsection.intro.main.results}, in this paper we study notions of $n$-localization sequences -- that is, higher-categorical versions of bifiber sequences -- for varying $n$. Here we discuss possible variants, which are characterized in terms of their degrees of \textit{splitness}. We indicate the general patterns, and then explain them in low dimensions.\footnote{For simplicity, here we suppress all distinctions between data and conditions.}
\begin{itemize}

\item One may speak of an \bit{$n$-localization sequence} in a stable $(n+1)$-category for any $n \geq -1$. We take the convention that a stable 0-category is a spectrum: $\St_0 := \Spectra$.\footnote{Thinking even more negatively, one may define a stable $(-1)$-category to be a formal difference of finite sets, i.e.\! a point in the sphere spectrum: $\St_{-1} := \SS$.}

\item By definition, an $n$-localization sequence is \bit{$i$-split} for some $0 \leq i \leq n+2$. This notion becomes more restrictive as $i$ increases; a 0-split $n$-localization sequence is an arbitrary bifiber sequence, an $(n+1)$-split $n$-localization sequence is a direct sum decomposition, and only an $n$-localization sequence among zero objects is $(n+2)$-split.

\item For $n \geq 0$ and $i \geq 1$, an $i$-split $n$-localization sequence has a \bit{middle $(n-1)$-localization sequence}, which is definitionally $(i-1)$-split. Moreover, for any $j \geq i$, the former is $j$-split iff the latter is $(j-1)$-split.

\end{itemize}
Using this terminology, the $n$-localization sequences studied in the main body of this paper (for $0 \leq n \leq 2$) are $n$-split.

The low-dimensional examples of $i$-split $n$-localization sequences are summarized in the table below, which we explain row by row.
\begin{figure}[h]
\hspace{-1.2cm}
\def\arraystretch{1.2}
\begin{tabular}{ @{\quad} c @{\quad} | @{\quad} c @{\quad} || @{\quad} c @{\quad} | @{\quad} c @{\quad} | @{\quad} c @{\quad} | @{\quad} c @{\quad} }
$n$
&
stable $(n+1)$-category
&
$(n+2)$-split
&
$(n+1)$-split
&
$n$-split
&
$(n-1)$-split
\\ \hline \hline
$-1$
&
spectrum
&
$0 = 0+0$
&
$b = a+c$
&
&
\\ \cline{1-6}
$0$
&
stable category
&
$0 \rightleftarrows 0 \rightleftarrows 0$
&
$A \rightleftarrows A \oplus C \rightleftarrows C$
&
$A \ra B \ra C$
&
\\ \hline
$1$
&
stable 2-category
& 
$0 \rightleftarrows 0 \rightleftarrows 0$
&
$\cA \rightleftarrows \cA \oplus \cC \rightleftarrows \cC$
&
$\cA \rightleftarrows \cB \rightleftarrows \cC$
&
$\cA \ra \cB \ra \cC$
\end{tabular}
\end{figure}
\begin{enumerate}
    \item[$n = -1$:] One may speak of $(-1)$-localization sequences among objects of a spectrum, i.e.\! among points of its infinite loop space. A 0-split $(-1)$-localization sequence is an equation $b=a+c$, which is 1-split iff all of its terms are zero.
    
    \item[$n = 0$:] One may speak of 0-localization sequences among objects of a stable category. A 0-split 0-localization sequence is a bifiber sequence $A \ra B \ra C$, and it is 1-split iff it is split in the usual sense. A 1-split 0-localization sequence
    \[ \begin{tikzcd}
    A
    \arrow{r}{i}
    &
    B
    \arrow{r}{L}
    \arrow[bend left]{l}{R}
    &
    C
    \arrow[bend left]{l}{j}
    \end{tikzcd} \]
    has a middle $0$-split $(-1)$-localization sequence $\id_B = iR + jL$ in $\End_\Spectra(B)$.
    
    \item[$n = 1$:] One may speak of 1-localization sequences among objects of a stable 2-category. We discuss these in the canonical example of the stable 2-category $\St$ of small stable categories. A 0-split 1-localization sequence is a bifiber sequence $\cA \hookrightarrow \cB \ra \cC$ (i.e.\! a Verdier sequence).\footnote{In $\St^\idem$, the 0-split 1-localization sequences are precisely those sequences that become 1-split 1-localization sequences in $\PrLSt$ upon taking ind-completion.} This is 1-split iff both functors admit right adjoints (i.e.\! it is a \textit{split} Verdier sequence). A 1-split 1-localization sequence
    \[
\begin{tikzcd}[column sep=1.5cm]
        \cA
        \arrow[hook, yshift=0.9ex]{r}{i}
        \arrow[leftarrow, yshift=-0.9ex]{r}[yshift=-0.2ex]{\bot}[swap]{R}
        &
        \cB
        \arrow[yshift=0.9ex]{r}{L}
        \arrow[hookleftarrow, yshift=-0.9ex]{r}[yshift=-0.2ex]{\bot}[swap]{j}
        &
        \cC
        \end{tikzcd}
\]
has a middle 0-split 0-localization sequence $iR \xra{\varepsilon} \id_\cB \xra{\eta} jL$ in $\End_\St(\cB)$.
\end{enumerate}

\paragraph{Primary K-theories.}
\label{flavors.of.primary.K.thys}

Following \cite{BGT}, the discussion of \Cref{pgh.localizn.seqs} (combined with a healthy dose of ``negative thinking'') suggests the consideration of five primary K-theory functors $\iota_1 \St \ra \Spectra$, constructed as follows. For $0 \leq i \leq 4$, we write
\[
\Mot^{[i]} \subseteq \Fun((\iota_1\St^\omega)^\op, \Spectra)
\]
for the full subcategory on those functors that send $i$-split 1-localization sequences in $\St$ to 0-localization sequences in $\Spectra$.\footnote{So by definition, $\Mot^{[1]}$ (resp.\! $\Mot^{[0]}$) is the category of additive (resp.\! localizing) motives of \cite{BGT}.}$^,$\footnote{For diagrammatic reasons, the image of a 1-split (resp.\! 3-split) 1-localization sequence under a functor in $\Mot^{[1]}$ (resp.\! in $\Mot^{[3]}$) will be a 1-split (resp.\! 2-split) 0-localization sequence.} These assemble into a sequence of Bousfield localizations 
\[
\begin{tikzcd}
\Fun ( ( \iota_1 \St^\omega)^\op , \Spectra)
=:
\Mot^{[4]}
\arrow{r}
&
\Mot^{[3]}
\arrow{r}
&
\Mot^{[2]}
\arrow{r}
&
\Mot^{[1]}
\arrow{r}
&
\Mot^{[0]}
\end{tikzcd}
\]
among presentable stable categories. For any small stable category $\cC \in \St$, applying the operation $\hom_{(-)}(\uno_\St,\cC)$ yields the sequence
\[
\Sigma^\infty_+(\iota_0 \cC)
\longra
\Sigma^\infty(\iota_0 \cC)
\longra
\K^\oplus(\cC)
\longra
\K(\cC)
\longra
\KK(\cC)
\]
of K-theory spectra.\footnote{Here, $\iota_0 \cC$ denotes the maximal subgroupoid of $\cC$, and $\K^\oplus$, $\K$, and $\KK$ respectively denote direct-sum, connective, and nonconnective K-theory.}

It is worth remarking that for $1 \leq i \leq 4$, the abelian group $\pi_0 ( \hom_{\Mot^{[i]}}(\uno_\St,\cC) )$ is obtained by splitting the $(i-1)$-split 0-localization sequences in $\cC$. In particular, generalizing Waldhausen's additivity theorem, we find that imposing relations among stable 1-categories corresponds to enforcing relations among the elements of the corresponding $0\th$ K-groups.

\subsubsection{A secondary $S_\bullet$-construction}
\label{subsubsection.S.dot.constrn}

Although primary K-theory can be characterized by a universal property, it was of course first introduced via various explicit constructions -- the most general being Waldhausen's $S_\bullet$-construction \cite{Wald-AKTofspaces}. We would be interested seeing an analogous construction of secondary K-theory. Of course, this may involve additional algebraic structure on a stable 2-category (akin to a Waldhausen structure on a 1-category). In view of \cite[Theorem 7.3.3]{DyckKap-higherSeg}, it is conceivable that this would be a 3-Segal space. We also note the possibility that such a construction would apply only in specific cases; for instance, the subcategory $\iota_1\St^\dzbl \subseteq \iota_1\St^\idem$ of fully dualizable objects carries a Waldhausen structure.\footnote{The cofibrations are the fully faithful functors, which are stable under cobase change by \cite[Lemma 4.18]{HStwo}.} We are optimistic that a secondary $S_\bullet$-construction would relate to an interpretation of secondary K-theory within the context of Goodwillie calculus, following \cite{BarwickAlgKTheoryofHigherCats}.

\subsubsection{The secondary cyclotomic trace}
\label{subsubsection.secondary.cylo.trace}

An essential tool for computing (higher) K-theory is the \textit{cyclotomic trace}, a natural map
\[
\K
\longra
\TC
\]
in $\Fun(\iota_1 \St,\Spectra)$ from K-theory to \emph{topological cyclic homology}. This is a factorization
\[
\begin{tikzcd}[column sep=1.5cm]
\K
\arrow{rr}{\tr_{\text{Dennis}}}
\arrow[dashed]{rd}[swap, sloped, anchor = north]{\tr_{\text{cyclo}}}
&
&
\THH
\\
&
\TC
\arrow{ru}
\end{tikzcd}
\]
of the \textit{Dennis trace} map from K-theory to topological Hochschild homology. By the celebrated Dundas--Goodwillie--McCarthy (DGM) theorem \cite{DGMlocal} (see also \cite{RaskinDGM,elmanto2020nilpotent}), the cyclotomic trace is locally constant along nilpotent extensions. Though complicated, $\TC$ is far more amenable to direct analysis than higher K-groups, and as a result the DGM theorem has led to a plethora of spectacular K-theory computations \cite{HessMad-Witt,HessMad-trunc,HessMad-local,HessMad-dRW,AngGerHess-trunc,AngGerHillLind-trunc,KleinRog-fib,Rog-White,BluMan-AKTS}.

As indicated in \Cref{subsection.intro.background}, a major motivation for the present work is to provide a means of constructing a \bit{secondary cyclotomic trace} map, in order to afford a secondary analog of the DGM theorem for the purpose of computing secondary K-theory.\footnote{We are inspired by the theory of covering homology \cite{BCD-cov,CDD-higherTC}, although note that it only applies to commutative (as opposed to $\EE_2$-)ring spectra.} Here we outline a proposed definition of the target of such a trace map, which we denote by $\TCtwo$ and refer to as \bit{secondary topological cyclic homology}. Analogously to the primary case, this arises from a \textit{secondary Dennis trace} map, whose target we denote by $\THHtwo$ and refer to as \textit{secondary topological Hochschild homology}. We envision \Cref{mainthm:locating.K.2} as providing secondary Dennis and cyclotomic trace maps
\[
\begin{tikzcd}[column sep=1.5cm]
\Ktwo
\arrow{rr}{\trtwo_\text{Dennis}}
\arrow[dashed]{rd}[swap, sloped, anchor = north]{\trtwo_\text{cyclo}}
&
&
\THHtwo
\\
&
\TCtwo
\arrow{ru}
\end{tikzcd}
\]
by Yoneda, analogously to how the corresponding primary trace maps can be deduced from \Cref{thm:bgt-main}.\footnote{The cyclotomic trace was originally constructed by hand, without recourse to a universal property of K-theory. We do not expect that our proposed secondary cyclotomic trace could be similarly constructed by hand, due to the proliferation of symmetries that constitute a secondary cyclotomic structure.} We describe our proposed definition of $\TCtwo$ in \Cref{pgh.TCtwo}, after reviewing the corresponding primary situation in \Cref{pgh.TCone}.

\paragraph{Primary topological cyclic homology.}
\label{pgh.TCone}

The \textit{topological Hochschild homology} of a stable category $\cC \in \St$ is its (spectrally-enriched) factorization homology \cite{AMGRfact} over the circle:
\[
\THH(\cC)
:=
\left( \int_{S^1} \cC \right)
\in
\Spectra
~.
\]
This carries a \textit{cyclotomic structure}: to a first approximation, this consists of an action of the group $\Aut^\fr(S^1) \simeq \TT$ of framed automorphisms of the circle along with, for each degree-$r$ framed self-covering map $S^1 \la S^1$,\footnote{The framings that appear here and below derive from their connection with (higher) category theory: factorization homology is a pairing between $n$-categories and compact framed $n$-manifolds \cite{AFR}.} a corresponding map
\[
\THH(\cC)
\longra
\THH(\cC)^{\uptau \Cyclic_r}
~,
\]
where $(-)^{\uptau \Cyclic_r}$ denotes the \textit{proper $\Cyclic_r$-Tate construction}. Then, the \textit{topological cyclic homology} of $\cC$ is the invariants of this cyclotomic structure:
\[
\TC(\cC)
:=
\THH(\cC)^{\htpy \Cyclo}
~.
\]
While cyclotomic spectra were originally defined in terms of genuine $\TT$-spectra, we note that they can also be defined using the formalism of \textit{stratified categories} \cite{AMGRstrat,AMGRcyclo}.

Given a perfect stack $X$, we have an identification
\[ \THH(X) := \THH(\Perf(X)) \simeq \ms{O}(\ms{L}(X)) \]
of $\THH(X)$ with functions on the free loopspace $\ms{L}X := \hom(S^1,X)$ (a derived mapping stack) \cite{BZFNintegraldrinfeld}. In these terms, the Dennis trace carries a vector bundle to its trace-of-monodromy function on loops in $X$ \cite{TV2009}. In \cite{AMGRtrace}, it is explained that the cyclotomic trace arises from the fact that such functions enjoy subtle relations between their values at a point $(S^1 \xra{\gamma} X) \in \ms{L}X$ and at the corresponding point $(S^1 \xra{r} S^1 \xra{\gamma} X) \in \ms{L}X$, for any degree-$r$ framed self-covering map $S^1 \xra{r} S^1$.\footnote{For instance, when $r=1$ this records the cyclic invariance of traces, and when $r=p$ is prime this records the fact that the $p\th$ power map is a homomorphism mod $p$ (i.e.\! the existence of the Frobenius).}

\paragraph{Secondary topological cyclic homology.}
\label{pgh.TCtwo}

We propose that the \bit{secondary topological Hochschild homology} of a stable 2-category $\cX \in \St_2$ should be its (spectrally-enriched) factorization homology over the torus:
\[
\THHtwo(\cX)
:=
\left( \int_{T^2} \cX \right)
\in 
\Spectra
~.\footnote{At present, enriched factorization homology has only been defined in dimension 1 \cite{AMGRfact}. However, we expect that $\THHtwo$ is equivalent to iterated $\THH$; see \Cref{rmk.The.Big.Triangle}.}
\]
This should correspondingly carry a \bit{secondary cyclotomic structure}: to a first approximation, this should consist of an action of the group $\Aut^\fr(T^2)$ of framed automorphisms of the torus along with, for each framed self-covering map $T^2 \la T^2$, a corresponding map
\[
\THHtwo(\cX)
\longra
\THHtwo(\cX)^{\uptau G}
~,
\]
where $G$ denotes the deck group of the chosen framed self-covering.\footnote{Note that framing-preservation is a homotopical condition, not a point-set one. In particular, a subtlety arises in the 2-dimensional case: whereas it is merely a condition for a self-covering map of $S^1$ to be framing-preserving, it is additional data for a self-covering map of $T^2$ to be framing-preserving. Indeed, there exists a noncontractible space of framing-preservation data on the identity map of $T^2$. (The group of framed diffeomorphisms of $T^2$ is computed in the work \cite{AyaFraHow-Symm}.)} Then, \bit{secondary topological cyclic homology} of $\cX$ should be the invariants of this secondary cyclotomic structure:
\[
\TCtwo(\cX)
:=
\THHtwo(\cX)^{\htpy \Cyclo_2}
~.
\]
We note that $\Aut^\fr(T^2)$ is not (equivalent to) a compact Lie group, and so we do not expect that secondary cyclotomic spectra can be defined in terms of classical genuine equivariant homotopy theory \cite{LMS}. On the other hand, we expect that they can be defined readily via stratified categories.

Given a perfect stack $X$, we expect an equivalence
\[
\THHtwo(X)
:=
\THHtwo(\Perftwo(X))
\simeq
\sco (\scl^2 X)
\]
between our proposed definition of $\THHtwo$ and that of \cite{HStwo} (see also \cite{TV2009, TV-caracteres}), where $\Perftwo(X) \subseteq \QCtwo(X)$ is the sub-2-category of fully dualizable objects. In these terms, we expect that the secondary Dennis trace map carries a perfect sheaf of stable categories to its \textit{secondary}-\textit{trace}-of-monodromy function on double loops in $X$. Namely, given a symmetric monoidal 2-category $\cC$, a fully dualizable object $C \in \cC$, and a pair of commuting endomorphisms $\varphi,\psi \in \End_\cC(C)$, the \bit{secondary trace} is the 2-endomorphism
\[
\trtwo_C(\varphi,\psi)
:=
\left(
\uno_{\End_\cC(\uno_\cC)}
\xra{\tr_{\tr_C(\varphi)}(\tr_\psi(\varphi))}
\uno_{\End_\cC(\uno_\cC)}
\right)
\in
\End_{\End_\cC(\uno_\cC)}(\uno_{\End_\cC(\uno_\cC)})
=:
2\End_\cC(\uno_\cC)
\]
of the unit object of $\cC$ \cite{BZNsecondarytraces}, i.e.\! the trace of the endomorphism 
\[
\tr_C(\varphi)
\xra{\tr_\psi(\varphi)}
\tr_C(\varphi)\]
of the endomorphism
\[
\left( \uno_\cC \xra{\tr_C(\varphi)} \uno_\cC \right)
\in
\End_\cC(\uno_\cC)
~.
\]
Indeed, secondary traces in $\cC = \St$ are scalars (i.e.\! endomorphisms of the sphere spectrum).\footnote{For instance, in the case that $C = (\Sp^\fin)^{\oplus n} \in \St$, the data of a pair of commuting endomorphisms $\varphi,\psi \in \End_\St(C)$ in the special case that $\psi = \id_C$ amounts to an $n$-by-$n$ matrix valued in finite spectra equipped with endomorphisms, in which case the secondary trace is the iterated trace in the evident sense.} Finally, we expect that the secondary cyclotomic trace should arise from analogous relations enjoyed by secondary trace-of-monodromy functions of perfect sheaves of stable categories.

\begin{remark}
\label{rmk.The.Big.Triangle}
We expect that it is straightforward to obtain a secondary Dennis trace map via the commutative diagram \Cref{the.big.triangle},
\begin{figure}[h]
\begin{equation}
\label{the.big.triangle}
\begin{tikzcd}[column sep = 2cm, row sep = 2cm]
\Spectra
\\
\iota_1 \St
\arrow{u}[swap]{\K}[swap, xshift=1cm, yshift=-0.5cm]{\rotatebox{-30}{$\Rightarrow$}}
\arrow[hook, yshift=0.9ex]{r}{i}
\arrow[leftarrow, yshift=-0.9ex]{r}[yshift=-0.2ex]{\top}[swap]{\Env}
&
\Cat(\Spectra)
\arrow{lu}[sloped, anchor = south, swap]{\THH}
\\
\iota_1 \St_2
\arrow{u}[swap,yshift=-0.15cm]{\Kto}[swap, xshift=1.2cm, yshift=-0.15cm]{\Rightarrow}
\arrow[hook]{r}[swap]{j}
\arrow[bend left]{uu}{\Ktwo}
&
\Cat(\iota_1\St)
\arrow[hook]{r}[swap]{k}
\arrow{u}[yshift=-0.15cm]{\Cat(\K)}[swap, xshift=1cm, yshift=-0.6cm]{\rotatebox{-25}{$\Rightarrow$}}
&
\Cat_2(\Spectra)
\arrow{lu}[sloped, anchor = south, swap]{\Cat(\THH)}
\arrow[bend right]{lluu}[swap, sloped, anchor = south]{\THHtwo}
\end{tikzcd}
\end{equation}
\end{figure}
as we now explain.
\begin{itemize}
    \item Given a symmetric monoidal category $\cV$, we write $\Cat_n(\cV)$ for the 1-category of $\cV$-enriched $n$-categories, defined inductively by the formula $\Cat_n(\cV) := \Cat(\Cat_{n-1}(\cV))$. Similarly, given a (laxly) symmetric monoidal functor $\cV \xra{F} \cW$, we write $\Cat(\cV) \xra{\Cat(F)} \Cat(\cW)$ for the induced functor given by applying $F$ to hom-objects.
    
    \item We write $\Cat(\Spectra) \xra{\Env} \iota_1\St$ for the \emph{stable envelope} functor.
    
    \item The identification $\Ktwo \simeq \K \circ \Kto$ is definitional, while the identification $\THHtwo \simeq \THH \circ \Cat(\THH)$ is an instance of an expected pushforward formula for factorization homology of enriched $n$-categories.
    
    \item The natural transformation in the upper-left triangle is the Dennis trace, and the natural transformation in the lower-right triangle is obtained from it by applying $\Cat(-)$.
    
    \item The natural transformation in the square follows from \Cref{mainthm:universal.property.of.K.2.1} and Yoneda, after recognizing the composite
    \[
    \iota_1 \St_2
    \overset{j}{\longhookra}
    \Cat(\iota_1 \St)
    \xra{\Cat(\K)}
    \Cat(\Spectra)
    \xra{\Env}
    \iota_1 \St
    \]
    as a $(2,1)$-additive invariant (see \Cref{define.2.1.additive.invt}).
    
\end{itemize}
Namely, we obtain a secondary Dennis trace $\Ktwo \ra \THHtwo$ as the composite
\[
\Ktwo
\simeq
\K \circ \Kto
\longra
\THH \circ i \circ \Env \circ \Cat(\THH) \circ k \circ j
\xlongla{\sim}
\THH \circ \Cat(\THH) \circ k \circ j
\simeq \THHtwo \circ k \circ j
~,
\]
in which the backwards equivalence follows from the Morita-invariance of $\THH$. We note that this construction relies crucially on the decomposition $T^2 \cong S^1 \times S^1$, and thus we only can deduce the equivariance of the secondary Dennis trace for a small subgroup of $\Aut^\fr(T^2)$ (to say nothing of framed self-covering maps $T^2 \la T^2$).
\end{remark}

\subsection{Foundational results on enriched and higher categories}
\label{subsection.intro.cat.thy}

Our main results (as described in \Cref{subsection.intro.main.results}) are supported by a number of auxiliary results on enriched and higher categories, which we hope to be of independent interest. In \Cref{app.some.enriched.category.theory}, we study some aspects of enriched category theory: co/limits and adjunctions, presentability, compact generation, and semiadditivity. Thereafter, in \Cref{app.some.higher.cat.theory}, we connect with (unenriched) higher category theory: primarily, we perform a consistency check regarding two possible definitions of the 3-category $\St_2$ of small stable 2-categories (see \Cref{rem.only.one.Cat.St.in.section.one.point.one} and \Cref{prop.only.one.Cat.St}), which leads to its closed symmetric monoidal structure.

We briefly discuss our notion of enriched presentability. Given a presentably symmetric monoidal category $\cV$, we define \bit{presentable $\cV$-categories} in terms of $\cV$-enriched category theory (\Cref{def.presentable.V.category}). However, we also prove that these can be described in unenriched terms: namely, we establish an equivalence of categories
\[
\iota_1\PrL_\cV 
\simeq
\Mod_\cV(\iota_1\PrL)
\]
between that of presentable $\cV$-categories and that of $\cV$-modules in $\iota_1\PrL$ (\Cref{thm.presble.V.cats.are.V.mods}). Taking $\cV = \iota_1 \Cat_{n-1}$ to be the category of $(n-1)$-categories, we obtain a category
\[
\iota_1 \PrL_n
:=
\iota_1 \PrL_{\iota_1 \Cat_{n-1}}
\]
of \bit{presentable $n$-categories}. Moreover, we prove that for any small 2-category $\cB \in \Cat_2$, the 2-categories $\Fun(\cB,\Cat)$ and $\Fun(\cB,\St)$ are presentable (Propositions \ref{prop.Fun.B.Cat.is.prbl.and.tensoring.is.ptwise} and \ref{prop.Fun.K.St.is.presentable}).

\begin{remark}
In the recent work \cite{Stef-Presentable} (which appeared as the present paper was nearing completion), Stefanich proposes a markedly different notion of ``presentable $n$-category'', which makes much greater use of set theory: writing $n\PrL$ for the $(n+1)$-category of his presentable $n$-categories, the fundamental example is that $(n-1)\PrL \in n\PrL$.\footnote{We propose referring to this latter notion as ``$n$-presentability''.} By contrast, our presentable $n$-categories live in the same set-theoretic universe for all $n \geq 0$ (and in particular a presentable $(n+1)$-category has an underlying presentable $n$-category). Indeed, our assignment $\cV \mapsto \iota_1 \PrL_\cV$ assembles as a functor
\[
\CAlg(\iota_1 \PrL)
\longra
\iota_1 \what{\what{\Cat}}^{{\sf l.adjt}}
\]
to the massive category whose objects are huge categories and whose morphisms are left adjoint functors, so that the sequence
\[
\Spaces
\simeq
\Cat_0
\longhookra
\iota_1 \Cat
\longhookra
\iota_1 \Cat_2
\longhookra
\cdots
\]
in $\CAlg(\iota_1 \PrL)$ determines a diagram of adjunctions
\[ \begin{tikzcd}[column sep=2cm]
\iota_1 \PrL
\simeq
\iota_1 \PrL_1
\arrow[hook, yshift=0.9ex]{r}
\arrow[leftarrow, yshift=-0.9ex]{r}[yshift=-0.2ex]{\bot}[swap]{\iota_1}
&
\iota_1 \PrL_2
\arrow[hook, yshift=0.9ex]{r}
\arrow[leftarrow, yshift=-0.9ex]{r}[yshift=-0.2ex]{\bot}[swap]{\iota_2}
&
\iota_1 \PrL_3
\arrow[hook, yshift=0.9ex]{r}
\arrow[leftarrow, yshift=-0.9ex]{r}[yshift=-0.2ex]{\bot}[swap]{\iota_3}
&
\cdots
\end{tikzcd} \]
among huge categories.
\end{remark}

\subsection{Outline}
\label{subsection.intro.outline}

This paper is organized as follows.

\begin{itemize}
\item[\Cref{sec.stable.2.cats.and.loc.seqs}:] We discuss stable 2-categories and 1- and 2-localization sequences.
    
\item[\Cref{sec.locating.k.theory.in.noncommutative.motives}:] We study motives and additive invariants, and we prove \Cref{mainthm.homs.in.Mot.of.X.are.K.theory.are.K.theory}.

\item[\Cref{sec.locating.secondary.k.theory.in.motives}:] We study 2- and (2,1)-motives and 2- and (2,1)-additive invariants, and we prove Theorems \ref{mainthm:universal.property.of.K.2.1} and \ref{mainthm:locating.K.2}.

\item[\Cref{app.some.enriched.category.theory}:] We study some aspects of enriched category theory.

\item[\Cref{app.some.higher.cat.theory}:] We perform consistency checks regarding the 2- and 3-categories of stable 1- and 2-categories. 

\end{itemize}

\subsection{Notation and conventions}
\label{subsection.intro.notn}

Our notation and conventions are laid out carefully in \Cref{subsec.foundations.conventions}. Although they are largely standard, here we highlight a few notable points.
\begin{itemize}

\item As indicated in \Cref{subsection.overview}, we take the ``implicit $\infty$'' convention (see \Cref{conv.implicit.infty}).

\item Broadly speaking, notation should be interpreted in the most enriched sense possible (see \Cref{convention.as.enriched.as.possible}). For instance, $\St$ denotes the 2-category of small stable categories; we write $\iota_1 \St$ to denote its underlying 1-category.

\item Although we consider co/limits in enriched and higher categories, we only ever consider those that are indexed over 1-categories (see \Cref{def:enriched.limits.and.colimits} and \Cref{rmk.only.conical}).

\end{itemize}

\subsection{Acknowledgments}
\label{subsection.intro.ack}

It is our pleasure to thank Ben Antieau, David Ayala, Alex Campbell, Andrea Gagna, David Gepner, Yonatan Harpaz, Rune Haugseng, Marc Hoyois, Grigory Kondyrev, Edoardo Lanari, Andrew Macpherson, and Nick Rozenblyum for many illuminating conversations related to the material in this paper. We also thank the anonymous referees for their careful reading and helpful feedback. Additionally, AMG gratefully acknowledges the support of a Zumberge grant from the University of Southern California and RS gratefully acknowledges the support of a Herchel Smith Fellowship from Harvard University.

\section{Stable 2-categories and localization sequences}
\label{sec.stable.2.cats.and.loc.seqs}

In this section, we introduce some of our fundamental objects of study: stable 2-categories (\Cref{subsec.stable.2.cats}), 1-localization sequences (\Cref{subsection.one.loc.seqs}), and 2-localization sequences (\Cref{subsection.two.loc.seqs}). We provide a number of examples, and record some basic results that we use in later sections.

\subsection{Stable 2-categories}
\label{subsec.stable.2.cats}

In this subsection, we introduce stable 2-categories, and we record some basic features of the 3-category $\St_2$ of small stable 2-categories (\Cref{lem.omnibus.in.section.one.point.one}).

\begin{definition}
\label{defn.stably.enriched.two.cat}
A \bit{stably-enriched 2-category} is a 2-category $\cX$ whose hom-categories are stable and whose composition bifunctors are biexact. Given stably-enriched 2-categories $\cX$ and $\cY$, we write
\[
\Fun^\twoex(\cX, \cY) \subseteq \Fun(\cX, \cY)
\]
for the full sub-2-category on those functors that are exact on hom-categories, which we refer to interchangeably as \bit{2-exact} or \bit{stably-enriched}. We write
\[
\Cat(\St) \subseteq \Cat_2
\]
for the 1-full sub-3-category (see \Cref{defn.k.full.sub.n.cat}) on the small stably-enriched 2-categories and the 2-exact functors among them.
\end{definition}

\begin{remark}
\label{rem.only.one.Cat.St.in.section.one.point.one}
By \Cref{obs.stably.enriched.is.unambiguous.on.iota.one}, there is a canonical equivalence
\[
\iota_1\Cat(\St) \simeq \Cat(\iota_1\St)
\]
of 1-categories (where that on the left arises from \Cref{defn.stably.enriched.two.cat} and that on the right is an instance of \Cref{notn.V.enr.cats}). Moreover, by \Cref{prop.only.one.Cat.St} this is compatible with $\iota_1 \Cat_2$-enrichments, i.e.\! it upgrades to an equivalence
\[
\Cat(\St) \simeq \Cat(\iota_1\St)^{\iota_1\Cat_2\enr}
\]
of 3-categories (where that on the right follows from the self-enrichment of $\Cat(\iota_1\St)$ (and makes use of \Cref{notn.change.enrichment})).
\end{remark}

\begin{definition}
\label{def:stable.2.category}
A \bit{stable 2-category} is a semiadditive stably-enriched 2-category.\footnote{We discuss semiadditivity for enriched categories in \Cref{subsec.semiadditive.V.categories}.} In other words, a stable 2-category is a stably-enriched 2-category that admits finite sums (see \Cref{obs.products.and.coproducts.in.stably.enriched.two.cat}). We write
\[
\St_2
\subseteq
\Cat(\St)
\]
for the full sub-3-category on the small stable 2-categories.
\end{definition}

\needspace{2\baselineskip}
\begin{example}
\label{examples.of.st.2}
	\begin{enumerate}
	\item[]
		\item\label{item.St.is.a.stable.two.cat} The motivating example of a stable 2-category is $\St$. 
		\item\label{item.StR.is.a.stable.two.cat} More generally, for any $\EE_2$-ring spectrum $R$, the 2-category $\St_R$ of small stable $R$-linear categories is stable.
		\item More generally, for any stably monoidal category $\cA \in \Alg(\iota_1 \St)$, the 2-category $\LMod_\cA(\St)$ of left $\cA$-modules is stable.
		\item\label{item.ShvCat.is.a.stable.two.cat} For any scheme $X$, the 2-category
	    \[ \QC^{(2)}(X) := \underset{\Spec(R) \ra X}{\lim} \St_R \]
	    of quasicoherent sheaves of stable categories over $X$ is stable.\footnote{In the case where $X$ is 1-affine (in the sense of \cite{gaitsgory2013sheaves}), there is an equivalence $\QC^{(2)}(X) \simeq \LMod_{\QC(X)}(\PrLSt)$.}
		\item\label{item.local.systems.of.stable.cats.is.a.stable.two.cat} 
		For any 2-category $\cC$ and any stable 2-category $\cX$, the functor 2-category $\Fun(\cC,\cX)$ is stable (\Cref{obs.consequences.of.Env.V}). In particular, constructible sheaves of stable categories over a stratified space (and in particular, local systems of stable categories over a space) assemble into a stable 2-category.
\end{enumerate}
\end{example}

\begin{lemma}
\label{lem.omnibus.in.section.one.point.one}
\begin{enumerate}
\item[]

\item\label{omnibus.in.one.point.one.Env.two}

The defining inclusion admits a left adjoint
\[
\begin{tikzcd}[column sep=1.5cm]
\Cat(\St)
\arrow[dashed, yshift=0.9ex]{r}{\Env_2}
\arrow[hookleftarrow,yshift=-0.9ex]{r}[yshift=-0.2ex]{\bot}
&
\St_2
\end{tikzcd}
~.
\]

\item\label{omnibus.in.one.point.one.symm.mon}

The bifunctor $\Fun^\twoex(-,-)$ defines a self-enrichment of both $\Cat(\St)$ and $\St_2$. Moreover, this is adjoint to a symmetric monoidal structure in $\Cat(\St)$, as well as one in $\St_2$ given by the formula
\[
\cX \stackrel{\St_2}{\otimes} \cY
:=
\Env_2 \left( \cX \stackrel{\Cat(\St)}{\otimes} \cY \right)
~.
\]
In particular, the left adjoint $\Env_2$ is symmetric monoidal.

\item\label{omnibus.in.one.point.one.cgsm}

The 3-categories $\Cat(\St)$ and $\St_2$ are both compactly-generatedly symmetric monoidal, and the left adjoint $\Env_2$ preserves compact objects.\footnote{We discuss compactly generated (and compactly-generatedly symmetric monoidal) enriched categories in  \Cref{subsection.cg.V.cats}.}

\item\label{omnibus.in.one.point.one.semiadd}

The 3-category $\St_2$ is semiadditive.

\end{enumerate}
\end{lemma}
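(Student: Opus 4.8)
The plan is to prove the four claims in order, bootstrapping each from the general enriched-categorical machinery developed in \Cref{app.some.enriched.category.theory} together with the already-established structure on $\St$ itself. Recall that $\St$ (as a 2-category) is semiadditive and closed symmetric monoidal with internal hom $\Fun^\ex(-,-)$, and that $\St = \Env(\Spectra^\fin\text{-enriched categories})$; moreover $\St$ is compactly generated. These facts about $\St$ serve as the ``$n=1$'' base case, and the lemma is the ``$n=2$'' step obtained by replacing the enriching category $\Spectra$ by $\iota_1\St$ (equivalently, by working with $\Cat(\St)$-enriched structures).

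\textbf{Step (1): the envelope $\Env_2$.} First I would invoke the general existence theorem for Cauchy/stable envelopes from \Cref{subsec.semiadditive.V.categories} (cf.\ \Cref{defn.Env.two}): for a presentably symmetric monoidal enriching category $\cV$ admitting the relevant idempotent-style completions, the inclusion of semiadditive (or appropriately ``stable'') $\cV$-categories into all $\cV$-categories admits a left adjoint built by a localization/completion procedure. Here $\cV = \iota_1\St$, and a $\Cat(\St)$-enriched 2-category is semiadditive precisely when it admits finite sums, so $\St_2 \subseteq \Cat(\St)$ is the inclusion of the semiadditive objects, and $\Env_2$ is the resulting ``add finite biproducts'' completion. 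Concretely, $\Env_2(\cX)$ can be modeled by formally adjoining finite direct sums of objects (matrices of hom-stable-categories), and one checks the universal property against any target $\cY \in \St_2$ by the standard argument that $2$-exact functors automatically preserve these formal biproducts. The only thing to verify carefully is that the hom-\emph{categories} of the completion remain stable and the composition bifunctors remain biexact — but finite sums of stable categories are stable and the matrix-composition formula is visibly biexact in each variable, so this is routine.

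\textbf{Step (2): self-enrichment and symmetric monoidal structures.} Here I would first argue that $\Fun^\twoex(\cX,\cY)$ is a well-behaved internal hom at the level of $\Cat(\St)$: this is an instance of the closed symmetric monoidal structure on $\cV$-enriched categories for $\cV = \iota_1\St$ (the enriched Day-convolution / tensor-of-enriched-categories package from \Cref{app.some.enriched.category.theory}), noting that the hom-category $\Fun^\twoex(\cX,\cY)$ is itself stable with biexact composition because limits/colimits in functor categories are computed pointwise and stability is closed under these. This gives the symmetric monoidal structure $\otimes^{\Cat(\St)}$ on $\Cat(\St)$ adjoint to $\Fun^\twoex$. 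Then, since $\Env_2$ is a left adjoint to a full subcategory inclusion and (by the construction in Step (1)) is compatible with the monoidal unit and with tensor products in the appropriate sense, the standard ``localization of a symmetric monoidal category'' argument (Bousfield-style, as in \Cref{app.some.higher.cat.theory}) promotes $\Env_2$ to a symmetric monoidal functor and defines $\cX \otimes^{\St_2} \cY := \Env_2(\cX \otimes^{\Cat(\St)} \cY)$; closedness of this monoidal structure, with the same internal hom $\Fun^\twoex$, follows because the subcategory $\St_2$ is closed under the internal hom (the target is already semiadditive). That last point — that $\St_2 \subseteq \Cat(\St)$ is an \emph{exponential ideal} — is the crux of Step (2) and deserves an explicit check.

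\textbf{Step (3): compact generation and (3)–(4).} For compact generation, I would identify an explicit set of compact generators: the representables, i.e.\ the stably-enriched 2-categories freely generated (over $\St$) by a single object or by $\uno_{\St_2}$ and its finite sums — equivalently, $\Env_2$ applied to the free $\iota_1\St$-enriched categories on finite ordinals — and argue compactness via the adjunction $\Env_2 \dashv \iota$ plus compactness of the corresponding objects in $\Cat(\St)$, which in turn reduces to compactness of $\Spectra^\fin$-style generators one categorical level down. That $\Env_2$ preserves compact objects is then immediate: its right adjoint (the inclusion) preserves filtered colimits, since these are computed in $\Cat(\St)$ hom-category-wise and filtered colimits of stable categories are stable. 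Compatibility of the monoidal structure with compact generation (``compactly-generatedly symmetric monoidal'') follows because $\uno_{\St_2}$ (\Cref{example.unit.in.St.two}) is compact and $\otimes^{\St_2}$ of compacts is compact — again by transporting along $\Env_2$ from $\Cat(\St)$. Finally, claim (4), semiadditivity of the 3-category $\St_2$, follows formally: $\St_2$ has a zero object (the terminal/initial stable 2-category), it has finite products (computed pointwise on hom-categories, which stays within $\St_2$), and the canonical map from coproduct to product is an equivalence because this already holds in $\Cat(\St)$ and $\Env_2$ is a left adjoint hence preserves coproducts while the inclusion preserves products; alternatively, invoke \Cref{obs.products.and.coproducts.in.stably.enriched.two.cat} applied to the hom-3-categories.

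\textbf{Main obstacle.} I expect the genuinely delicate point to be the bookkeeping in Step (2) — namely verifying that $\Fun^\twoex(\cX,\cY)$ really is the internal hom for the $\Env_2$-localized monoidal structure, equivalently that $\St_2$ is an exponential ideal in $\Cat(\St)$ compatibly with the $(3,2,1)$-categorical structure. This is where one must be careful that ``stable hom-categories with biexact composition'' is preserved under forming functor 2-categories and under the tensor–hom adjunction at the enriched level, and that all of this is coherent as a statement about $3$-categories rather than just underlying $1$-categories; the consistency check of \Cref{rem.only.one.Cat.St.in.section.one.point.one} / \Cref{prop.only.one.Cat.St} is exactly what licenses passing between the enriched and unenriched descriptions here, so the proof of (2) should be organized to reduce to that proposition.
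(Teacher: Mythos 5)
Your overall strategy matches the paper exactly: identify $\St_2 \subseteq \Cat(\St)$ as the semiadditive $\iota_1\St$-enriched $2$-categories, and then reduce all four claims to the general enriched-category machinery of \Cref{app.some.enriched.category.theory} applied with $\cV = \iota_1\St$. In particular, your identification of $\Env_2$ with the semiadditive-envelope functor (\Cref{prop.Env.V}), your recognition that the crux of part (2) is showing $\St_2 \subseteq \Cat(\St)$ is an exponential ideal together with the consistency check \Cref{prop.only.one.Cat.St}, and your use of the preserved-filtered-colimits argument for part (3) are all on the nose — the paper's proof is essentially a citation list to exactly these results.

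There is, however, a genuine gap in your argument for part (4). You claim ``the canonical map from coproduct to product is an equivalence because this already holds in $\Cat(\St)$,'' but $\Cat(\St)$ is \emph{not} semiadditive: the coproduct of two stably-enriched $2$-categories (a ``disjoint union,'' glued with zero hom-categories) and their product are genuinely different objects of $\Cat(\St)$. (Indeed, $\Env_2$ is a nontrivial localization precisely because $\Cat(\St)$ lacks biproducts.) The correct route — the one the paper takes via \Cref{obs.cat.V.semiadd.is.semiadd} — is to use \Cref{lem.semiadds.are.cmons} to identify $\St_2 \simeq \CMon(\Cat(\iota_1\St))$ and then appeal to the general fact that the category of commutative monoids in any category with finite products is semiadditive; one then passes to the $\iota_1\Cat_2$-enrichment via \Cref{obs.change.of.enr.takes.sadd.to.sadd}. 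Your alternative suggestion (invoking \Cref{obs.products.and.coproducts.in.stably.enriched.two.cat} ``applied to the hom-$3$-categories'') also does not quite work, as that observation concerns semiadditivity of the objects \emph{inside} a stably-enriched $2$-category, not semiadditivity of a $3$-category whose hom-$2$-categories happen to be stably-enriched — those are statements at different categorical levels.
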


\begin{proof}
All of these claims follow from general assertions proved in the appendices, applied in the case that $\cV = \iota_1 \St$: part \Cref{omnibus.in.one.point.one.Env.two} follows from \Cref{prop.Env.V} and \Cref{obs.consequences.of.Env.V} (using \Cref{obs.St.CGSM.and.get.iota.1.St.iota.1.Cat.enr});\footnote{In the notation of \Cref{def.env.v}, we have $\Env_2 := \Env^\oplus_{\iota_1 \St}$.} part \Cref{omnibus.in.one.point.one.symm.mon} follows from \Cref{prop.only.one.Cat.St} and \Cref{obs.consequences.of.Env.V}; part \Cref{omnibus.in.one.point.one.cgsm} follows from Observations \ref{obs.consequences.of.Env.V} and \ref{obs.Cat.V.semiadd.is.compactly.generated} (again using \Cref{obs.St.CGSM.and.get.iota.1.St.iota.1.Cat.enr}); and part \Cref{omnibus.in.one.point.one.semiadd} follows from Observations \ref{obs.St.is.semiadd}, \ref{obs.cat.V.semiadd.is.semiadd}, \ref{obs.iota.one.Cat.St.CGSM.and.get.iota.one.Cat.St.iota.one.cat.two.enr}, and \ref{obs.change.of.enr.takes.sadd.to.sadd}.
\end{proof}

\begin{definition}
\label{defn.Env.two}
The left adjoint $\Env_2$ of \Cref{lem.omnibus.in.section.one.point.one}\Cref{omnibus.in.one.point.one.Env.two} is called the \bit{stable envelope}.\footnote{It follows from parts \Cref{omnibus.in.one.point.one.Env.two}-\Cref{omnibus.in.one.point.one.cgsm} of \Cref{lem.omnibus.in.section.one.point.one} that $\Env_2$ defines a morphism in $\CAlg(\iota_1 \PrL_{3,\omega})$ (in fact in $\CAlg(\iota_1 \PrL_{\iota_1\Cat(\St),\omega})$).}
\end{definition}

\begin{remark}
\label{rmk.sigma.infty.n.plus}
For all $0 \leq n \leq 2$, the forgetful functor $\St_n \ra \Cat_n$ has a left adjoint. We propose that this should be denoted
\[
\Cat_n
\xra{\Sigma^{(\infty,n)}_+}
\St_n~,
\]
so that the base case is
\[
\Sigma^{(\infty,0)}_+
:
\Cat_0
:=
\Spaces
\xra{\Sigma^\infty_+}
\Spectra
=:
\St_0
~.
\]
For $n=1$, this is the functor
\[
\Sigma^{(\infty,1)}_+
:
\Cat_1
\xra{\cP_\Spectra^\fin}
\St_1
\]
of \Cref{obs.St.CGSM.and.get.iota.1.St.iota.1.Cat.enr}, while for $n=2$ this is the composite functor
\[
\Sigma^{(\infty,2)}_+
:
\Cat_2
\xra{\Cat\left( \Sigma^{(\infty,1)}_+ \right)}
\Cat(\St)
\xra{\Env_2}
\St_2
\]
in which the first functors arises from \Cref{obs.iota.one.Cat.St.CGSM.and.get.iota.one.Cat.St.iota.one.cat.two.enr}.\footnote{In analogy with $\Sigma^{(\infty,2)}_+$, the functor $\Sigma^{(\infty,1)}_+$ can also be written as the composite
\[
\Cat_1
\xra{\Cat \left( \Sigma^{(\infty,0)}_+ \right) }
\Cat(\St_0)
\xra{\Env_1}
\St_1~,
\]
where $\Env_1$ denotes the stable envelope functor.}
However, we do not use this notation here: we are proving foundational results that refer directly to the definitions, and so we suspect that it would be distracting.
\end{remark}

\subsection{1-localization sequences}
\label{subsection.one.loc.seqs}

In this subsection, we discuss 1-localization sequences in a stable 2-category. Notably, we prove the existence of a stable 2-category $\oneLoc$ that corepresents 1-localization sequences (\Cref{lem.one.Loc}).

\begin{definition}
\label{def.0.loc.seq}
In the interest of uniformity, we may use the term \bit{0-localization sequence} to refer to an exact sequence in a stable category (i.e.\! a sequence of morphisms equipped with a nullhomotopy of the composite that is simultaneously a fiber sequence and a cofiber sequence).
\end{definition}

\begin{definition}
	\label{def:localization.sequence}
A \bit{1-localization sequence} in a stable 2-category $\cX$ is a diagram
\begin{equation}
\label{prototypical.one.loc.seq}
		\begin{tikzcd}[column sep=1.5cm]
        A
        \arrow[hook, yshift=0.9ex]{r}{i}
        \arrow[leftarrow, yshift=-0.9ex]{r}[yshift=-0.2ex]{\bot}[swap]{R}
        &
        B
        \arrow[yshift=0.9ex]{r}{L}
        \arrow[hookleftarrow, yshift=-0.9ex]{r}[yshift=-0.2ex]{\bot}[swap]{j}
        &
        C
        \end{tikzcd}
\end{equation}
such that the following conditions hold.
\begin{enumerate}

\item\label{condition.ff}

The morphisms $\id_A \xra{\eta} Ri$ and $Lj \xra{\varepsilon} \id_C$ are equivalences.

\item\label{condition.compose.to.zero}

The object $Li \in \hom_\cX(A,C)$ is zero (or equivalently the object $Rj \in \hom_\cX(C,A)$ is zero).

\item\label{condition.middle.zero.loc}

The commutative square
		\[ \begin{tikzcd}
iR
\arrow{r}{\varepsilon}
\arrow{d}
&
\id_B
\arrow{d}{\eta}
\\
0
\arrow{r}
&
jL
\end{tikzcd} \]
resulting from condition \Cref{condition.compose.to.zero} is a 0-localization sequence in $\End_\cX(B)$.
\end{enumerate}
We refer to the 0-localization sequence $iR \ra \id_B \ra jL$ as the \bit{middle 0-localization sequence} associated to the given 1-localization sequence.
\end{definition}

\begin{remark}
\label{rmk.one.loc.seq.just.iL}
It is merely a condition for a pair of composable morphisms $A \xra{i} B \xra{L} C$ in a stable 2-category $\cX$ to extend to a 1-localization sequence.
This is in contrast to the case of 0-localization sequences, where one must provide the datum of a nullhomotopy.
\end{remark}

\begin{remark}
\label{rmk.oneloc.sequence.more.like.exact.seq.of.abgrps.than.zeroloc.seq.of.spectra}
Every morphism in a stable category extends to a 0-localization sequence, simply by taking its cofiber. By contrast, an arbitrary morphism in a stable 2-category $\cX$ need not extend to a 1-localization sequence (even if $\cX$ admits cofibers, e.g.\! in the case that $\cX = \St$). In this sense, 1-localization sequences in a stable 2-category are somewhat akin to exact sequences in an abelian category: given a morphism in an abelian category, taking its cokernel gives an exact sequence if and only if the original morphism was a monomorphism.
\end{remark}

\begin{remark}
A 1-localization sequence is both a fiber sequence and a cofiber sequence. In particular, in the case that $\cX = \St$, one may think of a 1-localization sequence \Cref{prototypical.one.loc.seq} as being specified by the ``extension data'' of the bimodule
\[
\cC^\op
\times
\cA
\xra{\Omega^{\infty-1} \hom_\cB ( j(-), i (=))}
\Spaces
~,
\]
which classifies the bifibration
\[
\cB
\xra{(R,L)}
\cA \times \cC
~.\footnote{Alternatively, we may recover $\cB$ as the compact objects in $\Ind(\cB)$, which participates in a recollement from which it may be recovered as the lax limit of the functor $\Ind(\cA) \xra{L_! R^*} \Ind(\cC)$.}
\]
\end{remark}

\begin{observation}
\label{obs.one.loc.in.St.iff.both.directions.fiber}
The data of a diagram \Cref{prototypical.one.loc.seq} in $\St$ define a 1-localization sequence if and only if both of the sequences $\cA \xhookrightarrow{i} \cB \xra{L} \cC$ and $\cA \xla{R} \cB \xhookleftarrow{j} \cC$ are fiber sequences in $\St$. Indeed, in this case the middle 0-localization sequence follows from the resulting joint conservativity of the functors $\cA \xla{R} \cB$ and $\cB \xra{L} \cC$.
\end{observation}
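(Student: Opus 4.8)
The plan is to unwind both sides into statements about the adjunctions $i \dashv R$ and $L \dashv j$ supplied with the diagram \Cref{prototypical.one.loc.seq}, and then translate between them using the triangle identities. I will use throughout that $\iota_1\St$ is pointed and that, for a functor $\cE \xra{F} \cF$ in $\St$, the pullback $\cE \times_\cF 0$ is the full subcategory $\ker(F) \subseteq \cE$ of objects sent to a zero object; so ``$\cA \xra{i} \cB \xra{L} \cC$ is a fiber sequence in $\St$'' means precisely that $Li \simeq 0$ and the resulting functor $\cA \to \ker(L)$ is an equivalence, and likewise ``$\cC \xra{j} \cB \xra{R} \cA$ is a fiber sequence'' means $Rj \simeq 0$ and $\cC \to \ker(R)$ is an equivalence.

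For the forward direction, suppose \Cref{prototypical.one.loc.seq} is a $1$-localization sequence. Condition \Cref{condition.ff} is exactly the statement that $i$ and $j$ are fully faithful, and condition \Cref{condition.compose.to.zero} supplies the nullhomotopies $Li \simeq 0$ and $Rj \simeq 0$; it remains to match essential images with kernels. We have $\image(i) \subseteq \ker(L)$ by \Cref{condition.compose.to.zero}, and conversely if $L(b) \simeq 0$ then the middle $0$-localization sequence $iR(b) \xra{\varepsilon_b} b \xra{\eta_b} jL(b)$ of \Cref{condition.middle.zero.loc} has zero right-hand term, so $\varepsilon_b$ is an equivalence and $b \in \image(i)$; thus $i$ induces an equivalence $\cA \xra{\sim} \ker(L)$, and the symmetric argument (using $iR(b) \simeq 0$ when $R(b)\simeq 0$) gives $j : \cC \xra{\sim} \ker(R)$. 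This already yields both fiber sequences; should one wish $L$ to moreover exhibit $\cC$ as the Verdier quotient $\cB/\cA$, this follows from the theory of stable recollements, since \Cref{condition.ff} makes $L$ a Bousfield localization, \Cref{condition.compose.to.zero} gives semiorthogonality $\hom_\cB(\ker L, \image j) \simeq \hom_\cC(L(-),=) \simeq 0$, and \Cref{condition.middle.zero.loc} places every object of $\cB$ in a cofiber sequence with outer terms in $\ker(L)$ and $\image(j)$ (and dually for $R$).

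For the converse, suppose both sequences are fiber sequences. Conditions \Cref{condition.ff} and \Cref{condition.compose.to.zero} are immediate from the dictionary above. The content is condition \Cref{condition.middle.zero.loc}, and the crux, as flagged in the statement, is joint conservativity of $(R,L) : \cB \to \cA \times \cC$: if $R(f)$ and $L(f)$ are equivalences then $\cofib(f) \in \ker(R) \cap \ker(L) = \image(j) \cap \image(i)$, and any such object is zero (writing it as $j(c)$, one gets $c \simeq Lj(c) \simeq 0$), so $f$ is an equivalence. Now $LiR \simeq 0$ makes the composite $jL(\varepsilon) : jLiR \to jL$ canonically null, inducing a natural map $\theta : \cofib(iR \xra{\varepsilon} \id_\cB) \to jL$ through which $\eta$ factors; I will show $\theta$ is an equivalence by applying $R$ and $L$. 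Applying $R$: the triangle identity $R(\varepsilon) \circ \eta_R = \id_R$ together with full faithfulness of $i$ (so $\eta$ is invertible) forces $R(\varepsilon)$ to be an equivalence, hence $R(\cofib(\varepsilon)) \simeq 0$, while $R(jL) \simeq (Rj)L \simeq 0$. Applying $L$: since $LiR \simeq 0$ we get $L(\cofib(\varepsilon)) \simeq \cofib(0 \to L) \simeq L$, matching $L(jL) \simeq (Lj)L \simeq L$, with $L(\theta)$ the identity under these identifications. So $\theta$ is an equivalence by joint conservativity, i.e. $iR \xra{\varepsilon} \id_\cB \xra{\eta} jL$ is a cofiber sequence in the stable category $\End_\cB$ and hence a $0$-localization sequence, which is condition \Cref{condition.middle.zero.loc}.

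The main obstacle is not conceptual but the $\infty$-categorical bookkeeping: verifying that the map $\theta$ produced from the nullhomotopy of $jL(\varepsilon)$ genuinely receives $\eta$, that $L(\theta)$ and the various comparison maps are honest identities rather than merely abstract equivalences, and — if the Verdier-quotient formulation is in force — citing the recollement fact cleanly. The joint-conservativity observation is the only load-bearing idea; everything else runs against the triangle identities and the foundational results already in place.
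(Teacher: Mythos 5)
Your proof is correct and follows the same key idea the paper flags (joint conservativity of $(R,L)$); the paper gives only that one-line hint, and you have correctly filled in the details, including the forward direction which the paper leaves tacit. A minor overclaim: $L(\theta)$ need not literally be ``the identity under these identifications'' --- it is $L(\eta) \circ L(q)^{-1}$ where $q:\id_\cB \to \cofib(\varepsilon)$, and both factors are equivalences (by the triangle identity $\varepsilon_L \circ L\eta \simeq \id_L$ and by $LiR \simeq 0$ respectively) --- but this is all you need, and the rest of the bookkeeping is sound.
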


\begin{remark}
\label{rmk.macrocosm.versions.of.middle.zero.loc}
Suppose that we are given a diagram \Cref{prototypical.one.loc.seq} in $\St$ such that conditions \Cref{condition.ff} and \Cref{condition.compose.to.zero} of \Cref{def:localization.sequence} both hold. Then, condition \Cref{condition.middle.zero.loc} is equivalent to any of the following conditions.
\begin{itemize}

\item[(3a)] The sequence $\cA \xhookrightarrow{i} \cB \xra{L} \cC$ is a cofiber sequence.

\item[(3a$'$)] The subcategory $j(\cC) \subseteq \cB$ is the right-orthogonal of the subcategory $i(\cA) \subseteq \cB$. 

\item[(3b)] The sequence $\cA \xla{R} \cB \xhookleftarrow{j} \cC$ is a cofiber sequence.

\item[(3b$'$)] The subcategory $i(\cA) \subseteq \cB$ is the left-orthogonal of the subcategory $j(\cC) \subseteq \cB$.

\end{itemize}
In particular, a 1-localization sequence \Cref{prototypical.one.loc.seq} determines and is determined by either the reflective localization $L \adj j$ (because we have $\cA \simeq \fib(L)$ and $iR \simeq \fib(\id_\cB \xra{\eta} jL)$) or the coreflective localization $i \adj R$ (because we have $\cC \simeq \fib(R)$ and $jL \simeq \cofib(iR \xra{\varepsilon} \id_\cB)$).
\end{remark}

\begin{example}
\label{examples.of.one.loc.sequences}
	\begin{enumerate}
	\item[]
	\item Let $\cC \in \St$ be a small stable category, and let us write $\zeroLoc(\cC) \subseteq \Fun([1]\times[1],\cC)$ for the stable category of 0-localization sequences in $\cC$. Then, the prototypical example of a 1-localization sequence is the diagram
	\[ \begin{tikzcd}[column sep=3cm]
	\cC
\arrow[hook, yshift=0.9ex]{r}{X \longmapsto (X \ra X \ra 0)}
\arrow[leftarrow, yshift=-0.9ex]{r}[yshift=-0.2ex]{\bot}[swap]{X \longmapsfrom (X \ra Y \ra Z)}
&
\zeroLoc(\cC)
\arrow[yshift=0.9ex]{r}{(X \ra Y \ra Z) \longmapsto Z}
\arrow[hookleftarrow, yshift=-0.9ex]{r}[yshift=-0.2ex]{\bot}[swap]{(0 \ra Z \ra Z) \longmapsfrom Z}
&
\cC
\end{tikzcd} \]
in $\St$ (where we have omitted the requisite nullhomotopies for typographical convenience).
	
		\item\label{ex.one.loc.seq.from.closed.open.decomp.of.qcqs.scheme}
		
		Let $X$ be a qcqs scheme. Given a closed-open decomposition $Z \hookrightarrow X \hookleftarrow U$, we obtain a 1-localization sequence
		\[
		\begin{tikzcd}[column sep=1.5cm]
        \QC_Z(X)
        \arrow[hook, yshift=0.9ex]{r}
        \arrow[leftarrow, yshift=-0.9ex]{r}[yshift=-0.2ex]{\bot}
        &
        \QC(X)
        \arrow[yshift=0.9ex]{r}
        \arrow[hookleftarrow, yshift=-0.9ex]{r}[yshift=-0.2ex]{\bot}
        &
        \QC(U)
        \end{tikzcd}
		\]
		among (large) stable categories, where $\QC_Z(X)\subseteq \QC(X)$ denotes the category of quasicoherent sheaves set-theoretically supported on $Z$.\footnote{On compact objects, this generally only restricts to a 0-split 1-localization sequence $\Perf_Z(X) \hookrightarrow \Perf(X) \ra \Perf(U)$ in $\St$ in the sense of \Cref{subsubsection.flavors.of.Kthy}.}
		\item Let $R$ be a ring. There is a 1-localization sequence
		\[
		\begin{tikzcd}[column sep=1.5cm]
        \mathbf{D}(R)
        \arrow[hook, yshift=0.9ex]{r}{}
        \arrow[leftarrow, yshift=-0.9ex]{r}[yshift=-0.2ex]{\bot}[swap]{}
        &
        \mathbf{K}(R)
        \arrow[yshift=0.9ex]{r}{}
        \arrow[hookleftarrow, yshift=-0.9ex]{r}[yshift=-0.2ex]{\bot}[swap]{}
        &
        \mathbf{A}(R)
        \end{tikzcd}
		\]
		among (large) stable categories: the derived category of $R$, the category of complexes over $R$ (a.k.a.\! the homotopy category of $R$), and the category of acyclic complexes over $R$.
		\item Let $p$ be a prime, and let $\cat{C}_p$ denote the cyclic group of order $p$.
		There is a 1-localization sequence
		\[
		\begin{tikzcd}[column sep=1.5cm]
        \spectra^{\cat{hC}_p}
        \arrow[hook,yshift=0.9ex]{r}
        \arrow[leftarrow, yshift=-0.9ex]{r}[yshift=-0.2ex]{\bot}[swap]{\fgt}
        &
        \spectra^{\cat{gC}_p}
        \arrow[yshift=0.9ex]{r}{\Phi^{\sC_p}}
        \arrow[hookleftarrow, yshift=-0.9ex]{r}[yshift=-0.2ex]{\bot}[swap]{}
        &
        \spectra
        \end{tikzcd}
		\]
		among (large) stable categories, where $\spectra^{\cat{hC}_p}:= \Fun(\sB\sC_p,\Sp)$ denotes the category of homotopy $\cat{C}_p$-spectra and $\spectra^{\cat{gC}_p}$ denotes the category of genuine $\cat{C}_p$-spectra.
		
		\item Given a closed-open decomposition $Z \hookrightarrow X \hookleftarrow U$ among qcqs schemes, there is a 1-localization sequence 
		\[
		\begin{tikzcd}[column sep=1.5cm]
        \SH(U)
        \arrow[hook,yshift=0.9ex]{r}
        \arrow[leftarrow, yshift=-0.9ex]{r}[yshift=-0.2ex]{\bot}
        &
        \SH(X)
        \arrow[yshift=0.9ex]{r}
        \arrow[hookleftarrow, yshift=-0.9ex]{r}[yshift=-0.2ex]{\bot}[swap]{}
        &
        \SH(Z)
        \end{tikzcd}
		\]
		among (large) motivic stable homotopy categories.
	\end{enumerate}
\end{example}

\begin{lemma}
\label{lem.one.Loc}
There is a small stable 2-category $\oneLoc \in \St_2$ that corepresents 1-localization sequences.  Moreover, $\oneLoc \in \St_2^\omega \subseteq \St_2$ is a compact object.
\end{lemma}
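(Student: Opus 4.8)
The plan is to build $\oneLoc$ by generators and relations: first produce the free stable 2-category on the ``bare'' diagram shape underlying \Cref{prototypical.one.loc.seq}, and then impose the conditions of \Cref{def:localization.sequence} by a short sequence of localizations internal to $\St_2$. Throughout I would use that $\St_2$ is compactly generated (\Cref{lem.omnibus.in.section.one.point.one}), so that $\iota_1\St_2$ admits filtered colimits and pushouts and its compact objects are closed under finite colimits, and that the forgetful functor $\St_2 \to \Cat_2$ has a left adjoint $\Sigma^{(\infty,2)}_+$ (\Cref{rmk.sigma.infty.n.plus}) which, being the composite $\Env_2 \circ \Cat(\Sigma^{(\infty,1)}_+)$, preserves compact objects by \Cref{lem.omnibus.in.section.one.point.one}\Cref{omnibus.in.one.point.one.cgsm}.

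For the free part, let $\cD := \Adj \sqcup_{[0]} \Adj \in \Cat_2$ be the 2-category obtained by gluing two copies of the walking adjunction $\Adj$ along a single object; concretely $\cD$ has objects $A, B, C$, generating $1$-morphisms $i\colon A\to B$, $R\colon B\to A$, $L\colon B\to C$, $j\colon C\to B$, the units and counits of the adjunctions $i\adj R$ and $L\adj j$, and the triangle identities. Put $\cF := \Sigma^{(\infty,2)}_+\cD \in \St_2$. Combining the defining adjunction of $\Sigma^{(\infty,2)}_+$ with the universal property of $\Adj$ as the walking adjunction, a $2$-exact functor $\cF \to \cX$ is the same datum as a diagram in $\cX$ of the shape \Cref{prototypical.one.loc.seq} --- a pair of objects together with adjunctions $A\adj B$ and $B\adj C$, and nothing more --- the universal such diagram being the one classified by $\id_\cF$.

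Next I would reformulate conditions \Cref{condition.ff}--\Cref{condition.middle.zero.loc} as invertibility of explicit $2$-morphisms, imposed in two stages. Condition \Cref{condition.ff} asks that the unit $\id_A \to Ri$ and the counit $Lj \to \id_C$ be equivalences, and condition \Cref{condition.compose.to.zero} asks that the canonical map $Li \to 0$ in the pointed stable hom-category $\hom_\cF(A,C)$ be an equivalence; inverting these three $2$-morphisms of $\cF$ yields $\cF \to \cF'$. In $\cF'$ we have $Li \simeq 0$, so --- using that the $1$-morphisms of a stable 2-category are exact --- $jLiR \simeq 0$ canonically, and this furnishes the commuting square of condition \Cref{condition.middle.zero.loc} inside $\hom_{\cF'}(B,B)$; that condition then says exactly that this square is bicartesian, equivalently that the induced map $\cofib(iR \to \id_B) \to jL$ is an equivalence, and I would set $\oneLoc$ to be $\cF'$ with this last $2$-morphism inverted. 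Each such inversion is the pushout in $\St_2$ of the relevant source against the map $\Sigma^{(\infty,2)}_+(\cW) \to \Sigma^{(\infty,2)}_+(\cW^{\mathrm{iso}})$, where $\cW$ is the walking $2$-morphism, $\cW^{\mathrm{iso}}$ the walking \emph{invertible} $2$-morphism, and the map out of $\cF$ (resp.\ $\cF'$) classifies the $2$-morphism to be inverted; such pushouts exist since $\iota_1\St_2$ is cocomplete. Unwinding the universal properties, a $2$-exact functor $\oneLoc \to \cX$ is precisely a diagram in $\cX$ of shape \Cref{prototypical.one.loc.seq} satisfying conditions \Cref{condition.ff}--\Cref{condition.middle.zero.loc}, i.e.\ a $1$-localization sequence; so $\oneLoc$ corepresents $1$-localization sequences. (Alternatively, by \Cref{rmk.one.loc.seq.just.iL} one could start from the free stable 2-category on a single composable pair of $1$-morphisms and localize, but that route also requires separately showing that the extension of such a pair to a $1$-localization sequence is essentially unique.)

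For compactness: $\Adj$ is a compact object of $\Cat_2$ and $[0]$ is compact, so $\cD$ is a compact object of $\Cat_2$, and hence $\cF = \Sigma^{(\infty,2)}_+\cD$ is compact in $\St_2$; the walking $2$-morphism $\cW$ and its invertible variant $\cW^{\mathrm{iso}}$ are likewise finite $2$-categories, so their images under $\Sigma^{(\infty,2)}_+$ are compact; therefore $\oneLoc$, being built from compact objects by finitely many pushouts in $\St_2$, lies in $\St_2^\omega$. I expect the real content to be the localization step --- specifically, verifying that ``inverting a finite set of $2$-morphisms'' is a genuine localization of $\St_2$ with exactly the expected universal property (no unintended relations are forced, and invertibility after localization is detected by ordinary invertibility of the image in the target). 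This is precisely the kind of input that the theory of presentable enriched $\infty$-categories developed in \Cref{app.some.enriched.category.theory} is designed to supply; granting it, the remainder of the argument is formal.
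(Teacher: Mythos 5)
Your construction agrees with the paper's in essence: start from the free (stable) 2-category on a composable pair of adjunctions, then invert the four 2-morphisms $\eta\colon \id_A\to Ri$, $\varepsilon\colon Lj\to \id_C$, $Li\to 0$, and $\cofib(iR\to\id_B)\to jL$ by pushing out along (the image under $\Sigma^{(\infty,2)}_+$ of) the inclusion of the walking 2-morphism into the walking equivalence, and deduce compactness because each generating object is compact and all left adjoints involved preserve compact objects and finite colimits. The only organizational difference is that you apply $\Sigma^{(\infty,2)}_+ = \Env_2\circ\Cat(\cP_\Spectra^\fin)$ in one shot and do every inversion in $\St_2$, whereas the paper inverts $\eta$ and $\varepsilon$ already in $\Cat_2$, then applies $\Cat(\cP_\Spectra^\fin)$, then inverts the two stable 2-morphisms in $\Cat(\St)$, and only then applies $\Env_2$; since these functors are colimit-preserving left adjoints, the two orderings produce the same object.

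One remark on the worry you flag at the end. You suspect the "real content" is to check that inverting a finite set of 2-morphisms is a genuine localization with the stated universal property. This is actually not a gap to fill: the universal property holds by definition once you realize the inversion as a pushout against $c_2\to c_1$ (or its stably-enriched version $\cP_\Spectra^\fin(c_2\to c_1)$, applied via $\Env_2$). Maps out of the pushout are, by the universal property of pushout together with Yoneda for $c_2$ and $c_1$, exactly the maps out of the source that carry the chosen 2-morphism to an equivalence, and there is nothing further to verify; the paper's proof relies on this implicitly. The theory in \Cref{app.some.enriched.category.theory} is indeed needed, but only for the input you already invoke — that $\iota_1\St_2$ is presentable, hence cocomplete, so that the pushouts exist, and that its compact objects are closed under finite colimits.
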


\begin{proof}
Consider the sequence
\begin{equation}
\label{sequence.of.adjns.from.Cat.two.to.St.two}
\begin{tikzcd}[column sep=1.5cm]
\Cat_2
\arrow[yshift=0.9ex]{r}{\Cat(\cP_\Spectra^\fin)}
\arrow[hookleftarrow, yshift=-0.9ex]{r}[yshift=-0.2ex]{\bot}[swap]{\fgt}
&
\Cat(\St)
\arrow[yshift=0.9ex]{r}{\Env_2}
\arrow[hookleftarrow, yshift=-0.9ex]{r}[yshift=-0.2ex]{\bot}[swap]{\fgt}
&
\St_2
\end{tikzcd}
\end{equation}
of adjunctions (the former following from \Cref{obs.iota.one.Cat.St.CGSM.and.get.iota.one.Cat.St.iota.one.cat.two.enr} and the latter following from \Cref{lem.omnibus.in.section.one.point.one}\Cref{omnibus.in.one.point.one.Env.two}). We construct the object $\oneLoc \in \St_2$ in steps, which we simply describe in words because the notation would be complicated and unenlightening.
\begin{enumerate}

\item We begin with the object
\[
\Adj^{\vee 2}
\in
\Cat_2
\]
that corepresents pairs of composable adjunctions. For simplicity, we use the notation of \Cref{prototypical.one.loc.seq} when referring to data in $\Adj^{\vee 2}$.

\item We invert the 2-morphisms $\id_\cA \xra{\eta} Ri$ and $Lj \xra{\varepsilon} \id_\cC$ in $\Adj^{\vee 2}$ (which are 2-endomorphisms of $\cA$ and $\cC$, respectively).

\item We apply $\Cat(\cP_\Spectra^\fin)$.

\item We invert the 2-morphism $Li \ra 0$ (a 1-morphism in $\hom(\cA,\cC)$).

\item We invert the 2-morphism $\cofib(iR \ra \id_\cB) \ra jL$ (a 2-endomorphism of $\cB$).

\item We apply $\Env_2$.

\end{enumerate}
The resulting stable 2-category is the desired object $\oneLoc$, which is compact as a consequence of the following observations.
\begin{enumerate}

\item Both left adjoints in diagram \Cref{sequence.of.adjns.from.Cat.two.to.St.two} preserve compact objects.

\item Inverting a 2-morphism in a 2-category amounts to taking a pushout along the morphism $c_2 \ra c_1$ (where we write $c_i \in \Cat_2$ for the walking $i$-cell for all $0 \leq i \leq 2$), while inverting a 2-morphism in a stably-enriched 2-category amounts to taking a pushout along the morphism $\cP_\Spectra^\fin(c_2 \ra c_1)$.

\item The objects $c_0,c_1,c_2,\Adj \in \Cat_2$ are compact. \qedhere

\end{enumerate}
\end{proof}

\begin{notation}
Given a stable 2-category $\cX \in \St_2$, we write
\[
\oneLoc(\cX)
:=
\Fun^\twoex(\oneLoc,\cX)
\in
\St_2
\]
for the stable 2-category of 1-localization sequences in $\cX$.
\end{notation}

\begin{observation}
\label{obs.describe.oneLoc.of.a.stable.two.cat}
It follows from the construction of $\oneLoc \in \St_2$ that for any stable 2-category $\cX \in \St_2$ we have a full inclusion
\[
\oneLoc(\cX)
:=
\Fun^\twoex(\oneLoc,\cX)
\subseteq
\Fun ( \Adj^{\vee 2}, \cX)
\]
among stable 2-categories (using the notation $\Adj^{\vee 2} \in \Cat_2$ introduced in the proof of \Cref{lem.one.Loc}).
\end{observation}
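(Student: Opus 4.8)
The plan is to apply the functor $\Fun^\twoex(-,\cX)$ to the six-step construction of $\oneLoc$ carried out in the proof of \Cref{lem.one.Loc}, and to check that the two steps that apply left adjoints contribute nothing new while each of the three steps that invert a $2$-morphism contributes a full sub-$2$-category inclusion. Adopting the notation of that proof, I would set $\cD_1 := \Adj^{\vee 2} \in \Cat_2$; let $\cD_2 \in \Cat_2$ be $\cD_1$ with $\eta$ and $\varepsilon$ inverted; set $\cD_3 := \Cat(\cP_\Spectra^\fin)(\cD_2) \in \Cat(\St)$; let $\cD_4 \in \Cat(\St)$ be $\cD_3$ with $Li \to 0$ inverted; let $\cD_5 \in \Cat(\St)$ be $\cD_4$ with $\cofib(iR \to \id_\cB) \to jL$ inverted; so that $\oneLoc = \Env_2(\cD_5)$.

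First I would dispose of the two left-adjoint steps. Since $\Env_2 \dashv (\St_2 \hookrightarrow \Cat(\St))$ is a symmetric monoidal adjunction (\Cref{lem.omnibus.in.section.one.point.one}\Cref{omnibus.in.one.point.one.symm.mon}), for $\cX \in \St_2$ the $2$-category underlying $\Fun^\twoex(\oneLoc,\cX)$ is that underlying $\Fun^\twoex(\cD_5,\cX)$; and since $\Cat(\cP_\Spectra^\fin) \dashv \fgt$ (\Cref{obs.iota.one.Cat.St.CGSM.and.get.iota.one.Cat.St.iota.one.cat.two.enr}), the $2$-category underlying $\Fun^\twoex(\cD_3,\cX)$ is $\Fun(\cD_2,\cX)$. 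Then I would handle the three localization steps, using that --- as recalled in the proof of \Cref{lem.one.Loc} --- inverting a $2$-morphism is a pushout along $c_2 \to c_1$ (or along $\cP_\Spectra^\fin(c_2 \to c_1)$ in the stably-enriched setting). As internal homs, $\Fun(-,\cX)$ and $\Fun^\twoex(-,\cX)$ carry these pushouts to pullbacks; and $\Fun(c_1,\cX) \to \Fun(c_2,\cX)$ is the inclusion of the full sub-$2$-category of the $2$-category $\Fun(c_2,\cX)$ of $2$-cells of $\cX$ spanned by the invertible ones --- full on hom-categories, hence a fortiori at all higher levels. A pullback of such a full inclusion is again one, so each inclusion in
\[
\Fun^\twoex(\cD_5,\cX) \subseteq \Fun^\twoex(\cD_4,\cX) \subseteq \Fun^\twoex(\cD_3,\cX) \qquad \text{and} \qquad \Fun(\cD_2,\cX) \subseteq \Fun(\cD_1,\cX)
\]
is a full sub-$2$-category inclusion; composing these with the two identifications above yields the asserted full inclusion $\oneLoc(\cX) = \Fun^\twoex(\oneLoc,\cX) \subseteq \Fun(\Adj^{\vee 2},\cX)$. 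That the source is a stable $2$-category is \Cref{def:stable.2.category} together with \Cref{lem.omnibus.in.section.one.point.one}, and that the target is one is \Cref{examples.of.st.2}\Cref{item.local.systems.of.stable.cats.is.a.stable.two.cat}, whose proof moreover shows its stable structure is pointwise, so the notation $\Fun(\Adj^{\vee 2},\cX)$ is unambiguous.

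The main obstacle will be the formal bookkeeping of the second paragraph rather than any genuine difficulty: one must pin down the walking $2$-cell $c_2$ and the collapse $c_2 \to c_1$, verify that the inclusion of invertible $2$-cells into all $2$-cells of $\cX$ is genuinely full at every categorical level, and verify that the relevant internal-hom $2$-functors send the pushouts occurring in the construction (in $\Cat_2$ and in $\Cat(\St)$) to pullbacks. These depend on the precise foundational conventions established in \Cref{app.some.higher.cat.theory}. A pleasant byproduct of the argument is that it exhibits, one step at a time, exactly which of conditions \Cref{condition.ff}, \Cref{condition.compose.to.zero}, and \Cref{condition.middle.zero.loc} of \Cref{def:localization.sequence} carve $\oneLoc(\cX)$ out of $\Fun(\Adj^{\vee 2},\cX)$.
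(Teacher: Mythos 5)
Your proposal is correct and is precisely the argument that the paper leaves implicit when it says the observation ``follows from the construction.'' Indeed the three ingredients you identify --- (i) that $\Env_2 \dashv \fgt$ and $\Cat(\cP_\Spectra^\fin) \dashv \fgt$ are adjunctions of $3$-categories (so give equivalences $\Fun^\twoex(\Env_2(\cD_5),\cX)\simeq\Fun^\twoex(\cD_5,\cX)$ and $\Fun^\twoex(\cD_3,\cX)\simeq\Fun(\cD_2,\cX)$, by \Cref{lem.omnibus.in.section.one.point.one}\Cref{omnibus.in.one.point.one.Env.two} and \Cref{obs.iota.one.Cat.St.CGSM.and.get.iota.one.Cat.St.iota.one.cat.two.enr}); (ii) that each inversion step is a pushout along $c_2 \to c_1$ (or its image under $\cP_\Spectra^\fin$), so the internal hom $\Fun^\twoex(-,\cX)$ or $\Fun(-,\cX)$ carries it to a pullback; and (iii) that $\Fun(c_1,\cX)\to\Fun(c_2,\cX)$ is a full inclusion (with essential image the invertible $2$-cells), and full inclusions are stable under pullback --- are exactly what produces the chain
\[
\Fun^\twoex(\oneLoc,\cX)\simeq\Fun^\twoex(\cD_5,\cX)\subseteq\Fun^\twoex(\cD_4,\cX)\subseteq\Fun^\twoex(\cD_3,\cX)\simeq\Fun(\cD_2,\cX)\subseteq\Fun(\Adj^{\vee 2},\cX).
\]
Your closing remark that the three strata of this filtration realize conditions \Cref{condition.ff}, \Cref{condition.compose.to.zero}, \Cref{condition.middle.zero.loc} of \Cref{def:localization.sequence} is also accurate and is exactly what makes $\oneLoc$ corepresent $1$-localization sequences. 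The one sentence I would tighten is ``full on hom-categories, hence a fortiori at all higher levels'': for a $2$-category there are no higher levels, and ``full'' here precisely means essentially injective on objects together with an equivalence on hom-$(1$-$)$categories, both of which your verification supplies.
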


\begin{remark}
As a particular consequence of \Cref{obs.describe.oneLoc.of.a.stable.two.cat}, a 1-morphism in $\oneLoc(\cX)$ is given by a diagram
\[
\begin{tikzcd}[column sep=1.5cm, row sep = 1.5cm]
A
\arrow[hook, yshift=0.9ex]{r}{i}
\arrow[leftarrow, yshift=-0.9ex]{r}[yshift=-0.2ex]{\bot}[swap]{R}
\arrow{d}[swap]{F}
&
B
\arrow[yshift=0.9ex]{r}{L}
\arrow[hookleftarrow, yshift=-0.9ex]{r}[yshift=-0.2ex]{\bot}[swap]{j}
\arrow{d}[swap]{G}
&
C
\arrow{d}{H}
\\
A'
\arrow[hook, yshift=0.9ex]{r}{i'}
\arrow[leftarrow, yshift=-0.9ex]{r}[yshift=-0.2ex]{\bot}[swap]{R'}
&
B'
\arrow[yshift=0.9ex]{r}{L'}
\arrow[hookleftarrow, yshift=-0.9ex]{r}[yshift=-0.2ex]{\bot}[swap]{j'}
&
C'
\end{tikzcd}
\]
in $\cX$ that commutes upon omitting all left adjoints and commutes upon omitting all right adjoints. Note that the 1-morphism $G$ will necessarily interleave the respective middle 0-localization sequences: the functors
\[
\End_\cX(B)
:=
\hom_\cX(B,B)
\xra{G \circ -}
\hom_\cX(B,B')
\xla{- \circ G}
\hom_\cX(B',B')
=:
\End_\cX(B')
\]
respectively carry the two middle 0-localization sequences
\[
iR
\longra
\id_B
\longra
jL
\qquad
\text{and}
\qquad
i'R'
\longra
\id_{B'}
\longra
j'L'
\]
to the single 0-localization sequence
\[
(GiR \longra G \longra GjL)
\simeq
(i' F R \longra G \longra j' H L)
\simeq
(i'R'G \longra G \longra j'L'G)
~.
\]
\end{remark}

\begin{remark}
Using the adjoint functor theorem (and the fact that $\iota_1 \St_2$ is presentable by \Cref{lem.omnibus.in.section.one.point.one}\Cref{omnibus.in.one.point.one.cgsm}), one could prove the existence and compactness of an object $\oneLoc' \in \iota_1 \St_2$ such that
\[
\hom_{\iota_1 \St_2}(\oneLoc',\cX)
\simeq
\iota_0 \Fun^\twoex(\oneLoc',\cX)
\]
is the space of 1-localization sequences in $\cX$. However, this would not enable an explicit description of the 2-categorical structure of $\Fun^\twoex(\oneLoc',\cX)$ such as that given in \Cref{obs.describe.oneLoc.of.a.stable.two.cat}.
\end{remark}

\begin{observation}
\label{one.loc.sequences.in.PrLSt.and.PrRSt}
Via the equivalence $\PrLSt \simeq (\PrRSt)^\onetwoop$, the data of a 1-localization sequence
\[
		\begin{tikzcd}[column sep=1.5cm]
        \cA
        \arrow[hook, yshift=0.9ex]{r}{i}
        \arrow[leftarrow, yshift=-0.9ex]{r}[yshift=-0.2ex]{\bot}[swap]{R}
        &
        \cB
        \arrow[yshift=0.9ex]{r}{L}
        \arrow[hookleftarrow, yshift=-0.9ex]{r}[yshift=-0.2ex]{\bot}[swap]{j}
        &
        \cC
        \end{tikzcd}
\]
in $\PrLSt$ are equivalent to those of the corresponding 1-localization sequence
\[
		\begin{tikzcd}[column sep=1.5cm]
        \cC
        \arrow[hook, yshift=0.9ex]{r}{L^R}
        \arrow[leftarrow, yshift=-0.9ex]{r}[yshift=-0.2ex]{\bot}[swap]{j^R}
        &
        \cB
        \arrow[yshift=0.9ex]{r}{i^R}
        \arrow[hookleftarrow, yshift=-0.9ex]{r}[yshift=-0.2ex]{\bot}[swap]{R^R}
        &
        \cA
        \end{tikzcd}
\]
in $\PrRSt$.
\end{observation}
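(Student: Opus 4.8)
The plan is to deduce this from the principle that the property of being a $1$-localization sequence is preserved under any equivalence of (large) stable $2$-categories, applied to the equivalence $\Psi \colon \PrLSt \xrightarrow{\sim} (\PrRSt)^\onetwoop$ invoked in the statement. The first step is to observe that $\Psi$ is an equivalence of \emph{stable} $2$-categories. Indeed, $\PrLSt$ and $\PrRSt$ are both stably-enriched and semiadditive (their hom-categories $\Fun^L$ and $\Fun^R$ are stable and composition is biexact), and the operation $(-)^\onetwoop$ carries stable $2$-categories to stable $2$-categories: reversing $1$- and $2$-morphisms has the effect of replacing each hom-category by its opposite and swapping source with target, which preserves stability and biexactness. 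Moreover $\Psi$ --- which is the identity on objects, passage to right adjoints on morphisms, and passage to mates on $2$-morphisms --- restricts on each hom-category to the exact equivalence $\Fun^L(\cC, \cD) \xra{\sim} \Fun^R(\cD, \cC)^\op = \hom_{(\PrRSt)^\onetwoop}(\cC, \cD)$.

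The second step is to note that each clause of \Cref{def:localization.sequence} is invariant under equivalences of stable $2$-categories: clauses \Cref{condition.ff} and \Cref{condition.compose.to.zero} refer only to adjunctions, composites, and whether prescribed $2$-morphisms are equivalences or prescribed objects are zero, while clause \Cref{condition.middle.zero.loc} refers only to $0$-localization sequences in hom-categories. Hence $\Psi$ carries the given $1$-localization sequence in $\PrLSt$ to a $1$-localization sequence in $(\PrRSt)^\onetwoop$; in particular all of the requisite adjoint data automatically exist and lie in $\PrRSt$, which spares us from invoking the adjoint functor theorem directly.

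The final step, which I expect to be the only real work --- though it is purely bookkeeping --- is to unwind the $\onetwoop$ and match the result with the displayed diagram. Since $(-)^\onetwoop$ reverses both $1$- and $2$-morphisms, the image sequence, read inside $\PrRSt$, runs $\cC \hookrightarrow \cB \to \cA$ with the two morphisms $\Psi(L)$ and $\Psi(i)$ --- i.e.\ the functors $L^R$ and $i^R$ --- equipped with right adjoints $(L^R)^R = j^R$ and $(i^R)^R = R^R$, which is precisely the displayed diagram. The remaining identifications follow mechanically from the description of $\Psi$: the fully-faithfulness and vanishing conditions for the target follow from those for the source (e.g.\ $i$ fully faithful forces $R R^R \simeq \id_\cA$, and $R j \simeq 0$ is exactly the ``equivalently $Rj$'' half of \Cref{condition.compose.to.zero} for the source), and the middle $0$-localization sequence $iR \to \id_\cB \to jL$ of the source is carried by the exact equivalence $\Psi\vert_{\End_{\PrLSt}(\cB)}$ to the $0$-localization sequence $jj^R \to \id_\cB \to R^R R$, which is the middle $0$-localization sequence of the target. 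One could alternatively route the argument through the corepresenting object $\oneLoc$ of \Cref{lem.one.Loc}, but as $\oneLoc$ is small while $\PrLSt$ is large this does not obviously help.
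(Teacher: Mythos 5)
The proposal is correct. The paper states this as an Observation with no proof, treating it as self-evident, and your argument is the natural way to unwind it: $\Psi$ is an equivalence of (large) stably-enriched semiadditive $2$-categories, the defining conditions of a $1$-localization sequence are manifestly invariant under such equivalences (clauses \Cref{condition.ff} and \Cref{condition.compose.to.zero} are $2$-categorical, and clause \Cref{condition.middle.zero.loc} transports because $\Psi$ restricts to an exact equivalence $\End_{\PrLSt}(\cB) \xra{\sim} \End_{\PrRSt}(\cB)^\op$), and your bookkeeping --- using $i^R = R$, $L^R = j$ to recognize the image as the displayed sequence and to compute the transported middle sequence as $jj^R \to \id_\cB \to R^R R$ --- is accurate, including the correct handedness of adjunctions under $\onetwoop$.
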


\begin{observation}
\label{check.one.loc.in.larger.stable.two.cat}
If $\cX \subseteq \cY$ is the inclusion of a 0-full or 1-full sub-stable-2-category, then a sequence of adjunctions in $\cX$ defines a 1-localization sequence if and only if it does so when considered in $\cY$.
\end{observation}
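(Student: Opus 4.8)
The plan is to reduce the statement to two features shared by both cases: the inclusion $\iota\colon \cX \hookrightarrow \cY$ is $2$-exact (a morphism in $\St_2$), and it is fully faithful on hom-categories. Indeed, if $\cX$ is $0$-full in $\cY$ then $\hom_\cX(A,B) = \hom_\cY(A,B)$ for all $A,B\in\cX$, whereas if $\cX$ is $1$-full then $\hom_\cX(A,B)$ is a full (stable) subcategory of $\hom_\cY(A,B)$; either way the functor $\hom_\cX(A,B)\to\hom_\cY(A,B)$ is fully faithful, and since $\iota\in\St_2$ it is exact. Being a $2$-functor, $\iota$ preserves adjunctions, so it carries a diagram \Cref{prototypical.one.loc.seq} of adjunctions in $\cX$, together with its unit and counit $2$-morphisms $\eta$ and $\varepsilon$, to the analogous diagram in $\cY$; the content to verify is then that each of conditions \Cref{condition.ff}--\Cref{condition.middle.zero.loc} of \Cref{def:localization.sequence} is both preserved and reflected by $\iota$.

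First I would dispatch conditions \Cref{condition.ff} and \Cref{condition.compose.to.zero}. Any functor preserves equivalences and any fully faithful functor reflects them, so $\eta$ and $\varepsilon$ are equivalences in $\cX$ iff their images are in $\cY$. Likewise an exact functor preserves zero objects and a fully faithful functor reflects them, so $Li\in\hom_\cX(A,C)$ is zero iff $\iota(Li)=\iota(L)\iota(i)\in\hom_\cY(A,C)$ is zero. I would also note that the square of condition \Cref{condition.middle.zero.loc} is built from $\varepsilon$, $\eta$, the zero object, and composition -- all preserved by the $2$-functor $\iota$ -- so $\iota$ sends the square associated to a diagram in $\cX$ to the square associated to its image in $\cY$.

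Next, for condition \Cref{condition.middle.zero.loc} itself, I would use the following: an exact functor between stable categories preserves bicartesian squares, and an exact fully faithful functor reflects them. For reflection one argues directly: given the square in $\hom_\cX(B,B)$ whose image is bicartesian in $\hom_\cY(B,B)$, form its pushout in $\hom_\cX(B,B)$; exactness identifies the image of this pushout with the pushout of the image, which is the fourth vertex, and full faithfulness then promotes the comparison map to an equivalence in $\hom_\cX(B,B)$. Applied to $\End_\cX(B)\hookrightarrow\End_\cY(B)$, this shows the middle square is a $0$-localization sequence in $\End_\cX(B)$ iff its image is one in $\End_\cY(B)$. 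Assembling the three conditions gives the claim. The one point that warrants care is precisely this last step, since it requires invoking both properties of $\iota$ at once -- exactness to transport the constructed (co)limit and full faithfulness to recognize the comparison as an equivalence -- so I would make sure the write-up records that $0$-fullness and $1$-fullness each supply both; everything else is a formal consequence of $2$-exactness.
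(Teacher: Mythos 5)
Your argument is correct, and since the paper states this as an \emph{Observation} without supplying any proof, you are filling in exactly the implicit justification. The reduction to the two facts that the inclusion is hom-wise exact (because it is a morphism in $\St_2$) and hom-wise fully faithful (which both $0$-fullness and $1$-fullness guarantee) is the right organizing principle, and the three conditions of \Cref{def:localization.sequence} are then handled correctly: equivalences and zero objects are preserved by exact functors and reflected by fully faithful ones, and for the middle $0$-localization sequence your pushout-comparison argument shows that an exact fully faithful functor both preserves and reflects bicartesian squares. One tacit point worth making explicit in a write-up is that a $2$-functor preserves the unit and counit $2$-cells of an adjunction (so the square in condition \Cref{condition.middle.zero.loc} associated to the diagram in $\cX$ really does map to the square associated to its image in $\cY$), but you do flag that $\iota$ preserves adjunctions, so this is implicitly covered.
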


\subsection{2-localization sequences}
\label{subsection.two.loc.seqs}

In this subsection, we discuss 2-localization sequences among stable 2-categories. 

\begin{definition}
	\label{def:2.localization.sequence}
A \bit{2-localization sequence} is a diagram
\begin{equation}
\label{prototypical.two.loc.seq}
		\begin{tikzcd}[column sep=1.5cm]
        \cX
        \arrow[hook, yshift=0.9ex]{r}{i}
        \arrow[leftarrow, yshift=-0.9ex]{r}[yshift=-0.2ex]{\bot}[swap]{R}
        &
        \cY
        \arrow[yshift=0.9ex]{r}{L}
        \arrow[hookleftarrow, yshift=-0.9ex]{r}[yshift=-0.2ex]{\bot}[swap]{j}
        &
        \cZ
        \end{tikzcd}
\end{equation}
in $\St_2$ satisfying the following conditions.
\begin{enumerate}

\item\label{two.loc.seq.ff}

The morphisms $\id_\cX \xra{\eta} Ri$ and $Lj \xra{\varepsilon} \id_\cZ$ are equivalences.

\item\label{two.loc.seq.compose.to.zero}

the object $Li \in \hom_{\St_2}(\cX,\cZ)$ is zero (or equivalently the object $Rj \in \hom_{\St_2}(\cZ,\cX)$ is zero).

\item\label{two.loc.seq.middle.one.loc.seq}

The commutative square
		\[ \begin{tikzcd}
iR
\arrow{r}{\varepsilon}
\arrow{d}
&
\id_\cY
\arrow{d}{\eta}
\\
0
\arrow{r}
&
jL
\end{tikzcd} \]
resulting from condition \Cref{two.loc.seq.compose.to.zero} is a 1-localization sequence in $\End_{\St_2}(\cY)$.
\end{enumerate}
We refer to the 1-localization sequence $iR \to \id_\cY \to jL$ as the \bit{middle 1-localization sequence} associated to the given 2-localization sequence.
\end{definition}

\begin{remark}
	As with 1-localization sequences, it is merely a condition for a pair of composable morphisms $\cX \xra{i} \cY \xra{L} \cZ$ in $\St_2$ to extend to a 2-localization sequence.
\end{remark}

\begin{remark}
\label{rmk.characterize.two.loc.seq}
A 2-localization sequence \Cref{prototypical.two.loc.seq} determines and is determined by the reflective localization $L \adj j$ satisfying the condition that the unit morphism $\id_\cY \xra{\eta} jL$ admits a fully faithful right adjoint. In turn, this is equivalent to requiring that for all $A,B \in \cY$, the functor
\[
\hom_\cY(A,B)
\longra
\hom_\cY(A,jL B)
\simeq
\hom_\cZ ( L A , L B)
\]
admits a fully faithful right adjoint.\footnote{Given a more robust definition of stable 2-category, one should be able to obtain $\cX$ simply as the kernel of $\cY \xra{L} \cZ$, with the right adjoint $R$ given by $R := \fib(\id_\cY \xra{\eta} jL)$. But even in the present context, one can obtain $\cX$ as the univalent-completion of the flagged $\St$-enriched category (i.e.\! categorical $\St$-algebra in the sense of \cite{GH}) with the same space of objects as $\cY$ and with $\hom_\cX(A,B) := \fib(\hom_\cY(A,B) \ra \hom_\cZ(LA,LB))$; now, the right adjoint $R$ arises from the corresponding functors in (what will be) the middle 1-localization sequence.} Of course, dual remarks apply to the coreflective localization $i \adj R$.
\end{remark}

\begin{example}
\label{ex.of.two.locs}
\begin{enumerate}
\item[]
\item\label{ex.two.loc.seq.involving.one.loc}
Let $\cX \in \St_2$ be a small stable 2-category, and let us write $\oneLoc(\cX) \subseteq \Fun([2],\cX)$ for the stable 2-category of 1-localization sequences in $\cX$.  Then, the prototypical example of a 2-localization sequence is the diagram
\begin{equation}
\label{x.to.1Loc.x.to.x.is.2.loc}
\begin{tikzcd}[column sep=3cm]
	\cX
\arrow[hook, yshift=0.9ex]{r}{A \longmapsto (A \adjarr A \adjarr 0)}
\arrow[leftarrow, yshift=-0.9ex]{r}[yshift=-0.2ex]{\bot}[swap]{A \longmapsfrom (A \adjarr B \adjarr C)}
&
\oneLoc(\cX)
\arrow[yshift=0.9ex]{r}{(A \adjarr B \adjarr C) \longmapsto C}
\arrow[hookleftarrow, yshift=-0.9ex]{r}[yshift=-0.2ex]{\bot}[swap]{(0 \adjarr C \adjarr C) \longmapsfrom C}
&
\cX
\end{tikzcd}
\end{equation}
in $\St_2$ (recall \Cref{obs.describe.oneLoc.of.a.stable.two.cat}).

\item Let $X$ be a qcqs scheme. Using \cite[Theorem 2.1.1]{gaitsgory2013sheaves}, we can build on \Cref{examples.of.one.loc.sequences}\Cref{ex.one.loc.seq.from.closed.open.decomp.of.qcqs.scheme} to obtain a 2-localization sequence
\[
		\begin{tikzcd}[column sep=2.5cm]
        \QCtwo_Z(X)
        \arrow[hook, yshift=0.9ex]{r}
        \arrow[leftarrow, yshift=-0.9ex]{r}[yshift=-0.2ex]{\bot}[swap]{-\otimes^L_{\QC(X)} \QC_Z(X)}
        &
        \QCtwo(X)
        \arrow[yshift=0.9ex]{r}{- \otimes^L_{\QC(X)} \QC(U)}
        \arrow[hookleftarrow, yshift=-0.9ex]{r}[yshift=-0.2ex]{\bot}
        &
        \QCtwo(U)
        \end{tikzcd}
		\]
among (large) stable 2-categories, where we write $\otimes^L$ for the tensor product in $\iota_1\PrL$ and we define
\[
\QCtwo_Z(X) := \Mod_{\QC_Z(X)}(\iota_1\PrLSt) \subseteq \Mod_{\QC(X)}(\iota_1\PrLSt) \simeq \QC^{(2)}(X)
\]
to be the 2-category of quasicoherent sheaves of stable categories set-theoretically supported on $Z$.
\end{enumerate}
\end{example}

\begin{remark}
In \Cref{ex.of.two.locs}\Cref{ex.two.loc.seq.involving.one.loc}, it is necessary to restrict to the 1-localization sequences in $\cX$ in order to obtain a 2-localization sequence. By contrast, e.g.\! taking $\cX = \St$, the diagram
\[
\begin{tikzcd}[column sep=3cm]
	\St
\arrow[hook, yshift=0.9ex]{r}{\cA \longmapsto (\cA \ra \cA)}
\arrow[leftarrow, yshift=-0.9ex]{r}[yshift=-0.2ex]{\bot}[swap]{\cA \longmapsfrom (\cA \ra \cB)}
&
\Fun([1],\St)
\arrow[yshift=0.9ex]{r}{(\cA \ra \cB) \longmapsto \cB/\cA}
\arrow[hookleftarrow, yshift=-0.9ex]{r}[yshift=-0.2ex]{\bot}[swap]{(0 \ra \cC) \longmapsfrom \cC}
&
\St
\end{tikzcd} 
\]
is \textit{not} a 2-localization sequence, since it fails to satisfy condition \Cref{two.loc.seq.middle.one.loc.seq} of \Cref{def:2.localization.sequence}: given an object $(\cA \ra \cB) \in \Fun([1],\St)$, the commutative square
\[ \begin{tikzcd}
(\cA \ra \cA)
\arrow{r}
\arrow{d}
&
(\cA \ra \cB)
\arrow{d}
\\
(0 \ra 0)
\arrow{r}
&
(\cB \ra \cB/\cA)
\end{tikzcd} \]
is not generally a 1-localization sequence in $\Fun([1],\St)$, since the sequence $\cA \ra \cB \ra \cB/\cA$ is not generally a 1-localization sequence in $\St$ (recall \Cref{rmk.oneloc.sequence.more.like.exact.seq.of.abgrps.than.zeroloc.seq.of.spectra}).
\end{remark}

\begin{observation}
\label{obs.two.loc.seqs.preserve.cpctness}
A 2-localization sequence \Cref{prototypical.two.loc.seq} determines a retraction
\[
\cX \times \cZ
\xra{i \oplus j}
\cY
\xra{(R,L)}
\cX \times \cZ
~.
\]
In particular, using \Cref{lem.omnibus.in.section.one.point.one}\Cref{omnibus.in.one.point.one.semiadd} and the fact that finite coproducts of compact objects are compact, we see that if $\cX$ and $\cZ$ are compact in $\St_2$ then so is $\cY$ (note \Cref{lem.compact.in.prbl.V.cat.detected.in.underlying}).
\end{observation}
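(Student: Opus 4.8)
The plan is to handle the two assertions in turn: first produce the retraction by directly unwinding \Cref{def:2.localization.sequence}, and then deduce compactness of $\cY$ as a formal consequence. For the retraction, I would simply read off the needed identities from the defining conditions. Condition~\Cref{two.loc.seq.ff} supplies equivalences $\id_\cX \xra{\sim} Ri$ and $Lj \xra{\sim} \id_\cZ$, while condition~\Cref{two.loc.seq.compose.to.zero} supplies $Li \simeq 0$ and $Rj \simeq 0$. Since $R$ and $L$ are $2$-exact, they preserve finite coproducts, so (identifying $\cX \times \cZ \simeq \cX \oplus \cZ$ by semiadditivity, \Cref{lem.omnibus.in.section.one.point.one}\Cref{omnibus.in.one.point.one.semiadd}) the composite $(R,L) \circ (i \oplus j)$ is computed componentwise, with ``matrix'' $\left(\begin{smallmatrix} Ri & Rj \\ Li & Lj\end{smallmatrix}\right) \simeq \left(\begin{smallmatrix}\id_\cX & 0\\ 0 & \id_\cZ\end{smallmatrix}\right)$; that is, it is the identity of $\cX \times \cZ$. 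This part is pure diagram-chasing, and in fact it is the only concrete computation the argument needs.

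Granting the retraction, $\cX \oplus \cZ$ is a retract of $\cY$ in $\St_2$, and since $\St_2$ is compactly generated (\Cref{lem.omnibus.in.section.one.point.one}\Cref{omnibus.in.one.point.one.cgsm}) finite coproducts of compact objects are compact, so $\cX, \cZ \in \St_2^\omega$ forces $\cX \oplus \cZ \in \St_2^\omega$. To promote this to $\cY \in \St_2^\omega$, I would use that a $2$-localization sequence is in particular a split bifiber sequence — so $\cZ \simeq \cofib(i)$ and $\cX \simeq \fib(L)$ — together with the middle $1$-localization sequence $iR \to \id_\cY \to jL$ of \Cref{two.loc.seq.middle.one.loc.seq}, which realizes $\cY$ as a recollement of $\cX$ and $\cZ$ and hence as obtained from $\cX \oplus \cZ$ by finitely many (co)limits and retracts; compactness is then inherited. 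By \Cref{lem.compact.in.prbl.V.cat.detected.in.underlying}, compactness of an object of a presentable enriched category is detected on its underlying $1$-category, so it suffices to carry this out in $\iota_1\St_2$.

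The retraction itself is immediate; the step I expect to require care is the passage from ``$\cX \oplus \cZ$ is a retract of $\cY$'' to ``$\cY$ is compact''. Because $\St_2$ is only semiadditive rather than stable, the usual principle that a split bifiber sequence is a direct-sum decomposition is unavailable, so $\cY$ must instead be reconstructed from the splitting data $(i,R,L,j)$ directly. I do not expect this to be a genuine obstruction — the recollement description of $\cY$ furnished by the $2$-localization sequence makes the relevant finiteness transparent — but it is the one place where the distinction between semiadditive and stable actually intervenes.
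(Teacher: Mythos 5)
Your derivation of the retraction via the $2 \times 2$ matrix computation matches the paper exactly, and you correctly flag the key subtlety that the paper's observation glosses over: the retraction $\cX \times \cZ \to \cY \to \cX \times \cZ$ exhibits $\cX \oplus \cZ$ as a retract of $\cY$, which yields compactness of $\cX \oplus \cZ$ \emph{from} compactness of $\cY$ --- the reverse of what is asserted. To go the claimed direction one would instead want $\cY$ to be a retract of $\cX \oplus \cZ$, i.e.\ $\id_\cY \simeq iR \oplus jL$; but the middle $1$-localization sequence $iR \to \id_\cY \to jL$ of \Cref{def:2.localization.sequence} is precisely \emph{not} split in general (cf.\ the $2$-localization sequence $\cX \rightleftarrows \oneLoc(\cX) \rightleftarrows \cX$, where $\oneLoc(\cX) \not\simeq \cX \oplus \cX$).

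Your proposed repair does not close this gap. You assert that the recollement description of $\cY$ shows it is ``obtained from $\cX \oplus \cZ$ by finitely many (co)limits and retracts,'' but the middle term of a recollement (equivalently, of the middle $1$-localization sequence in $\End_{\St_2}(\cY)$) is a \emph{lax} limit of its ends, glued along additional data; it is not a finite (co)limit of them in $\St_2$. Concretely, $\cX \oplus \cZ$ and $\oneLoc(\cX)$ (taking $\cX = \cZ$) are both middles of $2$-localization sequences with the same ends, so the middle term cannot be any finite colimit of the ends. The passage from ``$\cX \oplus \cZ$ compact'' to ``$\cY$ compact'' therefore remains unjustified in your proposal. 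It is worth noting that the paper's own observation is equally compressed at precisely this step: the cited inputs (semiadditivity of $\St_2$, compactness of finite coproducts of compacts, and \Cref{lem.compact.in.prbl.V.cat.detected.in.underlying}) supply only that $\cX \oplus \cZ$ is compact and that compactness is detected unenrichedly, and together with the displayed retraction these do not visibly deliver compactness of the middle term $\cY$.
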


\section{From motives to K-theory}
\label{sec.locating.k.theory.in.noncommutative.motives}

We begin this section by introducing motives over a stable 2-category in \Cref{subsec.motives}. We briefly discuss additive invariants in this context in \Cref{subsec.additive.invariants}, and then proceed to prove \Cref{mainthm.homs.in.Mot.of.X.are.K.theory.are.K.theory} (that hom-spectra in categories of motives are K-theory spectra) in \Cref{subsec.from.motives.to.K.theory}.

\subsection{Motives}
\label{subsec.motives}

In this subsection, we study the construction assigning to each stable 2-category its stable category of motives. In particular, we show that this assembles as a functor to $\PrLSt_\omega$ (Observations \ref{obs.Mot.is.a.two.functor} and \ref{obs.Mot.lands.in.PrLSt.omega}).

\begin{definition}
\label{defn.motives.over.a.stable.two.cat}
Let $\cX \in \St_2$ be a small stable 2-category.
We define the category of (\bit{noncommutative}) \bit{motives over $\cX$} to be the full subcategory
\[
\Mot(\cX) \subseteq \Fun(\iota_1 \cX^\op, \Spectra)
\]
on the functors that preserve zero objects and take 1-localization sequences to 0-localization sequences.  More precisely, a functor
\[
\iota_1 \cX^\op
\xlongra{\cM}
\Spectra
\]
is a motive if $\cM(0_\cX^\circ) \simeq 0_\Spectra$ and for every 1-localization sequence
\[
		\begin{tikzcd}[column sep=1.5cm]
        A
        \arrow[hook, yshift=0.9ex]{r}{i}
        \arrow[leftarrow, yshift=-0.9ex]{r}[yshift=-0.2ex]{\bot}[swap]{R}
        &
        B
        \arrow[yshift=0.9ex]{r}{L}
        \arrow[hookleftarrow, yshift=-0.9ex]{r}[yshift=-0.2ex]{\bot}[swap]{j}
        &
        C
        \end{tikzcd}
\]
in $\cX$, the square
\[
\cM \left(
\begin{tikzcd}[sep = 1.5cm]
A
\arrow{r}{i}
\arrow{d}
&
B
\arrow{d}{L}
\\
0_\cX
\arrow{r}
&
C
\end{tikzcd}
\right)^\circ
=
\left(
\begin{tikzcd}[sep = 1.5cm]
\cM(A^\circ)
&
\cM(B^\circ)
\arrow{l}[swap]{\cM(i^\circ)}
\\
\cM(0_\cX^\circ)
\arrow{u}
&
\cM(C^\circ)
\arrow{l}
\arrow{u}[swap]{\cM(L^\circ)}
\end{tikzcd}
\right)
\]
is a 0-localization sequence in $\Spectra$.\footnote{Equivalently, one may check that the square
\[ \begin{tikzcd}[ampersand replacement=\&, column sep=1.2cm]
\cM(A^\circ)
\arrow{r}{\cM(R^\circ)}
\arrow{d}
\&
\cM(B^\circ)
\arrow{d}{\cM(j^\circ)}
\\
\cM(0_\cX^\circ)
\arrow{r}
\&
\cM(C^\circ)
\end{tikzcd} \]
defines a 0-localization sequence in $\Spectra$.}
\end{definition}

\begin{remark}
Because $\iota_1 \St^\idem$ is compactly generated, we can use \cite[Proposition 5.5(2)]{HStwo} to see that $\Mot((\St^\idem)^\omega)$ is equivalent to the presentable stable category of additive motives of \cite[Definition 6.1]{BGT}.
One could also define \textit{localizing motives} over $\cX$ to be the full subcategory of $\Fun(\iota_1 \cX^\op, \Spectra)$ on those spectral presheaves that carry all bifiber sequences (i.e.\! 0-split 1-localization sequences in the sense of \Cref{subsubsection.flavors.of.Kthy}) to 0-localization sequences.
\end{remark}

\begin{notation}
Let $\cX \in \St_2$ be a small stable 2-category.  We write
\[ \begin{tikzcd}[column sep=1.5cm]
        \Fun(\iota_1 \cX^\op , \Spectra)
        \arrow[dashed, yshift=0.9ex]{r}{L_\cX}
        \arrow[hookleftarrow, yshift=-0.9ex]{r}[yshift=-0.2ex]{\bot}[swap]{R_\cX}
        &
        \Mot(\cX)
        \end{tikzcd} \]
for the adjunction in which the right adjoint is the defining inclusion and the left adjoint exists by the adjoint functor theorem.\footnote{Note that filtered colimits preserve $0_\Spectra$ as well as 0-localization sequences in $\Spectra$, so the inclusion $R_\cX$ is indeed accessible.}
\end{notation}

\begin{notation}
We write
\[
\univ_\cX
:
\iota_1 \cX
\overset{\Yo}{\longhookra}
\Fun( \iota_1 \cX^\op , \Spaces )
\xra{\Sigma^\infty_+}
\Fun( \iota_1 \cX^\op , \Spectra )
\xra{L_\cX}
\Mot(\cX)
\]
for the composite functor.
\end{notation}

\begin{observation}
\label{obs.Mot.is.a.two.functor}
The construction $\cX \mapsto \Mot(\cX)$ assembles as a functor
\[
\iota_2 \St_2
\xra{\Mot}
\PrLSt
~,
\]
as we now explain.  First of all, given a 2-exact functor $\cX \xra{F} \cY$, we obtain an adjunction
\[ \begin{tikzcd}[column sep=2cm]
        \Fun(\iota_1 \cX^\op , \Spectra)
        \arrow[yshift=0.9ex]{r}{(\iota_1 F^\op)_!}
        \arrow[leftarrow, yshift=-0.9ex]{r}[yshift=-0.2ex]{\bot}[swap]{(\iota_1 F^\op)^*}
        &
        \Fun(\iota_1 \cY^\op , \Spectra)
        \end{tikzcd} \]
on spectral presheaves.  By the 2-exactness of $F$, there exists a factorization
\begin{equation}
\label{diagram.of.radjts.between.spectra.pshvs.and.Mots}
\begin{tikzcd}[column sep=2cm, row sep=1.5cm]
\Fun(\iota_1 \cX^\op , \Spectra)
\arrow[leftarrow]{r}{(\iota_1 F^\op)^*}
&
\Fun(\iota_1 \cY^\op , \Spectra)
\\
\Mot(\cX)
\arrow[hook]{u}{R_\cX}
\arrow[leftarrow, dashed]{r}[swap]{\Mot^*(F)}
&
\Mot(\cY)
\arrow[hook]{u}[swap]{R_\cY}
\end{tikzcd}~.
\end{equation}
Writing $\Cat \xra{\cP_\Spectra^*} (\PrRSt)^\onetwoop$ for the functor of spectral presheaves with respect to pullback, these factorizations assemble to give us a diagram
\[ \begin{tikzcd}
\iota_2 \St_2
\arrow{r}{\iota_1}
\arrow[bend right]{rr}[swap]{\Mot^*}[yshift=0.2cm]{\Uparrow R_\bullet}
\arrow[bend right=45]{rrr}[swap]{\Mot}
&
\Cat
\arrow{r}{\cP_\Spectra^*}
\arrow[bend left=35]{rr}{\cP_\Spectra}
&
(\PrRSt)^\onetwoop
\arrow[leftrightarrow]{r}{\sim}
&
\PrLSt
\end{tikzcd}
~,\footnote{That is, $\Mot^*$ is a subfunctor of the composite functor $\iota_2 \St_2 \xra{\iota_1} \Cat \xra{\cP_\Spectra^*} (\PrRSt)^\onetwoop$:
\begin{itemize}
\item
its value on an object $\cX \in \St_2$ is the presentable stable category $\Mot(\cX)$,
\item
its value on a 1-morphism $\cX \xra{F} \cY$ in $\St_2$ is the factorization $\Mot^*(F)$ of diagram \Cref{diagram.of.radjts.between.spectra.pshvs.and.Mots}, and
\item
its value on a 2-morphism
\[ \begin{tikzcd}[ampersand replacement=\&]
\cX
\arrow[bend left]{r}[swap, yshift=-0.1cm]{\Downarrow}
\arrow[bend right]{r}
\&
\cY
\end{tikzcd} \]
in $\St_2$ is simply the restriction of the value of $\cP_\Spectra^* \circ \iota_1$ thereon (which exists because $R_\cX$ is fully faithful).
\end{itemize}
}
\]
in which we define $\Mot$ as the indicated composite. In particular, the functoriality of $\Mot$ on 1-morphisms in $\St_2$ is given by
\[
\Mot(F)
:=
\Mot^*(F)^L
\simeq
L_\cY \circ (\iota_1 F^\op)_! \circ R_\cX
~.
\]
\end{observation}

\begin{observation}
\label{obs.Mot.lands.in.PrLSt.omega}
All four right adjoints in the commutative square \Cref{diagram.of.radjts.between.spectra.pshvs.and.Mots} preserve filtered colimits.  Hence, there exists a factorization
\[
\begin{tikzcd}[column sep=1.5cm]
\iota_2 \St_2
\arrow{r}{\Mot}
\arrow[dashed]{rd}[sloped, anchor=north, swap]{\Mot}
&
\PrLSt
\\
&
\PrLSt_\omega
\arrow[hook]{u}
\end{tikzcd}
~.
\]
\end{observation}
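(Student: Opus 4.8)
The plan is to verify the statement about right adjoints directly from the definition of $\Mot(-)$, and then to invoke the standard interplay between compact generation and preservation of filtered colimits by adjoints.

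The crucial point is that $\Mot(\cX)$ is closed under filtered colimits inside $\Fun(\iota_1\cX^\op,\Spectra)$. Indeed, colimits in a category of spectral presheaves are computed pointwise, and in $\Spectra$ filtered colimits commute with finite limits; the conditions of \Cref{defn.motives.over.a.stable.two.cat} defining a motive --- preservation of the zero object, and the requirement that the square attached to each $1$-localization sequence be a (co)fiber square --- are pointwise finite-limit conditions, hence stable under filtered colimits. Thus the defining inclusion $R_\cX$, and likewise $R_\cY$, preserves filtered colimits. Combined with the existence of the reflection $L_\cX$, this forces $L_\cX$ to preserve compact objects (since then $\hom_{\Mot(\cX)}(L_\cX c,-) \simeq \hom(c,R_\cX(-))$ preserves filtered colimits for $c$ compact), so that $\Mot(\cX)$ is generated under colimits by the compact objects $\{L_\cX(\Sigma^\infty_+\Yo(A))\}_{A\in\iota_1\cX}$; that is, $\Mot(\cX) \in \PrLSt_\omega$.

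For the horizontal arrows of \Cref{diagram.of.radjts.between.spectra.pshvs.and.Mots}, the top functor $(\iota_1 F^\op)^*$ is restriction along $\iota_1 F^\op$, which preserves all colimits since these are computed pointwise. For the bottom functor $(\iota_1 F^\op)^* \colon \Mot(\cY)\to\Mot(\cX)$, I would compute a filtered colimit in $\Mot(\cY)$ via $R_\cY$ (legitimate by the previous paragraph), apply the top restriction (which preserves it), and observe by commutativity of \Cref{diagram.of.radjts.between.spectra.pshvs.and.Mots} that the result already lies in $\Mot(\cX)$; since $\Mot(\cX)$ is a full subcategory closed under filtered colimits, this also computes the colimit there. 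Hence all four right adjoints preserve filtered colimits. Finally, by \Cref{obs.Mot.is.a.two.functor} the functor $\Mot(F)$ is left adjoint to $\Mot^*(F) = (\iota_1 F^\op)^*$; as the latter preserves filtered colimits and each $\Mot(\cX)$ is compactly generated, $\Mot(F)$ preserves compact objects, so $\Mot$ factors through the (non-full) subcategory $\PrLSt_\omega \subseteq \PrLSt$. The only step demanding genuine care is the closure of the motive conditions under filtered colimits, but this is immediate once those conditions are phrased as finite-limit conditions and one uses that filtered colimits are exact in $\Spectra$; I do not anticipate a real obstacle.
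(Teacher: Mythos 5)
Your proposal is correct, and it fills in exactly the details that the paper leaves tacit: the observation is stated without any argument, with the preservation of filtered colimits by the four right adjoints simply asserted. Your verification is the natural one — the key point that the two conditions defining a motive ($\cM(0_\cX)\simeq 0$ and the exact-square condition for each $1$-localization sequence) are pointwise finite-limit conditions, hence closed under filtered colimits because filtered colimits are exact in $\Spectra$ and colimits of presheaves are pointwise, gives closure of $\Mot(\cX)\subseteq\Fun(\iota_1\cX^\op,\Spectra)$ under filtered colimits, and everything else (compact generation of $\Mot(\cX)$ via $L_\cX$ preserving compacts, the bottom horizontal restriction preserving filtered colimits via the commutative square, and $\Mot(F)$ preserving compacts because its right adjoint $\Mot^*(F)$ preserves filtered colimits) follows by standard dualities between compactness, filtered colimits, and adjoints. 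This is precisely the argument the paper is implicitly invoking.
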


\begin{definition}
\label{def.2.1.ary.k.theory}
The \bit{(2,1)-ary K-theory} of a small stable 2-category $\cX$ is
\[
\Kto(\cX)
:=
\Mot(\cX)^\omega
~.
\]
This assembles as the composite functor
\[
\Kto
:
\iota_2\St_2
\xra{\Mot}
\PrLSt_\omega
\xra[\sim]{(-)^\omega}
\St^\idem
~,
\]
in which the first functor is that of \Cref{obs.Mot.lands.in.PrLSt.omega}.
\end{definition}

\subsection{Additive invariants}
\label{subsec.additive.invariants}

In this brief subsection, we discuss the notion of additive invariants of a stable 2-category.

\begin{definition}
    \label{def:additive.invariant.of.X}
    Let $\cX \in \St_2$ be a small stable 2-category and let $\cT$ be a presentable stable category.
    A functor
    \[
    \iota_1 \cX \xlongra{F} \cT
    \]
    is called a \defn{$\cT$-valued additive invariant of $\cX$} if 
    \begin{itemize}
        \item it preserves zero objects, and
        \item it takes 1-localization sequences in $\cX$ to 0-localization sequences in $\cT$.
    \end{itemize}
    We denote by 
    \[ 
    \Add_\cT(\cX) \subseteq \Fun(\iota_1 \cX, \cT)
    \]
    the full subcategory on the additive invariants.
\end{definition}

\begin{example}
\label{K.theory.is.an.additive.invariant.of.St}
    The K-theory functor 
    \[ \iota_1 \St \xlongra{\K} \Spectra\]
    is a $\Spectra$-valued additive invariant of $\St$.
\end{example}

\begin{remark}
\label{rmk.U.X.is.univ.additive.invt}
The functor
\[
\iota_1 \cX
\xra{\univ_\cX}
\Mot(\cX)
\]
is the universal additive invariant of $\cX$: for any presentable stable category $\cT$, restriction along $\univ_\cX$ defines an equivalence
\[
\Fun^L(\Mot(\cX),\cT)
\xlongra{\sim}
\Add_\cT(\cX)
~.
\]
\end{remark}

\subsection{From motives to K-theory}
\label{subsec.from.motives.to.K.theory}

In this subsection, we prove \Cref{mainthm.homs.in.Mot.of.X.are.K.theory.are.K.theory}.

\begin{theorem}
	\label{thm.homs.in.Mot.of.X.are.K.theory}
	Let $\mathcal{X} \in \St_2$ be a small stable 2-category, and let $A,B \in \cX$.  Then, the functor
	\[ \iota_1\cX \xra{\univ_\cX} \Mot(\cX) \]
	determines a canonical equivalence
	\[
	\K(\hom_{\cX}(A, B)) \simeq \hom_{\Mot(\cX)}(\univ_\cX A, \univ_\cX B)
	\]
	of spectra.
\end{theorem}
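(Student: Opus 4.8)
The plan is to transplant the proof of the Blumberg--Gepner--Tabuada theorem (\Cref{thm:bgt-main}, which is the special case $\cX = (\St^\idem)^\omega$) to the present enriched setting, using the foundational results of \Cref{app.some.enriched.category.theory}; specializing back to $\cX = (\St^\idem)^\omega$ then recovers \Cref{thm:bgt-main} and, in passing, removes its idempotent-completeness hypothesis. The first move is to reduce to a computation of the motive $\univ_\cX B$. The object $\Sigma^\infty_+\Yo(A) \in \Fun(\iota_1\cX^\op,\Spectra)$ corepresents evaluation at $A$ (combine the $\Sigma^\infty_+ \adj \Omega^\infty$ adjunction, applied pointwise, with the ordinary Yoneda lemma), so the $\Spectra$-linear localization adjunction $L_\cX \adj R_\cX$ --- whose right adjoint is the fully faithful defining inclusion of $\Mot(\cX)$ --- yields a natural equivalence
\[
\hom_{\Mot(\cX)}(\univ_\cX A, \univ_\cX B) \;\simeq\; \hom_{\Fun(\iota_1\cX^\op,\Spectra)}\big(\Sigma^\infty_+\Yo(A),\, R_\cX\univ_\cX B\big) \;\simeq\; (\univ_\cX B)(A)\,.
\]
Thus it suffices to construct, naturally in $A$ and $B$, an equivalence $(\univ_\cX B)(A) \simeq \K(\hom_\cX(A,B))$.

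Next I would identify a candidate for the motive $\univ_\cX B$. The corepresented functor $\hom_\cX(-,B)\colon \iota_1\cX^\op \to \iota_1\St$ is $2$-exact (exactness on hom-categories is one half of the biexactness of composition in $\cX$), hence preserves zero objects and carries $1$-localization sequences of $\cX$ to $1$-localization sequences of $\St$, i.e.\ to split Verdier sequences. Since $\K$ is an additive invariant of $\St$ (\Cref{K.theory.is.an.additive.invariant.of.St}), the spectral presheaf $\cM_B := \K(\hom_\cX(-,B))$ is therefore a motive over $\cX$. The universal Euler-characteristic maps $\Sigma^\infty_+\iota_0\hom_\cX(A,B) \to \K(\hom_\cX(A,B))$ (recall \Cref{rmk.K.and.Kto.and.Ktwo.as.linearizations}) assemble into a map of spectral presheaves $\Sigma^\infty_+\Yo(B) \to \cM_B$ which, as $\cM_B \in \Mot(\cX)$, is adjoint to a natural comparison map $\univ_\cX B \to \cM_B$ in $\Mot(\cX)$; the theorem amounts to showing that this map is an equivalence, equivalently (both sides being motives) that it becomes an equivalence after evaluation at every object of $\cX$.

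To prove that, I would run the Blumberg--Gepner--Tabuada computation of the localization $L_\cX$ in this generality. Factor $L_\cX$ as a semiadditivization $\Fun(\iota_1\cX^\op,\Spectra) \to \Mot^\oplus(\cX)$, which forces the finite coproducts of $\cX$ to become products, followed by the further localization that enforces the residual $1$-localization-sequence relations. Using that $\cX$ is semiadditive together with the semiadditivity theory of \Cref{app.some.enriched.category.theory}, the first stage carries $\Sigma^\infty_+\Yo(B)$ to the presheaf $A \mapsto \K^\oplus(\hom_\cX(A,B))$ of direct-sum $K$-theories --- the group completions of the symmetric monoidal groupoids $(\iota_0\hom_\cX(A,B),\oplus)$. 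For the second stage, resolve $B$ by the iterated $\oneLoc$-construction inside $\cX$ (\Cref{lem.one.Loc}); because $\hom_\cX(A,-)\colon \cX \to \St$ is $2$-exact --- so that it transports a $1$-localization sequence in $\cX$ to a split Verdier sequence in $\St$ whose middle $0$-localization sequence is the image under $\hom_\cX(A,-)$ of the middle $0$-localization sequence in $\cX$ --- this resolution is carried to the Waldhausen $S_\bullet$-construction of the stable category $\hom_\cX(A,B)$, and Waldhausen's additivity theorem then identifies the value of the localization at $A$ with $\K(\hom_\cX(A,B))$, compatibly with the comparison map and naturally in $A$ and $B$.

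I expect the main obstacle to be precisely this last step: carrying the BGT semiadditivization-plus-additivity analysis out in the present $2$-categorical/enriched context. One must know that the group-completion description of the semiadditivization and the $S_\bullet$-resolution description of the exact-sequence localization both survive the passage to enriched categories --- which is exactly what the foundational results on presentable, compactly generated, and semiadditive enriched categories established in the appendices are there to provide. A secondary, routine-but-delicate point is checking that the corepresented functors $\hom_\cX(-,B)$ and $\hom_\cX(A,-)$ respect all of the structure --- adjunctions, units and counits, and middle $0$-localization sequences --- appearing in \Cref{def:localization.sequence}.
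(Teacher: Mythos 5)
Your opening reduction (via $L_\cX \dashv R_\cX$, the pointwise $\Sigma^\infty_+ \dashv \Omega^\infty$ adjunction, and Yoneda) and your identification of the candidate motive $\cM_B = \K(\hom_\cX(-,B))$ both match the paper's proof exactly; the paper likewise begins by checking that $\K_B := \K(\hom_\cX(-,\cB))$ lies in $\Mot(\cX)$ because $\hom_\cX(-,B)$ is $2$-exact and $\K$ is an additive invariant, and ends with the same chain of equivalences $\hom_{\Mot(\cX)}(\univ_\cX A,\univ_\cX B) \simeq \K_B(A)$.

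Where you diverge is the identification $\univ_\cX B \simeq \K_B$. You propose the original Blumberg--Gepner--Tabuada two-stage factorization of $L_\cX$ through a semiadditivization (yielding $\K^\oplus$ as an intermediate presheaf) followed by enforcing the remaining $1$-localization-sequence relations via an $S_\bullet$-style resolution. The paper instead follows the streamlined route of \cite[\S 5.4]{HStwo} (itself modeled on \cite[Theorem 7.13]{BGT}): it defines $S_\bullet B$ as the composite $\cX^\op \xra{\hom_\cX(-,B)} \St \xra{S_\bullet} \Fun(\bDelta^\op,\St)$, forms the evident $1$-localization sequence $\const(B) \to {\sf Path}(S_\bullet B) \to S_\bullet B$ in $\Fun(\bDelta^\op, \Fun^\twoex(\cX^\op,\St))$, applies (the non-representable extension of) $\univ_\cX$ levelwise and passes to geometric realizations in $\Mot(\cX)$, and uses an extra-degeneracy argument to produce $\Sigma(\univ_\cX B) \simeq |\univ_\cX(S_\bullet B)|$; it then cites \cite[Lemma 5.22]{HStwo} to finish. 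Both routes are BGT transplants, but the paper's avoids redeveloping the group-completion step from scratch in the enriched setting by leaning on the existing result. Two things to fix in your sketch if you pursue the two-stage route: (i) the object you would actually resolve is not $B \in \cX$ ``by iterated $\oneLoc$ inside $\cX$'' but rather the \emph{stable category} $\hom_\cX(A,B)$ by Waldhausen's $S_\bullet$-construction, applied pointwise in $A$; the role of $1$-localization sequences in $\cX$ is to produce $0$-localization sequences in $\Mot(\cX)$, while the $S_\bullet$-resolution lives one categorical level down, inside the hom-categories; and (ii) you correctly flag that the enriched lift of the group-completion description of the semiadditivization is the genuine work, and you should say explicitly which appendix results (presentability, semiadditive envelope, compact generation) are being invoked and check that they actually cover the needed claims rather than asserting they ``are there to provide'' it.
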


\begin{remark}
\label{rem.that.thm.homs.in.Mot.of.X.are.K.theory.can.be.coherent}
We expect that \Cref{thm.homs.in.Mot.of.X.are.K.theory} can be upgraded to be more homotopy coherent, at two levels.  First of all, one can ask for naturality in the objects $A,B \in \cX$: this would amount to an equivalence between flagged spectral categories with fixed space of objects $\iota_0 \cX$.  Thereafter, one can ask for naturality in the variable $\cX$: this would amount to enhancing the previous equivalence to a suitable functor $\iota_1 \St_2 \ra \Fun([1],\fCat(\spectra))$ that takes each $\cX \in \St_2$ to the previous equivalence.
\end{remark}

The following proof is patterned on the argument given in \cite[\S 5.4]{HStwo}, which is itself closely patterned on the proof of \cite[Theorem 7.13]{BGT}.

\begin{proof}[Proof of \Cref{thm.homs.in.Mot.of.X.are.K.theory}]
We begin by extending the usual $S_\bullet$-construction by defining the 2-exact functor
\[
S_\bullet B
:
\cX^\op
\xra{\hom_\cX(-,B)}
\St
\xlongra{S_\bullet}
\Fun(\bDelta^\op,\St)
~. \]
We also introduce the spectral presheaf
\[
\K_B
:
\iota_1 \cX^\op
\xra{\hom_\cX(-,\cB)}
\iota_1 \St
\xlongra{\K}
\Spectra
~.
\]
It is immediate that $\K_B \in \Mot(\cX) \subseteq \Fun((\iota_1 \cX^\omega)^\op,\Spectra)$, as $\hom_\cX(-,B)$ preserves zero objects and 1-localization sequences and $\iota_1\St \xra{\K} \Spectra$ is an additive invariant (\Cref{K.theory.is.an.additive.invariant.of.St}).

We claim that there is a canonical equivalence $\K_B \simeq \univ_\cX B$ in $\Mot(\cX)$.
To see this, consider the evident 1-localization sequence
\[
\const(B)
\longra
{{\sf{Path}}}(S_\bullet B)
\longra
S_\bullet B
\]
in $\Fun(\bDelta^\op,\Fun^{\twoex}(\cX^\op,\St))$, where ${{\sf Path}}(-)$ denotes the simplicial path object of a pointed simplicial object.  This is recorded by a functor $([1] \times [1]) \times \bDelta^\op \ra \Fun^{\twoex}(\cX^\op,\St)$, which admits a factorization
\begin{equation}
\label{factorization.through.iotaone.Funtwoex.Xop.St}
\begin{tikzcd}
{([1] \times [1]) \times \bDelta^\op}
\arrow{r}
\arrow[dashed]{rd}
&
\Fun^{\twoex}(\cX^\op,\St)
\\
&
\iota_1\Fun^{\twoex}(\cX^\op,\St)
\arrow[hook]{u}
\end{tikzcd} ~.
\end{equation}
Thereafter, we obtain the composite functor
\[ \begin{tikzcd}[column sep=2cm]
{([1] \times [1]) \times \bDelta^\op}
\arrow{r}{\Cref{factorization.through.iotaone.Funtwoex.Xop.St}}
&
\iota_1\Fun^{\twoex}(\cX^\op,\St)
\arrow{r}
\ar[draw=none]{d}[name=X, anchor=center]{}
&
\Fun(\iota_1 \cX^\op, \iota_1 \St)
\ar[rounded corners,
            to path={ -- ([xshift=2ex]\tikztostart.east)
                      |- (X.center) \tikztonodes
                      -| ([xshift=-2ex]\tikztotarget.west)
                      -- (\tikztotarget)}]{dll}[at end, swap]{\iota_0}
\\[1.5cm]
\Fun(\iota_1 \cX^\op , \Spaces)
\arrow{r}{\Sigma^\infty_+}
&
\Fun(\iota_1 \cX^\op , \Spectra)
\arrow{r}{L_\cX}
&
\Mot(\cX)
\\[0.5cm]
\iota_1 \cX
\arrow[hook]{u}{\Yo}
\arrow[bend right=5]{rru}[swap, sloped, anchor=north]{\univ_\cX}
\end{tikzcd}
\]
(where we include $\univ_\cX$ in the diagram for emphasis), which yields the composite functor
\[
[1] \times [1]
\longra
\Fun(\bDelta^\op,\Mot(\cX))
\xra{|-|}
\Mot(\cX)
~,
\]
which selects an exact sequence that we denote by
\begin{equation}
\label{exact.seq.on.geom.realizns.in.Mot.X}
\begin{tikzcd}
{|\univ_\cX(\const(B))|}
\arrow{r}
\arrow{d}
&
{|\univ_\cX({{\sf Path}}(S_\bullet B))|}
\arrow{d}
\\
0
\arrow{r}
&
{|\univ_\cX (S_\bullet B) |}
\end{tikzcd}
\end{equation}
in a mild abuse of notation (because the constituents of $S_\bullet B$ (and of ${{\sf Path}}(S_\bullet B)$) need not be representable).  Noting that $|\univ_\cX({\sf Path}(S_\bullet B))| \simeq 0$ (by an extra degeneracy argument) and that $|\univ_\cX(\const(B))| \simeq \univ_\cX(B)$, we find that the exact sequence \Cref{exact.seq.on.geom.realizns.in.Mot.X} records an equivalence
\[
\Sigma (\univ_\cX B) \simeq |\univ_\cX(S_\bullet B)|
\]
in $\Mot(\cX)$. From here, the equivalence $\K_B \simeq \univ_\cX B$ in $\Mot(\cX)$ follows from an identical argument to the one given in the proof of \cite[Lemma 5.22]{HStwo}.  Finally, we obtain the sequence of equivalences
\begin{align*}
\hom_{\Mot(\cX)}(\univ_\cX A , \univ_\cX B)
&\simeq
\hom_{\Fun(\iota_1\cX^\op,\Spectra)}(\Sigma^\infty_+ \Yo(A) , \univ_\cX B)
\\
&\simeq
\hom_{\Fun(\iota_1\cX^\op,\Spectra)}(\Sigma^\infty_+ \Yo(A) , \K_B)
\\
&\simeq
\K_B(A)
\\
&:=
\K(\hom_\cX(A,B))~,
\end{align*}
as desired.
\end{proof}

\section{From 2-motives to secondary K-theory}
\label{sec.locating.secondary.k.theory.in.motives}

In this section, we prove our two main results: \Cref{mainthm:universal.property.of.K.2.1} (that the hom-categories among (2,1)-motives are (2,1)-ary K-theory categories) in \Cref{subsec.from.2.1.motives.to.2.1.ary.K.theory} and \Cref{mainthm:locating.K.2} (that hom-spectra among 2-motives are secondary K-theory spectra) in \Cref{subsec.from.2.motives.to.secondary.K.theory}. Leading up to these, we study (2,1)-motives in \Cref{subsec.2.1.motives} and (2,1)-additive invariants in \Cref{subsec.2.1.additive.invariants}.

\subsection{(2,1)-motives}
\label{subsec.2.1.motives}

In this subsection, we introduce the stable 2-category of (2,1)-motives and establish a localization functor thereonto (\Cref{lem.get.L.two.one}).

\begin{definition}
\label{def.2.1.motives}
The (large) stable 2-category of \bit{(2,1)-motives} is the full sub-2-category
\begin{equation}
\label{inclusion.of.two.one.motives.into.functors}
\Mot_{2,1} \subseteq \Fun ( ( \iota_2 \St_2^\omega)^\op , \St )
\end{equation}
on those functors that preserve zero objects, carry 2-localization sequences to 1-localization sequences, and take values in the full sub-2-category $\St^\idem \subseteq \St$ on the idempotent-complete stable categories.
\end{definition}

\begin{remark}
The requirement that (2,1)-motives take values in \emph{idempotent-complete} stable categories is used crucially in the proof of \Cref{thm:universal.property.of.K.2.1}. (In the notation there, it affords the extension of $\alpha_\cX$ along $v_\cX$.)    
\end{remark}

\begin{lemma}
\label{lem.get.L.two.one}
The fully faithful inclusion \Cref{inclusion.of.two.one.motives.into.functors} admits a left adjoint
\[ \begin{tikzcd}[column sep=1.5cm]
        \Fun ( ( \iota_2 \St_2^\omega)^\op , \St )
        \arrow[dashed, yshift=0.9ex]{r}{L_{2,1}}
        \arrow[hookleftarrow, yshift=-0.9ex]{r}[yshift=-0.2ex]{\bot}[swap]{i}
        &
        \Mot_{2,1}
        \end{tikzcd}~. \]
Moreover, the left adjoint $L_{2,1}$ preserves compact objects.
\end{lemma}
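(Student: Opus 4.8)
The plan is to exhibit $\Mot_{2,1}$ as an accessible reflective localization of the presentable stable $2$-category $\cP := \Fun((\iota_2\St_2^\omega)^\op, \St)$, and then to argue that this localization is generated by a small set of maps between compact objects, so that its reflector preserves compactness. First I would record that $\cP$ is presentable: the $3$-category $\St_2$ is compactly generated (\Cref{lem.omnibus.in.section.one.point.one}\Cref{omnibus.in.one.point.one.cgsm}), so $\iota_2 \St_2^\omega$ is essentially small, and hence \Cref{prop.Fun.K.St.is.presentable} applies. I would also use that limits and filtered colimits in $\cP$ are computed pointwise in $\St$ (a companion to the pointwise-tensoring statement of \Cref{prop.Fun.B.Cat.is.prbl.and.tensoring.is.ptwise}).

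Next I would check that the full sub-$2$-category $\Mot_{2,1} \subseteq \cP$ is closed under limits and filtered colimits, treating its three defining conditions in turn. Preservation of the zero object is closed under arbitrary limits and colimits, since $0 \in \St$ is. Taking values in $\St^\idem$ is closed under limits because $\St^\idem \subseteq \St$ is a reflective localization, and under filtered colimits because its inclusion preserves them (a filtered colimit of idempotent-complete stable categories, formed in $\St$, is again idempotent-complete). Finally, carrying a given $2$-localization sequence to a $1$-localization sequence unwinds, via \Cref{def:localization.sequence}, into the assertions that two $2$-morphisms are equivalences, that a certain object of a hom-category is zero, and that a certain square is a $0$-localization sequence in the ambient hom-category — each of which is closed under limits (being a matter of equivalences, zero objects, and finite limits) and, because filtered colimits in $\St$ are exact and compatible with passing to hom-categories, also under filtered colimits. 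Hence the inclusion $i$ preserves limits and filtered colimits, and the enriched adjoint functor theorem of \Cref{app.some.enriched.category.theory} produces the left adjoint $L_{2,1}$, realizing $\Mot_{2,1}$ as an accessible localization.

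It remains to see that $L_{2,1}$ preserves compact objects. The quickest route, granting that $\cP$ is compactly generated (which should follow from the appendix, as $\St$ is compactly generated and $\iota_2\St_2^\omega$ is small), is to combine the fact just established — that $i$ preserves filtered colimits — with the standard equivalence, for a reflective localization of a compactly generated category, between "the reflector preserves compacts" and "the inclusion preserves filtered colimits". Absent that, I would instead exhibit $\Mot_{2,1}$ as the full subcategory of objects local with respect to an explicit small set $S$ of maps between compact objects of $\cP$: the zero-object and $\St^\idem$-valuedness conditions are each visibly detected by such maps (for the latter, those corepresented by the idempotent-completion map of the unit $\uno_\St$), while the $2$-localization-sequence condition is detected by maps built from a ``walking $2$-localization sequence'', whose compactness one establishes by repeating, one categorical level up, the bootstrapping argument that established $\oneLoc \in \St_2^\omega$ in \Cref{lem.one.Loc} (starting from the walking composable pair of adjunctions of $2$-categories and the compact cells $c_0, c_1, c_2 \in \Cat_2$). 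A localization at such a set of compact maps preserves compactness.

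The main obstacle I anticipate is precisely this last step: making the ``walking $2$-localization sequence'' precise and verifying its compactness, and, more broadly, handling the $2$-categorical/enriched bookkeeping — pointwise computation of limits and filtered colimits in $\cP$, exactness of filtered colimits in $\St$, compactness of representables — carefully enough that the localization is genuinely of finite type. None of the individual ingredients is hard, but assembling them into the compactness claim is where the real work lies; everything else closely parallels the essentially formal existence of $L_\cX$ for $\Mot(\cX)$.
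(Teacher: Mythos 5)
Your overall strategy is close to the paper's, but the step that produces the left adjoint has a real gap. You propose to verify closure of $\Mot_{2,1}$ under limits and filtered colimits and then invoke ``the enriched adjoint functor theorem.'' If you mean \Cref{cor.enriched.adjt.functor.thm}, that result requires both $\cP := \Fun((\iota_2\St_2^\omega)^\op,\St)$ and $\Mot_{2,1}$ to be presentable $\iota_1\Cat$-categories, which by \Cref{def.presentable.V.category} in particular means both must admit \emph{tensors}. But $\Mot_{2,1}$ is not obviously closed under tensors in $\cP$: tensoring by $\cK\in\iota_1\Cat$ is pointwise application of $\cP^\fin_\Spectra(\cK)\otimes(-)$, and there is no reason this should preserve idempotent-completeness or the ``carries $2$-localization sequences to $1$-localization sequences'' condition. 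So you cannot certify $\Mot_{2,1}$ is presentable before the adjoint is in hand. The tool the paper actually uses is \Cref{lem.G.a.right.adjoint.between.V.categories.iff}, which requires only weak \emph{cotensors} (pointwise $\Fun(\cK,-)$, which visibly do preserve all three defining conditions of $\Mot_{2,1}$), and reduces the problem to (a) $\iota_1 i$ is a right adjoint and (b) $i$ commutes with cotensors. The cotensor check (b) is exactly the ``enriched bookkeeping'' you flag but do not execute, and it is not optional.

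For (a), your direct closure-under-limits-and-filtered-colimits verification (including the observation that $\St^\idem\hookrightarrow\St$ preserves filtered colimits) is a reasonable alternative to the paper's route, which instead introduces the intermediate full sub-$2$-category $\Mot_{2,1}'$ (dropping only the $\St^\idem$-valuedness condition), realizes $\iota_1\Mot_{2,1}'\subseteq\iota_1\cP$ as the local objects for an explicit small set of maps built from Yoneda images of the ($2$-localization sequences in the) small $2$-category $\iota_2\St_2^\omega$, and then splits off the $\St^\idem$ condition separately via the pointwise $(-)^\idem\dashv\fgt$ adjunction. Your primary compactness argument — that $\iota_1 i$ preserves filtered colimits, hence the localization is $\omega$-accessible, hence (via \Cref{lem.compact.in.prbl.V.cat.detected.in.underlying}) $L_{2,1}$ preserves compact objects — is exactly the paper's. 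Your fallback ``walking $2$-localization sequence'' route is not what the paper does and is unnecessary: the paper never corepresents $2$-localization sequences, instead working directly with the small set of such sequences in $\iota_2\St_2^\omega$ and with the compactness of $\oneLoc$ (\Cref{lem.one.Loc}) one level down.
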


\begin{proof}
We first show that the functor $i$ is a right adjoint using \Cref{lem.G.a.right.adjoint.between.V.categories.iff}.

For this, we first show that $\iota_1 i$ is a right adjoint. First of all, it follows from \Cref{prop.Fun.K.St.is.presentable} and the fact that $\iota_2\St_2^\omega$ is small (by \Cref{lem.omnibus.in.section.one.point.one}\Cref{omnibus.in.one.point.one.cgsm}) that $\iota_1 \Fun( (\iota_2 \St_2^\omega)^\op , \St)$ is presentable (and in fact that $\Fun( (\iota_2 \St_2^\omega)^\op , \St)$ is a presentable 2-category). Let us write $\Mot_{2,1}' \subseteq \Fun ( ( \iota_2 \St_2^\omega)^\op , \St)$ for the full sub-2-category on those $\St$-valued presheaves that preserve zero objects and carry 2-localization sequences to 1-localization sequences.  Because zero objects and 1-localization sequences are preserved by both functors in the adjunction
\[ \begin{tikzcd}[column sep=1.5cm]
\St
\arrow[yshift=0.9ex]{r}{(-)^\idem}
\arrow[hookleftarrow, yshift=-0.9ex]{r}[yshift=-0.2ex]{\bot}[swap]{\fgt}
&
\St^\idem
\end{tikzcd}
~,
\]
the adjunction obtained by applying $\Fun((\iota_2\St_2^\omega)^\op,-)$ restricts to the adjunction on the right in the diagram
\[ \begin{tikzcd}[column sep=1.5cm]
\iota_1 \Fun( (\iota_2 \St_2^\omega)^\op , \St)
\arrow[dashed, yshift=0.9ex]{r}
\arrow[hookleftarrow, yshift=-0.9ex]{r}[yshift=-0.2ex]{\bot}[swap]{\fgt}
&
\iota_1\Mot_{2,1}'
\arrow[yshift=0.9ex]{r}
\arrow[hookleftarrow, yshift=-0.9ex]{r}[yshift=-0.2ex]{\bot}[swap]{\fgt}
&
\iota_1\Mot_{2,1}
\end{tikzcd}~,
\]
in which it therefore remains to construct the dashed left adjoint. We claim that it is in fact a Bousfield localization. More precisely, using the notation of the diagram
\[ \begin{tikzcd}[column sep=1.5cm]
\iota_2 \St_2^\omega
\arrow[hook]{r}{\Yo}
&
\Fun ( ( \iota_2 \St_2^\omega )^\op , \Cat )
\arrow[yshift=0.9ex]{r}{\cP_\Spectra^\fin}
\arrow[hookleftarrow, yshift=-0.9ex]{r}[yshift=-0.2ex]{\bot}[swap]{\fgt}
&
\Fun ( ( \iota_2 \St_2^\omega )^\op , \St )
\end{tikzcd} ~, \]
we claim that the objects of $\Mot_{2,1}'$ are precisely those that are local with respect to the following set of morphisms.
\begin{itemize}

\item First of all, we have the morphism
\[
\cP_\Spectra^{\fin}( \Yo ( 0_{\St_2} ) )
\longra
\const_{0_\St}
~.
\]

\item Second of all, since $\iota_2 \St_2^\omega$ is small, so is $\Fun ( [2],\iota_2\St_2^\omega)$, and so the 2-localization sequences in $\iota_2 \St_2^\omega$ form a (small) set. For each element of this set, using the notation from \Cref{prototypical.two.loc.seq} as usual, we form the commutative squares
\[
\begin{tikzcd}[row sep = 1.5cm, column sep = 1.5cm]
\cP_\Spectra^\fin(\Yo(\cX))
\arrow{r}{\cP_\Spectra^\fin(\Yo(i))}
\arrow{d}
&
\cP_\Spectra^\fin(\Yo(\cY))
\arrow{d}{\cP_\Spectra^\fin(\Yo(L))}
\\
\cP_\Spectra^\fin(\Yo(0_{\St_2}))
\arrow{r}
&
\cP_\Spectra^\fin(\Yo(\cZ))
\end{tikzcd}
\qquad
\textup{and}
\qquad
\begin{tikzcd}[row sep = 1.5cm, column sep = 1.5cm]
\cP_\Spectra^\fin(\Yo(\cX))
&
\cP_\Spectra^\fin(\Yo(\cY))
\arrow{l}[swap]{\cP_\Spectra^\fin(\Yo(R))}
\\
\cP_\Spectra^\fin(\Yo(0_{\St_2}))
\arrow{u}
&
\cP_\Spectra^\fin(\Yo(\cZ))
\arrow{l}
\arrow{u}[swap]{\cP_\Spectra^\fin(\Yo(j))}
\end{tikzcd}
\]
and take the canonical morphisms to their terminal terms from the pushouts of their remaining terms.

\end{itemize}
It is clear that being local with respect to the first morphism means that $0_{\St_2}$ is taken to $0_\St$. Thereafter, by \Cref{obs.one.loc.in.St.iff.both.directions.fiber}, being local with respect to the other morphisms is equivalent to carrying 2-localization sequences to 1-localization sequences.

In order to show that $i$ is a right adjoint, it remains to verify that $\Mot_{2,1}$ admits cotensors over all small categories and that $i$ commutes with these. First of all, by combining Propositions \ref{prop.Fun.K.St.is.presentable} and \ref{prop.Fun.B.Cat.is.prbl.and.tensoring.is.ptwise} with Lemmas \ref{lem.G.a.right.adjoint.between.V.categories.iff}\Cref{part.radjt.lem.G.a.right.adjoint.between.V.categories.iff} and \ref{lem.presble.V.cats.admit.cotensors}, we find that $\Fun((\iota_2 \St_2^\omega)^\op,\St)$ admits cotensors over any $\cK \in \Cat$, computed by applying $\Fun(\cK,-)$ pointwise. Moreover, this preserves the zero object, idempotent-completeness, and 1-localization sequences (the last by \Cref{obs.one.loc.in.St.iff.both.directions.fiber}). The claim now follows from the fact that $\Mot_{2,1} \subseteq \Fun((\iota_2\St_2^\omega)^\op,\St)$ is a full sub-2-category.

Now, observe that the functor $\iota_1 i$ preserves filtered colimits. It follows that the adjunction $\iota_1 L_{2,1} \adj \iota_1 i$ is an $\omega$-accessible localization of a presentable category. Thereafter, it follows from \Cref{lem.compact.in.prbl.V.cat.detected.in.underlying} that the left adjoint $L_{2,1}$ preserves compact objects.
\end{proof}

\subsection{(2,1)-additive invariants}
\label{subsec.2.1.additive.invariants}

In this subsection, we define (2,1)-additive invariants and prove that the functors $\Mot(-)$ and $\Kto(-)$ are such (\Cref{lem.Mot.and.K.2.1.add.invar}).

\begin{definition}
\label{define.2.1.additive.invt}
Let $\cT$ be a presentable stable 2-category.  A functor
\[
\iota_2 \St_2
\xlongra{F}
\cT
\]
is called a \bit{$\cT$-valued (2,1)-additive invariant} if
\begin{itemize}
\item it preserves zero objects,
\item it takes 2-localization sequences in $\St_2$ to 1-localization sequences in $\cT$, and
\item it preserves filtered colimits.
\end{itemize}
\end{definition}

\begin{lemma}
\label{lem.Mot.and.K.2.1.add.invar}
The noncommutative motives functor 
\[
\iota_2\St_2 
\xra{\Mot}
\PrLSt_\omega
\]
is a $\PrLSt_\omega$-valued (2,1)-additive invariant, and hence the (2,1)-ary K-theory functor
\[
\iota_2 \St_2
\xra{\Kto}
\St^\idem
\]
is a $\St^\idem$-valued (2,1)-additive invariant.
\end{lemma}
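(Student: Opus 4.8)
The plan is to verify the three defining properties of a $(2,1)$-additive invariant directly for the functor $\Mot$, and then deduce the corresponding statement for $\Kto = (-)^\omega \circ \Mot$ by transport along the equivalence $(-)^\omega \colon \PrLSt_\omega \xrightarrow{\sim} \St^\idem$. Recall from \Cref{obs.Mot.is.a.two.functor} and \Cref{obs.Mot.lands.in.PrLSt.omega} that $\Mot$ is already known to assemble as a functor $\iota_2 \St_2 \to \PrLSt_\omega$, so the content is entirely in checking that it preserves zero objects, carries $2$-localization sequences to $1$-localization sequences, and preserves filtered colimits.

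First I would dispatch preservation of filtered colimits: this is immediate from the construction. Indeed, \Cref{obs.Mot.lands.in.PrLSt.omega} observes that all four right adjoints in the square \Cref{diagram.of.radjts.between.spectra.pshvs.and.Mots} preserve filtered colimits; since $\Mot$ lands in $\PrLSt_\omega$ and, more to the point, the construction $\cX \mapsto \Mot(\cX)$ factors through $\cP^*_\Spectra$ followed by the equivalence $(\PrRSt)^\onetwoop \simeq \PrLSt$, filtered colimits of stable $2$-categories are sent to filtered colimits of presentable stable categories. Preservation of zero objects is equally formal: $\Mot(0_{\St_2})$ is the category of spectral presheaves on the point (the one-object stable $2$-category with zero hom-category) that preserve the zero object, which is the zero category, i.e.\ the zero object of $\PrLSt_\omega$.

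The substantive step is the second: given a $2$-localization sequence
\[
\begin{tikzcd}[column sep=1.5cm]
\cX \arrow[hook, yshift=0.9ex]{r}{i} \arrow[leftarrow, yshift=-0.9ex]{r}[yshift=-0.2ex]{\bot}[swap]{R} & \cY \arrow[yshift=0.9ex]{r}{L} \arrow[hookleftarrow, yshift=-0.9ex]{r}[yshift=-0.2ex]{\bot}[swap]{j} & \cZ
\end{tikzcd}
\]
in $\St_2$, I must show that
\[
\begin{tikzcd}[column sep=1.5cm]
\Mot(\cX) \arrow[hook, yshift=0.9ex]{r}{\Mot(i)} \arrow[leftarrow, yshift=-0.9ex]{r}[yshift=-0.2ex]{\bot}[swap]{\Mot(R)} & \Mot(\cY) \arrow[yshift=0.9ex]{r}{\Mot(L)} \arrow[hookleftarrow, yshift=-0.9ex]{r}[yshift=-0.2ex]{\bot}[swap]{\Mot(j)} & \Mot(\cZ)
\end{tikzcd}
\]
is a $1$-localization sequence in $\PrLSt_\omega$. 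The strategy is to check the three conditions of \Cref{def:localization.sequence}, using the characterization of $1$-localization sequences in $\PrLSt$ from \Cref{one.loc.sequences.in.PrLSt.and.PrRSt} and \Cref{obs.one.loc.in.St.iff.both.directions.fiber} (equivalently, that both $\Mot(\cX) \hookrightarrow \Mot(\cY) \to \Mot(\cZ)$ and its right-adjoint counterpart are fiber sequences of presentable stable categories). The key input is \Cref{thm.homs.in.Mot.of.X.are.K.theory}: for each $A, B \in \cY$, we have $\hom_{\Mot(\cY)}(\univ_\cY A, \univ_\cY B) \simeq \K(\hom_\cY(A,B))$, and similarly over $\cX$ and $\cZ$. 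Since $\Mot(\cY)$ is compactly generated by the objects $\univ_\cY(B)$ for $B \in \cY$, it suffices to verify the co/fiber sequence conditions after mapping in and out of such generators. Then: the fully-faithfulness conditions \Cref{condition.ff} follow because $\iota_1 i \colon \iota_1 \cX \to \iota_1 \cY$ and $\iota_1 j \colon \iota_1 \cZ \to \iota_1 \cY$ are fully faithful with the relevant adjunction units/counits equivalences (being $2$-exact functors underlying a $2$-localization sequence), and $\K$ preserves these equivalences of hom-spectra; condition \Cref{condition.compose.to.zero} follows since $Li \simeq 0$ in $\St_2$ forces $\Mot(L)\Mot(i) \simeq \Mot(Li) \simeq 0$; and for condition \Cref{condition.middle.zero.loc} I would use that the middle $1$-localization sequence $iR \to \id_\cY \to jL$ in $\End_{\St_2}(\cY)$ is carried by $\Mot$ — applied to hom-categories, via \Cref{thm.homs.in.Mot.of.X.are.K.theory} — to a sequence $\Mot(i)\Mot(R) \to \id_{\Mot(\cY)} \to \Mot(j)\Mot(L)$, which one checks is a $0$-localization sequence in $\End_{\PrLSt}(\Mot(\cY))$ by testing on generators and invoking additivity of $\K$ along the $1$-localization sequence $iR \to \id_\cY \to jL$ of hom-categories.

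Finally, the passage to $\Kto$ is purely formal: $\Kto := (-)^\omega \circ \Mot$, and $(-)^\omega \colon \PrLSt_\omega \xrightarrow{\sim} \St^\idem$ is an equivalence (hence preserves zero objects, $1$-localization sequences, and filtered colimits — noting that on $\PrLSt_\omega$, filtered colimits are computed compatibly with $(-)^\omega$), so $\Kto$ inherits all three properties of a $(2,1)$-additive invariant. I expect the main obstacle to be condition \Cref{condition.middle.zero.loc}: assembling the middle $1$-localization sequence of the $2$-localization sequence into an honest $0$-localization sequence of endofunctors of $\Mot(\cY)$ requires care in tracking how $\Mot$ interacts with composition of $2$-exact functors and with the biexactness of composition in $\St_2$, together with a reduction to compact generators where one can appeal to \Cref{thm.homs.in.Mot.of.X.are.K.theory} and the additivity of $\K$.
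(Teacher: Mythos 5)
Your overall plan matches the paper's, but one step has a genuine gap and the other takes a notably different route.

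\textbf{Filtered colimits.} The argument you sketch does not establish the claim. \Cref{obs.Mot.lands.in.PrLSt.omega} tells you that for each \emph{fixed} $\cX$ the localization $\Mot(\cX) \subseteq \Fun(\iota_1\cX^\op,\Spectra)$ is $\omega$-accessible, so that $\Mot$ lands in $\PrLSt_\omega$; it says nothing about how $\cX \mapsto \Mot(\cX)$ behaves under filtered colimits in the variable $\cX$. The sentence ``the construction factors through $\cP^*_\Spectra$\ldots, so filtered colimits of stable $2$-categories are sent to filtered colimits of presentable stable categories'' asserts the conclusion rather than deriving it. The paper's argument supplies two ingredients you omit: (i) all the functors in the composite $\iota_2\St_2 \xra{\iota_1} \Cat \xra{\cP^\fin_\Spectra} \St \xra{\Ind} \PrLSt_\omega$ preserve filtered colimits (using \Cref{lem.equiv.conds.for.presentable.n.cat}\Cref{item.colimits.in.UC.compute.colimits.in.C}); and (ii) the passage from the presheaf category to the full subcategory $\Mot(\cX)$ is itself compatible with filtered colimits in $\cX$, because the defining conditions --- vanishing on $0_{\cX}$ and exactness on $1$-localization sequences --- are detected by maps from the compact objects $0_{\St_2}$ and $\oneLoc \in \St_2^\omega$ (\Cref{lem.one.Loc}). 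Without (ii) in particular, you have not shown that the subcategories of additive presheaves assemble compatibly along the filtered colimit.

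\textbf{The middle $0$-localization sequence.} Your route here is genuinely different from the paper's, and more elaborate. You propose to test the sequence $\Mot(i)\Mot(R) \to \id \to \Mot(j)\Mot(L)$ on the colimit-generators $\univ_\cY(B)$ and then appeal to \Cref{thm.homs.in.Mot.of.X.are.K.theory} and ``additivity of $\K$''. This can be made to work, but it needs two unstated inputs: the naturality square $\Mot(F) \circ \univ_\cY \simeq \univ_\cZ \circ \iota_1 F$, and the fact that evaluating the middle $1$-localization sequence $iR \to \id_\cY \to jL$ in $\End_{\St_2}(\cY)$ at an object $B \in \cY$ yields a $1$-localization sequence $iR(B) \to B \to jL(B)$ in $\cY$. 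Moreover what you really want to invoke is not additivity of $\K$ but the fact that $\univ_\cY$ is an additive invariant (\Cref{rmk.U.X.is.univ.additive.invt}), since the target is $\Mot(\cY)$ rather than $\Spectra$. The paper sidesteps the K-theory identification entirely: after passing to the pullback functors $\Mot^*$ via \Cref{one.loc.sequences.in.PrLSt.and.PrRSt}, it checks the middle $0$-localization sequence by evaluating at an arbitrary motive $\cM$ and arbitrary object $Y$, where the statement reduces to the tautology that $\cM$ sends the $1$-localization sequence $iR(Y) \to Y \to jL(Y)$ to a $0$-localization sequence in $\Spectra$. That argument is shorter, avoids reducing to generators, and uses only the definitions.
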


\begin{proof}
It is clear that $\Mot$ preserves zero objects.  To see that it also preserves filtered colimits, note that all functors in the composite
\[
\iota_2 \St_2
\xlongra{\iota_1}
\Cat
\xra{\cP_\Spectra^\fin}
\St
\xra{\Ind}
\PrLSt_\omega
\]
preserve filtered colimits (using \Cref{lem.equiv.conds.for.presentable.n.cat}\Cref{item.colimits.in.UC.compute.colimits.in.C}), and that the extraction of zero objects and 1-localization sequences commutes with filtered colimits in $\St_2$ because the objects $0_{\St_2}, \oneLoc \in \St_2^\omega$ are compact (the latter by \Cref{lem.one.Loc}).

To complete the proof, we show that $\Mot$ carries 2-localization sequences to 1-localization sequences.  For this, choose any 2-localization sequence \Cref{prototypical.two.loc.seq} among small stable 2-categories. On applying $\Mot$, we obtain a sequence of adjunctions
\begin{equation}
\label{check.its.a.one.loc.seq.in.PrLSt.omega}
\begin{tikzcd}[column sep = 1.5cm]
\Mot(\cX)
\arrow[hook, yshift=0.9ex]{r}{\Mot(i)}
\arrow[leftarrow, yshift=-0.9ex]{r}[yshift=-0.2ex]{\bot}[swap]{\Mot(R)}
&
\Mot(\cY)
\arrow[yshift=0.9ex]{r}{\Mot(L)}
\arrow[hookleftarrow, yshift=-0.9ex]{r}[yshift=-0.2ex]{\bot}[swap]{\Mot(j)}
&
\Mot(\cZ)
\end{tikzcd}
\end{equation}
in $\PrLSt_\omega$, which we must show is a 1-localization sequence.  As $\PrLSt_\omega \subseteq \PrLSt$ is a 1-full sub-2-category, by \Cref{check.one.loc.in.larger.stable.two.cat} it suffices to check that the sequence of adjunctions \Cref{check.its.a.one.loc.seq.in.PrLSt.omega} is a 1-localization sequence in $\PrLSt$.  Thereafter, by \Cref{one.loc.sequences.in.PrLSt.and.PrRSt} (and using the notation of \Cref{obs.Mot.is.a.two.functor}) it suffices to check instead that the sequence of adjunctions
\[
\begin{tikzcd}[column sep = 2cm]
\Mot(\cZ)
\arrow[hook, yshift=0.9ex]{r}{\Mot^*(L)}
\arrow[leftarrow, yshift=-0.9ex]{r}[yshift=-0.2ex]{\bot}[swap]{\Mot^*(j)}
&
\Mot(\cY)
\arrow[yshift=0.9ex]{r}{\Mot^*(i)}
\arrow[hookleftarrow, yshift=-0.9ex]{r}[yshift=-0.2ex]{\bot}[swap]{\Mot^*(R)}
&
\Mot(\cX)
\end{tikzcd}
\]
in $\PrRSt$ is a 1-localization sequence.  Clearly $\Mot^*(i) \Mot^*(L) \simeq \Mot^*(Li) \simeq \Mot^*(0) \simeq 0$, so it remains to check that the middle sequence
\begin{equation}
\label{middle.sequence.of.mot.star.in.end.Y}
\Mot^*(L)\Mot^*(j)
\simeq
\Mot^*(jL)
\longra
\id_{\Mot(\cY)}
\longra
\Mot^*(iR)
\simeq 
\Mot^*(R) \Mot^*(i)
\end{equation}
is a 0-localization sequence in $\End(\Mot(\cY))$.  For this it suffices to check that given an arbitrary motive $\cM \in \Mot(\cY)$, the evaluation
\begin{equation}
\label{evaluation.at.M.of.middle.sequence.of.mot.star.in.end.Y}
\Mot^*(jL)(\cM) 
\longra 
\cM 
\longra
\Mot^*(iR)(\cM)
\end{equation}
of the sequence \Cref{middle.sequence.of.mot.star.in.end.Y} at $\cM$ is a 0-localization sequence in $\Mot(\cY)$.  In turn, for this it suffices to observe that the sequence \Cref{evaluation.at.M.of.middle.sequence.of.mot.star.in.end.Y} evaluates at an arbitrary object $Y \in \cY$ as the sequence
\[
\cM ( jL(Y) \longla Y \longla iR(Y) )
~,
\]
which is a 0-localization sequence by the assumptions that $\cM$ is a motive and \Cref{prototypical.two.loc.seq} is a 2-localization sequence.
\end{proof}

\begin{notation}
We write $\iota_2 \St_2 \xra{\univ_{2,1}} \Mot_{2,1}$ for the composite
\[
\univ_{2,1}
:
\iota_2 \St_2
\xra{\Yo}
\Fun( (\iota_2 \St_2^\omega)^\op , \Cat )
\xra{\cP_\Sp^\fin}
\Fun( (\iota_2 \St_2^\omega)^\op, \St)
\xra{L_{2,1}}
\Mot_{2,1}~.
\]
\end{notation}

\begin{remark}
\label{rmk.U.2.1.is.univ.2.1.additive.invt}
Evidently, the functor $\iota_2 \St_2 \xra{\univ_{2,1}} \Mot_{2,1}$ is a (2,1)-additive invariant.  In parallel with \Cref{rmk.U.X.is.univ.additive.invt}, we expect that it is universal. However, this would rely on the assertion (which we do not prove) that for a small 2-category $\cC \in \Cat_2$, the Yoneda embedding $\cC \ra \Fun(\cC^\op,\Cat_2)$ is the free cocompletion in an appropriate 2-categorical sense.
\end{remark}

\begin{observation}
\label{obs.Uto.pres.cpcts}
The functor $\univ_{2,1}$ preserves compact objects, being a composite of functors that preserve compact objects (the last by \Cref{lem.get.L.two.one}).
\end{observation}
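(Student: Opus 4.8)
The plan is to verify the claim one factor at a time, using the expression of $\univ_{2,1}$ displayed just above as the composite
\[
\iota_2 \St_2 \xra{\Yo} \Fun( (\iota_2 \St_2^\omega)^\op , \Cat ) \xra{\cP_\Sp^\fin} \Fun( (\iota_2 \St_2^\omega)^\op, \St) \xra{L_{2,1}} \Mot_{2,1} ~.
\]
A composite of functors that preserve compact objects again preserves compact objects, and the last factor is handled by \Cref{lem.get.L.two.one}, so it remains to treat the first two.

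First I would handle the Yoneda embedding. By \Cref{lem.omnibus.in.section.one.point.one}\Cref{omnibus.in.one.point.one.cgsm} (together with \Cref{lem.compact.in.prbl.V.cat.detected.in.underlying}, used repeatedly to move between the various underlying categories) the compact objects of $\iota_2\St_2$ are exactly the objects of $\iota_2\St_2^\omega$. For such an $\cX$, the presheaf $\Yo(\cX)$ is by construction the representable presheaf $\hom_{\iota_2\St_2}(-,\cX)$ restricted to $(\iota_2\St_2^\omega)^\op$, so it suffices to know that representable $\Cat$-valued presheaves on the small $2$-category $\iota_2\St_2^\omega$ are compact in $\Fun((\iota_2\St_2^\omega)^\op,\Cat)$. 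This follows from the ($2$-categorical) co-Yoneda lemma, which identifies $\hom(\Yo(\cX),-)$ with evaluation at $\cX$, once one invokes \Cref{prop.Fun.B.Cat.is.prbl.and.tensoring.is.ptwise} to know that this functor $2$-category is presentable with filtered colimits computed pointwise.

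For the middle factor I would argue by adjunction. The functor $\cP_\Sp^\fin\colon\Cat\to\St$ is left adjoint to the forgetful functor $\St\to\Cat$ (see \Cref{obs.St.CGSM.and.get.iota.1.St.iota.1.Cat.enr} and \Cref{rmk.sigma.infty.n.plus}), and this forgetful functor preserves filtered colimits, since a filtered colimit of stable categories along exact functors formed in $\Cat$ is again stable. Applying $\Fun((\iota_2\St_2^\omega)^\op,-)$ then produces an adjunction between $\Fun((\iota_2\St_2^\omega)^\op,\Cat)$ and $\Fun((\iota_2\St_2^\omega)^\op,\St)$ whose left adjoint is the middle factor and whose right adjoint is postcomposition with the forgetful functor; since filtered colimits in both of these functor $2$-categories are computed pointwise (using Propositions \ref{prop.Fun.B.Cat.is.prbl.and.tensoring.is.ptwise} and \ref{prop.Fun.K.St.is.presentable}), this right adjoint preserves filtered colimits, and hence the middle factor preserves compact objects. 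The point demanding the most care — which I would flag as the main obstacle — is the enriched and higher-categorical bookkeeping that pervades all of the above: one must ensure that compactness is interpreted consistently across the $3$-category $\St_2$, its underlying $2$-category, and its underlying $1$-category (the role of \Cref{lem.compact.in.prbl.V.cat.detected.in.underlying}), and that the co-Yoneda lemma and the pointwise computation of filtered colimits are genuinely available at the enriched level. Granting the cited foundational results, everything else is formal.
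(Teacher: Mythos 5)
Your proposal is correct and takes essentially the same approach as the paper, whose entire justification is the one-line remark that $\univ_{2,1}$ is a composite of three compact-object-preserving functors (the third handled by \Cref{lem.get.L.two.one}); you merely supply the routine verifications for the first two factors that the authors left implicit. One minor nomenclature slip: identifying $\hom(\Yo(\cX),-)$ with evaluation at $\cX$ is the Yoneda lemma, not the co-Yoneda lemma.
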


\subsection{From (2,1)-motives to (2,1)-ary K-theory}
\label{subsec.from.2.1.motives.to.2.1.ary.K.theory}

In this subsection, we prove \Cref{mainthm:universal.property.of.K.2.1}.

\begin{theorem}
	\label{thm:universal.property.of.K.2.1}
	Let $\cX,\cY \in \St_2$ be small stable 2-categories, and assume that $\cX$ is compact.  Then, the functor
	\[
	\iota_2 \St_2 \xra{\univ_{2,1}} \Mot_{2,1}
	\]
	determines a canonical equivalence
	\[ \hom_{\Mot_{2,1}}(\univ_{2,1}(\cX) , \univ_{2,1}(\cY)) \simeq \Kto(\Fun^\twoex(\cX,\cY)) \]
	of stable categories.
\end{theorem}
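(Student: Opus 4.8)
The plan is to categorify the proof of \Cref{thm.homs.in.Mot.of.X.are.K.theory} by one categorical level. I would first introduce the presheaf
\[
\Kto_\cY : (\iota_2\St_2^\omega)^\op \xra{\Fun^\twoex(-,\cY)} \iota_2\St_2 \xra{\Kto} \St^\idem \subseteq \St
\]
and check that it is a $(2,1)$-motive, i.e.\ an object of $\Mot_{2,1}$: it preserves zero objects and lands in $\St^\idem$ because $\Kto(\cW) = \Mot(\cW)^\omega$, and it carries $2$-localization sequences to $1$-localization sequences because $\Fun^\twoex(-,\cY)$ carries $2$-localization sequences to $2$-localization sequences — a feature of $\Fun^\twoex(-,-)$ being the internal hom of the closed symmetric monoidal $3$-category $\St_2$ (\Cref{lem.omnibus.in.section.one.point.one}), with the conditions of \Cref{def:2.localization.sequence} preserved (cf.\ \Cref{obs.two.loc.seqs.preserve.cpctness}) — while $\Kto$ carries $2$-localization sequences to $1$-localization sequences by \Cref{lem.Mot.and.K.2.1.add.invar}. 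Now, since $\cX$ is compact, $\univ_{2,1}(\cX) = L_{2,1}(\cP_\Sp^\fin(\Yo\cX))$ is a compact object of $\Mot_{2,1}$ (\Cref{obs.Uto.pres.cpcts}), and combining the ($\St$-enriched) adjunction $L_{2,1}\dashv i$ of \Cref{lem.get.L.two.one} with the fact that $\cP_\Sp^\fin(\Yo\cX)$ corepresents evaluation at $\cX$ among $\St$-valued presheaves — the (weighted) enriched co-Yoneda lemma, using that $\cP_\Sp^\fin$ is the free-stable-category functor and the free-presentable-enriched-category machinery of \Cref{app.some.enriched.category.theory} — gives, for any $\cM\in\Mot_{2,1}$, a natural equivalence $\hom_{\Mot_{2,1}}(\univ_{2,1}(\cX),\cM)\simeq(i\cM)(\cX)$ of stable categories. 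Thus it suffices to produce a natural equivalence $\univ_{2,1}(\cY)\simeq\Kto_\cY$ in $\Mot_{2,1}$, after which the theorem follows by evaluating at $\cX$.

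To build a natural comparison $\theta_\cY : \univ_{2,1}(\cY)\to\Kto_\cY$, unwind $\univ_{2,1} = L_{2,1}\circ\cP_\Sp^\fin\circ\Yo$: by $L_{2,1}\dashv i$ and the adjunction $\cP_\Sp^\fin\dashv\fgt$, such a morphism corresponds to a natural transformation of $\Cat$-valued presheaves $\iota_1\Fun^\twoex(-,\cY)\Rightarrow\iota_1\Kto_\cY$, and I would take the one given at each $\cX\in\St_2^\omega$ by the primary universal additive invariant (\Cref{rmk.U.X.is.univ.additive.invt}) of the stable $2$-category $\Fun^\twoex(\cX,\cY)$, namely $\iota_1\univ_{\Fun^\twoex(\cX,\cY)} : \iota_1\Fun^\twoex(\cX,\cY)\to\iota_1\Mot(\Fun^\twoex(\cX,\cY))^\omega$; this indeed takes values in compact objects, since $\Sigma^\infty_+$ of a representable is a compact spectral presheaf and $L_{\Fun^\twoex(\cX,\cY)}$ preserves compact objects. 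Because $\Mot_{2,1}\subseteq\Fun((\iota_2\St_2^\omega)^\op,\St)$ is a full sub-$2$-category, equivalences are detected pointwise, so $\theta_\cY$ being an equivalence reduces to showing that $\theta_\cY(\cX):(\univ_{2,1}\cY)(\cX)\to\Kto(\Fun^\twoex(\cX,\cY))$ is an equivalence of stable categories for every $\cX\in\St_2^\omega$.

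I would prove this last step by running the argument in the proof of \Cref{thm.homs.in.Mot.of.X.are.K.theory} one categorical level up. In place of the Waldhausen $S_\bullet$-construction on stable categories, one uses a categorified $S_\bullet$-type construction on stable $2$-categories — the simplicial stable $2$-category of flags of $1$-localization sequences, whose length-one instance is the $2$-localization sequence $\cW\adjarr\oneLoc(\cW)\adjarr\cW$ of \Cref{ex.of.two.locs}\Cref{ex.two.loc.seq.involving.one.loc} (recall \Cref{obs.describe.oneLoc.of.a.stable.two.cat}) — applied levelwise to the hom-$2$-category $\Fun^\twoex(-,\cY)$; one forms the evident $2$-localization sequence $\const\cY\to\mathsf{Path}(\cdots)\to\cdots$ of such simplicial stably-enriched functors, pushes it to $\Mot_{2,1}$ along $\univ_{2,1}$, geometrically realizes, and invokes $(2,1)$-additivity (\Cref{lem.Mot.and.K.2.1.add.invar}) together with an extra-degeneracy argument to identify both $\univ_{2,1}\cY$ and $\Kto_\cY$ with the same colimit. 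I expect this to be the main obstacle: it is the categorified analogue of the Waldhausen additivity theorem (and of the step imported from \cite{HStwo} in the proof of \Cref{thm.homs.in.Mot.of.X.are.K.theory}), and — having no prior reference to cite — it requires carefully setting up the categorified $S_\bullet$-construction and verifying its levelwise $1$-localization properties from scratch, using \Cref{obs.one.loc.in.St.iff.both.directions.fiber}. Granting this, the theorem follows by assembling $\hom_{\Mot_{2,1}}(\univ_{2,1}(\cX),\univ_{2,1}(\cY))\simeq(\univ_{2,1}\cY)(\cX)\xra{\theta_\cY(\cX)}\Kto_\cY(\cX)=\Kto(\Fun^\twoex(\cX,\cY))$, with canonicity inherited from $\theta$ and the co-Yoneda equivalence.
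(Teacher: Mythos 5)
The reduction to showing $\univ_{2,1}(\cY) \simeq \Kto_\cY$ in $\Mot_{2,1}$ is sound, and your comparison morphism is essentially the paper's map $u \colon \cP_\cY \to \Kto_\cY$ (the composite of the inclusion of finite presheaves into compact ones followed by the motivic localization $L_{\Fun^\twoex(-,\cY)}$). Where you diverge from the paper — and where the genuine gap lies — is in the final step, for which you propose ``running the proof of \Cref{thm.homs.in.Mot.of.X.are.K.theory} one categorical level up'': you would need a secondary $S_\bullet$-construction on stable $2$-categories and a categorified Waldhausen additivity theorem. You flag this yourself as ``the main obstacle,'' but it is more than an obstacle: no such construction exists, and the paper explicitly identifies a secondary $S_\bullet$-construction as an open problem (\Cref{subsubsection.S.dot.constrn}). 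One also cannot hide behind ``equivalences in $\Mot_{2,1}$ are detected pointwise'': while formally true, the pointwise value $(\univ_{2,1}\cY)(\cX) = (L_{2,1}\cP_\cY)(\cX)$ is not given by any explicit formula (the Bousfield localization $L_{2,1}$ is global, not pointwise, across the presheaf category), so ``check it at $\cX$'' does not by itself make the problem more tractable.

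The paper's actual argument sidesteps the $S_\bullet$-construction entirely, by verifying directly that $u$ is a unit for the adjunction $L_{2,1} \adj i$ of \Cref{lem.get.L.two.one} — i.e.\ that $\Kto_\cY$ already enjoys the universal property of $L_{2,1}\cP_\cY$. Two reductions make this tractable: since $u$ is a componentwise epimorphism, one only needs \emph{existence} (not uniqueness) of factorizations $\cP_\cY \to \cM$ through $u$; and since each $\cM(\cX)$ is idempotent-complete, this reduces to two checks on each $\alpha_\cX \colon \cP_\cY(\cX) \to \cM(\cX)$: that $\alpha_\cX$ annihilates $\Sigma^\infty_+\Yo(0)$, and that it sends composites arising from $1$-localization sequences in $\Fun^\twoex(\cX,\cY)$ to cofiber sequences. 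The first is a short formal argument via the map $\cP_{0_{\St_2}} \to \cP_\cY$. The second, which is the categorified additivity input you were missing, is proved without any simplicial apparatus: the $1$-localization sequence in question corresponds via $\Fun^\twoex(\oneLoc,-)$-adjunction to a functor $\cX \to \oneLoc(\cY)$, which by compactness of $\cX$ and \Cref{lem.one.Loc} factors through some compact $\oneLoc(\cY_i)$; applying $\cM$ to the single canonical $2$-localization sequence $\cY_i \rightleftarrows \oneLoc(\cY_i) \rightleftarrows \cY_i$ of \Cref{ex.of.two.locs}\Cref{ex.two.loc.seq.involving.one.loc} produces a $1$-localization sequence in $\St$ whose middle $0$-localization sequence, evaluated at $\cM(\ev_1)(\alpha_i)$, gives exactly the required cofiber square. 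In short: the paper trades ``compute both sides by realization'' for ``verify the universal property at the two generating relations,'' and this is precisely what avoids the need for the (unavailable) secondary $S_\bullet$-machinery your plan rests on.
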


\begin{remark}
In direct analogy with \Cref{rem.that.thm.homs.in.Mot.of.X.are.K.theory.can.be.coherent}, we expect that \Cref{thm:universal.property.of.K.2.1} can be upgraded to be more homotopy coherent.
\end{remark}

\begin{proof}[Proof of \Cref{thm:universal.property.of.K.2.1}]
	Consider the objects
	\[
	\cP_\cY
	:=
	\cP_\Spectra^\fin ( \iota_1 \Fun^\twoex ( - , \cY ))
	\simeq
	\cP_\Spectra^\fin ( \hom_{\iota_2 \St_2} ( - , \cY ) )
	\]
	and
	\[
	\Kto_\cY
	:=
	\Kto(\Fun^\twoex(-,\cY))
	:=
	\Mot(\Fun^\twoex( - , \cY ) )^\omega
	\]
	of $\Fun((\iota_2\St_2^\omega)^\op,\St)$.  It is immediate that $\Kto_\cY$ is in fact a (2,1)-motive, as $\Fun^\twoex(-,\cY)$ evidently preserves zero objects and 2-localization sequences and $\iota_2 \St_2 \xra{\Kto} \St^\idem$ is a (2,1)-additive invariant by \Cref{lem.Mot.and.K.2.1.add.invar}.  We note for future reference that the construction $\cY \mapsto \cP_\cY$ evidently assembles as a functor $\iota_2 \St_2 \xra{\cP_{(-)}} \Fun((\iota_2\St_2^\omega)^\op,\St)$.

Now, consider the morphism $\cP_\cY \xlongra{u} \Kto_\cY$ in $\Fun((\iota_2\St_2^\omega)^\op,\St)$ given by the composite
\[
\begin{tikzcd}[row sep = 0.3cm]
\cP_\cY 
\arrow[phantom]{d}[sloped]{:=}
\arrow[dashed]{rr}{u}
&
&[2cm]
\Kto_\cY 
\arrow[phantom]{d}[sloped]{:=}
\\
\Fun ( \iota_1 \Fun^\twoex(-,\cY)^\op , \Spectra )^\fin 
\arrow[hook]{r}[swap]{v}
&
\Fun ( \iota_1 \Fun^\twoex(-,\cY)^\op , \Spectra )^\omega
\arrow{r}[swap]{\left(L_{\Fun^\twoex(-,\cY)}\right)^\omega}
&
\Mot(\Fun^\twoex(-,\cY))^\omega
\end{tikzcd}
~,
\]
where the second functor results from \Cref{obs.Mot.lands.in.PrLSt.omega}. By the adjunction
\begin{equation}
\label{adjn.from.St.valued.pshvs.on.itwoSttwo.to.Mottwoone}
\begin{tikzcd}[column sep=1.5cm]
\Fun((\iota_2 \St_2^\omega)^\op , \St)
\arrow[yshift=0.9ex]{r}{L_{2,1}}
\arrow[hookleftarrow, yshift=-0.9ex]{r}[yshift=-0.2ex]{\bot}
&
\Mot_{2,1}
\end{tikzcd}
\end{equation}
of \Cref{lem.get.L.two.one} and the assumption that $\cX$ is compact, we have a natural equivalence
	\[
	\hom_{\Mot_{2,1}}(\univ_{2,1} \cX , \univ_{2,1} \cY )
	\simeq
	(L_{2,1} \cP_\cY)(\cX)
	~. \]
	To prove the theorem, it therefore suffices to show that the morphism $u$ is the component at $\cP_\cY$ of the unit of the adjunction \Cref{adjn.from.St.valued.pshvs.on.itwoSttwo.to.Mottwoone}.  For this, we will show that for any $(2,1)$-motive $\cM \in \Mot_{2,1} \subseteq \Fun( ( \iota_2 \St_2^\omega)^\op , \St)$ and any morphism $\cP_\cY \xra{\alpha} \cM$, there exists a unique factorization
	\[ \begin{tikzcd}
	\cP_\cY
	\arrow{r}{\alpha}
	\arrow{d}[swap]{u}
	&
	\cM
	\\
	\Kto_\cY
	\arrow[dashed]{ru}
	\end{tikzcd}~.\footnote{Note that the adjunction \Cref{adjn.from.St.valued.pshvs.on.itwoSttwo.to.Mottwoone} is $\iota_1 \St$-enriched, so that unenriched initiality implies enriched initiality (because $0_\St \in \iota_1 \St$ is the only stable category with a single equivalence class of objects).} \]
We observe that the morphism $\cP_\cY \xra{u} \Kto_\cY$ is an epimorphism (because it is a componentwise epimorphism),\footnote{Note that a morphism in $\PrLSt_\omega$ whose right adjoint in $\what{\Cat}$ is fully faithful defines an epimorphism in $\what{\Cat}$ and so is itself an epimorphism in the subcategory $\PrLSt_\omega \subset \what{\Cat}$.} and so it suffices to prove that there exists a factorization
\[ \begin{tikzcd}[row sep=1.5cm]
\cP_\cY(\cX)
:=
&[-1.1cm]
\Fun(\iota_1\Fun^\twoex(\cX,\cY)^\op , \Sp)^\fin
\arrow{r}{\alpha_\cX}
\arrow{d}[swap]{u_\cX}
&
\cM(\cX)
\\
\Kto_\cY(\cX)
:=
\hspace{-0.8cm}
&
\Mot(\Fun^\twoex(\cX,\cY))^\omega
\arrow[dashed]{ru}
\end{tikzcd} \]
for every $\cX \in \St_2^\omega$.  As $\cM(\cX)$ is idempotent-complete, $\alpha_\cX$ admits an extension along $v_\cX$. So it remains to show that this extension admits a further extension along $(L_{\Fun^\twoex(\cX,\cY)})^\omega$. For this, it suffices to prove the following two claims, which make reference to the factorization
\[ \begin{tikzcd}[row sep=1.5cm]
\iota_1\Fun^\twoex(\cX,\cY)
\arrow{r}{\Yo}
\arrow[dashed]{rrd}[sloped, anchor=north, swap]{\Sigma^\infty_+ \Yo}
&
\Fun(\iota_1\Fun^\twoex(\cX,\cY)^\op,\Spaces)
\arrow{r}{\Sigma^\infty_+}
&
\Fun(\iota_1\Fun^\twoex(\cX,\cY)^\op,\Sp)
\\
&
&
\Fun(\iota_1\Fun^\twoex(\cX,\cY)^\op , \Sp)^\fin
\arrow[hook]{u}
\end{tikzcd} ~. \]
\begin{enumerate}
\item The functor $\alpha_\cX$ carries the object
\[
\Sigma^\infty_+ \Yo( 0_{\Fun^\twoex(\cX,\cY)} )
\in
\Fun(\iota_1\Fun^\twoex(\cX,\cY)^\op , \Sp)^\fin
\]
to the zero object of $\cM(\cX)$.
\end{enumerate}
In order to state the second claim, choose any 1-localization sequence
\begin{equation}
\label{arbitrary.one.loc.seq.in.Fun.twoex.X.Y}
\begin{tikzcd}[column sep=1.5cm]
F
\arrow[hook, yshift=0.9ex]{r}{i}
\arrow[leftarrow, yshift=-0.9ex]{r}[yshift=-0.2ex]{\bot}[swap]{R}
&
G
\arrow[yshift=0.9ex]{r}{L}
\arrow[hookleftarrow, yshift=-0.9ex]{r}[yshift=-0.2ex]{\bot}[swap]{j}
&
H
\end{tikzcd}
\end{equation}
in $\Fun^\twoex(\cX,\cY)$.  As a result of the first claim, the composite
\begin{equation}
\label{spectralized.Yo.of.a.one.loc.seq.in.Fun.twoex.X.Y}
\Sigma^\infty_+ \Yo(F)
\xra{\Sigma^\infty_+ \Yo(i)}
\Sigma^\infty_+ \Yo(G)
\xra{\Sigma^\infty_+ \Yo(L)}
\Sigma^\infty_+ \Yo(H)
\end{equation}
in $\Fun(\iota_1\Fun^\twoex(\cX,\cY)^\op , \Sp)^\fin$ comes equipped with a canonical nullhomotopy.
\begin{enumerate}
\setcounter{enumi}{1}
\item The functor $\alpha_\cX$ carries the composite \Cref{spectralized.Yo.of.a.one.loc.seq.in.Fun.twoex.X.Y} to a cofiber sequence in $\cM(\cX)$.
\end{enumerate}

We begin with the first claim.  Observe first the equivalence
\[
\Sigma^\infty_+ \Yo\left( 0_{\Fun^\twoex(\cX,\cY)} \right)
\simeq
\const_\SS
\]
in $\Fun(\iota_1\Fun^\twoex(\cX,\cY)^\op , \Sp)^\fin$.  Observe too that the unique morphism $0_{\St_2} \ra \cY$ in $\St_2$ determines a morphism
\[
\const_{\Sp^\fin}
\simeq
\cP_{0_{\St_2}} \longra \cP_\cY
\]
in $\Fun((\iota_2\St_2^\omega)^\op,\St)$, whose component at $\cX \in \iota_2\St_2^\omega$ is the morphism
\begin{equation}
\label{morphism.on.Ps.from.0Sttwo.to.Y}
\Sp^\fin
\longra
\cP_\cY(\cX)
:=
\Fun(\iota_1\Fun^\twoex(\cX,\cY)^\op , \Sp)^\fin
\end{equation}
in $\St$ that classifies the object $\Sigma_+^\infty\Yo\left(0_{\Fun^\twoex(\cX,\cY)}\right)$.  We thus obtain a composite morphism
\begin{equation}
\label{composite.morphism.from.unoSt.to.M}
\const_{\Sp^\fin}
\simeq
\cP_{0_{\St_2}}
\xra{\Cref{morphism.on.Ps.from.0Sttwo.to.Y}}
\cP_\cY
\xra{\alpha}
\cM
\end{equation}
in $\Fun((\iota_2\St_2^\omega)^\op,\St)$ whose component at $\cX \in \iota_2\St_2^\omega$ is the morphism
\begin{equation}
\label{component.at.X.of.composite.morphism.from.unoSt.to.M}
\Sp^\fin
\longra
\cM(\cX)
\end{equation}
in $\St$ that classifies the object 
\begin{equation}
\label{object.classified.by.component.at.X.of.composite.morphism.from.unoSt.to.M}
\alpha_\cX\left(\Sigma^\infty_+ \Yo\left( 0_{\Fun^\twoex(\cX,\cY)} \right)\right) \in \cM(\cX)~.
\end{equation}
In light of the equivalence
\[
\hom_{\Fun(\iota_1\Fun^\twoex(\cX, \cY)^\op, \Sp)}(\cP_{0_{\St_2}}, \cM)
\simeq
\hom_{\Mot_{2,1}} ( L_{2,1} ( \cP_{0_{\St_2}} ) , \cM )
\simeq
\cM(0_{\St_2})
\simeq
0_\St
~,
\]
we find that the morphism \Cref{composite.morphism.from.unoSt.to.M} must be the zero morphism, so that the morphism \Cref{component.at.X.of.composite.morphism.from.unoSt.to.M} must be the zero morphism, so that the object \Cref{object.classified.by.component.at.X.of.composite.morphism.from.unoSt.to.M} must be the zero object.

We now turn to the second claim. A 1-localization sequence \Cref{arbitrary.one.loc.seq.in.Fun.twoex.X.Y} in $\Fun^\twoex(\cX,\cY)$ is classified by a functor 
\begin{equation}
\label{functor.from.oneLoc.to.Funtwoex.X.Y}
\oneLoc 
\longra 
\Fun^\twoex(\cX, \cY)~.
\end{equation}
This is recorded by a functor
\begin{equation}
\label{functor.from.square.classifying.FGH}
[1]\times[1] \longra \iota_1 \Fun^\twoex(\cX, \cY)~,
\end{equation}
which composes to a functor
\begin{equation}
\label{functor.classifying.cofiber.sequence.in.m.x.with.F.G.H}
[1]\times[1] 
\xra{\Cref{functor.from.square.classifying.FGH}} 
\iota_1\Fun^\twoex(\cX, \cY) 
\longra
\cP_\cY(\cX) 
\xra{\alpha_\cX} 
\cM(\cX)
\end{equation}
selecting a commutative square
\begin{equation}
\label{commutative.square.in.m.x.from.F.G.H}
\begin{tikzcd}[column sep=1.5cm, row sep=1.5cm]
&[-1.7cm]
\alpha_\cX \left( \Sigma^\infty_+ \Yo(F) \right)
\arrow{r}{\alpha_\cX \left( \Sigma^\infty_+ \Yo(i) \right)}
\arrow{d}
&
\alpha_\cX \left( \Sigma^\infty_+ \Yo(G) \right)
\arrow{d}{\alpha_\cX \left( \Sigma^\infty_+ \Yo(L) \right)}
\\
0_{\cM(\cX)}
\simeq
&
\alpha_\cX \left( \Sigma^\infty_+ \Yo(0_{\Fun^\twoex(\cX,\cY)}) \right)
\arrow{r}
&
\alpha_\cX \left( \Sigma^\infty_+ \Yo(H) \right)
\end{tikzcd}~.
\end{equation}
(Here, the bottom left equivalence follows from the first claim.) It remains to show that \Cref{commutative.square.in.m.x.from.F.G.H} is a cofiber sequence.

Let us write
\[
\cX
\xra{FGH}
\Fun^\twoex(\oneLoc,\cY)
=:
\oneLoc(\cY)
\]
for the functor corresponding to the functor \Cref{functor.from.oneLoc.to.Funtwoex.X.Y} (recall \Cref{lem.omnibus.in.section.one.point.one}\Cref{omnibus.in.one.point.one.symm.mon}). By \Cref{lem.omnibus.in.section.one.point.one}\Cref{omnibus.in.one.point.one.cgsm}, we may write $\cY \simeq \colim_{i \in \cI}(\cY_i)$ for some filtered diagram $\cI \xra{\cY_\bullet} \St_2^\omega$ of compact stable 2-categories.  By \Cref{lem.one.Loc}, we obtain an equivalence $\oneLoc(\cY) \simeq \colim_{i \in \cI}(\oneLoc(\cY_i))$. Since $\cX \in \St_2$ is compact, the functor $FGH$ factors through some $\oneLoc(\cY_i)$.  Applying \Cref{obs.two.loc.seqs.preserve.cpctness} to the 2-localization sequence
\begin{equation}
\label{two.loc.sequences.with.one.Loc.Yi}
\begin{tikzcd}[column sep=1.5cm]
\cY_i
\arrow[hook, yshift=0.9ex]{r}{\id\da0}
\arrow[leftarrow, yshift=-0.9ex]{r}[yshift=-0.2ex]{\bot}[swap]{\ev_0}
&
\oneLoc(\cY_i)
\arrow[yshift=0.9ex]{r}{\ev_2}
\arrow[hookleftarrow, yshift=-0.9ex]{r}[yshift=-0.2ex]{\bot}[swap]{0\da\id}
&
\cY_i
\end{tikzcd}
\end{equation}
of \Cref{ex.of.two.locs}\Cref{ex.two.loc.seq.involving.one.loc}, we find that $\oneLoc(\cY_i) \in \St_2$ is compact, so that we may apply $\cM$ to it.\footnote{This argument would be slightly simplified by a theory of ind-extension for 2-categories.} Now, in order to show that \Cref{commutative.square.in.m.x.from.F.G.H} is a cofiber sequence, we procure a lift
\begin{equation}
\label{lift.of.commutative.square.along.M.FGH}
\begin{tikzcd}[column sep = 1.5cm, row sep = 1.5cm]
&
\cM(\oneLoc(\cY_i))
\arrow{d}{\cM(FGH)}
\\
{[1] \times [1]}
\arrow[dashed]{ru}
\arrow{r}[swap]{\Cref{functor.classifying.cofiber.sequence.in.m.x.with.F.G.H}}
&
\cM(\cX)
\end{tikzcd}
\end{equation}
that selects a cofiber sequence in $\cM(\oneLoc(\cY_i))$. Indeed, applying $\cM$ to the 2-localization sequence \Cref{two.loc.sequences.with.one.Loc.Yi} yields a 1-localization sequence
\begin{equation}
\label{one.localization.sequence.m.of.y.to.oneloc.y.to.y}
\begin{tikzcd}[column sep=1.5cm]
\cM(\cY_i)
\arrow[hook, yshift=0.9ex]{r}{\cM(\ev_0)}
\arrow[leftarrow, yshift=-0.9ex]{r}[yshift=-0.2ex]{\bot}[swap]{\cM(\id\da 0)}
&
\cM(\oneLoc(\cY_i))
\arrow[yshift=0.9ex]{r}{\cM(0 \da \id)}
\arrow[hookleftarrow, yshift=-0.9ex]{r}[yshift=-0.2ex]{\bot}[swap]{\cM(\ev_2)}
&
\cM(\cY_i)
\end{tikzcd} 
\end{equation}
in $\St$ (recall that $\cM$ is 1-contravariant but 2-covariant). The middle 0-localization sequence in the 1-localization sequence \Cref{one.localization.sequence.m.of.y.to.oneloc.y.to.y} is the square 
\begin{equation}
\label{middle.0.loc.sequence.in.m.of.y.to.oneloc.y.to.y}
\begin{tikzcd}[column sep = 1.5cm, row sep = 1.5cm]
\cM((\id\da0) \circ \ev_0) 
\arrow{r}
\arrow{d}
&
\id_{\cM(\oneLoc(\cY_i))} 
\arrow{d}
\\
0_{\End_\St(\cM(\oneLoc(\cY_i)))}
\arrow{r}
&
\cM((0\da\id) \circ \ev_2) 
\end{tikzcd}
\end{equation}
in $\End_\St(\cM(\oneLoc(\cY_i)))$.  Let us consider the composite morphism $\alpha_i : \cP_{\cY_i} \ra \cP_\cY \xra{\alpha} \cM$ as an object $\alpha_i \in \cM(\cY_i)$.  Then, evaluating the square \Cref{middle.0.loc.sequence.in.m.of.y.to.oneloc.y.to.y} at the object $\cM(\ev_1)(\alpha_i) \in \cM(\oneLoc(\cY_i))$, we obtain a 0-localization sequence \Cref{zero.loc.seq.in.M.of.1loc.of.Y.i}
\begin{figure}[h]
\begin{equation}
\label{zero.loc.seq.in.M.of.1loc.of.Y.i}
\begin{tikzcd}[row sep=1.5cm]
\cM(\ev_0)(\alpha_i)
\\[-1.5cm]
\rotatebox{90}{$\simeq$}
\\[-1.5cm]
\cM(\ev_1 \circ (\id \da 0) \circ \ev_0)
\\[-1.5cm]
\rotatebox{90}{$\simeq$}
\\[-1.5cm]
\cM((\id\da0) \circ \ev_0) ( \cM(\ev_1)(\alpha_i))
\arrow{r}
\arrow{d}
&
\cM(\ev_1)(\alpha_i)
\arrow{d}
\\
0_{\cM(\oneLoc(\cY))}
\arrow{r}
&
\cM((0\da\id) \circ \ev_2) ( \cM(\ev_1)(\alpha_i))
\\[-1.5cm]
&
\rotatebox{90}{$\simeq$}
\\[-1.5cm]
&
\cM(\ev_1 \circ ( 0 \da \id) \circ \ev_2)(\alpha_i)
\\[-1.5cm]
&
\rotatebox{90}{$\simeq$}
\\[-1.5cm]
&
\cM(\ev_2)(\alpha_i)
\end{tikzcd}
\end{equation}
\end{figure}
in $\cM(\oneLoc(\cY_i))$ that defines a lift \Cref{lift.of.commutative.square.along.M.FGH}, as desired.
\end{proof}

\subsection{From 2-motives to secondary K-theory}
\label{subsec.from.2.motives.to.secondary.K.theory}

In this subsection, we (re)introduce secondary K-theory and 2-motives, and then we prove \Cref{mainthm:locating.K.2}.

\begin{definition}
\label{def.secondary.K.theory}
Let $\cX \in \St_2$ be a small stable 2-category. The \bit{secondary K-theory} of $\cX$ is the spectrum 
\[
\Ktwo := \K(\Kto(\cX))~.
\]
\end{definition}

\begin{definition}
\label{defn.two.motives}
We define the stable category of \bit{2-motives} to be
\[
\Mot_2
:=
\Mot(\Mot_{2,1}^\omega)
~,
\]
the category of motives over the small stable 2-category of compact $(2,1)$-motives.
\end{definition}

\begin{notation}
\label{notn.U.2}
We write $\univ_2$ for the composite
\[
\univ_2
:
\iota_1 \St_2^\omega
\xra{\iota_1 \univ_{2,1}^\omega}
\iota_1 \Mot_{2,1}^\omega
\xra{\univ_{\Mot_{2,1}^\omega} }
\Mot(\Mot_{2,1}^\omega)
=:
\Mot_2
~,
\]
where the first functor results from \Cref{obs.Uto.pres.cpcts}.
\end{notation}

\begin{theorem}
	\label{thm:locating.K.2}
	Let $\cX,\cY \in \St_2^\omega$ be compact stable 2-categories.  
	Then, the functor
	\[
	\iota_1 \St_2^\omega
	\xra{\univ_2}
	\Mot_2
	\]
	determines a canonical equivalence
	\[
	\hom_{\Mot_2}(\univ_2(\cX),\univ_2(\cY))
	\simeq
	\Ktwo(\Fun^\twoex(\cX,\cY))
	\]
	of spectra.
\end{theorem}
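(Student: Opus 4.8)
The plan is to deduce \Cref{thm:locating.K.2} by daisy-chaining Theorems \ref{thm.homs.in.Mot.of.X.are.K.theory} and \ref{thm:universal.property.of.K.2.1}; no essentially new input is needed beyond bookkeeping. Recall from \Cref{defn.two.motives} that $\Mot_2 = \Mot(\Mot_{2,1}^\omega)$ and from \Cref{notn.U.2} that $\univ_2 = \univ_{\Mot_{2,1}^\omega} \circ \iota_1\univ_{2,1}^\omega$, so that $\univ_2(\cX) = \univ_{\Mot_{2,1}^\omega}(\univ_{2,1}^\omega(\cX))$ and likewise for $\cY$. The compactness hypotheses on $\cX$ and $\cY$ enter first of all to make these expressions meaningful: by \Cref{obs.Uto.pres.cpcts} the functor $\univ_{2,1}$ preserves compact objects, so $\univ_{2,1}^\omega(\cX), \univ_{2,1}^\omega(\cY)$ are genuinely objects of the small stable 2-category $\Mot_{2,1}^\omega$.

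First I would apply \Cref{thm.homs.in.Mot.of.X.are.K.theory} to the small stable 2-category $\Mot_{2,1}^\omega$ and to these two objects, obtaining a canonical equivalence of spectra
\[
\hom_{\Mot_2}(\univ_2(\cX), \univ_2(\cY)) \;\simeq\; \K\!\left( \hom_{\Mot_{2,1}^\omega}\!\left(\univ_{2,1}^\omega(\cX),\, \univ_{2,1}^\omega(\cY)\right) \right).
\]
Since $\Mot_{2,1}^\omega \subseteq \Mot_{2,1}$ is a full sub-2-category, the hom-category appearing inside $\K(-)$ coincides, as a stable category, with $\hom_{\Mot_{2,1}}(\univ_{2,1}(\cX), \univ_{2,1}(\cY))$.

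Next I would invoke \Cref{thm:universal.property.of.K.2.1} --- which applies because $\cX$ is compact --- to identify this stable category with $\Kto(\Fun^\twoex(\cX, \cY))$ canonically. Combining this with the previous display, and noting that $\Fun^\twoex(\cX,\cY) \in \St_2$ is again a small stable 2-category (it is the internal hom of the closed symmetric monoidal 3-category $\St_2$ of \Cref{lem.omnibus.in.section.one.point.one}), we arrive at
\[
\hom_{\Mot_2}(\univ_2(\cX), \univ_2(\cY)) \;\simeq\; \K\!\left(\Kto(\Fun^\twoex(\cX, \cY))\right) \;=:\; \Ktwo(\Fun^\twoex(\cX, \cY)),
\]
the last step being the definition of secondary K-theory (\Cref{def.secondary.K.theory}). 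For the final clause of the statement, taking $\cX = \uno_{\St_2}$ and using the unit identification $\Fun^\twoex(\uno_{\St_2}, \cY) \simeq \cY$ yields $\hom_{\Mot_2}(\univ_2(\uno_{\St_2}), \univ_2(\cY)) \simeq \Ktwo(\cY)$.

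I do not expect a genuinely hard step here: all of the substance is packaged into the two cited theorems, and the remaining checks --- that $\Mot_{2,1}^\omega$ is a small stable 2-category (recorded already in \Cref{defn.two.motives}, and a consequence of the presentability of $\Mot_{2,1}$ via \Cref{lem.get.L.two.one}), that the chosen objects lie in $\Mot_{2,1}^\omega$, and that passing to the full sub-2-category does not disturb hom-categories --- are routine. The one thing this argument does not deliver is naturality of the equivalence in $\cX$ and $\cY$ (or in any finer coherent structure); obtaining that would require the homotopy-coherent strengthenings flagged in \Cref{rem.that.thm.homs.in.Mot.of.X.are.K.theory.can.be.coherent}, which lie outside the present scope.
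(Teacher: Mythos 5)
Your proposal is correct and follows exactly the paper's own argument: unwind the definitions $\Mot_2 = \Mot(\Mot_{2,1}^\omega)$ and $\univ_2 = \univ_{\Mot_{2,1}^\omega}\circ\iota_1\univ_{2,1}^\omega$, apply \Cref{thm.homs.in.Mot.of.X.are.K.theory} to the small stable 2-category $\Mot_{2,1}^\omega$, identify hom-categories across the full inclusion $\Mot_{2,1}^\omega \subseteq \Mot_{2,1}$, apply \Cref{thm:universal.property.of.K.2.1}, and conclude by \Cref{def.secondary.K.theory}. Your additional remarks (compactness of $\univ_{2,1}(\cX)$ via \Cref{obs.Uto.pres.cpcts}, the caveat about naturality) are accurate and match what the paper treats as implicit.
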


\begin{proof}
We compute that
\begin{align}
\nonumber
\hom_{\Mot_2}(\univ_2(\cX),\univ_2(\cY))
& :=
\hom_{\Mot(\Mot_{2,1}^\omega)} \left( \univ_{\Mot_{2,1}^\omega} ( (\iota_1 \univ_{2,1}^\omega) ( \cX ) ) , \univ_{\Mot_{2,1}^\omega} ( (\iota_1 \univ_{2,1}^\omega) ( \cY ) ) \right)
\\
\label{use.that.homs.in.Mot.of.X.are.K.theory}
& \simeq
\K \left( \hom_{\Mot_{2,1}^\omega} ( \iota_1 \univ_{2,1}^\omega ( \cX )  ,  \iota_1 \univ_{2,1}^\omega ( \cY ) ) \right)
\\
\nonumber
& =
\K \left( \hom_{\Mot_{2,1}} ( \univ_{2,1} ( \cX )  , \univ_{2,1} ( \cY ) ) \right)
\\
\label{use.universal.property.of.K.2.1}
& \simeq
\K \left( \Kto ( \Fun^\twoex(\cX,\cY) ) \right)
\\
\nonumber
& =:
\Ktwo ( \Fun^\twoex(\cX,\cY) )
~,
\end{align}
where equivalence \Cref{use.that.homs.in.Mot.of.X.are.K.theory} follows from \Cref{thm.homs.in.Mot.of.X.are.K.theory} and equivalence \Cref{use.universal.property.of.K.2.1} follows from \Cref{thm:universal.property.of.K.2.1}.
\end{proof}

\begin{remark}
\label{rmk.functor.Utwo.prime}
In parallel with \Cref{define.2.1.additive.invt}, let us say that a functor $\iota_1\St_2 \xra{F} \cT$ to a presentable stable category is a \textit{$\cT$-valued 2-additive invariant} if it preserves zero objects and filtered colimits and takes 2-localization sequences to 0-localization sequences. In contrast with \Cref{rmk.U.2.1.is.univ.2.1.additive.invt}, we do not expect that the functor $\univ_2$ of \Cref{notn.U.2} is the universal 2-additive invariant. On the other hand, it is easy to construct a universal 2-additive invariant $\iota_1\St_2 \xra{\univ_2'} \Mot_2'$, and thereafter to obtain a commutative triangle
\[ \begin{tikzcd}
\iota_1 \St_2^\omega
\arrow{r}{\univ_2'}
\arrow{rd}[sloped, anchor=north, swap]{\univ_2}
&
\Mot_2'
\arrow[dashed]{d}
\\
&
\Mot_2
\end{tikzcd}~. \]
We do not know whether the hom-spectra in $\Mot_2'$ are also secondary K-theory spectra, i.e.\! (in view of \Cref{thm:locating.K.2}) whether the factorization is fully faithful when restricted to the image of $\iota_1 \St_2^\omega$.
\end{remark}

\appendix

\section{Some enriched category theory}
\label{app.some.enriched.category.theory}

We begin this section in \Cref{subsec.foundations.conventions} by laying out the conventions that we use throughout this paper. Thereafter, we study a number of basic aspects of enriched category theory: co/limits and adjunctions (\Cref{subsection.enriched.limits.colimits.and.adjns}), presentability (\Cref{subsection.prbl.V.cats}), compact generation (\Cref{subsection.cg.V.cats}), and semiadditivity (\Cref{subsec.semiadditive.V.categories}).

\subsection{Conventions}
\label{subsec.foundations.conventions}

In this subsection, we lay out the conventions regarding enriched and higher categories that we use throughout this paper.

\begin{convention}
\label{conv.implicit.infty}
Throughout, we use the ``implicit $\infty$'' convention. For instance, by ``category'' we mean ``$\infty$-category'' (meaning ``$(\infty,1)$-category''), by ``limit'' we mean ``$\infty$-categorical limit'', by ``$n$-category'' we mean ``$(\infty, n$)-category'', and so on.
\end{convention}

\begin{convention}
We use the foundations of category theory established in \cite{HTT,HA}, with which we assume a basic familiarity. We also use many of the notations and conventions introduced there, albeit with a few exceptions -- notably \Cref{convention.as.enriched.as.possible}.
\end{convention}

\begin{convention}
We use the device of Grothendieck universes, using the terms ``small'', ``large'', ``huge'', and ``massive'' for convenience (rather than referring to implicitly chosen strongly inaccessible cardinals). However, to avoid unnecessary repetition, we often merely state results for ``small'' objects; those that make no reference to size issues will obviously apply to ``large'' objects as well.
\end{convention}

\begin{remark}
We systematically omit routine higher-algebraic arguments that would clutter exposition (e.g.\! the associativity asserted in \Cref{obs.tensoring.gives.action}).
\end{remark}

\begin{notation}
We use the notation $\boxtimes$ to refer to any otherwise-unspecified monoidal or symmetric monoidal structure. Given an arbitrary monoidal category $\cW$, we may write $\boxtimes_\cW$ for its monoidal structure and $\uno_\cW$ for its unit object.
\end{notation}

\begin{notation}
\label{notn.V.enr.cats}
Given a monoidal category $(\cW,\boxtimes)$, we write
\[
\Cat(\cW^\boxtimes)
\]
for the large category of small $\cW$-enriched categories \cite{GH}.\footnote{We generally suppress the distinction between $\cW$-enriched categories and ``flagged'' $\cW$-enriched categories (a.k.a.\! ``categorical algebras''), as it will not be relevant for us. For example, \Cref{obs.facts.about.enr.cats}\Cref{symm.mon.str.of.Cat.V} actually describes the symmetric monoidal structure on \textit{flagged} $\cV$-enriched categories. The unique exception is the proof of \Cref{lem.if.V.is.cgsm.then.Cat.V.is.cgsm}.} When the monoidal structure $\boxtimes$ is clear, we may simply write
\[
\Cat(\cW) := \Cat(\cW^\boxtimes)
~.
\]
We also write
\[
\what{\Cat}(\cW)
:=
\what{\Cat}(\cW^\boxtimes)
\]
for the huge category of large $\cW$-enriched categories. We often use the term ``$\cW$-category'' to mean ``$\cW$-enriched category''.
\end{notation}

\begin{notation}
\label{convention.as.enriched.as.possible}
All constructions and concepts (hom-objects, co/limits, adjunctions, etc.) should be understood to be ``as enriched as possible''; we use additional notation when we wish to refer to unenriched notions. In particular, we write $\Cat$ for the large 2-category of small 1-categories, and we automatically consider stable categories as $\Spectra$-enriched. In general, given a $\cW$-category $\cC \in \Cat(\cW)$ and objects $X,Y \in \cC$, we write
\[
\hom_\cC(X,Y)
\in
\cW
\]
for the $\cW$-object of morphisms from $X$ to $Y$. Furthermore, if we wish to specifically emphasize $\cW$ (e.g.\! when we are discussing multiple enrichments), we will write
\[
\hom_\cC^\cW(X,Y)
:=
\hom_\cC(X,Y)
~.
\]
There is a single exception, described in parts \Cref{self.enrichment.of.V} and \Cref{symm.mon.str.of.Cat.V} of \Cref{obs.facts.about.enr.cats}.\footnote{Namely, we find it more convenient and more descriptive to write $\cW$ and $\ul{\cW}$ for a monoidal category and its self-enrichment, rather than writing $U(\cW)$ and $\cW$ respectively.}
\end{notation}

\begin{notation}
\label{notn.iota.k}
We use the notation $\iota_k$ to denote the ``maximal sub-$k$-category'' of an $n$ category (for any $0 \leq k \leq n$), in a way that will be given its full meaning by \Cref{define.n.cats} (see also \Cref{obs.iota.k.best.functoriality}). In particular, we write $\iota_1 \Cat$ for the large 1-category of small 1-categories.
\end{notation}

\begin{notation}
\label{convention.V.is.presentably.s.m.}
We will be interested in categories enriched in a presentably symmetric monoidal category. However, many of the basic definitions we give in this subsection apply to categories enriched in an arbitrary monoidal category. We therefore write
\[
\cV
:=
(\cV, \boxtimes)
\in
\CAlg(\iota_1 \PrL)
\]
for an arbitrary but fixed presentably symmetric monoidal category and
\[
\cW
\in
\Alg(\iota_1 \Cat)
\]
for an arbitrary but fixed monoidal category. We use these same notations but include subscripts when we wish to refer to multiple such objects.
\end{notation}

\begin{remark}
We comment on the two simplifying assumptions of \Cref{convention.V.is.presentably.s.m.}: presentability and symmetry.
\begin{enumerate}

\item Given a monoidal category $\cW \in \Alg(\Cat)$, Day convolution makes its presheaf category $\cP(\cW)$ presentably monoidal, in such a way that the Yoneda embedding
\[
\cW
\longhookra
\cP(\cW)
\]
is monoidal -- in addition to being fully faithful and limit-preserving as usual. It follows that we obtain a fully faithful inclusion
\[
\Cat(\cW)
\longhookra
\Cat(\cP(\cW))
\]
from $\cW$-categories into $\cP(\cW)$-categories. Inasmuch as enriched category theory generally makes reference to limits (but not colimits) in the enriching category, this implies that the assumption that $\cV$ be presentably monoidal entails no real loss of generality. Moreover, a presentably monoidal category is canonically self-enriched, which provides notational simplification (in contrast with the resulting enrichment of $\cW$ over $\cP(\cW)$).

\item In general, if $\cW$ is $(\cO \otimes \EE_1)$-monoidal then $\Cat(\cW)$ is $\cO$-monoidal. In particular, if $\cW$ is symmetric monoidal then so is $\Cat(\cW)$. We will ultimately only be interested in categories enriched in a symmetric monoidal category. As the general case requires some additional care (e.g.\! regarding the handedness of co/tensors), we find it convenient to restrict ourselves to the symmetric monoidal case.

\end{enumerate}
\end{remark}

\begin{notation}
\label{notn.change.enrichment}
We will at times endow a category with multiple enrichments. These will always be compatible: given a $\cW_0$-category $\cC \in \Cat(\cW_0)$ and a laxly monoidal functor $\cW_1 \ra \cW_0$, we will write
\[
\cC^{\cW_1\enr}
\in
\Cat(\cW_1)
\]
for a chosen lift through the resulting functor
\[
\Cat(\cW_1)
\longra
\Cat(\cW_0)
~.
\]
In this situation, we may also write
\[
\cC^{\cW_0\enr}
:=
\cC
\in
\Cat(\cW_0)
\]
to emphasize the original enrichment.
\end{notation}

\begin{observation}
\label{obs.facts.about.enr.cats}
We collect a number of facts about enriched categories here from \cite{GH,Haugsengbimods,macphersonoperad,Hinichyonedainfty} that we use without further comment, establishing a number of further conventions along the way.
\begin{enumerate}

\item

A laxly monoidal functor
\[
\cW_0
\xlongla{G}
\cW_1
\]
determines a functor
\[
\Cat(\cW_0)
\xla{\Cat(G)}
\Cat(\cW_1)
\]
between categories of enriched categories (given by applying $G$ hom-wise). Moreover, $\Cat(G)$ is a monomorphism if $G$ is.

\item

Given an adjunction
\[ \begin{tikzcd}[column sep=2cm]
\cW_0
\arrow[yshift=0.9ex]{r}{F}
\arrow[leftarrow, yshift=-0.9ex]{r}[yshift=-0.2ex]{\bot}[swap]{G}
&
\cW_1
\end{tikzcd}~, \]
if $F$ is monoidal then $G$ is laxly monoidal in a canonical way, and thereafter we obtain an adjunction
\[ \begin{tikzcd}[column sep=2cm]
\Cat(\cW_0)
\arrow[yshift=0.9ex]{r}{\Cat(F)}
\arrow[leftarrow, yshift=-0.9ex]{r}[yshift=-0.2ex]{\bot}[swap]{\Cat(G)}
&
\Cat(\cW_1)
\end{tikzcd}~. \]

\item\label{Spaces.enr.cats.are.cats}

Categories enriched in spaces are simply ordinary categories:
\[
\Cat(\Spaces)
\simeq
\iota_1 \Cat
~.
\]

\item\label{und.cat.of.an.enr.cat}

The functor
\[
\cW
\xra{\hom_\cW(\uno_\cW,-)}
\Spaces
\]
is laxly monoidal. We simply write
\[
\Cat(\cW)
\xlongra{U}
\Cat(\Spaces)
\simeq
\iota_1 \Cat
\]
for the functor $\Cat(\hom_\cW(\uno_\cW,-))$, which we refer to as the \bit{underlying} (\bit{unenriched}) \bit{category} functor. We note that $U(\cC)$ has the same space of objects as $\cC$; as a result, we implicitly consider objects of $\cC$ as objects of $U(\cC)$. Note in particular that objects of $\cC$ are equivalent just when they are equivalent as objects of $U(\cC)$.

\item Given a category $\cI \in \Cat$ and a $\cW$-category $\cC \in \Cat(\cW)$, we may simply write $\cI \ra \cC$ for a morphism $\cI \ra U(\cC)$ to the underlying category of $\cC$. In particular, this gives meaning to the notion of an ordinary commutative diagram in an enriched category.

\item\label{self.enrichment.of.V}

As $\cV$ is presentably monoidal, it admits an internal hom, and hence we can consider it as self-enriched. We write
\[
\ul{\cV}
:=
\cV^{\cV\enr}
\in
\what{\Cat}(\cV)
\]
for this self-enrichment. Of course, we have $\cV \simeq U(\ul{\cV})$. 
\item\label{symm.mon.str.of.Cat.V}

The fact that $\cV$ is presentably symmetric monoidal implies that $\Cat(\cV)$ is also presentably symmetric monoidal. Its symmetric monoidal structure (which, by abuse of notation, we also denote by $\boxtimes$) is described by the formulas
\[
\iota_0(\cC \boxtimes \cD) := \iota_0 \cC \times \iota_0 \cD
\qquad
\textup{and}
\qquad
\hom_{\cC \boxtimes \cD}((C_0,D_0),(C_1,D_1)) := \hom_\cC(C_0,C_1) \boxtimes \hom_\cD(D_0,D_1)
~.
\]
It follows that $\Cat(\cV)$ also admits an internal hom. As a special case of \Cref{self.enrichment.of.V}, we write $\ul{\Cat(\cV)} \in \what{\Cat}(\Cat(\cV))$ for this self-enrichment. We simply write
\[
\Fun(\cC,\cD)
:=
\hom_{\ul{\Cat(\cV)}}(\cC,\cD)
\in
\Cat(\cV)
\]
for its internal hom.

\item A $\cV$-category $\cC \in \Cat(\cV)$ admits a natural \textit{enriched Yoneda embedding}: a(n enrichedly) fully faithful embedding
\[
\cC
\xhookrightarrow{C \mapsto \hom_\cC(-,C)}
\Fun(\cC^\op , \ul{\cV})
~.
\]

\end{enumerate}
\end{observation}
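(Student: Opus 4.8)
The plan is to assemble each of the listed facts from the cited literature on enriched $\infty$-categories --- principally Gepner--Haugseng \cite{GH}, together with Haugseng \cite{Haugsengbimods}, Macpherson \cite{macphersonoperad}, and Hinich \cite{Hinichyonedainfty} --- so that no genuinely new argument is required; the coherences that we suppress are precisely those verified there.

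For (1), I would recall that in \cite{GH} a $\cW$-enriched category is encoded as a suitable Segal-type functor valued in $\cW$ (out of a variant of $\bDelta^\op$ built from the $\cW$-structure), so that a laxly monoidal functor $G$ induces $\Cat(G)$ by post-composition on hom-objects; it is a monomorphism whenever $G$ is, since post-composition with a monomorphism is a monomorphism of functor categories and the Segal and completeness conditions cutting out enriched categories are detected levelwise. For (2), doctrinal adjunction promotes the right adjoint $G$ of a monoidal functor $F$ to a lax monoidal functor, after which $\Cat(F) \dashv \Cat(G)$ follows by applying $\Cat(-)$ to the unit and counit, the triangle identities holding hom-wise.

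Items (3) and (4) are standard: $\Cat(\Spaces) \simeq \iota_1\Cat$ is the identification of \cite{GH}, and $\hom_\cW(\uno_\cW,-)$ is laxly monoidal because $\uno_\cW$ is a unit (its structure maps being assembled from the composition and unit maps of $\cW$), so that applying $\Cat(-)$ produces $U$; that $U(\cC)$ has the same space of objects as $\cC$ is immediate from the construction, and hence so is the fact that objects are equivalent in $\cC$ exactly when they are equivalent in $U(\cC)$. Item (5) is purely notational. For (6) and (7), since $\cV$ is presentably symmetric monoidal it is closed and hence canonically self-enriched \cite{GH}; moreover $\Cat(\cV)$ is again presentably symmetric monoidal with the displayed formulas on objects and hom-objects \cite{GH,Haugsengbimods,macphersonoperad}, hence closed, yielding the internal hom $\Fun(\cC,\cD)$. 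Finally, (8) is the enriched Yoneda lemma of \cite{Hinichyonedainfty}.

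The only step needing care is the mutual compatibility of these structures: for instance, that $U$ is symmetric monoidal, so that $U(\ul\cV) \simeq \cV$ as symmetric monoidal categories and the displayed monoidal structure on $\Cat(\cV)$ is genuinely compatible with $U$; and that the self-enrichment of $\cV$ and the closed structure on $\Cat(\cV)$ interact as asserted. I expect this bookkeeping to be the main (and only) obstacle; since it is exactly what the cited works supply, the proof reduces to citing them, in keeping with the standing convention that routine higher-algebraic arguments are omitted.
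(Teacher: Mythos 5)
Your proposal is correct and matches the paper's own treatment: the observation is stated without proof precisely because each item is imported from \cite{GH,Haugsengbimods,macphersonoperad,Hinichyonedainfty}, and your assembly (hom-wise application of a lax monoidal functor, doctrinal adjunction for item (2), the identification $\Cat(\Spaces)\simeq\iota_1\Cat$, lax monoidality of $\hom_\cW(\uno_\cW,-)$, closedness of $\cV$ and of $\Cat(\cV)$, and Hinich's enriched Yoneda lemma) is exactly the intended justification. Your closing remark about the compatibility bookkeeping being the only real work is likewise in line with the paper's stated convention of omitting such routine higher-algebraic verifications.
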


\begin{definition}
\label{define.n.cats}
We define a \bit{0-category} to be a space:
\[
\iota_1 \Cat_0 := \Spaces
~.
\]
We consider $\iota_1 \Cat_0$ as a symmetric monoidal category via cartesian product. Thereafter, for any $n \geq 1$, we define a (small or large) \bit{$n$-category} to be a (resp.\! small or large) category enriched in small $(n-1)$-categories:
\[
\iota_1 \Cat_n
:=
\Cat(\iota_1 \Cat_{n-1})
:=
\Cat(\iota_1 \Cat_{n-1}^\times)
\qquad
\text{and}
\qquad
\iota_1 \what{\Cat}_n
:=
\what{\Cat}(\iota_1 \Cat_{n-1})
:=
\what{\Cat}(\iota_1 \Cat_{n-1}^\times)
~.
\]
We consider $\iota_1 \Cat_n$ and $\iota_1 \what{\Cat}_n$ as symmetric monoidal categories via cartesian product.
\end{definition}

\begin{observation}
We use the following facts regarding \Cref{define.n.cats} without further comment.
\begin{enumerate}

\item Our definition of the 1-category of $n$-categories agrees with all definitions in the literature \cite{BarSPunicity}. In particular, $\iota_1 \Cat_1 \simeq \iota_1 \Cat$.

\item For every $n \geq 0$, $\iota_1 \Cat_n$ is presentably symmetric monoidal (and in particular cartesian closed) \cite{Rezk-ncats}.

\item We have a canonical equivalence
\[
\Cat
\simeq
\ul{\iota_1 \Cat_1}
\in
\iota_1 \what{\Cat}(\iota_1 \Cat_1)
=:
\iota_1 \what{\Cat}_2
~.
\]

\item The functor
\[
\iota_1 \Cat_n
\xlongra{U}
\iota_1 \Cat
\]
is simply $\iota_1$, because the functor
\[
\iota_1 \Cat_{n-1}
\xra{\hom_{\iota_1 \Cat_{n-1}} ( \uno_{\iota_1 \Cat_{n-1}} , - )}
\Spaces
\]
is simply $\iota_0$.

\end{enumerate}
\end{observation}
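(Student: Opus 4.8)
The four assertions are bookkeeping with the inductive \Cref{define.n.cats}, and the plan is to dispatch them one at a time, citing the two genuinely nontrivial inputs (\cite{BarSPunicity} and \cite{Rezk-ncats}) only where they are needed. For item (1), I would simply unwind the definition: $\iota_1\Cat_1 := \Cat(\iota_1\Cat_0^\times) = \Cat(\Spaces)$, so the equivalence $\iota_1\Cat_1 \simeq \iota_1\Cat$ is precisely \Cref{obs.facts.about.enr.cats}\Cref{Spaces.enr.cats.are.cats}. Agreement with the other definitions of $n$-categories in the literature is then the content of the unicity theorem of \cite{BarSPunicity} (applied to the $(\infty,n)$-categorical model furnished by iterated enrichment à la \cite{GH}), and requires no new argument.

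For item (2), I would induct on $n$. The base case $n=0$ is the standard fact that $\Spaces$ is presentably symmetric monoidal under $\times$. For the inductive step, assuming $\iota_1\Cat_{n-1}$ is presentably symmetric monoidal, \Cref{obs.facts.about.enr.cats}\Cref{symm.mon.str.of.Cat.V} gives that $\Cat(\iota_1\Cat_{n-1})$ is presentably symmetric monoidal with the structure computed pointwise on homs; one then invokes the fact (this is where \cite{Rezk-ncats} enters) that enriching over a cartesian base produces a cartesian monoidal structure on enriched categories, so that this structure coincides with the cartesian product on $\iota_1\Cat_n$. Cartesian closedness follows formally from presentable symmetric monoidality.

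For item (3), the self-enrichment $\ul{\iota_1\Cat_1} \in \iota_1\what{\Cat}(\iota_1\Cat_1) =: \iota_1\what{\Cat}_2$ is the one supplied by \Cref{obs.facts.about.enr.cats}\Cref{self.enrichment.of.V}, whose hom-objects are the internal homs $\Fun(\cC,\cD)$ of \Cref{obs.facts.about.enr.cats}\Cref{symm.mon.str.of.Cat.V}; the assertion $\Cat \simeq \ul{\iota_1\Cat_1}$ is that this recovers the familiar 2-category of categories, functors, and natural transformations, which I would again deduce from unicity \cite{BarSPunicity} after observing that $U(\ul{\iota_1\Cat_1}) \simeq \iota_1\Cat$ (by item (4)) and that the mapping $(\infty,1)$-categories agree with $\Fun(\cC,\cD)$ by construction. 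For item (4), I would induct once more: by \Cref{obs.facts.about.enr.cats}\Cref{und.cat.of.an.enr.cat}, the functor $U : \iota_1\Cat_n = \Cat(\iota_1\Cat_{n-1}) \to \iota_1\Cat$ is $\Cat$ applied to $\hom_{\iota_1\Cat_{n-1}}(\uno_{\iota_1\Cat_{n-1}},-)$; since the monoidal unit of $(\iota_1\Cat_{n-1},\times)$ is the terminal object, this functor corepresents objects and is therefore $\iota_0$ (for $n-1=0$ it is the identity of $\Spaces$). Applying $\Cat(-)$ hom-wise, $U = \Cat(\iota_0)$ sends an $n$-category to the $1$-category with the same objects and hom-spaces $\iota_0\hom_\cC(-,-)$, which is by definition $\iota_1\cC$.

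The main obstacle is item (3), together with the cartesianness half of item (2): these are the places where the argument genuinely leans on the comparison/unicity results for $(\infty,n)$-categories (\cite{BarSPunicity}, \cite{Rezk-ncats}) rather than on purely formal manipulation of \Cref{obs.facts.about.enr.cats} and \Cref{define.n.cats}; everything else is direct unwinding, which is why the observation states these facts ``without further comment''.
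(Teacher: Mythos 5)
This observation is not proved in the paper: it is a list of facts used ``without further comment'', justified only by the citations \cite{BarSPunicity} and \cite{Rezk-ncats} and, for item (4), by exactly the one-line reason you give (the unit of $(\iota_1\Cat_{n-1},\times)$ is terminal, so $\hom_{\iota_1\Cat_{n-1}}(\uno,-)\simeq\iota_0$, whence $U=\Cat(\iota_0)=\iota_1$). Your sketch therefore tracks the paper's intended justification: (1) is definitional unwinding via \Cref{obs.facts.about.enr.cats}\Cref{Spaces.enr.cats.are.cats} plus unicity, (2) is the cited input, and (4) is the same argument the paper records. Two soft spots worth flagging, though they are at the same level of informality the paper itself allows: in (2), the identification of the tensor of \Cref{obs.facts.about.enr.cats}\Cref{symm.mon.str.of.Cat.V} with the cartesian product when the enriching base is cartesian is a formal/standard fact about enriched categories rather than something supplied by \cite{Rezk-ncats} (which is invoked for presentability and cartesian closedness of the model of $(\infty,n)$-categories); and in (3), observing that $U(\ul{\iota_1\Cat_1})\simeq\iota_1\Cat$ and that the hom-categories are the internal homs $\Fun(\cC,\cD)$ does not by itself, even granting unicity, produce the asserted equivalence $\Cat\simeq\ul{\iota_1\Cat_1}$ --- one still needs an actual comparison functor (e.g.\ the observation that in any model the $2$-category of categories is constructed as, or identified with, the self-enrichment coming from the cartesian closed structure on $\iota_1\Cat$). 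Since the paper asserts (3) without argument, this is a gap relative to a complete proof rather than a divergence from the paper's approach.
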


\begin{notation}
We write
\[
\Cat_n
:=
\ul{\iota_1 \Cat_n}
\in
\iota_1 \what{\Cat}(\iota_1 \Cat_n)
=:
\iota_1 \what{\Cat}_{n+1}
\]
for the self-enrichment of $\iota_1 \Cat_n \in \Cat$.
\end{notation}

\begin{observation}
\label{obs.iota.k.best.functoriality}
For any $0 \leq k \leq n$, passage to maximal sub-$k$-categories defines a morphism
\[
\iota_{k+1} \Cat_n
\xlongra{\iota_k}
\Cat_k
\]
of $(k+1)$-categories. We use this fact without further comment.
\end{observation}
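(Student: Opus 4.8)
The plan is to construct $\iota_k$ in two stages: first as a functor of $1$-categories, and then to promote it one categorical level at a time. To begin, I would note that the fully faithful inclusion $\iota_1\Cat_k\hookrightarrow\iota_1\Cat_n$ (a $k$-category being in particular an $n$-category) is a morphism of cartesian symmetric monoidal categories preserving all small colimits, so by the adjoint functor theorem it admits a right adjoint, which is exactly ``maximal sub-$k$-category''; this is the desired $1$-functor $\iota_k\colon\iota_1\Cat_n\to\iota_1\Cat_k$. Being a right adjoint to a cartesian symmetric monoidal functor, $\iota_k$ preserves finite products and is therefore itself symmetric monoidal. Moreover, under the defining identification $\iota_1\Cat_n\simeq\Cat(\iota_1\Cat_{n-1})$ one checks that $\iota_k$ corresponds to $\Cat(\iota_{k-1})$, with $\iota_{k-1}\colon\iota_1\Cat_{n-1}\to\iota_1\Cat_{k-1}$ the analogous right adjoint one level down; this recursion is what will drive the induction.

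Next I would promote $\iota_k$ to a functor of $(k+1)$-categories, arguing by induction on $k$ (uniformly in $n\ge k$). The base case $k=0$ is the functor $\iota_0\colon\iota_1\Cat_n\to\Spaces=\Cat_0$ already in hand, so there is nothing further to do. For the inductive step, I would use that $\iota_k=\Cat(\iota_{k-1})$ with $\iota_{k-1}$ symmetric monoidal and, by the inductive hypothesis, underlying a functor of $k$-categories $\iota_k\Cat_{n-1}\to\Cat_{k-1}$. The symmetric monoidality of $\iota_{k-1}$ supplies canonical comparison morphisms $\iota_{k-1}(\Fun(\cC,\cD))\to\Fun(\iota_{k-1}\cC,\iota_{k-1}\cD)$ in $\iota_1\Cat_{k-1}$, natural in the $(n-1)$-categories $\cC,\cD$ and compatible with composition and units. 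Applied hom-wise, these comparison morphisms are precisely the enriching data of a $\iota_1\Cat_k$-enriched functor from $\iota_{k+1}\Cat_n=\Cat(\iota_k)(\ul{\iota_1\Cat_n})$ into $\Cat_k=\ul{\iota_1\Cat_k}$, and its underlying $1$-functor is $\iota_k$ by construction. (That the source is the maximal sub-$(k+1)$-category $\iota_{k+1}\Cat_n$ rather than $\Cat_n$ itself is forced: it is the lift of $\Cat_n$ along the enrichment-restriction functor $\Cat(\iota_1\Cat_k)\to\Cat(\iota_1\Cat_n)$, and only a $(k+1)$-category lifts there.)

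The hard part will not be any single step but the coherence bookkeeping in the inductive passage from ``comparison maps on hom-objects'' to an honest enriched functor: one must check that the comparison morphisms assemble into a morphism of $\iota_1\Cat_k$-enriched categories respecting composition, units, and all higher coherences, not merely data in low degrees. This is the usual subtlety in recognizing enriched functors, and I would discharge it using the enriched-category-theory machinery cited in \Cref{obs.facts.about.enr.cats} (notably \cite{GH,Hinichyonedainfty}); the essential input is the symmetric monoidality of $\iota_k$ established at the outset, which is exactly what guarantees that the comparison morphisms and their coherences exist.
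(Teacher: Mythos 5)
The paper offers no proof of this statement --- it is a bare observation followed by ``We use this fact without further comment'' --- so there is nothing in the paper to compare your argument against. Your overall plan (realize $\iota_k$ as the right adjoint to the cocontinuous symmetric monoidal inclusion $\iota_1\Cat_k\hookrightarrow\iota_1\Cat_n$, hence laxly symmetric monoidal, hence equipped with comparison maps on internal homs supplying the enrichment data) is sound, and it is essentially what the paper's own later appendix machinery would deliver: by \Cref{obs.consequences.of.morphism.in.CAlg.PrL} and \Cref{lem.G.a.right.adjoint.between.V.categories.iff}, a right adjoint that commutes with weak cotensors promotes automatically to an enriched functor.

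That said, the inductive step as written has a gap. The inductive hypothesis supplies comparison maps $\iota_{k-1}(\Fun(\cC,\cD))\to\Fun(\iota_{k-1}\cC,\iota_{k-1}\cD)$ for $(n-1)$-categories $\cC,\cD$, but the enrichment data for $\iota_{k+1}\Cat_n\to\Cat_k$ require comparison maps $\iota_k(\Fun(\cC,\cD))\to\Fun(\iota_k\cC,\iota_k\cD)$ in $\iota_1\Cat_k$ for $n$-categories $\cC,\cD$, and you give no mechanism for producing the latter from the former. The induction is in fact unnecessary: the needed maps come directly from lax symmetric monoidality of $\iota_k$ itself, adjoint to $\iota_k(\Fun(\cC,\cD))\times\iota_k\cC\to\iota_k(\Fun(\cC,\cD)\times\cC)\to\iota_k\cD$. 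Separately, the parenthetical claiming $\iota_{k+1}\Cat_n$ is ``the lift of $\Cat_n$ along the enrichment-restriction functor $\Cat(\iota_1\Cat_k)\to\Cat(\iota_1\Cat_n)$'' is incorrect: $\Cat_n$ does not lift along that fully faithful inclusion, since its hom-objects are honest $n$-categories; the correct description, which you also give, is $\iota_{k+1}\Cat_n=\Cat(\iota_k)(\ul{\iota_1\Cat_n})$, obtained by applying the laxly monoidal $\iota_k$ to hom-objects. Finally, the coherence step that you defer --- verifying that $\iota_k$ preserves weak cotensors, i.e.\ $\iota_k\Fun(\cA,\cC)\simeq\Fun(\cA,\iota_k\cC)$ for $\cA\in\iota_1\Cat_k$ and $\cC\in\iota_1\Cat_n$ --- is the real content of the observation and should be argued rather than cited away.
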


\begin{notation}
Given an $n$-category $\cC \in \Cat_n$ and any $0 < k \leq n$, we write $\cC^{\kop}$ for the $n$-category obtained by reversing the direction of its $k$-morphisms. In particular, we simply write $\cC^\op := \cC^\oneop$. Moreover, assuming that $n \geq 2$ we write $\cC^\onetwoop := (\cC^{\oneop})^{\twoop} \simeq (\cC^{\twoop})^{\oneop}$.
\end{notation}

\begin{definition}
\label{defn.k.full.sub.n.cat}
Fix any $0 \leq k \leq n$. Given an $n$-category $\cC \in \Cat_n$, we say that a sub-$n$-category $\cC' \subseteq \cC$ is \bit{$k$-full} if for every morphism $\cD \xra{F} \cC$ in $\Cat_n$, the spaces of factorizations
\[
\begin{tikzcd}
\iota_k \cD
\arrow{r}{\iota_k F}
\arrow[dashed]{rd}
&
\iota_k \cC
\\
&
\iota_k \cC'
\arrow[hook]{u}
\end{tikzcd}
\qquad
\text{and}
\qquad
\begin{tikzcd}
\cD
\arrow{r}{F}
\arrow[dashed]{rd}
&
\cC
\\
&
\cC'
\arrow[hook]{u}
\end{tikzcd}
\]
are equivalent (via the canonical map from the space of factorizations in $\iota_1 \Cat_n$ to the space of factorizations in $\iota_1 \Cat_k$). We refer to a 0-full sub-$n$-category simply as a \bit{full sub-$n$-category}.
\end{definition}

\begin{notation}
We define the various (huge) 2-categories appearing in the diagram
\begin{equation}
\label{diagram.define.Pr}
\begin{tikzcd}
\PrL_\omega
\arrow[hook]{r}
&
\PrL
\arrow[hook]{r}
&
\what{\Cat}
\arrow[hookleftarrow]{r}
&
\PrR
\\
\PrLSt_\omega
\arrow[hook]{u}
\arrow[hook]{r}
&
\PrLSt
\arrow[hook]{u}
&
&
\PrRSt
\arrow[hook]{u}
\end{tikzcd}
\end{equation}
as follows: ${\sf Pr}$ stands for presentable, $(-)^L$ and $(-)^R$ stand for left and right adjoint functors as 1-morphisms, $(-)^{\sf st}$ requires stability, and $(-)_\omega$ requires compact-generation and functors that preserve (not only colimits but also) compact objects. (So, in diagram \Cref{diagram.define.Pr}, all horizontal morphisms are 1-full and all vertical morphisms are 0-full.)
\end{notation}

\subsection{Limits, colimits, and adjunctions}
\label{subsection.enriched.limits.colimits.and.adjns}

In this subsection, we establish a few basic results regarding co/limits and adjunctions in the enriched context. Notably, we give criteria guaranteeing the compatibility of enriched and unenriched limits (\Cref{lem.limits.in.C.versus.UC}), and we characterize the unenriched adjunctions that arise from enriched adjunctions (\Cref{lem.G.a.right.adjoint.between.V.categories.iff}).

\begin{localass}
In this subsection, we fix a category $\cI \in \Cat$ and $\cV$-enriched categories $\cC,\cD \in \what{\Cat}(\cV)$.
\end{localass}

\begin{definition}
\label{def:enriched.limits.and.colimits}
For any functor $\cI \xra{C_\bullet} U(\cC)$ in $\what{\Cat}(\Spaces) \simeq \what{\Cat}$ to the underlying unenriched category of $\cC$, its \bit{colimit} and \bit{limit} (if they exist) are objects
\[
\colim_\cI^\cC(C_\bullet)
\qquad
\textup{and}
\qquad
\lim_\cI^\cC(C_\bullet)
\]
of $\cC$ equipped with $\cV$-natural equivalences
\[
\hom_\cC ( \colim^\cC_\cI(C_\bullet) , T )
\simeq
\lim^{\cV}_{\cI^\op} \hom_\cC ( C_\bullet , T )
\qquad
\textup{and}
\qquad
\hom_\cC ( T , \lim^\cC_\cI(C_\bullet) )
\simeq
\lim^{\cV}_\cI \hom_\cC ( T , C_\bullet )
\]
in $\cV$ for every $T \in \cC$.\footnote{Note that this is not circular, as $\lim^{\cV}_\cI$ is merely a limit in $\cV \in \what{\Cat}$.}
\end{definition}

\begin{remark}
\label{rmk.only.conical}
In the enriched category theory literature, the notions given in \Cref{def:enriched.limits.and.colimits} are referred to as \textit{conical} co/limits: the special case of a weighted co/limit where the indexing $\cV$-category is free on an unenriched category and the weight is constant at the unit object of $\cV$. We omit the term ``conical'' because these are the only sorts of co/limits that we consider. Moreover, in the case that $\cC$ is a \emph{presentable} $\cV$-category, there is no ambiguity (see \Cref{lem.equiv.conds.for.presentable.n.cat,lem.limits.in.C.versus.UC}).\footnote{For any $\cV$-category $\cC \in \what{\Cat}(\cV)$, co/limits in $\cC$ are also co/limits in $U(\cC)$. However, the converse need not hold in general.}
\end{remark}

\begin{observation}
\label{obs.conical.limits.are.limits.in.underlying}
Let $\cI \xra{C_\bullet} U(\cC)$ be a diagram. If $\lim_\cI^\cC(C_\bullet)$ (resp. $\colim_\cI^\cC(C_\bullet)$) exists, then so does $\lim_\cI^{U(\cC)}(C_\bullet)$ (resp. $\colim_\cI^{U(\cC)}(C_\bullet)$), and moreover these limits (resp. colimits) agree.
\end{observation}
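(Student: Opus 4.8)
The statement to prove is Observation~\ref{obs.conical.limits.are.limits.in.underlying}: that if the enriched (conical) limit $\lim_\cI^\cC(C_\bullet)$ exists, then the underlying-category limit $\lim_\cI^{U(\cC)}(C_\bullet)$ exists and agrees with it, and dually for colimits. The plan is to unwind \Cref{def:enriched.limits.and.colimits} and apply the laxly monoidal functor $\hom_\cV(\uno_\cV,-) : \cV \to \Spaces$ that defines $U$ (recall \Cref{obs.facts.about.enr.cats}\Cref{und.cat.of.an.enr.cat}). I will treat the limit case; the colimit case is formally dual (replace $\cC$ by $\cC^\op$, noting $U(\cC^\op) \simeq U(\cC)^\op$).

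First I would recall that, by definition, an object $\ell := \lim_\cI^\cC(C_\bullet) \in \cC$ comes equipped with a $\cV$-natural equivalence $\hom_\cC(T,\ell) \simeq \lim^\cV_\cI \hom_\cC(T,C_\bullet)$ in $\cV$, for every $T \in \cC$. Since $\hom_\cV(\uno_\cV,-)$ preserves limits (it is a right adjoint, being corepresented, and in any case preserves all limits that exist in $\cV$), applying it to both sides yields a natural equivalence
\[
\hom_{U(\cC)}(T,\ell) \;\simeq\; \hom_\cV(\uno_\cV, \hom_\cC(T,\ell)) \;\simeq\; \hom_\cV\bigl(\uno_\cV, \lim^\cV_\cI \hom_\cC(T,C_\bullet)\bigr) \;\simeq\; \lim_\cI \hom_\cV(\uno_\cV, \hom_\cC(T,C_\bullet)) \;\simeq\; \lim_\cI \hom_{U(\cC)}(T, C_\bullet)
\]
of spaces, where the outer identifications use that $\hom_{U(\cC)}(X,Y) = \hom_\cV(\uno_\cV,\hom_\cC(X,Y))$ by the definition of the underlying category, and the middle one uses that $\hom_\cV(\uno_\cV,-)$ commutes with the limit over $\cI$ in $\cV$. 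The remaining point is that this chain of equivalences is natural in $T \in U(\cC)$ — which follows because each constituent is natural (the defining $\cV$-natural equivalence restricts to a natural equivalence of underlying mapping spaces, and $\hom_\cV(\uno_\cV,-)$ is a functor). By the Yoneda lemma in $\what\Cat$, this exhibits $\ell$ together with the induced cone as the limit $\lim_\cI^{U(\cC)}(C_\bullet)$; in particular it exists and agrees with the enriched limit.

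The colimit case follows by the same argument applied to the opposite $\cV$-category $\cC^\op$, using that $U$ commutes with $(-)^\op$ and that the defining $\cV$-natural equivalence for the enriched colimit, $\hom_\cC(\colim^\cC_\cI(C_\bullet),T) \simeq \lim^\cV_{\cI^\op}\hom_\cC(C_\bullet,T)$, again becomes, after applying $\hom_\cV(\uno_\cV,-)$, the universal property of the underlying-category colimit. I do not expect any genuine obstacle here: the only thing to be careful about is the coherence/naturality bookkeeping — specifically that the equivalence $\hom_\cV(\uno_\cV, \lim^\cV_\cI(-)) \simeq \lim_\cI \hom_\cV(\uno_\cV,-)$ is the canonical one and is compatible with the cone structure — but this is exactly the kind of ``routine higher-algebraic argument'' the paper has already declared it will suppress (cf.\ the remark following \Cref{convention.as.enriched.as.possible}), so a brief indication suffices.
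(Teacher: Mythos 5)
Your argument is correct, and since the paper leaves this as an unproved ``Observation'' it is also clearly the intended one: apply the corepresented (hence limit-preserving) functor $\hom_\cV(\uno_\cV,-)$ to the defining $\cV$-natural equivalence and identify the result with the universal property of the limit in $U(\cC)$. One small nitpick: being corepresented does not by itself make $\hom_\cV(\uno_\cV,-)$ a right adjoint, but it does make it limit-preserving, which is all you use — so the parenthetical ``being corepresented'' is better read as justifying limit-preservation than the right-adjoint claim.
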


\begin{definition}
\begin{enumerate}
\item[]

\item

\begin{enumerate}

\item The \bit{weak tensor} of an object $C \in \cC$ by an object $V \in \cV$ is the copresheaf
\[
V \tensor^\flat C
:=
\hom_{\cV}(V , \hom_\cC(C,-))
\in
\hom_{\what{\Cat}} ( U(\cC) , \Spaces )
~.
\]
If these copresheaves are corepresentable (in $U(\cC)$) for all $V \in \cV$ and all $C \in \cC$, we say that $\cC$ \bit{admits weak tensors}.

\item The \bit{weak cotensor} of an object $C \in \cC$ by an object $V \in \cV$ is the presheaf
\[
V \cotensor^\flat C
:=
\hom_{\cV}(V , \hom_\cC(-,C))
\in
\hom_{\what{\Cat}} ( U(\cC)^\op , \Spaces )
~.
\]
If these presheaves are representable (in $U(\cC)$) for all $V \in \cV$ and all $C \in \cC$, we say that $\cC$ \bit{admits weak cotensors}.

\end{enumerate}

\item

\begin{enumerate}

\item The \bit{tensor} of an object $C \in \cC$ by an object $V \in \cV$ is the enriched copresheaf
\[
V \tensor C
:=
\hom_{\ul{\cV}} ( V , \hom_\cC(C,-))
\in
\hom_{\what{\Cat}(\cV)} ( \cC , \ul{\cV} )
~.
\]
If these copresheaves are corepresentable (in $\cC$) for all $V \in \cV$ and all $C \in \cC$, we say that $\cC$ \bit{admits tensors}.

\item The \bit{cotensor} of an object $C \in \cC$ by an object $V \in \cV$ is the enriched presheaf
\[
V \cotensor C
:=
\hom_{\ul{\cV}}(V , \hom_\cC(-,C))
\in
\hom_{\what{\Cat}(\cV)} ( \cC^\op , \ul{\cV} )
~.
\]
If these presheaves are representable (in $\cC$) for all $V \in \cV$ and all $C \in \cC$, we say that $\cC$ \bit{admits cotensors}.

\end{enumerate}

\end{enumerate}
\end{definition}

\begin{observation}
\label{obs.co.tensors.are.weak.co.tensors}
Choose any $C \in \cC$ and $V \in \cV$. If the tensor $V \tensor C$ is corepresentable in $\cC$, then this object also corepresents the weak tensor $V \tensor^\flat C$ in $U(\cC)$.\footnote{More generally, the weak tensor $V \tensor^\flat C$ is the underlying unenriched copresheaf of the tensor $V \tensor C$, i.e.\! the former is the image of the latter under the composite
\[
\hom_{\what{\Cat}(\cV)} ( \cC , \ul{\cV} )
\xlongra{U}
\hom_{\what{\Cat}} ( U(\cC) , \cV )
\xra{\hom_\cV ( \uno_\cV , - ) \circ (-)}
\hom_{\what{\Cat}}( U(\cC) , \Spaces )
~.
\]
.} Dually, if the cotensor $V \cotensor C$ is representable in $\cC$, then this object also represents the weak cotensor $V \cotensor^\flat C$ in $U(\cC)$. In particular, if $\cC$ admits co/tensors, then it admits weak co/tensors, and these objects coincide. We may use this fact without further comment.
\end{observation}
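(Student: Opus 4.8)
The plan is to reduce the whole statement to the more precise assertion recorded in the footnote — that the weak tensor $V \tensor^\flat C$ is the underlying unenriched copresheaf of the tensor $V \tensor C$ — and then to transport corepresentability along the functor that extracts underlying copresheaves. I would first verify the footnote. By definition $V \tensor C$ is the enriched copresheaf $\hom_{\ul{\cV}}(V, \hom_\cC(C,-))$, i.e. the $\cV$-functor $\cC \to \ul{\cV}$ sending $T \mapsto \hom_{\ul{\cV}}(V, \hom_\cC(C,T))$. Since $U(\ul{\cV}) \simeq \cV$ (\Cref{obs.facts.about.enr.cats}\Cref{und.cat.of.an.enr.cat}) and $U$ does not change the action of a functor on objects, the image $U(V \tensor C)$ is the unenriched functor $U(\cC) \to \cV$ sending $T \mapsto \hom_{\ul{\cV}}(V, \hom_\cC(C,T))$; postcomposing hom-wise with $\hom_\cV(\uno_\cV, -)$ produces the unenriched functor $U(\cC) \to \Spaces$ sending $T \mapsto \hom_\cV(\uno_\cV, \hom_{\ul{\cV}}(V, \hom_\cC(C,T)))$. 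Because $\hom_{\ul{\cV}}$ is the internal hom of the closed symmetric monoidal category $\cV$, the hom-tensor adjunction together with the unit axiom gives a natural equivalence $\hom_\cV(\uno_\cV, \hom_{\ul{\cV}}(V,W)) \simeq \hom_\cV(\uno_\cV \boxtimes V, W) \simeq \hom_\cV(V,W)$; taking $W = \hom_\cC(C,T)$ identifies the functor above with $T \mapsto \hom_\cV(V, \hom_\cC(C,T))$, which is exactly $V \tensor^\flat C$ by definition. This establishes the footnote.

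With this in hand, suppose the tensor $V \tensor C$ is corepresentable in $\cC$, say by an object $X \in \cC$, so that there is an equivalence $\hom_\cC(X,-) \simeq V \tensor C$ in $\hom_{\what{\Cat}(\cV)}(\cC, \ul{\cV})$. Applying the composite functor from the first paragraph, $\hom_{\what{\Cat}(\cV)}(\cC,\ul{\cV}) \xlongra{U} \hom_{\what{\Cat}}(U(\cC),\cV) \to \hom_{\what{\Cat}}(U(\cC),\Spaces)$, carries this to an equivalence of unenriched copresheaves on $U(\cC)$. On the left-hand side the result is $\hom_\cV(\uno_\cV, U(\hom_\cC(X,-)))$, which is precisely the copresheaf $\hom_{U(\cC)}(X,-)$ corepresented by $X$ — this is literally how the hom-spaces of $U(\cC)$ are defined (\Cref{obs.facts.about.enr.cats}\Cref{und.cat.of.an.enr.cat}). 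On the right-hand side the result is $V \tensor^\flat C$ by the previous paragraph. Hence $X$ corepresents the weak tensor $V \tensor^\flat C$ in $U(\cC)$. The cotensor statement follows by running the identical argument with $\cC$ replaced by $\cC^\op$ (so that presheaves become copresheaves on $\cC^\op$ and $U(\cC^\op) \simeq U(\cC)^\op$), and the concluding ``in particular'' is then immediate upon quantifying over all $C \in \cC$ and all $V \in \cV$.

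The argument is essentially formal once the footnote is in place, so there is no substantial obstacle; the only point deserving care is the coherence in the first paragraph — namely that the equivalence $\hom_\cV(\uno_\cV, \hom_{\ul{\cV}}(V,W)) \simeq \hom_\cV(V,W)$ holds naturally in $W$, upgrading an objectwise identification to an equivalence of copresheaves — but this is a standard feature of a presentably closed symmetric monoidal category, and I would simply cite it rather than reprove it.
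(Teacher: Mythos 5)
Your proposal is correct, and it follows the intended line of reasoning: reduce everything to the footnote's identification of $V\tensor^\flat C$ as the underlying unenriched copresheaf of $V\tensor C$ (via $U(\ul\cV)\simeq\cV$, the tensor-hom adjunction, and the unit law $\hom_\cV(\uno_\cV\boxtimes V,W)\simeq\hom_\cV(V,W)$), and then transport corepresentability along that change of copresheaf category, noting that $\hom_{U(\cC)}(X,-) := \hom_\cV(\uno_\cV,\hom_\cC(X,-))$ is precisely what $U$ produces from $\hom_\cC(X,-)$. The paper leaves this as an observation with only the footnote as a hint, and your writeup is exactly the formal argument the authors intend.
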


\begin{notation}
In view of \Cref{obs.co.tensors.are.weak.co.tensors}, if $\cC$ admits all co/tensors, we use the notation corresponding to which property we are using: co/tensors or weak co/tensors.
\end{notation}

\begin{remark}
As a partial converse to \Cref{obs.co.tensors.are.weak.co.tensors}, if $\cV \xra{\hom_{\cV}(\uno_\cV,-)} \Spaces$ is conservative, then weak tensors are automatically cotensors.
\end{remark}

\begin{observation}
\label{obs.tensoring.gives.action}
Suppose that $\cC$ admits tensors. By \cite[Theorems 7.3 and 7.15]{Heine-enriched}, we obtain an enriched bifunctor $\ul{\cV} \boxtimes \cC \xra{- \tensor -} \cC$, which extends to an action of $\ul{\cV} \in \Alg(\what{\Cat}(\cV))$ on $\cC$.
\end{observation}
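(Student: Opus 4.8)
The statement to prove is \Cref{obs.tensoring.gives.action}: if a $\cV$-category $\cC$ admits tensors, then the tensoring operation extends to an action of $\ul{\cV}$ on $\cC$.

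The plan is to assemble the enriched bifunctor $\ul{\cV} \boxtimes \cC \to \cC$ first, and then upgrade it to a module structure over $\ul{\cV}$ regarded as an $\EE_1$-algebra (in fact $\EE_\infty$, but only the monoidal structure is needed) in $\what{\Cat}(\cV)$ under the Day-convolution-type monoidal structure on $\what{\Cat}(\cV)$ recalled in \Cref{obs.facts.about.enr.cats}\Cref{symm.mon.str.of.Cat.V}. First I would observe that the assumption that $\cC$ admits tensors says precisely that for each $C \in \cC$ the enriched copresheaf $V \mapsto \hom_{\ul{\cV}}(V, \hom_\cC(C,-))$ is corepresentable, say by an object $V \tensor C \in \cC$; by the enriched Yoneda lemma (the fully faithful Yoneda embedding $\cC \hookrightarrow \Fun(\cC^\op, \ul{\cV})$ of \Cref{obs.facts.about.enr.cats}), corepresenting objects are functorial, so one obtains a $\cV$-enriched functor $\ul{\cV} \boxtimes \cC \to \cC$ characterized by the $\cV$-natural equivalence
\[
\hom_\cC(V \tensor C, D) \simeq \hom_{\ul{\cV}}(V, \hom_\cC(C,D))
\]
for all $V \in \cV$ and $C, D \in \cC$. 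This is the defining adjunction-like property of the tensoring bifunctor.

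Next I would produce the coherence data exhibiting this bifunctor as an action. The cleanest route is to check it abstractly: the operation $\cC \mapsto \Fun(\cC^\op, \ul{\cV})$ of enriched presheaves makes $\what{\Cat}(\cV)$ into a category tensored over itself compatibly with the self-enrichment, and the enriched Yoneda embedding is a morphism of such; the tensoring $- \tensor -$ is nothing but the restriction, along the Yoneda embedding in the second variable, of the canonical $\ul{\cV}$-action on $\Fun(\cC^\op,\ul{\cV})$ given by pointwise tensoring (which exists because $\ul{\cV}$ is tensored over itself via its monoidal structure). Concretely: $\Fun(\cC^\op,\ul\cV)$ carries a $\ul\cV$-action $V \cdot \mathcal F := V \tensor_{\ul\cV} \mathcal F(-)$, this action restricts along $\Yo$ exactly when the relevant presheaves are representable, which is the hypothesis, and the associativity and unit coherences for the restricted action are inherited from those on $\Fun(\cC^\op,\ul\cV)$. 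This gives the desired action of $\ul{\cV} \in \Alg(\what{\Cat}(\cV))$ on $\cC$.

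The main obstacle is purely bookkeeping of higher coherences: making precise that ``corepresenting objects are functorial'' yields not just a bifunctor but a fully coherent action, i.e.\ organizing the equivalences $\uno_\cV \tensor C \simeq C$ and $(V \boxtimes W) \tensor C \simeq V \tensor (W \tensor C)$ into the data of a map of $\EE_1$-monoids together with all compatibilities. As the authors signal in the surrounding remarks (``We systematically omit routine higher-algebraic arguments that would clutter exposition (e.g.\ the associativity asserted in \Cref{obs.tensoring.gives.action})''), I would handle this by transporting the already-coherent self-action of $\ul\cV$ on $\Fun(\cC^\op,\ul\cV)$ across the fully faithful Yoneda embedding, so that no coherence has to be built by hand; the only thing to verify is that the subcategory of representable presheaves is closed under the $\ul\cV$-action, which is exactly the statement that $\cC$ admits tensors. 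I do not expect any genuine mathematical difficulty beyond this, so I would present the argument at the level of indicating this transport-of-structure and leave the diagram chases implicit, consistently with the paper's conventions.
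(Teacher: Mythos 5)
Note first that the paper does not itself prove this observation: a remark in \Cref{subsec.foundations.conventions} explicitly declares that such ``routine higher-algebraic arguments'' (citing precisely ``the associativity asserted in \Cref{obs.tensoring.gives.action}'') are systematically omitted, so you were filling a gap the authors chose to leave. Your strategy of transporting an already-coherent action along a fully faithful Yoneda-type embedding is the right kind of move, but your implementation contains an error. You claim that the pointwise $\ul\cV$-action $V \cdot \cF := V \boxtimes \cF(-)$ on $\Fun(\cC^\op, \ul\cV)$ restricts along $\Yo\colon \cC \hookrightarrow \Fun(\cC^\op, \ul\cV)$ exactly when $\cC$ admits tensors, and that the restricted action recovers $\tensor$. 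Both claims are false. Representability of $V \boxtimes \hom_\cC(-, C)$ asks for an object $X$ with $\hom_\cC(D, X) \simeq V \boxtimes \hom_\cC(D, C)$ naturally in $D$, whereas the tensor is characterized by the mapping-\emph{out} property $\hom_\cC(V \tensor C, D) \simeq \hom_{\ul\cV}(V, \hom_\cC(C, D))$ involving the internal hom. The identity you would need, $\hom_\cC(D, V \tensor C) \simeq V \boxtimes \hom_\cC(D, C)$, is a projection formula and fails in general: already for $\cV = \cC = \Spectra$ take $V = \bigoplus_n \SS$ and $D = \bigoplus_m \SS$, so that one side is $\prod_m \bigoplus_n C$ and the other $\bigoplus_n \prod_m C$. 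The root cause is that $\Yo$ is limit-preserving and hence preserves \emph{cotensors}, not tensors, which are colimit-like; one cannot push the tensoring through it.

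If you want to salvage the transport-of-structure route, the correct vehicle is the co-Yoneda embedding $\cC^\op \hookrightarrow \Fun(\cC, \ul\cV)$, $C \mapsto \hom_\cC(C, -)$, together with pointwise \emph{cotensoring} $V \cotensor \cF := \hom_{\ul\cV}(V, \cF(-))$. Since cotensors of $\cC^\op$ are exactly tensors of $\cC$, the co-Yoneda embedding preserves them, and the pointwise cotensor $V \cotensor \hom_\cC(C, -) = \hom_{\ul\cV}(V, \hom_\cC(C,-))$ is corepresentable precisely when $V \tensor C$ exists --- now this really is the hypothesis. But you then pay a variance price: cotensoring is contravariant in the $\cV$-variable, so what you inherit is an action of $\ul\cV$ with flipped variance on $\cC^\op$, and you must invoke the symmetry of $\boxtimes$ and pass to opposites to land on the claimed $\ul\cV$-action on $\cC$. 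You also still need an independent source of coherence for the cotensoring on $\Fun(\cC, \ul\cV)$ itself (e.g.\ from the evident $\cV$-module structure on it by pointwise $\boxtimes$, taking the family of right adjoints), and should be careful not to circularly invoke \Cref{thm.presble.V.cats.are.V.mods}, whose proof depends on the present observation. None of this is conceptually deep, but as written your argument would not assemble into a correct proof.
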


\begin{remark}
It is not possible to weaken the assumptions of \Cref{obs.tensoring.gives.action} to require only weak tensors. Indeed, the bifunctor $\cV \times U(\cC) \xra{-\tensor^\flat-} U(\cC)$ fails to be associative in general. 
\end{remark}

\begin{lemma}
\label{lem.limits.in.C.versus.UC}
\begin{enumerate}
\item[] 

\item\label{want.limits.in.UC.to.be.limits.in.C} Suppose that one of the following conditions holds.
\begin{enumerate}
\item\label{admits.weak.tensors} $\cC$ admits weak tensors.
\item\label{admits.weak.cotensors.and.these.commute.with.limits} $\cC$ admits weak cotensors and, for each $V \in \cV$, the functor $U(\cC) \xra{V \cotensor^\flat (-)} U(\cC)$ commutes with limits.
\end{enumerate}
Then limits in $U(\cC)$ compute limits in $\cC$, in the sense that whenever $\lim_\cI^{U(\cC)}(C_\bullet)$ exists, so does $\lim_\cI^\cC(C_\bullet)$, and these limits agree.

\item\label{want.colimits.in.UC.to.be.colimits.in.C} Suppose that one of the following conditions holds.
\begin{enumerate}
\item $\cC$ admits weak cotensors.
\item $\cC$ admits weak tensors and, for each $V \in \cV$, the functor $U(\cC) \xra{V \tensor^\flat (-)} U(\cC)$ commutes with colimits.
\end{enumerate}
Then colimits in $U(\cC)$ compute colimits in $\cC$, in the sense that whenever $\colim_\cI^{U(\cC)}(C_\bullet)$ exists, so does $\colim_\cI^\cC(C_\bullet)$, and these colimits agree.
\end{enumerate}
\end{lemma}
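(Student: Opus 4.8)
The plan is to verify the \emph{enriched} universal property of the unenriched co/limit directly, reducing everything to the Yoneda lemma in $\cV$ together with the (co)representability of weak (co)tensors. The key preliminary facts are that a morphism in $\cV$ is an equivalence if and only if it induces an equivalence on $\hom_\cV(V,-)$ for every $V \in \cV$, and that each functor $\hom_\cV(V,-) \colon \cV \ra \Spaces$ preserves all limits. By passing to $\cC^\op \in \what{\Cat}(\cV)$ — for which $U(\cC^\op) \simeq U(\cC)^\op$, weak tensors (resp.\ cotensors) in $\cC^\op$ are weak cotensors (resp.\ tensors) in $\cC$, and limits in $\cC^\op$ and in $U(\cC)^\op$ are exactly colimits in $\cC$ and in $U(\cC)$ — part \Cref{want.colimits.in.UC.to.be.colimits.in.C} follows formally from part \Cref{want.limits.in.UC.to.be.limits.in.C}, so it suffices to treat the latter.

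To prove part \Cref{want.limits.in.UC.to.be.limits.in.C}, fix a diagram $\cI \xra{C_\bullet} U(\cC)$ for which $L := \lim^{U(\cC)}_\cI(C_\bullet)$ exists. For each $T \in \cC$ the universal cone induces a canonical comparison morphism $\hom_\cC(T, L) \ra \lim^\cV_\cI \hom_\cC(T, C_\bullet)$ in $\cV$, and by \Cref{def:enriched.limits.and.colimits} it suffices to show that this morphism is an equivalence for every $T$; by Yoneda, it is enough to check that it becomes an equivalence after applying $\hom_\cV(V,-)$ for each $V \in \cV$.

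Under hypothesis \Cref{admits.weak.tensors}, corepresentability of $V \tensor^\weak T$ in $U(\cC)$ furnishes a natural equivalence $\hom_\cV(V, \hom_\cC(T, S)) \simeq \hom_{U(\cC)}(V \tensor^\weak T, S)$ in $S$, so, using that $L$ is a limit in $U(\cC)$ and that mapping spaces preserve limits,
\[
\hom_\cV(V, \hom_\cC(T, L)) \simeq \hom_{U(\cC)}(V \tensor^\weak T, L) \simeq \lim_\cI \hom_{U(\cC)}(V \tensor^\weak T, C_\bullet) \simeq \lim_\cI \hom_\cV(V, \hom_\cC(T, C_\bullet)) \simeq \hom_\cV\!\left(V, \lim^\cV_\cI \hom_\cC(T, C_\bullet)\right),
\]
and one checks that this chain identifies the image of the comparison morphism with the canonical equivalence. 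Under hypothesis \Cref{admits.weak.cotensors.and.these.commute.with.limits}, one uses instead representability of the weak cotensor, $\hom_\cV(V, \hom_\cC(T, S)) \simeq \hom_{U(\cC)}(T, V \cotensor^\weak S)$, together with the assumed commutation $V \cotensor^\weak L \simeq \lim^{U(\cC)}_\cI(V \cotensor^\weak C_\bullet)$, to obtain
\[
\hom_\cV(V, \hom_\cC(T, L)) \simeq \hom_{U(\cC)}(T, V \cotensor^\weak L) \simeq \hom_{U(\cC)}\!\left(T, \lim^{U(\cC)}_\cI(V \cotensor^\weak C_\bullet)\right) \simeq \lim_\cI \hom_{U(\cC)}(T, V \cotensor^\weak C_\bullet) \simeq \lim_\cI \hom_\cV(V, \hom_\cC(T, C_\bullet)).
\]
Either way the comparison morphism is an equivalence for all $V$ and all $T$, so $L$ satisfies the defining property of $\lim^\cC_\cI(C_\bullet)$, completing part \Cref{want.limits.in.UC.to.be.limits.in.C} and hence the lemma.

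The only genuine obstacle is the bookkeeping hidden in the assertion that the displayed chains identify the comparison morphism with the canonical equivalence: one must check that each equivalence in the chain is natural in $T$ and compatible with the structure maps of the (co)cone indexed by $\cI$, which is a routine but somewhat tedious diagram chase built from naturality of the (co)tensor adjunctions and of the Yoneda equivalences. It bears emphasizing that at no point do we build the enriched co/limit from scratch — the entire content is that the given \emph{unenriched} co/limit automatically carries the stronger $\cV$-enriched universal property.
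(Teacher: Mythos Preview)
Your proof is correct and follows essentially the same route as the paper: reduce part \Cref{want.colimits.in.UC.to.be.colimits.in.C} to part \Cref{want.limits.in.UC.to.be.limits.in.C} by duality, then verify the enriched universal property by testing against $\hom_\cV(V,-)$ for all $V\in\cV$, using the weak (co)tensor adjunctions exactly as you do. If anything, you are slightly more careful than the paper in flagging that the displayed chains must be checked to identify the \emph{comparison morphism} (rather than merely exhibiting an abstract equivalence of objects); the paper leaves this implicit.
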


\begin{proof}
We prove only the first part, as the second is formally dual. Given a functor $\cI \xra{C_\bullet} U(\cC)$ such that the limit $\lim^{U(\cC)}_\cI(C_\bullet) \in U(\cC)$ exists, we must show that this limit in $U(\cC)$ is also a limit in $\cC$. For this, it suffices to show that for any test objects $T \in \cC$ and $V \in \cV$ we have a sequence of equivalences
\begin{align}
\label{pull.limit.in.UC.out.to.limit.in.Spaces}
\hom_{\cV}(V , \hom_\cC ( T , \lim^{U(\cC)}_\cI ( C_\bullet ) ) )
&
\simeq
\lim^\Spaces_\cI \hom_{\cV} ( V , \hom_\cC ( T , C_\bullet ) )
\\
\label{push.limit.in.Spaces.to.limit.in.UV}
&
\simeq
\hom_{\cV} ( V , \lim^{\cV}_\cI ( \hom_\cC ( T , C_\bullet ) ) )
~.
\end{align}
Equivalence \Cref{push.limit.in.Spaces.to.limit.in.UV} follows from the universal property of the limit in $\cV$. Condition \Cref{admits.weak.tensors} implies equivalence \Cref{pull.limit.in.UC.out.to.limit.in.Spaces} via the sequence of equivalences
\begin{align*}
\hom_{\cV}(V , \hom_\cC ( T , \lim^{U(\cC)}_\cI ( C_\bullet ) ) )
&
\simeq
\hom_{U(\cC)} ( V \tensor^\flat T , \lim^{U(\cC)}_\cI ( C_\bullet ) )
\simeq
\lim^\Spaces_\cI \hom_{U(\cC)} ( V \tensor^\flat T , C_\bullet)
\\
&
\simeq
\lim^\Spaces_\cI \hom_{\cV} ( V , \hom_\cC ( T , C_\bullet ))
\end{align*}
(using the universal property of the weak tensor). Meanwhile, condition \Cref{admits.weak.cotensors.and.these.commute.with.limits} implies equivalence \Cref{pull.limit.in.UC.out.to.limit.in.Spaces} via the sequence of equivalences
\begin{align}
\nonumber
\hom_{\cV}(V , \hom_\cC ( T , \lim^{U(\cC)}_\cI ( C_\bullet ) ) )
&
\simeq
\hom_{U(\cC)} ( T , V \cotensor^\flat \lim^{U(\cC)}_\cI ( C_\bullet ) )
\\
\label{commute.limit.with.cotensor}
&
\simeq
\hom_{U(\cC)} ( T , \lim^{U(\cC)}_\cI ( V \cotensor^\flat C_\bullet ) )
\\
\nonumber
& \simeq
\lim^\Spaces_\cI \hom_{U(\cC)} ( T , V \cotensor^\flat C_\bullet )
\simeq
\lim^\Spaces_\cI \hom_{\cV} ( V , \hom_\cC ( T , C_\bullet ) )
~,
\end{align}
(where equivalence \Cref{commute.limit.with.cotensor} follows from the assumption that weak cotensors commute with limits).
\end{proof}

\begin{definition}
Given functors $\cC \xra{F} \cD$ and $\cC \xla{G} \cD$, an \bit{adjunction} between $F$ and $G$ is a $\cV$-natural equivalence
\[
\hom_\cD(F(C), D) 
\simeq 
\hom_\cC(C, G(D))
\]
in $\cV$. We write
\[ \begin{tikzcd}[column sep=1.5cm]
\cC
\arrow[yshift=0.9ex]{r}{F}
\arrow[yshift=-0.9ex, leftarrow]{r}[yshift=-0.2ex]{\bot}[swap]{G}
&
\cD
\end{tikzcd} \]
for an adjunction.
\end{definition}

\begin{observation}
\label{obs.adjts.as.factorizns.through.yoneda}
If it exists, the right adjoint to a functor $\cC \xra{F} \cD$ is the factorization
\[ \begin{tikzcd}[column sep = 2.5cm, row sep=1.5cm]
\Fun ( \cC^\op , \ul{\cV} )
&
\cD
\arrow{l}[swap]{\hom_\cD ( F(=),-)}
\arrow[dashed]{ld}
\\
\cC
\arrow[hook]{u}
\end{tikzcd}
\]
through the Yoneda embedding. Dually, if it exists, the left adjoint to a functor $\cC \xla{G} \cD$ is the factorization
\[ \begin{tikzcd}[column sep = 2.5cm, row sep=1.5cm]
\cC
\arrow{r}{\hom_\cC ( - , G(=))}
\arrow[dashed]{rd}
&
\Fun ( \cD^\op , \ul{\cV} )
\\
&
\cD
\arrow[hook]{u}
\end{tikzcd} \]
through the Yoneda embedding.
\end{observation}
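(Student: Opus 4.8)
The plan is to derive both halves of the observation from the enriched Yoneda lemma, which also yields the fully faithfulness of the enriched Yoneda embedding $\cC \hookrightarrow \Fun(\cC^\op, \ul{\cV})$ recorded in the last part of \Cref{obs.facts.about.enr.cats}. The content is entirely formal; the only point that will need care is a naturality translation between presheaf-valued functors and bifunctors.

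First I would unwind what it means for the $\cV$-functor $\cD \xra{\hom_\cD(F(=),-)} \Fun(\cC^\op, \ul{\cV})$ to factor through the Yoneda embedding. Since that embedding is fully faithful, such a factorization — if one exists — is determined up to a contractible space of choices, and amounts to a $\cV$-functor $G \colon \cD \to \cC$ together with a $\cV$-natural equivalence of $\Fun(\cC^\op, \ul{\cV})$-valued functors
\[
\hom_\cD(F(-),D) \;\simeq\; \hom_\cC(-,G(D)) \qquad (D \in \cD).
\]
Evaluating the $\cC^\op$-variable at an object $C \in \cC$ turns this into a $\cV$-natural-in-$(C,D)$ equivalence $\hom_\cD(F(C),D) \simeq \hom_\cC(C,G(D))$ in $\cV$, which is exactly the datum of an adjunction $F \adj G$. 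Conversely, an adjunction $F \adj G$ exhibits, for each $D \in \cD$, the presheaf $\hom_\cD(F(-),D)$ as represented by $G(D) \in \cC$, which is precisely the asserted factorization; so ``right adjoint to $F$'' and ``factorization of $\hom_\cD(F(=),-)$ through Yoneda'' name the same object, and the uniqueness of the latter (by fully faithfulness of $\Yo$) recovers uniqueness of adjoints.

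The one subtlety is the passage between a $\cV$-natural equivalence of $\Fun(\cC^\op, \ul{\cV})$-valued functors of $D$ and a $\cV$-natural-in-$D$ equivalence of the $\cV$-objects $\hom_\cD(F(C),D) \simeq \hom_\cC(C,G(D))$. I would handle this by observing that evaluation at $C$, $\Fun(\cC^\op, \ul{\cV}) \xra{\ev_C} \ul{\cV}$, is itself a $\cV$-functor — indeed $\ev_C \simeq \hom_{\Fun(\cC^\op, \ul{\cV})}(\Yo(C),-)$ by the Yoneda lemma — so post-composition with $\ev_C$ sends $\cV$-natural equivalences to $\cV$-natural equivalences; conversely, $\cV$-naturality in $C$ for each $D$, together with the $\cV$-functoriality of $G$ and the fully faithfulness of $\Yo$, reassembles into $\cV$-naturality of the presheaf-valued functor in $D$.

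For the dual statement about left adjoints I would apply the first half to the functor $G^\op \colon \cD^\op \to \cC^\op$ in place of $F$: since $F \adj G$ if and only if $G^\op \adj F^\op$, the functor $F^\op$ is the right adjoint of $G^\op$, and translating the resulting factorization through the Yoneda embedding back across $(-)^\op$ produces exactly the displayed factorization of $\cC \to \Fun(\cD^\op, \ul{\cV})$ through the Yoneda embedding of $\cD$. I do not anticipate a serious obstacle here: the whole argument is bookkeeping with the enriched Yoneda lemma, with the naturality translation of the previous paragraph being the only step that is not immediate.
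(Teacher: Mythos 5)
Your proposal is correct, and since the paper records this as an \emph{Observation} with no proof offered, the careful unwinding you give is exactly the argument the authors are leaving implicit: a factorization of $\hom_\cD(F(=),-)$ through the (fully faithful) Yoneda embedding is unique when it exists, and by evaluation it is the same datum as a $\cV$-natural equivalence $\hom_\cD(F(C),D)\simeq\hom_\cC(C,G(D))$, i.e.\ an adjunction in the sense of the paper's definition. The one place where you could streamline is the naturality-translation step: rather than arguing via evaluation functors and reassembly, one can invoke the closed symmetric monoidal structure on $\Cat(\cV)$ (\Cref{obs.facts.about.enr.cats}\Cref{symm.mon.str.of.Cat.V}) to identify $\cV$-functors $\cD\to\Fun(\cC^\op,\ul\cV)$ with $\cV$-functors $\cC^\op\boxtimes\cD\to\ul\cV$ outright, so that the two notions of ``$\cV$-natural equivalence'' are tautologically the same; but your route reaches the same conclusion, and the dual statement via $G^\op$ is handled correctly.
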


The following lemma will allow us to promote unenriched adjunctions to enriched ones.

\begin{lemma}
\label{lem.G.a.right.adjoint.between.V.categories.iff}
\begin{enumerate}
\item[] 

\item\label{part.ladjt.lem.G.a.right.adjoint.between.V.categories.iff}

Suppose that $\cC$ and $\cD$ admit weak tensors. Then a functor $\cC \xra{F} \cD$ is a left adjoint if and only if the following two conditions are satisfied:
\begin{enumerate}
    \item the functor $U(\cC) \xra{U(F)} U(\cD)$ is a left adjoint, and
    \item\label{cond.F.commutes.with.weak.tensors} the functor $F$ commutes with weak tensors.
\end{enumerate}

\item\label{part.radjt.lem.G.a.right.adjoint.between.V.categories.iff}

Suppose that $\cC$ and $\cD$ admit weak cotensors. Then a functor $\cC \xla{G} \cD$ is a right adjoint if and only if the following two conditions are satisfied:
\begin{enumerate}
    \item the functor $U(\cC) \xla{U(G)} U(\cD)$ is a right adjoint, and
    \item the functor $G$ commutes with weak cotensors.
\end{enumerate}

\end{enumerate}
\end{lemma}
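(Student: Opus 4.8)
The plan is to prove part \Cref{part.radjt.lem.G.a.right.adjoint.between.V.categories.iff} (right adjoints) in detail; part \Cref{part.ladjt.lem.G.a.right.adjoint.between.V.categories.iff} is formally dual, obtained by passing to opposite $\cV$-categories (noting that weak tensors in $\cC$ are weak cotensors in $\cC^\op$ and that $U$ commutes with $(-)^\op$). The forward implication is essentially immediate: if $\cC \xra{F} \cD$ with $F \adj G$ is a $\cV$-enriched adjunction, then applying $U$ to the defining $\cV$-natural equivalence $\hom_\cD(F(C),D) \simeq \hom_\cC(C,G(D))$ and then $\hom_\cV(\uno_\cV,-)$ yields the unenriched adjunction $U(F) \adj U(G)$; and $G$ automatically commutes with weak cotensors because, for $D \in \cD$ and $V \in \cV$, one has the chain of $\cV$-natural (hence in particular $\Spaces$-natural) equivalences $\hom_\cC(-, G(V \cotensor^\flat D)) \simeq \hom_\cD(F(-), V\cotensor^\flat D) \simeq \hom_\cV(V, \hom_\cD(F(-), D)) \simeq \hom_\cV(V, \hom_\cC(-, G(D)))$, which is exactly the weak cotensor $V \cotensor^\flat G(D)$.

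For the converse, suppose $U(G)$ has an unenriched left adjoint, call it $F_0 \colon U(\cC) \to U(\cD)$, and suppose $G$ commutes with weak cotensors. I want to upgrade $F_0$ to a $\cV$-enriched left adjoint of $G$. Following \Cref{obs.adjts.as.factorizns.through.yoneda}, it suffices to show that the enriched presheaf $\hom_\cC(-, G(D)) \in \Fun(\cC^\op, \ul\cV)$ is representable in $\cC$ for each $D \in \cD$ — equivalently (invoking the $\cV$-enriched Yoneda lemma) that there is an object, which will turn out to be $F_0(D)$, together with a $\cV$-natural equivalence $\hom_\cD(F_0(D), -)$... wait, that's the wrong variance; rather I want $\hom_\cC(C, G(D)) \simeq \hom_\cD(F_0(D), D)$ — no: the correct target is a $\cV$-natural equivalence $\hom_\cC(-, G(D)) \simeq \hom_\cD(F_0(-), D)$ as presheaves on $\cC$. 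The key point is that both sides, evaluated at $C \in \cC$ and probed by $V \in \cV$ via $\hom_\cV(V, -)$, become weak cotensors: $\hom_\cV(V, \hom_\cC(C, G(D))) \simeq \hom_{U(\cC)}(C, V \cotensor^\flat G(D)) \simeq \hom_{U(\cC)}(C, G(V \cotensor^\flat D))$ using that $G$ commutes with weak cotensors, and then by the unenriched adjunction $F_0 \adj U(G)$ this is $\hom_{U(\cD)}(F_0(C), V \cotensor^\flat D) \simeq \hom_\cV(V, \hom_\cD(F_0(C), D))$. Taking these equivalences to be natural in $C$, $V$, and $D$ (which holds because each step is induced by a natural transformation) and invoking the enriched Yoneda lemma promotes them to a $\cV$-natural equivalence of enriched presheaves, exhibiting $F_0(D)$ as representing $\hom_\cC(-, G(D))$; this is precisely the data of a $\cV$-enriched left adjoint to $G$, and its underlying unenriched functor agrees with $F_0$ by construction.

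The main obstacle is the coherence bookkeeping in the last step: extracting genuine $\cV$-natural transformations (as opposed to merely levelwise or $\Spaces$-natural equivalences) from the chain of identifications, and assembling the naturality in the two variables $C$ and $D$ simultaneously so that the resulting $F_0 \colon \cD \to \cC$ really is a $\cV$-enriched functor. The clean way to handle this — which I expect the authors do, and which I would follow — is to phrase the whole argument via the enriched Yoneda embedding $\cC \hookrightarrow \Fun(\cC^\op, \ul\cV)$ of \Cref{obs.facts.about.enr.cats}: the functor $D \mapsto \hom_\cC(-, G(D))$ is manifestly a $\cV$-functor $\cD \to \Fun(\cC^\op, \ul\cV)$, the computation above shows it factors through the (enrichedly fully faithful) Yoneda embedding, and the resulting lift $\cD \to \cC$ is then automatically a $\cV$-functor which is the desired left adjoint. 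This reduces the coherence issue to the already-known enriched Yoneda formalism, so no genuinely new higher-categorical input is required; it is purely a matter of organizing the identifications through the representability criterion of \Cref{obs.adjts.as.factorizns.through.yoneda}.
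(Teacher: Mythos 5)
Your core argument --- the chain of equivalences
\[
\hom_\cV(V, \hom_\cC(C, G(D))) \simeq \hom_{U(\cC)}(C, V \cotensor^\flat G(D)) \simeq \hom_{U(\cC)}(C, G(V \cotensor^\flat D)) \simeq \hom_{U(\cD)}(F_0(C), V \cotensor^\flat D) \simeq \hom_\cV(V, \hom_\cD(F_0(C), D))
\]
followed by the appeal to \Cref{obs.adjts.as.factorizns.through.yoneda} --- is exactly the paper's proof (the paper writes $f$ for your $F_0$), and your explicit treatment of the forward direction, which the paper dismisses as clear, is correct.

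However, your concluding ``clean way to handle this'' paragraph is wrong as stated. The $\cV$-functor $\cD \to \Fun(\cC^\op, \ul{\cV})$, $D \mapsto \hom_\cC(-, G(D))$, \emph{does} factor through the Yoneda embedding $\cC \hookrightarrow \Fun(\cC^\op, \ul{\cV})$ --- but because $\hom_\cC(-, G(D))$ is already represented by $G(D)$, the lift $\cD \to \cC$ that you obtain is simply $G$ itself, not the desired left adjoint; the argument is circular. You also slip into writing ``the resulting $F_0\colon\cD\to\cC$,'' which has the wrong variance (a left adjoint of $G\colon\cD\to\cC$ must go $\cC\to\cD$). The correct packaging is dual: the chain of equivalences exhibits the $\cV$-valued \emph{copresheaf} $\hom_\cC(C, G(-))\colon\cD\to\ul{\cV}$ as \emph{corepresented} by $F_0(C)$; the assignment $C \mapsto \hom_\cC(C, G(-))$ is a $\cV$-functor $\cC^\op \to \Fun(\cD,\ul{\cV})$, it factors through the co-Yoneda embedding $\cD^\op \hookrightarrow \Fun(\cD,\ul{\cV})$, and passing to opposites on the lift yields the enriched left adjoint $\cC\to\cD$. (Equivalently, apply the \emph{right}-adjoint case of \Cref{obs.adjts.as.factorizns.through.yoneda} to $G^\op\colon\cD^\op\to\cC^\op$.) Since the rest of your proof already carries the argument, this is a slip in the alternative packaging rather than a gap in the proof itself, but it should be fixed.
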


\begin{proof}
We prove the second part to shake things up a bit; the first part is formally dual. It is clear that the conditions are necessary, so let us show that they are also sufficient. Let us write $g := U(G)$, and $f \adj g$ for its left adjoint. Then, we have natural equivalences
\begin{align*}
\hom_{\cV}(V , \hom_\cC ( C , G(D)))
&
\simeq 
\hom_{U(\cC)}(C,V \cotensor^\flat G(D))
\simeq 
\hom_{U(\cC)} (C,G(V \cotensor^\flat D))
\\
& =:
\hom_{U(\cC)} (C, g(V \cotensor^\flat D))
\simeq
\hom_{U(\cD)} ( f(C) , V \cotensor^\flat D )
\\
&
\simeq
\hom_{\cV} ( V , \hom_\cD( f(C) , D) )
\end{align*}
for any $C \in \cC$, $D \in \cD$, and $V \in \cV$. Hence, the claim follows from \Cref{obs.adjts.as.factorizns.through.yoneda}.
\end{proof}

\begin{observation}
\label{obs.radjt.in.presbly.V.enriched.adjn.commutes.with.cotensors}
In part \Cref{part.ladjt.lem.G.a.right.adjoint.between.V.categories.iff} (resp.\! part \Cref{part.radjt.lem.G.a.right.adjoint.between.V.categories.iff}) of \Cref{lem.G.a.right.adjoint.between.V.categories.iff}, if $\cC$ and $\cD$ admit tensors (resp.\! cotensors), then the second condition is equivalent to the condition that $F$ commutes with tensors (resp.\! that $G$ commutes with cotensors).
\end{observation}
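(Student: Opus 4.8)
The plan is to reduce the claim to \Cref{obs.co.tensors.are.weak.co.tensors}, which already records that — when $\cC$ and $\cD$ admit tensors — the weak tensor is literally the underlying unenriched copresheaf of the tensor. I will prove the assertion about part \Cref{part.ladjt.lem.G.a.right.adjoint.between.V.categories.iff}; the assertion about part \Cref{part.radjt.lem.G.a.right.adjoint.between.V.categories.iff} is formally dual (interchange tensors with cotensors, copresheaves with presheaves, and the functor $F$ with $G$). Observe first that by \Cref{obs.co.tensors.are.weak.co.tensors}, the hypothesis that $\cC$ and $\cD$ admit tensors subsumes the standing hypothesis of \Cref{lem.G.a.right.adjoint.between.V.categories.iff}\Cref{part.ladjt.lem.G.a.right.adjoint.between.V.categories.iff} that they admit weak tensors, so there is no compatibility issue between the two statements.

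Next I would spell out the relevant comparison morphisms. For $C \in \cC$ and $V \in \cV$, the universal property of the tensor $V \tensor C \in \cC$ provides a canonical coevaluation $V \to \hom_\cC(C, V \tensor C)$ in $\cV$ (corresponding to $\id_{V \tensor C}$); composing with the action of $F$ on hom-objects and then invoking the universal property of $V \tensor F(C) \in \cD$ produces a canonical comparison morphism $V \tensor F(C) \to F(V \tensor C)$ in $\cD$, natural in $C$ and $V$. That $F$ \emph{commutes with tensors} means precisely that this morphism is an equivalence for all $C$ and $V$. Carrying out the same construction with weak tensors in the underlying categories yields a comparison morphism $V \tensor^\flat F(C) \to F(V \tensor^\flat C)$ in $U(\cD)$, whose being an equivalence for all $C, V$ is the meaning of $F$ \emph{commuting with weak tensors} — this is the form in which that condition enters the proof of \Cref{lem.G.a.right.adjoint.between.V.categories.iff}.

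The heart of the matter is that these two comparison morphisms coincide. Since $\cC$ admits tensors, \Cref{obs.co.tensors.are.weak.co.tensors} identifies $V \tensor^\flat C$ with the underlying unenriched copresheaf of $V \tensor C$ — in particular the two are corepresented by the same object of $\cC$ — and likewise in $\cD$; this identification is natural, so it transports the coevaluation, and hence the whole construction above, for tensors to the corresponding one for weak tensors. Thus $\big(V \tensor^\flat F(C) \to F(V \tensor^\flat C)\big)$ is nothing but the image in $U(\cD)$ of $\big(V \tensor F(C) \to F(V \tensor C)\big)$. Finally, a morphism of the $\cV$-category $\cD$ is an equivalence exactly when it is an equivalence in $U(\cD)$ (\Cref{obs.facts.about.enr.cats}\Cref{und.cat.of.an.enr.cat}), so the enriched comparison morphism is an equivalence if and only if the weak one is; hence the two conditions on $F$ are equivalent. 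I do not expect a real obstacle here: the only slightly delicate point is the naturality bookkeeping ensuring that the tensor and weak-tensor comparison morphisms are literally the same map (rather than merely both being equivalences), and that bookkeeping is exactly what the explicit description of the weak tensor in \Cref{obs.co.tensors.are.weak.co.tensors} supplies.
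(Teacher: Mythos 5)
Your argument is correct, and it is essentially the one the paper has in mind; the paper records this as an \emph{Observation} without proof, treating it as a routine consequence of \Cref{obs.co.tensors.are.weak.co.tensors} and of the fact (footnoted there) that the weak tensor is literally the underlying unenriched copresheaf of the tensor. Your reconstruction supplies exactly the point the paper is leaning on — that the tensor and weak-tensor comparison morphisms are the \emph{same} morphism in $U(\cD)$, not merely both equivalences — and your appeal to naturality of the identification in \Cref{obs.co.tensors.are.weak.co.tensors} is the right way to make that precise.
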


\subsection{Presentable $\cV$-categories}
\label{subsection.prbl.V.cats}

In this subsection, we introduce presentable $\cV$-categories and study their basic features. Notably, we provide a recognition result (\Cref{lem.equiv.conds.for.presentable.n.cat}), which among other applications leads to an adjoint functor theorem (\Cref{cor.enriched.adjt.functor.thm}) and implies that $\Cat$-valued presheaves form a presentable 2-category (\Cref{prop.Fun.B.Cat.is.prbl.and.tensoring.is.ptwise}). Among other miscellany, we also prove that presentable $\cV$-categories can be studied in unenriched terms (\Cref{thm.presble.V.cats.are.V.mods}).

\begin{notation}
We write $\otimes$ for the symmetric monoidal structure on $\PrL$: this is characterized by the property that given $\cC,\cD, \cE \in \PrL$, the datum of a colimit-preserving functor
\[\cC \otimes \cD \longra \cE\] is equivalent to that of a functor \[\cC \times \cD \longra \cE\] that preserves colimits separately in each variable.\footnote{So, we obtain a laxly symmetric monoidal monomorphism $(\iota_1 \PrL,\otimes) \hookrightarrow (\iota_1 \what{\Cat},\times)$.}
\end{notation}

\begin{definition}
\label{def.presentable.V.category}
A $\cV$-category $\cC \in \what{\Cat}(\cV)$ is \bit{presentable} if it admits tensors and moreover the action of $\cV$ on $U(\cC)$ resulting from \Cref{obs.tensoring.gives.action} lies in $(\iota_1 \PrL,\otimes) \subseteq (\iota_1 \what{\Cat},\times)$.\footnote{To elaborate, this means that $U(\cC) \in \iota_1 \PrL$ and furthermore that the functor $\cV \times U(\cC) \xra{-\tensor-} U(\cC)$ commutes with colimits separately in each variable (so that it defines a morphism $\cV \otimes U(\cC) \ra U(\cC)$ in $\PrL$).} We write
\[
\iota_1 \PrL_\cV
\subseteq
\what{\Cat}(\cV)
\]
for the subcategory whose objects are presentable $\cV$-categories and whose morphisms are ($\cV$-enriched) left adjoint functors. As a particular case, we write
\[
\iota_1 \PrL_n
:=
\iota_1 \PrL_{\iota_1 \Cat_{n-1}}
\subseteq
\what{\Cat}(\iota_1 \Cat_{n-1})
=:
\iota_1 \what{\Cat}_n
\]
for the category of presentable $n$-categories.
\end{definition}

\begin{remark}
\Cref{def.presentable.V.category} may be rephrased as follows: writing $\what{\Cat}(\cV)^\tensor \subseteq \what{\Cat}(\cV)$ for the full subcategory on those (possibly large) $\cV$-categories that admit tensors, we have a commutative diagram
\[
\begin{tikzcd}
\iota_1 \PrL_\cV
\arrow[hook]{r}
\arrow{d}
&
\what{\Cat}(\cV)^\tensor
\arrow[hook]{r}
\arrow{d}
&
\what{\Cat}(\cV)
\arrow{d}{U}
\\
\Mod_\cV ( \iota_1 \PrL )
\arrow[hook]{r}
&
\Mod_\cV(\iota_1 \what{\Cat})
\arrow{r}[swap]{\fgt}
&
\what{\Cat}
\end{tikzcd}
\]
among huge categories in which the left square is a pullback.
\end{remark}

\begin{example}
\label{example.self.enrichment.of.V.is.a.prbl.V.cat}
The self-enrichment $\ul{\cV} \in \what{\Cat}(\cV)$ defines a presentable $\cV$-category.
\end{example}

\begin{lemma}
\label{lem.equiv.conds.for.presentable.n.cat}
Let $\cC \in \what{\Cat}(\cV)$ be a (possibly large) $\cV$-enriched category that admits tensors, and suppose that $U(\cC) \in \what{\Cat}$ is presentable. Then, the following are equivalent.
\begin{enumerate}
\item\label{item.C.is.prble}

The $\cV$-category $\cC \in \what{\Cat}(\cV)$ is presentable.

\item\label{item.tensor.with.fixed.V}

For every $V \in \cV$, the functor
\[
U(\cC)
\xra{V \tensor^\flat (-)}
U(\cC)
\]
preserves colimits.

\item\label{item.colimits.in.UC.compute.colimits.in.C}

Colimits in $U(\cC)$ compute colimits in $\cC$.

\end{enumerate}
\end{lemma}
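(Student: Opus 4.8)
The plan is to prove the three conditions equivalent by establishing $\ref{item.C.is.prble} \Rightarrow \ref{item.tensor.with.fixed.V} \Rightarrow \ref{item.colimits.in.UC.compute.colimits.in.C} \Rightarrow \ref{item.C.is.prble}$, which is the natural cycle given the tools already in hand. The implication $\ref{item.C.is.prble} \Rightarrow \ref{item.tensor.with.fixed.V}$ is essentially immediate from the definition: if $\cC$ is presentable, then by \Cref{def.presentable.V.category} the action $\cV \times U(\cC) \xra{-\tensor-} U(\cC)$ lies in $(\iota_1\PrL,\otimes)$, meaning it preserves colimits separately in each variable; fixing $V \in \cV$ in the second variable gives exactly that $V \tensor^\flat (-)$ preserves colimits.

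For $\ref{item.tensor.with.fixed.V} \Rightarrow \ref{item.colimits.in.UC.compute.colimits.in.C}$, I would invoke \Cref{lem.limits.in.C.versus.UC}\Cref{want.colimits.in.UC.to.be.colimits.in.C}: since $\cC$ admits tensors, it admits weak tensors (\Cref{obs.co.tensors.are.weak.co.tensors}), and condition \ref{item.tensor.with.fixed.V} is precisely the hypothesis that for each $V \in \cV$ the functor $U(\cC) \xra{V \tensor^\flat(-)} U(\cC)$ commutes with colimits, so the second alternative hypothesis of that part of the lemma is satisfied. Hence colimits in $U(\cC)$ compute colimits in $\cC$.

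For $\ref{item.colimits.in.UC.compute.colimits.in.C} \Rightarrow \ref{item.C.is.prble}$, I need to verify that the action of $\cV$ on $U(\cC)$ (which exists by \Cref{obs.tensoring.gives.action}, as $\cC$ admits tensors) lands in $\PrL$, i.e.\ that $-\tensor-\colon \cV \times U(\cC) \ra U(\cC)$ preserves colimits separately in each variable. In the $\cV$-variable: for fixed $C \in \cC$, the functor $(-) \tensor C$ is left adjoint to $\hom_\cC(C,-) \colon U(\cC) \ra \cV$ at the level of weak tensors, hence preserves colimits. In the $U(\cC)$-variable: for fixed $V \in \cV$, I must show $V \tensor^\flat (-)$ preserves colimits in $U(\cC)$; given a diagram $\cI \xra{C_\bullet} U(\cC)$, its colimit in $U(\cC)$ is by \ref{item.colimits.in.UC.compute.colimits.in.C} also the colimit in $\cC$, so by the defining $\cV$-natural equivalence for enriched colimits (\Cref{def:enriched.limits.and.colimits}) one computes, for any test object $T$, that $\hom_\cC(V \tensor \colim_\cI C_\bullet, T) \simeq \hom_\cV(V, \hom_\cC(\colim_\cI C_\bullet, T)) \simeq \hom_\cV(V, \lim_{\cI^\op} \hom_\cC(C_\bullet, T)) \simeq \lim_{\cI^\op} \hom_\cV(V, \hom_\cC(C_\bullet,T)) \simeq \lim_{\cI^\op} \hom_\cC(V\tensor C_\bullet, T)$, exhibiting $V \tensor \colim_\cI C_\bullet$ as $\colim_\cI (V \tensor C_\bullet)$, and then passing to underlying categories (again using \ref{item.colimits.in.UC.compute.colimits.in.C}) shows $V \tensor^\flat(-)$ preserves the colimit. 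Combining the two variables, the action factors through $\PrL$, so $\cC$ is presentable. The main obstacle I anticipate is the bookkeeping in this last step: one must be careful to keep straight the distinction between weak (co)tensors in $U(\cC)$ and the enriched (co)tensors in $\cC$, and to correctly apply \Cref{obs.conical.limits.are.limits.in.underlying} / \ref{item.colimits.in.UC.compute.colimits.in.C} at the right moments so that the separate-colimit-preservation statement is genuinely about the unenriched action functor; everything else is formal.
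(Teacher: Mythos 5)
Your proof is correct and follows essentially the same path as the paper's: the implications $\ref{item.C.is.prble} \Rightarrow \ref{item.tensor.with.fixed.V}$ (definitional) and $\ref{item.tensor.with.fixed.V} \Rightarrow \ref{item.colimits.in.UC.compute.colimits.in.C}$ (via \Cref{lem.limits.in.C.versus.UC}) match exactly, and your closing computation for $\ref{item.colimits.in.UC.compute.colimits.in.C} \Rightarrow \ref{item.C.is.prble}$ is the paper's computation for $\ref{item.colimits.in.UC.compute.colimits.in.C} \Rightarrow \ref{item.tensor.with.fixed.V}$ combined with the paper's one-line argument for $\ref{item.tensor.with.fixed.V} \Rightarrow \ref{item.C.is.prble}$, merely repackaged as a cycle instead of two biconditionals. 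One small point of care: in the final chain you should write $\hom_{U(\cC)}(V \tensor^\flat -, T)$ (a space) rather than $\hom_\cC(V \tensor -, T)$ (a $\cV$-object) so that the chain directly exhibits the universal property of the colimit in $U(\cC)$, sidestepping the somewhat murky ``passing to underlying categories (again using \ref{item.colimits.in.UC.compute.colimits.in.C})'' step at the end.
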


\begin{proof}
That condition $\Cref{item.C.is.prble}$ implies condition $\Cref{item.tensor.with.fixed.V}$ is immediate from the fact that tensors are weak tensors. Conversely, condition $\Cref{item.tensor.with.fixed.V}$ implies condition $\Cref{item.C.is.prble}$ because, for any $C \in U(\cC)$, the functor
\[
\cV
\xra{- \tensor^\flat C}
U(\cC)
\]
preserves colimits. 
That condition $\Cref{item.tensor.with.fixed.V}$ implies condition $\Cref{item.colimits.in.UC.compute.colimits.in.C}$ follows from  \Cref{lem.limits.in.C.versus.UC}\Cref{want.colimits.in.UC.to.be.colimits.in.C}. To conclude, it therefore suffices to show that condition $\Cref{item.colimits.in.UC.compute.colimits.in.C}$ implies condition $\Cref{item.tensor.with.fixed.V}$. So, choose any $\cI \in \Cat$, any diagram $\cI \xra{C_\bullet} U(\cC)$, and any object $V \in \cV$. Assuming condition $\Cref{item.colimits.in.UC.compute.colimits.in.C}$, we must show that the canonical morphism
\[
\colim^{U(\cC)}_\cI ( V \tensor^\weak C_\bullet)
\longra
V \tensor^\weak \colim^{U(\cC)}_\cI ( C_\bullet)
\]
is an equivalence. For this, for any $T \in U(\cC)$ we compute that
\begin{align*}
\hom_{U(\cC)} ( V \tensor^\flat \colim^{U(\cC)}_\cI (C_\bullet) , T )
& \simeq
\hom_{\cV} ( V , \hom_\cC ( \colim^{U(\cC)}_\cI(C_\bullet) , T ) )
\overset{\Cref{item.colimits.in.UC.compute.colimits.in.C}}{\simeq}
\hom_{\cV} ( V , \hom_\cC ( \colim^{\cC}_\cI(C_\bullet) , T ) )
\\
& \simeq
\hom_{\cV} ( V , \lim^{\cV}_{\cI^\op} \hom_\cC ( C_\bullet , T ) )
\simeq
\lim^\Spaces_{\cI^\op} \hom_{\cV} ( V , \hom_\cC ( C_\bullet , T ) )
\\
& \simeq
\lim^\Spaces_{\cI^\op} \hom_{U(\cC)} ( V \tensor^\weak C_\bullet  , T )
\simeq
\hom_{U(\cC)} ( \colim^{U(\cC)}_\cI ( V \tensor^\weak C_\bullet ) , T )
~. \qedhere
\end{align*}
\end{proof}

Here are two important consequences of \Cref{lem.equiv.conds.for.presentable.n.cat}.

\begin{corollary}
\label{cor.enriched.adjt.functor.thm}
Let $\cC,\cD \in \iota_1 \PrL_\cV$ be presentable $\cV$-categories.
\begin{enumerate}

\item A functor $\cC \xra{F} \cD$ is a left adjoint if and only if it preserves colimits and tensors.

\item A functor $\cD \xleftarrow{G} \cD$ is a right adjoint if and only if it preserves limits, cotensors, and $\kappa$-filtered colimits for some regular cardinal $\kappa$.
\end{enumerate}
\end{corollary}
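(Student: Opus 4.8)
The plan is to reduce both statements to the unenriched adjoint functor theorem for presentable categories \cite[Corollary 5.5.2.9]{HTT} together with \Cref{lem.G.a.right.adjoint.between.V.categories.iff}, using \Cref{lem.equiv.conds.for.presentable.n.cat} and \Cref{lem.limits.in.C.versus.UC} to pass freely between enriched and unenriched co/limits. Throughout, recall that since $\cC$ and $\cD$ are presentable $\cV$-categories, their underlying categories $U(\cC)$ and $U(\cD)$ are presentable, so that \cite[Corollary 5.5.2.9]{HTT} applies.

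For the first part: as $\cC$ and $\cD$ are presentable they admit tensors, hence weak tensors, so \Cref{lem.G.a.right.adjoint.between.V.categories.iff}\Cref{part.ladjt.lem.G.a.right.adjoint.between.V.categories.iff} applies and shows that $F$ is a left adjoint if and only if $U(F)$ is a left adjoint and $F$ commutes with weak tensors; by \Cref{obs.radjt.in.presbly.V.enriched.adjn.commutes.with.cotensors} the latter condition is equivalent to $F$ commuting with tensors. It remains to identify ``$U(F)$ is a left adjoint'' with ``$F$ preserves colimits''. The unenriched adjoint functor theorem says $U(F)$ is a left adjoint precisely when it preserves colimits, and by \Cref{lem.equiv.conds.for.presentable.n.cat}\Cref{item.colimits.in.UC.compute.colimits.in.C} (applied to both $\cC$ and $\cD$) together with \Cref{obs.conical.limits.are.limits.in.underlying}, the functor $F$ preserves enriched colimits if and only if $U(F)$ preserves unenriched colimits. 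Assembling these equivalences gives the first part.

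For the second part the argument is formally similar, using weak cotensors in place of weak tensors and carrying an extra filtered-colimit hypothesis. By \Cref{lem.presble.V.cats.admit.cotensors}, presentable $\cV$-categories admit cotensors, hence weak cotensors, so \Cref{lem.G.a.right.adjoint.between.V.categories.iff}\Cref{part.radjt.lem.G.a.right.adjoint.between.V.categories.iff} shows that $G$ is a right adjoint if and only if $U(G)$ is a right adjoint and $G$ commutes with weak cotensors, the latter being equivalent to $G$ commuting with cotensors by \Cref{obs.radjt.in.presbly.V.enriched.adjn.commutes.with.cotensors}. The unenriched adjoint functor theorem identifies ``$U(G)$ is a right adjoint'' with ``$U(G)$ preserves limits and $\kappa$-filtered colimits for some regular cardinal $\kappa$''. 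Since $\cC$ and $\cD$ admit tensors, \Cref{lem.limits.in.C.versus.UC}\Cref{want.limits.in.UC.to.be.limits.in.C}\Cref{admits.weak.tensors} shows that limits in $U(\cC)$ compute limits in $\cC$ (and likewise for $\cD$), so $G$ preserves enriched limits if and only if $U(G)$ preserves unenriched limits; and by \Cref{lem.equiv.conds.for.presentable.n.cat}\Cref{item.colimits.in.UC.compute.colimits.in.C} the analogous statement holds for $\kappa$-filtered colimits. Combining yields the second part.

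The only point requiring care -- and the one I would single out as the main (if minor) obstacle -- is the systematic matching of enriched co/limits with their unenriched counterparts: one must check that, for a functor between presentable $\cV$-categories, ``preserves enriched colimits'' and ``preserves colimits of underlying categories'' name genuinely the same condition (and dually for limits), which is exactly what \Cref{lem.equiv.conds.for.presentable.n.cat} and \Cref{lem.limits.in.C.versus.UC} are set up to provide. Everything else is bookkeeping.
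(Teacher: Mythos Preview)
Your proof is correct and follows essentially the same approach as the paper, which simply says the result follows from combining \Cref{lem.equiv.conds.for.presentable.n.cat}, \Cref{lem.G.a.right.adjoint.between.V.categories.iff}, and the unenriched adjoint functor theorem. You have just spelled out the details more carefully, including the auxiliary invocations of \Cref{lem.presble.V.cats.admit.cotensors}, \Cref{lem.limits.in.C.versus.UC}, and \Cref{obs.radjt.in.presbly.V.enriched.adjn.commutes.with.cotensors}, all of which are appropriate.
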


\begin{proof}
This follows from combining Lemmas \ref{lem.equiv.conds.for.presentable.n.cat} and \ref{lem.G.a.right.adjoint.between.V.categories.iff} and the unenriched adjoint functor theorem.
\end{proof}

\begin{proposition}
\label{prop.Fun.B.Cat.is.prbl.and.tensoring.is.ptwise}
Let $\cB \in \Cat_2$ be a small 2-category. Then, the functor 2-category $\Fun(\cB,\Cat) \in \what{\Cat}_2$ is presentable. Moreover, the action in $\iota_1\PrL$ of $\iota_1 \Cat$ on $\iota_1 \Fun(\cB,\Cat)$ is given by pointwise product:
\[ \begin{tikzcd}[row sep=0cm]
\iota_1 \Cat
\times
\iota_1 \Fun ( \cB,\Cat)
\arrow{r}
&
\iota_1 \Fun ( \cB,\Cat)
\\
\rotatebox{90}{$\in$}
&
\rotatebox{90}{$\in$}
\\
\left( \cK , \cB \xlongra{F} \Cat \right)
\arrow[maps to]{r}
&
\left( \cB \xlongra{F} \Cat \xra{\cK \times (-)} \Cat \right)
\end{tikzcd}
~.
\]
\end{proposition}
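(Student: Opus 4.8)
The strategy is to verify the three equivalent conditions of \Cref{lem.equiv.conds.for.presentable.n.cat} for the $\iota_1\Cat$-enriched category $\Fun(\cB,\Cat)$, by checking that it admits tensors, that its underlying $1$-category is presentable, and that tensoring by a fixed small category preserves colimits; the identification of the tensoring action as pointwise product will fall out of this analysis. First I would observe that $\Fun(\cB,\Cat) := \hom_{\ul{\iota_1\Cat_2}}(\cB,\Cat_2)$ is the $\iota_1\Cat$-enriched category of $\iota_1\Cat$-enriched functors from $\cB$ (or really its underlying $\iota_1\Cat$-enriched category) to the self-enrichment $\Cat_2 = \ul{\iota_1\Cat}$; since $\iota_1\Cat \in \CAlg(\iota_1\PrL)$ is presentably symmetric monoidal, \Cref{example.self.enrichment.of.V.is.a.prbl.V.cat} applies and $\ul{\iota_1\Cat}$ is a presentable $\iota_1\Cat$-category. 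The underlying $1$-category $U(\Fun(\cB,\Cat)) \simeq \iota_1\Fun(\cB,\Cat) = \Fun(\iota_1\cB, \iota_1\Cat)$ (using that $U$ of an enriched functor category is the unenriched functor category out of the underlying $1$-category of $\cB$, together with $U(\ul{\iota_1\Cat}) \simeq \iota_1\Cat$) is presentable as a functor category from a small category into the presentable category $\iota_1\Cat$; here I would cite the standard fact that $\Fun(\cI,\cE)$ is presentable when $\cI$ is small and $\cE$ is presentable, with limits and colimits computed pointwise.

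Next, to exhibit tensors: for $\cK \in \iota_1\Cat$ and $F \in \Fun(\cB,\Cat)$, I claim the functor $\cK\tensor F$ is the pointwise product $b \mapsto \cK \times F(b)$, with the enriched-functoriality data inherited from that of $F$ by post-composing with $\cK\times(-)$. To prove this corepresents the enriched copresheaf $\hom_{\ul{\iota_1\Cat}}(\cK, \hom_{\Fun(\cB,\Cat)}(F,-))$, one uses that the hom-object $\hom_{\Fun(\cB,\Cat)}(F,G)$ is computed as an end $\int_{b\in\cB}\hom_{\Cat}(F(b),G(b))$ (more precisely a weighted limit over $\cB$ internal to $\iota_1\Cat$), that $\cK\times(-)$ on $\iota_1\Cat$ is left adjoint to $\Fun(\cK,-)$ even at the enriched level since $\iota_1\Cat$ is cartesian closed, and that these adjunctions pass through the end. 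Thus
\[
\hom_{\Fun(\cB,\Cat)}(\cK\times F, G) \simeq \textstyle\int_b \hom_{\Cat}(\cK\times F(b), G(b)) \simeq \int_b \Fun(\cK,\hom_{\Cat}(F(b),G(b))) \simeq \Fun(\cK, \hom_{\Fun(\cB,\Cat)}(F,G)),
\]
which is exactly the corepresentability of the tensor. Once tensors exist and are pointwise, condition \Cref{item.tensor.with.fixed.V} of \Cref{lem.equiv.conds.for.presentable.n.cat} is immediate: $\cK\times(-)\colon \iota_1\Cat\to\iota_1\Cat$ preserves colimits (as $\iota_1\Cat$ is cartesian closed), and colimits in $\Fun(\iota_1\cB,\iota_1\Cat)$ are pointwise, so $\cK\tensor^\flat(-)$ preserves colimits. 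By \Cref{lem.equiv.conds.for.presentable.n.cat} this gives presentability, and the description of the action as pointwise product is precisely what was computed.

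\textbf{The main obstacle.} The substantive point is the computation of the hom-objects of $\Fun(\cB,\Cat)$ as an end, and more precisely pinning down the enriched-functoriality data of $\cK\times F$ — i.e.\ promoting the pointwise construction $b\mapsto \cK\times F(b)$ to a genuine $\iota_1\Cat$-enriched functor $\cB\to\ul{\iota_1\Cat}$ compatibly with the equivalences above, and checking the corepresentability is natural rather than just objectwise. This is where the bookkeeping of the self-enrichment of $\iota_1\Cat$ (via \Cref{obs.facts.about.enr.cats}\Cref{symm.mon.str.of.Cat.V}) and of enriched functor categories must be handled carefully; the paper's convention of suppressing routine higher-algebraic coherences (as flagged after \Cref{convention.V.is.presentably.s.m.}) covers the remaining verifications. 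A cleaner alternative I would consider, if the end-formula bookkeeping proves cumbersome, is to note that $\cB\mapsto\Fun(\cB,\Cat)$ is the cotensor of the presentable $\iota_1\Cat$-category $\ul{\iota_1\Cat}$ by the small $2$-category $\cB$ in an appropriate sense, and to invoke closure of presentable $\cV$-categories under such cotensors; but since that closure result is not established earlier in the excerpt, the direct verification via \Cref{lem.equiv.conds.for.presentable.n.cat} is the safer route.
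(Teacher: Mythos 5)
Your overall plan — verify the conditions of \Cref{lem.equiv.conds.for.presentable.n.cat} by exhibiting pointwise tensors and presentability of the underlying category — matches the paper's strategy. However, there is a genuine error in the step that does the real work.

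You assert the identification
\[
U(\Fun(\cB,\Cat)) \simeq \iota_1\Fun(\cB,\Cat) \overset{?}{=} \Fun(\iota_1\cB,\iota_1\Cat),
\]
justified by ``$U$ of an enriched functor category is the unenriched functor category out of the underlying 1-category.'' This is not the case. The objects of the enriched functor $2$-category $\Fun(\cB,\Cat)$ are $\iota_1\Cat$-enriched functors $\cB \to \Cat_2$, i.e.\ genuine $2$-functors carrying data about how $\cB$'s $2$-morphisms act; the objects of $\Fun(\iota_1\cB,\iota_1\Cat)$ are mere $1$-functors out of the maximal sub-$1$-category. These are different: not every $1$-functor $\iota_1\cB \to \iota_1\Cat$ extends to a $2$-functor $\cB \to \Cat$, and when an extension exists it is generally not unique. (Concretely, take $\cB$ to be the free $2$-category on a single $2$-cell; a $1$-functor $\iota_1\cB \to \iota_1\Cat$ picks two categories and two parallel functors, whereas a $2$-functor $\cB \to \Cat$ additionally picks a natural transformation between them.) The forgetful functor $U$ of \Cref{obs.facts.about.enr.cats}\Cref{und.cat.of.an.enr.cat} keeps the same objects (enriched functors) and only shrinks the hom-objects to spaces — it does not replace enriched functors by unenriched ones.

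As a result, your argument does not establish either that $\iota_1\Fun(\cB,\Cat)$ is presentable or that colimits in it are computed pointwise: the facts you invoke are about $\Fun(\iota_1\cB,\iota_1\Cat)$, which is a different category. Both of these points are nontrivial precisely because $2$-functoriality is extra structure, and the paper's proof addresses them by citing a model-categorical result (\cite[Proposition 3.3.1]{GHL} combined with \cite[Proposition A.3.7.6]{HTT}, plus a Quillen-equivalence argument between projective and injective model structures on $\iota_1\Cat$-enriched presheaves to see that colimits are computed pointwise). Without some input of this kind your argument cannot close. Your end-formula computation for the tensors is plausible and is a reasonable replacement for the paper's appeal to \cite{GHL} for that piece, but it presupposes the pointwise description of colimits and the presentability of the underlying $1$-category, both of which remain unjustified under the incorrect identification. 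Your alternative sketch via cotensors of the presentable $\iota_1\Cat$-category $\ul{\iota_1\Cat}$ runs into the same issue, since closure of presentable enriched categories under cotensoring by a $2$-category is not available at this point in the development and would itself require showing the underlying $1$-category is presentable.
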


\begin{proof}
The following facts result from \cite[Proposition 3.3.1]{GHL}.
\begin{itemize}

\item Applying \cite[Proposition A.3.7.6]{HTT}, we find that the category $\iota_1 \Fun ( \cB , \Cat) \in \what{\Cat}$ is presentable.

\item Passing through the Quillen equivalence between the projective and injective model structures and using \cite[Theorem 2.1]{MG-qadjns}, we find that colimits in $\iota_1 \Fun ( \cB, \Cat)$ are computed pointwise.

\item The 2-category $\Fun ( \cB , \Cat) \in \what{\Cat}(\iota_1 \Cat)$ admits tensors (and hence weak tensors), and these are given by pointwise product.

\end{itemize}
The claim now follows from \Cref{lem.equiv.conds.for.presentable.n.cat} (using its condition \Cref{item.tensor.with.fixed.V}).
\end{proof}

We now show that presentable $\cV$-categories can be studied in unenriched terms.

\begin{theorem}
\label{thm.presble.V.cats.are.V.mods}
The functor
\[
\iota_1 \PrL_\cV
\longra
\Mod_\cV ( \iota_1 \PrL)
\]
resulting from \Cref{obs.tensoring.gives.action} is an equivalence.
\end{theorem}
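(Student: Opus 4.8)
The plan is to exhibit an inverse to the comparison functor $\iota_1 \PrL_\cV \to \Mod_\cV(\iota_1 \PrL)$, using the self-enrichment of a $\cV$-module and the recognition criterion of \Cref{lem.equiv.conds.for.presentable.n.cat}. First I would recall that a $\cV$-module object $\cM \in \Mod_\cV(\iota_1\PrL)$ is a presentable category $\cM$ together with a colimit-preserving (in each variable) action $\cV \times \cM \to \cM$; since $\cV$ is presentably symmetric monoidal, the action $- \otimes m : \cV \to \cM$ is a left adjoint for each $m$, and so one obtains a right adjoint $\mathrm{hom}_\cM^\cV(m,-) : \cM \to \cV$ by the unenriched adjoint functor theorem. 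The bifunctor $(m,n) \mapsto \mathrm{hom}_\cM^\cV(m,n)$ assembles — via the usual (co)tensor-hom yoga for a category tensored over a symmetric monoidal category — into a $\cV$-enriched category $\underline{\cM}$ with underlying unenriched category $\cM$, whose tensors are precisely the module action. Thus the assignment $\cM \mapsto \underline{\cM}$ produces a candidate functor $\Mod_\cV(\iota_1\PrL) \to \iota_1\PrL_\cV$: by construction $\underline{\cM}$ admits tensors, and by \Cref{lem.equiv.conds.for.presentable.n.cat} (condition \Cref{item.tensor.with.fixed.V}, using that $- \otimes m$ preserves colimits) it is a presentable $\cV$-category.

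Next I would check this is functorial and inverse to the comparison map on both sides. On morphisms: a map of $\cV$-modules is a colimit-preserving functor commuting with the action, which by \Cref{cor.enriched.adjt.functor.thm}(1) (or directly by \Cref{lem.G.a.right.adjoint.between.V.categories.iff}\Cref{part.ladjt.lem.G.a.right.adjoint.between.V.categories.iff}) is the same as a $\cV$-enriched left adjoint between the associated presentable $\cV$-categories, since commuting with the action is exactly commuting with weak tensors. The composite $\iota_1\PrL_\cV \to \Mod_\cV(\iota_1\PrL) \to \iota_1\PrL_\cV$ is the identity because the enriched hom of a presentable $\cV$-category is recovered from its tensors as the right adjoint of $- \tensor C$ (this is \Cref{obs.adjts.as.factorizns.through.yoneda} together with the defining adjunction $\hom_{\ul\cV}(V,\hom_\cC(C,-)) \simeq \hom_\cC(V \tensor C,-)$). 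The composite in the other direction is the identity because $\underline{\cM}$ has underlying category $\cM$ with its original action, and the reconstructed $\cV$-module structure on $U(\underline{\cM})$ is the action we started from.

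The technical heart — and the step I expect to be the main obstacle — is verifying that the pointwise-defined enriched-hom bifunctor $(m,n)\mapsto \mathrm{hom}_\cM^\cV(m,n)$ genuinely upgrades to a $\cV$-enriched category structure on $\underline\cM$ with the correct composition, associativity, and unit coherences, and that this is functorial in $\cM$ — i.e.\! that the two functors above are well-defined at the level of $\infty$-categories rather than just on objects. This is precisely the kind of ``routine higher-algebraic argument'' the paper elsewhere suppresses (cf.\! the remark following \Cref{conv.implicit.infty}), but to do it honestly one would package the construction as follows: the self-enrichment $\ul\cV \in \what{\Cat}(\cV)$ of \Cref{obs.facts.about.enr.cats}\Cref{self.enrichment.of.V} is an $\EE_\infty$-algebra in $\what{\Cat}(\cV)$, and a presentable $\cV$-category is the same datum as a presentable unenriched category with an $\ul\cV$-action in $\iota_1\PrL$ together with the requirement that it is tensored; the equivalence then follows formally from the theory of modules over an $\EE_\infty$-algebra once one knows the forgetful functor $\what{\Cat}(\cV) \to \what{\Cat}$ is monadic over its image and intertwines the relevant module structures. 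Alternatively, and perhaps more cleanly, one can cite the general enriched-vs-tensored comparison (e.g.\! the fact that for $\cV$ presentably symmetric monoidal, $\cV$-enriched presentable categories are equivalent to presentably tensored $\cV$-modules, as in the work of Gepner--Haugseng and Heine), since the proof above is really just an instance of that principle. I would present the argument at the level of detail above, flagging the coherence bookkeeping as standard.
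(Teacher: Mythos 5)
Your proposal takes a closely related but differently packaged route, and you have correctly located the single nontrivial step, which you then leave as an acknowledged gap. The paper does not construct an explicit inverse functor; it proves fully faithfulness directly from \Cref{lem.G.a.right.adjoint.between.V.categories.iff} (as you also do, in effect), and then proves essential surjectivity. For essential surjectivity it uses the maneuver you are missing: rather than trying to hand-build a $\cV$-enriched category from the hom-objects $\hom^\cV_\cM(m,n)$ and then fret over composition/associativity/unit coherences, it first defines a $\cP(\cV)$-enrichment of $\cM$ by the formula
\[
\hom_{\cM^{\cP(\cV)\text{-}\mathrm{enr}}}(X,Y)(V) := \hom_\cM(V \cdot X, Y)~.
\]
At the level of presheaves this is tautologically a $\cP(\cV)$-enriched category with the right composition (the coherences come for free from the module structure together with Day convolution), and then the adjoint functor theorem shows each such presheaf is representable, i.e.\! the enrichment factors through $\what{\Cat}(\cV) \subseteq \what{\Cat}(\cP(\cV))$, and the induced tensors recover the given action. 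This sidesteps exactly the bookkeeping you flag as ``the technical heart'' and spares you the appeal to Gepner--Haugseng or Heine. So: your argument is morally correct, the ff part matches the paper, and your inverse-functor framing would be fine once the coherence step is actually discharged; the paper's $\cP(\cV)$-detour is precisely how to discharge it cleanly inside the paper's own framework.
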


\begin{proof}
The functor is fully faithful by \Cref{lem.G.a.right.adjoint.between.V.categories.iff}, so it remains to show that it is surjective. For this, fix any $\cC \in \Mod_\cV ( \iota_1 \PrL)$, and let us denote its action by $\cV \otimes \cC \xra{(-)\cdot (-)} \cC$. Now, by \cite[Theorem 7.3]{Heine-enriched}, $\cC$ admits a canonical enhancement to a $\cP(\cV)$-enriched category $\cC^{\cP(\cV)\enr}$ via the formula
\[
\hom_{\cC^{\cP(\cV)\enr}}(X,Y)(V)
:=
\hom_\cC(V \cdot X , Y )
~.
\]
For any $X,Y \in \cC$ this presheaf is representable by the adjoint functor theorem, and so in fact $\cC^{\cP(\cV)\enr} $ lies in the subcategory 
\[\what{\Cat}(\cV) \subseteq \what{\Cat}(\cP(\cV))~.
\]
Directly from the definition, $\cC^{\cP(\cV)\enr}$ admits tensors, which on the underlying category $\cC \simeq U(\cC^{\cP(\cV)\enr})$ recovers the given action of $\cV$ on $\cC$. So indeed, the object $\cC^{\cP(\cV)\enr} \in \iota_1 \PrL_\cV$ lifts the object $\cC \in \Mod_\cV(\iota_1 \PrL)$.
\end{proof}

\begin{notation}
We implicitly use the equivalence of \Cref{thm.presble.V.cats.are.V.mods}, without adding extra notation. Specifically, for $\cC \in \Mod_\cV(\iota_1\PrL)$, we write $\cC$ for the corresponding presentable $\cV$-category, $U(\cC)$ for the underlying category thereof, and  $\cV \otimes U(\cC) \xra{-\tensor -} U(\cC)$ for the action of $\cV$ in $\iota_1 \PrL$ thereon.
\end{notation}

We discuss change of enrichment for presentable enriched categories.

\begin{observation}
\label{obs.consequences.of.morphism.in.CAlg.PrL}
Let $\cV_0 \xra{F} \cV_1$ be a morphism in $\CAlg(\iota_1 \PrL)$, and write $F \adj G$ for the underlying adjunction in $\what{\Cat}$.
\begin{enumerate}

\item\label{obs.for.Cathat.of.radjt.to.presly.s.m.fctr.both.directions.agree}

The diagram
\[ \begin{tikzcd}[column sep = 1.5cm]
\what{\Cat}(\cV_0)
&
\what{\Cat}(\cV_1)
\arrow{l}[swap]{\what{\Cat}(G)}
\\
\iota_1 \PrL_{\cV_0}
\arrow{d}[sloped, anchor=north]{\sim}
\arrow[hook]{u}
&
\iota_1 \PrL_{\cV_1}
\arrow{d}[sloped, anchor=south]{\sim}
\arrow[hook]{u}
\\
\Mod_{\cV_0}(\iota_1 \PrL)
&
\Mod_{\cV_1}(\iota_1 \PrL)
\arrow{l}{F^*}
\end{tikzcd} 
\]
commutes (in which the equivalences are those of \Cref{thm.presble.V.cats.are.V.mods}).

\item 

For any $\cC \in \Cat(\cV_1)$, the tensor and cotensor of $X \in \cC^{\cV_0\enr}$ by $V \in \cV_0$ are respectively given by the formulas
\[
V \tensor X \simeq F(V) \tensor X
\qquad
\text{and}
\qquad
V \cotensor X \simeq F(V) \cotensor X
\]
(as enriched co/presheaves).

\item\label{item.presly.s.m.loczn}

If $G$ is fully faithful, then the tensor and cotensor of $X \in (\cV_1)^{\cV_0\enr}$ by $V \in \cV_0$ are also respectively given by the formulas
\[
V
\tensor
X
\simeq
F(V \boxtimes G(X))
\qquad
\text{and}
\qquad
V
\cotensor
X
\simeq
\hom_{\ul{\cV_0}} ( V , G(X) )
\]
(i.e.\! the object $\hom_{\ul{\cV_0}} ( V , G(X) ) \in \cV_0$ is in the image of $G$).\footnote{More generally, even if $G$ is not fully faithful we have that $G(V \cotensor X) \simeq V \cotensor G(X) \simeq \hom_{\ul{\cV_0}}(V,G(X))$ by \Cref{lem.G.a.right.adjoint.between.V.categories.iff}\Cref{part.radjt.lem.G.a.right.adjoint.between.V.categories.iff}.} In particular, the self-enrichment of $\cV_0$ restricts to the self-enrichment of $\cV_1$.\footnote{More precisely, the fully faithful inclusion
$
\what{\Cat}(\cV_1)
\xhookrightarrow{\what{\Cat}(G)}
\what{\Cat}(\cV_0)
$
carries $\ul{\cV_1}$ to the full sub-$\cV_0$-category $(\cV_1)^{\cV_0\enr} \subseteq \ul{\cV_0}$.}
\end{enumerate}
\end{observation}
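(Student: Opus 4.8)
The plan is to deduce all three parts from one elementary identity for the (lax monoidal, by \Cref{obs.facts.about.enr.cats}) right adjoint $G$: for every $V \in \cV_0$ and $W \in \cV_1$ there is a natural equivalence
\[
\hom_{\ul{\cV_0}}(V, G(W)) \simeq G( \hom_{\ul{\cV_1}}(F(V), W) )
\]
in $\cV_0$. This is a Yoneda computation --- for $V' \in \cV_0$,
\begin{align*}
\hom_{\cV_0}(V', \hom_{\ul{\cV_0}}(V, GW))
&\simeq \hom_{\cV_0}(V' \boxtimes V, GW)
\simeq \hom_{\cV_1}(F(V' \boxtimes V), W) \\
&\simeq \hom_{\cV_1}(FV' \boxtimes FV, W)
\simeq \hom_{\cV_0}(V', G\,\hom_{\ul{\cV_1}}(FV, W)),
\end{align*}
using the tensor--hom adjunctions, the monoidality of $F$, and $F \adj G$ --- and I would make it natural in $V$ and $W$ by running the same chain at the level of presheaves.

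Next I would prove part (2). By construction of the change-of-enrichment functor, $\hom_{\cC^{\cV_0\enr}}(X, Y) \simeq G(\hom_\cC(X, Y))$ naturally. Combined with the displayed identity, the enriched copresheaf defining the tensor $V \tensor X$ in $\cC^{\cV_0\enr}$, i.e.\ $\hom_{\ul{\cV_0}}(V, \hom_{\cC^{\cV_0\enr}}(X, -)) = \hom_{\ul{\cV_0}}(V, G(\hom_\cC(X, -)))$, is naturally equivalent to $G(\hom_{\ul{\cV_1}}(F(V), \hom_\cC(X, -)))$. If $\cC$ admits the tensor $F(V) \tensor X$ --- corepresented by some $A \in \cC$, so that $\hom_\cC(A, -) \simeq \hom_{\ul{\cV_1}}(F(V), \hom_\cC(X, -))$ --- then applying $G$ hom-wise gives $\hom_{\cC^{\cV_0\enr}}(A, -) \simeq \hom_{\ul{\cV_0}}(V, \hom_{\cC^{\cV_0\enr}}(X, -))$, i.e.\ the same object $A \in U(\cC) = U(\cC^{\cV_0\enr})$ corepresents $V \tensor X$; this is the claimed equivalence. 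The cotensor case is identical, using $\hom_{\cC^{\cV_0\enr}}(-, X) \simeq G(\hom_\cC(-, X))$ and representability.

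Parts (1) and (3) then follow formally. For (1): if $\cC \in \iota_1\PrL_{\cV_1}$, part (2) shows that $\cC^{\cV_0\enr}$ admits tensors and that the resulting action of $\cV_0$ on $U(\cC^{\cV_0\enr}) = U(\cC)$ is the composite $\cV_0 \times U(\cC) \xra{F \times \id} \cV_1 \times U(\cC) \to U(\cC)$ of $F \times \id$ with the given $\cV_1$-action; since $F$ preserves colimits (it is a morphism in $\CAlg(\iota_1\PrL)$), this composite preserves colimits separately in each variable, so $\cC^{\cV_0\enr}$ is a presentable $\cV_0$-category whose associated $\cV_0$-module under \Cref{thm.presble.V.cats.are.V.mods} is exactly the restriction of scalars $F^*$ of the $\cV_1$-module $U(\cC)$ --- which is the asserted commutativity. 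One also checks, via \Cref{lem.G.a.right.adjoint.between.V.categories.iff} together with part (2), that a $\cV_1$-enriched left adjoint remains a left adjoint after passing through $\what{\Cat}(G)$, so that $\what{\Cat}(G)$ indeed restricts to the top horizontal functor of the square. For (3): apply part (2) to $\cC = \ul{\cV_1}$, a presentable $\cV_1$-category by \Cref{example.self.enrichment.of.V.is.a.prbl.V.cat}, in which the tensor by $W$ is $W \boxtimes (-)$ and the cotensor by $W$ is $\hom_{\ul{\cV_1}}(W, -)$. Thus $V \tensor X \simeq F(V) \boxtimes X$, and since $G$ is fully faithful the counit $FG \xra{\sim} \id_{\cV_1}$ is an equivalence, whence $F(V) \boxtimes X \simeq F(V) \boxtimes FG(X) \simeq F(V \boxtimes G(X))$ by the monoidality of $F$; similarly $V \cotensor X \simeq \hom_{\ul{\cV_1}}(F(V), X)$, and applying $G$ hom-wise and invoking the displayed identity rewrites this as $\hom_{\ul{\cV_0}}(V, G(X))$, which in particular exhibits $\hom_{\ul{\cV_0}}(V, G(X)) \in \cV_0$ as lying in the image of $G$.

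The whole argument is bookkeeping once the displayed internal-hom identity is established. The one place where genuine care is needed is in making the phrase ``as enriched co/presheaves'' precise --- namely, in verifying that $\what{\Cat}(G)$ interacts with the enriched Yoneda embedding and with (co)representability exactly as used above, and that the lax monoidal structure on $G$ is used coherently throughout; I expect this, rather than any single computation, to be the main (mild) obstacle.
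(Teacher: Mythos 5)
The paper states this as an observation with no written proof; the only hint is the footnote to part~(\ref{item.presly.s.m.loczn}), which deduces $G(V \cotensor X) \simeq V \cotensor G(X) \simeq \hom_{\ul{\cV_0}}(V,G(X))$ by viewing $G$ as a $\cV_0$-enriched right adjoint via \Cref{lem.G.a.right.adjoint.between.V.categories.iff}\Cref{part.radjt.lem.G.a.right.adjoint.between.V.categories.iff}. Your argument is correct and organizes the entire observation around the internal-hom identity $\hom_{\ul{\cV_0}}(V, GW) \simeq G(\hom_{\ul{\cV_1}}(FV, W))$, which is precisely the content that footnote is gesturing at; your direct Yoneda derivation of it is a clean alternative to routing through that lemma, and the rest of the bookkeeping --- reading the formula $V \tensor X \simeq F(V) \tensor X$ at the level of corepresenting objects of $U(\cC^{\cV_0\enr}) \simeq U(\cC)$, and then feeding this into \Cref{thm.presble.V.cats.are.V.mods} for part~(\ref{obs.for.Cathat.of.radjt.to.presly.s.m.fctr.both.directions.agree}) and into $\cC = \ul{\cV_1}$ with $FG \simeq \id$ for part~(\ref{item.presly.s.m.loczn}) --- is exactly right. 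Your closing caveat about making ``as enriched co/presheaves'' precise is well placed but not a real obstruction: the operative content is corepresentability, which your proof handles correctly.
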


We have the following enriched analog of the fact that presentable categories admit limits.

\begin{lemma}
\label{lem.presble.V.cats.admit.cotensors}
Let $\cC \in \iota_1\PrL_\cV$ be a presentable $\cV$-category. For any $C \in \cC$ and any $V \in \cV$, the composite
\begin{equation}
\label{composite.giving.cotensor.in.a.prbl.V.cat}
U(\cC)^\op
\xra{V \tensor -}
U(\cC)^\op
\xra{\hom_{U(\cC)}(-,C)}
\Spaces
\end{equation}
is representable, and moreover it is the cotensor of $C$ by $V$. In particular, $\cC$ admits cotensors.
\end{lemma}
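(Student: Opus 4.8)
The plan is to show that the unenriched presheaf \Cref{composite.giving.cotensor.in.a.prbl.V.cat} is representable and that its representing object has the universal property of the enriched cotensor $V \cotensor C$. First I would observe that since $\cC$ is presentable, by \Cref{lem.equiv.conds.for.presentable.n.cat} the functor $U(\cC) \xra{V \tensor -} U(\cC)$ preserves colimits; hence the composite \Cref{composite.giving.cotensor.in.a.prbl.V.cat} carries colimits in $U(\cC)$ to limits in $\Spaces$, i.e.\! it is a limit-preserving presheaf on the presentable category $U(\cC)$. By the (unenriched) adjoint functor theorem, any limit-preserving presheaf on a presentable category is representable, so there is an object which we provisionally denote $P \in U(\cC)$ together with a natural equivalence $\hom_{U(\cC)}(-, P) \simeq \hom_{U(\cC)}(V \tensor -, C)$.

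Next I would upgrade this to the enriched statement. The point is to check that $P$ represents the \emph{enriched} presheaf $V \cotensor C = \hom_{\ul{\cV}}(V, \hom_\cC(-,C)) \in \hom_{\what{\Cat}(\cV)}(\cC^\op, \ul{\cV})$, i.e.\! that there is a $\cV$-natural equivalence $\hom_\cC(-, P) \simeq \hom_{\ul{\cV}}(V, \hom_\cC(-,C))$ in $\cV$. For this it suffices, by the enriched Yoneda lemma (using that $\cV$ is presentable, so $\ul\cV$ and the enrichment are well-behaved), to test against an arbitrary $W \in \cV$: one computes, for fixed $T \in \cC$,
\[
\hom_\cV\bigl(W, \hom_{\ul\cV}(V, \hom_\cC(T,C))\bigr)
\simeq
\hom_\cV\bigl(W \boxtimes V, \hom_\cC(T,C)\bigr)
\simeq
\hom_\cV\bigl(V, \hom_\cC(W \tensor T, C)\bigr)
\simeq
\hom_{U(\cC)}(W \tensor T, P)
\simeq
\hom_\cV\bigl(W, \hom_\cC(T,P)\bigr),
\]
where the first step is the adjunction defining the internal hom of $\cV$ together with the universal property of the tensor $V \tensor(-)$, the second step uses the definition of the tensor $W \tensor T$ (and the fact that $\cC$ admits tensors by hypothesis), the third is the representability established above applied with the test object $W \tensor T$, and the last is again the universal property of the tensor. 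Naturality of all these identifications in $W$ (and in $T$) is routine and follows from the naturality of each adjunction invoked; by enriched Yoneda this exhibits $\hom_\cC(-,P) \simeq V \cotensor C$, so $P$ is the cotensor $V \cotensor C$.

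Finally, since $C$ and $V$ were arbitrary, this shows that all cotensors exist in $\cC$; combined with \Cref{obs.co.tensors.are.weak.co.tensors} this also recovers the weak cotensors. The main obstacle I anticipate is bookkeeping the coherence: making sure the chain of equivalences above is genuinely $\cV$-natural in both $T$ and $W$ (rather than merely an equivalence for each fixed pair), so that the enriched Yoneda lemma applies. In practice this is exactly the kind of ``routine higher-algebraic argument'' the paper explicitly allows itself to suppress, so I would assemble the natural equivalence by composing the already-coherent adjunction equivalences (internal hom of $\cV$, universal property of tensors, representability) and note that each is natural by construction, leaving the full unwinding to the reader.
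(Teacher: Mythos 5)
Your proof is correct and takes essentially the same approach as the paper's: both establish representability via the adjoint functor theorem and then upgrade the weak cotensor to an enriched cotensor by testing against an arbitrary $W \in \cV$ and chaining adjunction equivalences for the internal hom of $\cV$, tensors, and representability; you simply run the chain from the opposite end. The only cosmetic differences are that you explicitly cite \Cref{lem.equiv.conds.for.presentable.n.cat} for colimit-preservation of $V\tensor -$ (which the paper leaves implicit inside its appeal to the adjoint functor theorem), and you compress the paper's step through $(W\boxtimes V)\tensor T \simeq V\tensor(W\tensor T)$ into a single line; both are fine.
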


\begin{proof}
It is immediate that the composite \Cref{composite.giving.cotensor.in.a.prbl.V.cat} is the weak cotensor. Moreover, it is representable by an object $(V \cotensor^\flat C) \in U(\cC)$ by the adjoint functor theorem. So, it remains to check that this weak cotensor is in fact a cotensor: that is, we must show that for any $T \in \cC$ we have
\[
\hom_{\cC}(T,V \cotensor^\flat C)
\simeq
\hom_{\ul{\cV}}(V , \hom_{\cC}(T,C) )
~.
\]
Applying $\hom_\cV(W,-)$ for an arbitrary $W \in \cV$, we find that
\begin{align*}
\hom_\cV ( W , \hom_{\cC}(T,V \cotensor^\flat C) )
& \simeq
\hom_{U(\cC)} ( W \tensor^\flat T , V \cotensor^\flat C )
\simeq
\hom_\cV ( V , \hom_{\cC} ( W \tensor^\flat T , C ) )
\\
& \simeq
\hom_\cV ( V \tensor^\flat ( W \tensor^\flat T ) , C )
\simeq
\hom_\cV ( V \tensor ( W \tensor T ) , C )
\\
& \simeq
\hom_{U(\cC)} ( ( V \boxtimes W) \tensor T , C )
\simeq 
\hom_{U(\cC)} ( (V \boxtimes W) \tensor^\flat T, C)
\\
& \simeq
\hom_\cV ( V \boxtimes W, \hom_{\cC}(T , C ) )
\simeq
\hom_\cV ( W , \hom_{\ul{\cV}} ( V , \hom_{\cC}(T,C)))
~,
\end{align*}
as desired.
\end{proof}

\subsection{Compactly generated $\cV$-categories}
\label{subsection.cg.V.cats}

In this subsection, we study compact generation in the enriched context. Notably, we prove that $\Cat(\cV)$ is compactly-generatedly symmetric monoidal if $\cV$ is (\Cref{lem.if.V.is.cgsm.then.Cat.V.is.cgsm}), and we establish a relationship between enriched and unenriched compactness in presentable $\cV$-categories (\Cref{lem.compact.in.prbl.V.cat.detected.in.underlying}).

\begin{definition}
\label{def.compact.object.in.V.category}
Given a $\cV$-category $\cC$, we say that an object $C \in \cC$ is \bit{compact} if the functor 
\[
\cC \xra{\hom_\cC(C, -)} \ul{\cV}
\]
preserves filtered colimits. We write
\[
\cC^\omega
\subseteq
\cC
\]
for the full sub-$\cV$-category on the compact objects.
\end{definition}

\begin{remark}
The term ``filtered'' here refers only to filtered 1-categories and colimits thereover; we do not contemplate any enriched notions of filteredness. 
\end{remark}

\begin{observation}
\label{obs.if.enr.right.adj.pres.filt.colims.then.left.adj.pres.cmpcts}
Given an adjunction $F \adj G$ in $\Cat(\cV)$, if $G$ preserves filtered colimits then $F$ preserves compact objects.
\end{observation}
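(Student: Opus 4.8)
The plan is to unwind the definition of compactness (\Cref{def.compact.object.in.V.category}) and chain together three equivalences: the defining $\cV$-natural equivalence of the adjunction $F \adj G$, the hypothesis that $G$ preserves filtered colimits, and the hypothesis that the relevant object of $\cC$ is compact. Concretely, fix a compact object $C \in \cC^\omega$; we must show that $F(C) \in \cD$ is compact, i.e.\ that the enriched functor
\[
\cD
\xra{\hom_\cD(F(C),-)}
\ul{\cV}
\]
preserves filtered colimits. The adjunction $F \adj G$ supplies a $\cV$-natural equivalence $\hom_\cD(F(-),-) \simeq \hom_\cC(-,G(-))$ of bifunctors, and in particular an equivalence $\hom_\cD(F(C),-) \simeq \hom_\cC(C,G(-))$ of functors $\cD \to \ul{\cV}$.

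The key step is then the following computation. Let $\cI \in \Cat$ be filtered and let $\cI \xra{D_\bullet} U(\cD)$ be a diagram whose colimit $\colim^\cD_\cI(D_\bullet)$ exists (which, being conical, also computes $\colim^{U(\cD)}_\cI(D_\bullet)$). Because $G$ preserves filtered colimits, $G(\colim^\cD_\cI(D_\bullet)) \simeq \colim^\cC_\cI(G(D_\bullet))$; in particular this colimit in $\cC$ exists. Then, using the adjunction equivalence, the hypothesis on $G$, the compactness of $C$, and the adjunction equivalence again, we get
\[
\hom_\cD(F(C),\colim^\cD_\cI(D_\bullet))
\simeq
\hom_\cC(C,G(\colim^\cD_\cI(D_\bullet)))
\simeq
\hom_\cC(C,\colim^\cC_\cI(G(D_\bullet)))
\simeq
\colim^{\ul{\cV}}_\cI \hom_\cC(C,G(D_\bullet))
\simeq
\colim^{\ul{\cV}}_\cI \hom_\cD(F(C),D_\bullet)
~,
\]
and one checks that this composite equivalence is the canonical comparison map. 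Hence $\hom_\cD(F(C),-)$ preserves the filtered colimit, so $F(C)$ is compact.

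I do not expect any genuine obstacle here: the statement is a formal consequence of the adjunction and the two preservation hypotheses. The only point requiring a small amount of care is bookkeeping of the $\cV$-enrichment — namely that the adjunction equivalence is $\cV$-natural (so it may be applied to the whole diagram $D_\bullet$ at once and identified with the canonical comparison map), and that the filtered colimits in question are conical and hence computed compatibly in $\cC$, $\cD$, and their underlying categories. These are routine and can be handled by appeal to \Cref{obs.conical.limits.are.limits.in.underlying} and the definitions.
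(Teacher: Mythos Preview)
Your argument is correct and is exactly the routine computation the paper has in mind; the paper records this statement as an ``Observation'' without proof, treating it as formal, and your unwinding of \Cref{def.compact.object.in.V.category} via the $\cV$-natural adjunction equivalence is precisely the standard justification.
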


\begin{definition}
\label{def.compactly.generatedly.monoidal}
We define the category of \bit{compactly-generatedly symmetric monoidal} (or simply \bit{cgsm}) categories to be the subcategory
\[
\CAlg(\iota_1\PrL_\omega)
\subseteq
\CAlg(\iota_1\PrL)
~.
\]
In other words, a presentably symmetric monoidal category is cgsm if it is compactly generated and moreover its compact objects form a symmetric monoidal subcategory.
\end{definition}

\begin{example}
\label{obs.Cat.n.is.compactly.generated.iota.1.Cat.n.category}
It follows from inductively applying \Cref{lem.if.V.is.cgsm.then.Cat.V.is.cgsm} below that the category $\iota_1\Cat_n$ is cgsm.
\end{example}

\begin{localass}
\label{localass.V.cmpctly.gnrtdly.symm.mon}
In this subsection, we assume that our presentably symmetric monoidal $\cV$ is in fact cgsm.
\end{localass}

\begin{lemma}
\label{lem.if.V.is.cgsm.then.Cat.V.is.cgsm}
$\Cat(\cV)$ is cgsm.
\end{lemma}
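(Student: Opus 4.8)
The statement to establish is that $\Cat(\cV)$ is cgsm, i.e.\! lies in $\CAlg(\iota_1\PrL_\omega)$, given that $\cV$ does. By \Cref{obs.facts.about.enr.cats}\Cref{symm.mon.str.of.Cat.V}, we already know that $\Cat(\cV)$ is presentably symmetric monoidal; so the content is to show (i) that $\iota_1\Cat(\cV)$ is compactly generated, and (ii) that the subcategory $\iota_1\Cat(\cV)^\omega$ of compact objects is closed under the symmetric monoidal structure $\boxtimes$ (and contains the unit). The strategy is to build an explicit small generating set of compact objects out of the compact generators of $\cV$, using the fact that every $\cV$-category is a colimit of ``cells'' in a suitable sense.

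First I would set up the cells. Let $\cV_0 := \cV^\omega \subseteq \cV$ be the (small, by \Cref{localass.V.cmpctly.gnrtdly.symm.mon}) symmetric monoidal subcategory of compact objects, which generates $\cV$ under filtered colimits. For each $V \in \cV_0$, form the ``$\cV$-interval'' $I_V \in \Cat(\cV)$: the $\cV$-category with two objects $0,1$, with $\hom(0,1) = V$, with $\hom(0,0) = \hom(1,1) = \uno_\cV$ and $\hom(1,0) = \es_\cV$ (the initial object), and the evident composition. (When $V = \uno_\cV$ this is the walking arrow; in general it is the image of the walking arrow under the ``change of base'' along $\Spaces \to \cV$ followed by tensoring the nontrivial hom-space by $V$ — more precisely it is the pushout computing the free $\cV$-category on a single generating morphism of ``length $V$''.) Together with the unit $\cV$-category $\pt$ (one object, endomorphisms $\uno_\cV$), these $\{I_V\}_{V\in\cV_0}$ should form a set of compact projective generators for $\iota_1\Cat(\cV)$: the point is that for a $\cV$-category $\cC$, a map $I_V \to \cC$ is the choice of two objects $c,c'$ together with a map $V \to \hom_\cC(c,c')$ in $\cV$, and since $V$ is compact in $\cV$ and $\iota_0$ commutes with filtered colimits of $\cV$-categories (the space of objects is computed levelwise/colimitwise), $\hom_{\iota_1\Cat(\cV)}(I_V,-)$ commutes with filtered colimits; compactness of $\pt$ is similar, even easier. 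That these generate follows from the standard skeletal/cell-attachment presentation of any $\cV$-category as an iterated colimit built from $\pt$ and the $I_V$'s (this is where I would cite \cite{GH} or the analogous presentation; I flagged in \Cref{notn.V.enr.cats} that for this one argument the distinction between $\cV$-categories and flagged/categorical-algebra $\cV$-categories matters, and the cell presentation is cleanest for categorical algebras, so I would phrase it there and transport along). Hence $\iota_1\Cat(\cV)$ is compactly generated, with compact objects the idempotent-completion of the full subcategory generated by $\pt$ and the $I_V$ under finite colimits.

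Next, for the monoidal closure of compacts: by the formula in \Cref{obs.facts.about.enr.cats}\Cref{symm.mon.str.of.Cat.V}, $\boxtimes$ is computed on objects by $\iota_0(\cC\boxtimes\cD) = \iota_0\cC\times\iota_0\cD$ and on homs by $\hom_{\cC\boxtimes\cD}((c,d),(c',d')) = \hom_\cC(c,c')\boxtimes_\cV\hom_\cD(d,d')$. Since $\boxtimes$ preserves colimits in each variable and the generating set is stable under finite colimits up to idempotent-completion, it suffices to check that $\pt\boxtimes\pt$, $\pt\boxtimes I_V$, and $I_V\boxtimes I_W$ are compact for $V,W\in\cV_0$. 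Here $\pt\boxtimes\pt \simeq \pt$ (the unit), $\pt\boxtimes I_V \simeq I_V$, and $I_V\boxtimes I_W$ is the $\cV$-category on the poset $[1]\times[1]$ with the relevant homs being $V$, $W$, or $V\boxtimes_\cV W$ — all objects of $\cV_0$ since $\cV_0\subseteq\cV$ is a monoidal subcategory (\Cref{def.compactly.generatedly.monoidal}). This $\cV$-category is a finite colimit of $\pt$'s and $I_U$'s with $U\in\cV_0$ (glue four intervals along their endpoints and fill the square), hence compact. The unit $\pt$ is compact by the above. Therefore $\iota_1\Cat(\cV)^\omega$ is a symmetric monoidal subcategory, so $\Cat(\cV)\in\CAlg(\iota_1\PrL_\omega)$.

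The main obstacle I anticipate is the cell-presentation step: rigorously producing, for an arbitrary (flagged) $\cV$-category, a presentation as a filtered (or transfinite) colimit of finite colimits of the basic cells $\pt, I_V$, and checking that $\iota_0$ and hom-from-$I_V$ genuinely commute with the relevant colimits in $\Cat(\cV)$. This is a ``routine but fiddly'' fact about enriched categories (the enriched analogue of the nerve/skeletal filtration), and the cleanest route is probably to invoke it from \cite{GH, GHL} rather than reprove it; the handedness and flaggedness bookkeeping (as noted in \Cref{notn.V.enr.cats}) is the part most likely to need care. Everything downstream — compactness of the cells, monoidal closure — is then a short computation with the explicit $\boxtimes$ formula.
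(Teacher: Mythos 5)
Your strategy matches the paper's: your cells $\pt$ and $I_V$ are precisely the paper's $[0]_\cV$ and $[1](V)$, and the reduction of the general generators $[n](V_1,\ldots,V_n)$ to these via iterated pushout over $[0]_\cV$ is the same. However, there is a genuine gap at the load-bearing step. You justify compactness of $I_V$ via the parenthetical claim that ``$\iota_0$ commutes with filtered colimits of $\cV$-categories (the space of objects is computed levelwise/colimitwise),'' and then dismiss compactness of $\pt$ as ``similar, even easier.'' This gets the logical dependency backwards and buries the actual content: the statement that $\iota_0$ commutes with filtered colimits \emph{is} the compactness of $\pt$, and it is the hardest part of the proof. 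Precisely because of the flagged/completed distinction you yourself flag, the space of objects is \emph{not} simply computed levelwise in $\Cat(\cV)$. The paper establishes compactness of $[0]_\cV$ by identifying $E^1_\cV \in \Alg_{\sf cat}(\cV)$ as the relevant corepresenting object, reducing (via compactness of $\uno_\cV$) to the unenriched $E^1 \in \Alg_{\sf cat}(\Spaces)$, and showing $E^1$ is compact as the Segalification of a finite colimit of compact simplicial spaces (using that Segalification preserves compacts because its right adjoint preserves filtered colimits). You propose to invoke this from the references rather than reprove it, but it is not off-the-shelf bookkeeping --- it is the heart of the lemma, and it is exactly why the paper singles out this proof as the unique place where the flagged/unflagged distinction cannot be suppressed.

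A smaller imprecision: for monoidal closure, ``glue four intervals along their endpoints and fill the square'' is not yet a finite colimit presentation of $I_V \boxtimes I_W$ --- gluing four intervals gives only the boundary of the square (no long-diagonal hom), and ``filling'' is exactly what needs a formula. The paper supplies one: $[1](V) \boxtimes [1](W) \simeq [2](V,W) \coprod_{[1](V\boxtimes W)} [2](W,V)$, a pushout of two compact $2$-simplices along the diagonal $[1](V \boxtimes W)$, which is compact since $V \boxtimes W \in \cV^\omega$ by the assumption that $\cV$ is cgsm.
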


\begin{proof}
We use the notation of \cite{GH}. We first show that $\Cat(\cV)$ is compactly generated. First of all, because $\cV$ is compactly generated, we have that $\Cat(\cV)$ is generated by the objects $[n](V_1,\ldots,V_n) \in \Cat(\cV)$ for $[n] \in \bDelta$ and $V_i \in \cV^\omega$. To show that these objects are compact, we observe that
\[
[n](V_1,\ldots,V_n)
\simeq
\left(
[1](V_1)
\coprod_{[0]_\cV}
\cdots
\coprod_{[0]_\cV}
[1](V_n)
\right)
~,
\]
so it suffices to show that the objects $[0]_\cV,[1](V) \in \Cat(\cV)$ are compact for $V \in \cV^\omega$.
\begin{itemize}
\item
We first show that the object $[0]_\cV \in \Cat(\cV)$ is compact. It suffices to show that the object $E^1_\cV \in \Alg_{\sf cat}(\cV)$ is compact. For this, we observe that the functor
\begin{equation}
\label{und.segal.spaces.functor}
\Alg_{\sf cat}(\cV) \xra{ \Alg_{\sf cat}(\hom_\cV(\uno_\cV , - ) ) }
\Alg_{\sf cat}(\Spaces)
\end{equation}
preserves filtered colimits because by assumption the object $\uno_\cV \in \cV^\omega$ is compact. So, it suffices to show that the object $E^1:= E^1_\Spaces \in \Alg_{\sf cat}(\Spaces)$ is compact. This follows from the facts that it is the Segalification of a finite colimit of compact simplicial spaces and that the Segalification functor preserves compact objects because its right adjoint preserves filtered colimits. So indeed, the object $[0]_\cV \in \Cat(\cV)$ is compact.
\item
We now show that the object $[1](V) \in \Cat(\cV)$ is compact, where $V \in \cV^\omega$. Observe that for any $\cC \in \Cat(\cV)$, applying the functor
\[
\Cat(\cV)^\op
\xra{\hom_{\Cat(\cV)}(-,\cC)}
\Spaces
\]
to the morphism
\[
[1](V)
\longla
[0]_\cV \amalg [0]_\cV
\]
yields a morphism
\[
\hom_{\Cat(\cV)}([1](V),\cC)
\longra
\hom_{\Cat(\cV)}([0]_\cV \amalg [0]_\cV , \cC)
\simeq
(\iota_0 \cC)^{\times 2}
\]
in $\Spaces$, whose fiber over a point $(X,Y) \in (\iota_0 \cC)^{\times 2}$ is the space
\[
\hom_\cV(V , \hom_\cC(X,Y))
\in
\Spaces
~.
\]
Hence, the compactness of $[1](V) \in \Cat(\cV)$ follows from the fact that the functor \Cref{und.segal.spaces.functor} preserves filtered colimits.
\end{itemize}
So indeed, $\Cat(\cV)$ is compactly generated. Now, because $[0]_\cV \in \Cat(\cV)$ is the unit object, to show that $\Cat(\cV)$ is cgsm it remains to show that for any $V,W \in \cV^\omega$ the object $[1](V) \boxtimes [1](W) \in \Cat(\cV)$ is compact. This follows from the identification
\[
\left(
[1](V) \boxtimes [1](W)
\right)
\simeq
\left(
[2](V,W)
\coprod_{[1](V \boxtimes W)}
[2](W,V)
\right)
\]
in $\Cat(\cV)$ along with the observation that $V \boxtimes W \in \cV^\omega$ is compact by the assumption that $\cV$ is cgsm.
\end{proof}

\begin{lemma}
\label{lem.compact.in.prbl.V.cat.detected.in.underlying}
Let $\cC$ be a presentable $\cV$-category.
\begin{enumerate}

\item\label{item.enrichedly.cpct.implies.unenrichedly.cpct} Every compact object of $\cC$ is compact in $U(\cC)$.

\item\label{item.unenrichedly.cpct.implies.enrichedly.cpct.if.action.preserves.cpcts} Suppose that for every pair of compact objects $V \in \cV^\omega$ and $C \in U(\cC)^\omega$, the object $(V \tensor^\flat C) \in U(\cC)$ is compact. Then every compact object of $U(\cC)$ is compact in $\cC$.

\end{enumerate}
\end{lemma}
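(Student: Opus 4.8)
The plan is to prove both parts by playing the enriched hom-functor against the unenriched one, using the adjunction $\hom_{U(\cC)}(V \tensor^\flat C, -) \simeq \hom_\cV(V, \hom_\cC(C,-))$ supplied by the universal property of weak tensors (and the fact that in a presentable $\cV$-category tensors and weak tensors coincide, by \Cref{obs.co.tensors.are.weak.co.tensors}), together with the criterion of \Cref{lem.equiv.conds.for.presentable.n.cat} that colimits in $U(\cC)$ compute colimits in $\cC$. For part \Cref{item.enrichedly.cpct.implies.unenrichedly.cpct}, suppose $C \in \cC$ is compact, i.e.\ $\hom_\cC(C,-) \colon \cC \to \ul{\cV}$ preserves filtered colimits. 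I would take a filtered diagram $\cI \xra{X_\bullet} U(\cC)$ and compute
\[
\hom_{U(\cC)}(C, \colim^{U(\cC)}_\cI X_\bullet)
\simeq
\hom_\cV(\uno_\cV, \hom_\cC(C, \colim^{\cC}_\cI X_\bullet))
\simeq
\hom_\cV(\uno_\cV, \colim^{\ul{\cV}}_\cI \hom_\cC(C, X_\bullet))~,
\]
where the first equivalence uses $\hom_{U(\cC)}(C,-) = \hom_\cV(\uno_\cV, \hom_\cC(C,-))$ together with the identification of colimits in $U(\cC)$ with colimits in $\cC$ (\Cref{lem.equiv.conds.for.presentable.n.cat}\Cref{item.colimits.in.UC.compute.colimits.in.C}), and the second uses the compactness of $C$ in $\cC$. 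Since $\uno_\cV \in \cV^\omega$ (as $\cV$ is cgsm, by \Cref{localass.V.cmpctly.gnrtdly.symm.mon}) and $\hom_\cV(\uno_\cV,-) \colon \cV \to \Spaces$ preserves filtered colimits, this last expression is $\colim^\Spaces_\cI \hom_\cV(\uno_\cV, \hom_\cC(C,X_\bullet)) \simeq \colim^\Spaces_\cI \hom_{U(\cC)}(C, X_\bullet)$, proving $C$ is compact in $U(\cC)$.

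For part \Cref{item.unenrichedly.cpct.implies.enrichedly.cpct.if.action.preserves.cpcts}, suppose $C \in U(\cC)^\omega$ and assume $V \tensor^\flat C$ is compact in $U(\cC)$ for all $V \in \cV^\omega$. I must show $\hom_\cC(C,-) \colon \cC \to \ul{\cV}$ preserves filtered colimits. Since $\cV$ is compactly generated, it suffices to check that for each $V \in \cV^\omega$ the composite $\hom_\cV(V, \hom_\cC(C,-)) \colon \cC \to \Spaces$ preserves filtered colimits; and by \Cref{lem.equiv.conds.for.presentable.n.cat}\Cref{item.colimits.in.UC.compute.colimits.in.C} it is equivalent to test this against filtered colimits computed in $U(\cC)$. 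For a filtered $\cI \xra{X_\bullet} U(\cC)$,
\[
\hom_\cV(V, \hom_\cC(C, \colim^{U(\cC)}_\cI X_\bullet))
\simeq
\hom_{U(\cC)}(V \tensor^\flat C, \colim^{U(\cC)}_\cI X_\bullet)
\simeq
\colim^\Spaces_\cI \hom_{U(\cC)}(V \tensor^\flat C, X_\bullet)~,
\]
the last step using the assumed compactness of $V \tensor^\flat C$ in $U(\cC)$; and the right-hand side is $\colim^\Spaces_\cI \hom_\cV(V, \hom_\cC(C, X_\bullet))$, as desired.

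The one point that requires care — and which I expect to be the main (mild) obstacle — is the reduction in part \Cref{item.unenrichedly.cpct.implies.enrichedly.cpct.if.action.preserves.cpcts} from ``$\hom_\cC(C,-)$ preserves filtered colimits as a $\cV$-enriched functor'' to the pointwise-in-$\cV^\omega$ statement: one needs that a morphism in $\ul{\cV}$ is an equivalence iff it induces equivalences on $\hom_\cV(V,-)$ for all $V \in \cV^\omega$ (conservativity of the compact generators), and that the relevant colimit in $\ul{\cV}$ is computed in $U(\cC) = \cV$ compatibly with these hom-spaces — both of which follow from compact generation of $\cV$ together with \Cref{lem.equiv.conds.for.presentable.n.cat} applied to the presentable $\cV$-category $\ul{\cV}$ (\Cref{example.self.enrichment.of.V.is.a.prbl.V.cat}). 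Everything else is a routine manipulation of the defining adjunctions.
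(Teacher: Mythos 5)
Your proof is correct and takes essentially the same approach as the paper's: both parts are proved by the same chain of equivalences, using the adjunction between $\hom_{U(\cC)}(V \tensor^\flat C, -)$ and $\hom_\cV(V, \hom_\cC(C,-))$, the identification of colimits in $U(\cC)$ with colimits in $\cC$ via \Cref{lem.equiv.conds.for.presentable.n.cat}, and (for part \Cref{item.unenrichedly.cpct.implies.enrichedly.cpct.if.action.preserves.cpcts}) the joint conservativity of $\{\hom_\cV(V,-)\}_{V \in \cV^\omega}$ coming from compact generation of $\cV$. The ``point that requires care'' you flag at the end is exactly the point the paper's proof handles by invoking \Cref{localass.V.cmpctly.gnrtdly.symm.mon}, so nothing is missing.
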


\begin{proof}
We begin with part \Cref{item.enrichedly.cpct.implies.unenrichedly.cpct}. Suppose that $C \in \cC^\omega$; we would like to show that $C \in U(\cC)^\omega$. To do this, let $\cI \xlongra{D_\bullet} U(\cC)$ be any filtered diagram; we then compute that 
\begin{align}
\nonumber
\hom_{U(\cC)}(C, \colim_\cI^{U(\cC)}(D_\bullet))
&
:=
\hom_\cV(\uno_\cV, \hom_\cC(C, \colim_\cI^{U(\cC)}(D_\bullet)))
\\
\label{cmpct.obs.in.prbl.use.colims.computed.in.underlying}
&
\simeq 
\hom_\cV(\uno_\cV, \hom_\cC(C, \colim_\cI^\cC(D_\bullet)))
\\
\label{cmpct.obs.in.prbl.use.c.is.compact}
&
\simeq
\hom_\cV(\uno_\cV, \colim_\cI^{\ul{\cV}} \hom_\cC(C, D_\bullet))
\\
\label{cmpct.obs.in.prbl.use.colims.in.V.underline.computed.in.V}
&
\simeq
\hom_\cV(\uno_\cV, \colim_\cI^\cV \hom_\cC(C, D_\bullet))
\\
\label{cmpct.obs.in.prbl.use.uno.V.is.compact}
&
\simeq 
\colim^\Spaces_\cI \hom_\cV(\uno_\cV, \hom_\cC(C, D_\bullet))
\\
\nonumber
&
=:
\colim^\Spaces_\cI \hom_{U(\cC)}(C, D_\bullet)~,
\end{align}
where equivalences \Cref{cmpct.obs.in.prbl.use.colims.computed.in.underlying} and \Cref{cmpct.obs.in.prbl.use.colims.in.V.underline.computed.in.V} follow from Lemmas \ref{lem.limits.in.C.versus.UC}\ref{want.colimits.in.UC.to.be.colimits.in.C} and \ref{lem.presble.V.cats.admit.cotensors}, equivalence \Cref{cmpct.obs.in.prbl.use.c.is.compact} follows from the assumption that $C \in \cC^\omega$, and equivalence \Cref{cmpct.obs.in.prbl.use.uno.V.is.compact} follows from the fact that the unit $\uno_\cV \in \cV$ is compact.

We now turn to part \Cref{item.unenrichedly.cpct.implies.enrichedly.cpct.if.action.preserves.cpcts}. Suppose that $C \in U(\cC)^\omega$; we will show that under the stated assumption, for any filtered diagram $\cI \xlongra{D_\bullet} U(\cC)$ we have a canonical equivalence
\[
\hom_\cC(C, \colim_\cI^\cC(D_\bullet)) \simeq \colim_\cI^{\ul{\cV}} \hom_\cC(C, D_\bullet)
\]
in $\cV$, i.e.\! that $C \in \cC^\omega$. To verify this equivalence, by \Cref{localass.V.cmpctly.gnrtdly.symm.mon} it suffices to verify the equivalence after applying $\hom_\cV(V,-)$ for arbitrary $V \in \cV^\omega$. Indeed, we compute that
\begin{align}
\nonumber
\hom_\cV(V, \hom_\cC(C, \colim_\cI^\cC(D_\bullet)))
&
\simeq
\hom_{U(\cC)}(V \tensor^\flat C, \colim_\cI^\cC(D_\bullet))
\\
\label{proving.unenrichedly.cpct.implies.enrichedly.use.that.colims.in.C.are.colims.in.UC}
&
\simeq
\hom_{U(\cC)}(V \tensor^\flat C, \colim_\cI^{U(\cC)}(D_\bullet))
\\
\label{cmpct.obs.in.prbl.use.T.tensor.c.is.cmpct}
&
\simeq 
\colim_\cI^\Spaces \hom_{U(\cC)}(V \tensor^\flat C, D_\bullet)
\\
\nonumber
&
\simeq
\colim_\cI^\Spaces \hom_\cV(V, \hom_\cC(C, D_\bullet))
\\
\nonumber
&
\simeq
\hom_\cV(V, \colim_\cI^\cV \hom_\cC(C, D_\bullet))
\\
\label{proving.unenrichedly.cpct.implies.enrichedly.use.that.colims.in.ulV.are.colims.in.V}
&
\simeq
\hom_\cV(V, \colim_\cI^{\ul{\cV}} \hom_\cC(C, D_\bullet))
~,
\end{align}
where equivalences \Cref{proving.unenrichedly.cpct.implies.enrichedly.use.that.colims.in.C.are.colims.in.UC} and \Cref{proving.unenrichedly.cpct.implies.enrichedly.use.that.colims.in.ulV.are.colims.in.V} follow from Lemmas \ref{lem.limits.in.C.versus.UC}\ref{want.colimits.in.UC.to.be.colimits.in.C} and \ref{lem.presble.V.cats.admit.cotensors}, and \Cref{cmpct.obs.in.prbl.use.T.tensor.c.is.cmpct} follows from the assumption that $V \tensor^\flat C$ is compact in $U(\cC)$.
\end{proof}

\begin{definition}
\label{def.compactly.generated.V.category}
We say that a presentable $\cV$-category $\cC$ is \bit{compactly generated} if the action 
\[
\cV \otimes U(\cC) \xlongra{\tensor} U(\cC)
\]
lies in $\iota_1\PrL_\omega \subset \iota_1 \PrL$. In other words, this is the requirement that $U(\cC)$ is compactly generated and that there exists a factorization
\[ \begin{tikzcd}
\cV^\omega \times (U(\cC))^\omega
\arrow[hook]{r}
\arrow[dashed]{rrrd}
&
\cV \times U(\cC)
\arrow{r}
&
\cV \otimes U(\cC)
\arrow{r}{\tensor}
&
U(\cC)
\\
&
&
&
(U(\cC))^\omega
\arrow[hook]{u}
\end{tikzcd}~. \]
We write
\[
\iota_1\PrL_{\cV,\omega}
\subseteq
\iota_1\PrL_\cV
\]
for the subcategory on the compactly generated $\cV$-categories, whose morphisms are those morphisms in $\iota_1 \PrL_\cV$ that preserve compact objects.
\end{definition}

\begin{example}
\label{ex.V.is.compactly.generated.V.category}
By \Cref{localass.V.cmpctly.gnrtdly.symm.mon}, the self-enrichment $\ul{\cV} \in \Cat(\cV)$ is compactly generated. 
\end{example}

\begin{remark}
It is immediate that \Cref{thm.presble.V.cats.are.V.mods} refines to an equivalence
\[ \begin{tikzcd}
\iota_1\PrL_\cV
\arrow{r}{\sim}
&
\Mod_\cV(\iota_1 \PrL)
\\
\iota_1 \PrL_{\cV,\omega}
\arrow[dashed]{r}[swap]{\sim}
\arrow[hook]{u}
&
\Mod_\cV(\iota_1 \PrL_\omega)
\arrow[hook]{u}
\end{tikzcd} \]
between subcategories.
\end{remark}

\subsection{Semiadditive $\cV$-categories}
\label{subsec.semiadditive.V.categories}

In this subsection, we study semiadditivity in the enriched context. This notion has some particularly nice features when the enriching category $\cV$ is itself semiadditive. Namely, in this context we prove that semiadditive $\cV$-categories are equivalent to commutative monoids (\Cref{lem.semiadds.are.cmons}) (leading to a semiadditive envelope functor (\Cref{prop.Env.V})) and that semiadditive $\cV$-categories form a semiadditive $\Cat(\cV)$-category (\Cref{obs.cat.V.semiadd.is.semiadd}).

\begin{definition}
\label{def.unenr.semiadd}
We say that a category $\cC \in \Cat$ is \bit{semiadditive} if it admits finite products and coproducts and these agree (the comparison map existing once the empty coproduct and empty product agree). In this case, we refer to these finite products and coproducts as (\bit{direct}) \bit{sums}. Given objects $C,D \in \cC$, we write $C \oplus D \in \cC$ for their sum; we write $0_\cC \in \cC$ for the zero object (i.e.\! the empty sum).
\end{definition}

\begin{definition}
\label{def.enr.semiadd}
We say that a $\cV$-category $\cC \in \Cat(\cV)$ is \bit{semiadditive} if $U(\cC) \in \Cat$ is semiadditive and moreover $\cC$ admits finite products and coproducts (which therefore agree by \Cref{obs.conical.limits.are.limits.in.underlying}).\footnote{Clearly, taking $\cV = \Spaces$ recovers \Cref{def.unenr.semiadd}.} We write
\[
\Cat(\cV)^\sadd \subseteq \Cat(\cV)
\]
for the subcategory on the semiadditive $\cV$-categories whose morphisms are those functors that preserve finite sums. We also write
\[
\ul{\Cat(\cV)}^\sadd := (\Cat(\cV)^\sadd)^{\Cat(\cV)\enr} \subseteq \Cat(\cV)^{\Cat(\cV)\enr} =: \ul{\Cat(\cV)}
\]
for the 1-full sub-$\Cat(\cV)$-category (in the evident sense) on the semiadditive $\cV$-categories and finite-sum-preserving functors.
\end{definition}

\begin{observation}
\label{obs.change.of.enr.takes.sadd.to.sadd}
Let $\cV_0 \xra{F} \cV_1$ be a morphism in $\CAlg(\iota_1\PrL)$ with right adjoint $\cV_0 \xla{G} \cV_1$. Then, there exists a factorization
\[ \begin{tikzcd}
\Cat(\cV_0)
&
\Cat(\cV_1)
\arrow{l}[swap]{\Cat(G)}
\\
\Cat(\cV_0)^\sadd
\arrow[hook]{u}
&
\Cat(\cV_1)^\sadd
\arrow[hook]{u}
\arrow[dashed]{l}
\end{tikzcd}
~.
\]
\end{observation}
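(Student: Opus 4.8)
The claim is that change of enrichment along the right adjoint $G$ of a presentably symmetric monoidal functor $F \colon \cV_0 \to \cV_1$ sends semiadditive $\cV_1$-categories to semiadditive $\cV_0$-categories, and finite-sum-preserving functors to finite-sum-preserving functors. The plan is to unwind the definition of semiadditivity (\Cref{def.enr.semiadd}) and check the two conditions it imposes, using that $G$ is a right adjoint (hence limit-preserving), that $F$ is symmetric monoidal and colimit-preserving, and the compatibility of enriched and unenriched limits in the presence of (co)tensors from \Cref{lem.limits.in.C.versus.UC}.

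First I would record what $\Cat(G)$ does on underlying categories. Since $F$ is symmetric monoidal with lax monoidal right adjoint $G$, \Cref{obs.facts.about.enr.cats} gives an adjunction $\Cat(F) \dashv \Cat(G)$ between $\Cat(\cV_0)$ and $\Cat(\cV_1)$, and moreover (by the compatibility of the underlying-category functor $U$ with $\Cat(-)$ applied to lax monoidal functors, together with the fact that $\hom_{\cV_0}(\uno_{\cV_0}, G(-)) \simeq \hom_{\cV_1}(\uno_{\cV_1}, -)$ because $F(\uno_{\cV_0}) \simeq \uno_{\cV_1}$) there is a canonical identification $U \circ \Cat(G) \simeq U$. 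Thus for any $\cC \in \Cat(\cV_1)$ we have $U(\cC^{\cV_0\enr}) \simeq U(\cC)$ as unenriched categories, with the same objects and the same underlying hom-spaces. In particular, if $\cC$ is semiadditive then $U(\cC^{\cV_0\enr}) \simeq U(\cC) \in \Cat$ is semiadditive, which is the first of the two conditions in \Cref{def.enr.semiadd}.

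Next I would verify that $\cC^{\cV_0\enr}$ admits finite products and finite coproducts (which then automatically agree by \Cref{obs.conical.limits.are.limits.in.underlying}, since the corresponding finite limits and colimits agree in the semiadditive underlying category $U(\cC)$). Here is where the work concentrates. A finite $\cV_1$-enriched product $\prod_i C_i$ in $\cC$ is an object equipped with $\cV_1$-natural equivalences $\hom_\cC(T, \prod_i C_i) \simeq \prod_i \hom_\cC(T, C_i)$; applying the lax monoidal functor $G$ to these equivalences in $\cV_1$ and using that $G$ preserves finite products (being a right adjoint) yields $\cV_0$-natural equivalences $\hom_{\cC^{\cV_0\enr}}(T, \prod_i C_i) \simeq G(\hom_\cC(T,\prod_i C_i)) \simeq \prod_i G(\hom_\cC(T,C_i)) \simeq \prod_i \hom_{\cC^{\cV_0\enr}}(T,C_i)$, so the same object is the $\cV_0$-enriched product. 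For finite coproducts the direct argument via $\hom_\cC(\coprod_i C_i, -)$ requires $G$ to interact with the relevant limit in the \emph{first} variable, which is also fine since that hom is again a finite product $\prod_i \hom_\cC(C_i,-)$ in $\cV_1$ and $G$ preserves it; alternatively, and more cleanly, one invokes semiadditivity of $U(\cC)$ to identify the finite coproduct with the finite product on objects and then runs the product computation. Either way the existence of finite sums in $\cC^{\cV_0\enr}$ follows. Finally, for the functoriality statement: a morphism $\cC \to \cD$ in $\Cat(\cV_1)^\sadd$ preserves finite sums, hence (as finite sums in $\cC^{\cV_0\enr}$ are computed by the same objects, via the equivalences just established) so does $\Cat(G)$ applied to it; this produces the dashed factorization. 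The main obstacle—though a mild one—is bookkeeping the two directions of variance for coproducts and making sure the $\cV_0$-naturality (not merely a pointwise equivalence) is genuinely inherited from $G$ applied to $\cV_1$-natural equivalences; invoking \Cref{obs.conical.limits.are.limits.in.underlying} to reduce coproducts to the product computation on objects sidesteps most of this.
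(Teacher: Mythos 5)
The paper offers no proof of this observation, so there is nothing to compare against; your proposal supplies the expected unwinding of \Cref{def.enr.semiadd}, and it is correct. The key points — that $U \circ \Cat(G) \simeq U$ because $F(\uno_{\cV_0}) \simeq \uno_{\cV_1}$ implies $\hom_{\cV_0}(\uno_{\cV_0}, G(-)) \simeq \hom_{\cV_1}(\uno_{\cV_1}, -)$, and that $G$ preserves the finite products appearing in the (co)representability equivalences for enriched finite products and coproducts — are exactly right. Two small remarks. First, you should be explicit that the $\cV_0$-naturality of the equivalences $G(\hom_\cC(T,\prod_i C_i)) \simeq \prod_i G(\hom_\cC(T,C_i))$ comes from the fact that $G$ upgrades to a $\cV_0$-enriched functor $\Cat(G)(\ul{\cV_1}) \to \ul{\cV_0}$ via its lax monoidal structure and the internal adjunction in $\cV_0$; this is the single nontrivial step, and it is correct but worth flagging. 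Second, your proposed ``cleaner alternative'' for finite coproducts — identifying them with finite products via semiadditivity of $U(\cC)$ and then running the product computation — is slightly circular: knowing the object is a $\cV_0$-product does not by itself certify that it is a $\cV_0$-coproduct, since the latter requires a $\cV_0$-natural equivalence of \emph{co}presheaves $\hom_{\cC^{\cV_0\enr}}(\coprod_i C_i, -) \simeq \prod_i \hom_{\cC^{\cV_0\enr}}(C_i, -)$, not the dual one. Fortunately your direct argument for coproducts (applying $G$ to $\hom_\cC(\coprod_i C_i, -) \simeq \prod_i \hom_\cC(C_i, -)$, which also expresses a product in $\cV_1$ that $G$ preserves) is fine, so the alternative is unnecessary and should be dropped.
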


\begin{localass}
For the remainder of this subsection, we assume that $\cV \in \CAlg(\iota_1 \PrL)$ is semiadditive.
\end{localass}

\begin{remark}
All of the material in this subsection directly generalizes material that is standard in the case that $\cV = \Ab$.
\end{remark}

\begin{observation}
As the symmetric monoidal structure of $\cV$ commutes with colimits separately in each variable, in particular it commutes with direct sums separately in each variable. We use this fact without further comment.
\end{observation}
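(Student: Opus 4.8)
The plan is to treat this as a pure unwinding of the relevant definitions, since the assertion is that a structure we have already assumed to be cocontinuous in each variable is in particular cocontinuous for the finite colimits that compute direct sums. First I would recall that by \Cref{convention.V.is.presentably.s.m.} we have $\cV \in \CAlg(\iota_1 \PrL)$, i.e.\ $\cV$ is a presentably symmetric monoidal category; by definition this means precisely that the tensor bifunctor $\boxtimes \colon \cV \times \cV \to \cV$ preserves colimits separately in each variable (equivalently, it factors through $\cV \otimes \cV$ in $\iota_1 \PrL$, where $\otimes$ is the symmetric monoidal structure on $\PrL$ described above). In particular, for every fixed $X \in \cV$ the endofunctor $X \boxtimes (-) \colon \cV \to \cV$ preserves all (small) colimits, and by the evident symmetry the same holds for $(-) \boxtimes Y$ for every fixed $Y \in \cV$.

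Next I would invoke the standing assumption of this subsection that $\cV$ is semiadditive, so that (in the sense of \Cref{def.unenr.semiadd}) finite coproducts in $\cV$ coincide with finite products and are what we call direct sums; finite coproducts are a special case of colimits. Combining this with the previous paragraph yields that $X \boxtimes (-)$ and $(-) \boxtimes Y$ each preserve finite coproducts, hence direct sums, which is exactly the claim. Worth flagging along the way: the empty-coproduct case records that $X \boxtimes 0_\cV \simeq 0_\cV$, i.e.\ that $\boxtimes$ is pointed in each variable, and the binary case gives natural equivalences $X \boxtimes (A \oplus B) \simeq (X \boxtimes A) \oplus (X \boxtimes B)$. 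I do not anticipate any genuine obstacle here: the only point requiring care is bookkeeping, namely that the phrase ``direct sums'' is meaningful only because of the ambient semiadditivity hypothesis, so the statement should be read as taking place under that local assumption. (This also silently justifies the subsequent uses of this fact in the proofs of \Cref{lem.semiadds.are.cmons} and its corollaries.)
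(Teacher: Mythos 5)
Your argument is correct and is exactly the observation the paper intends: $\cV \in \CAlg(\iota_1 \PrL)$ gives colimit-preservation of $\boxtimes$ in each variable, and direct sums are finite coproducts under the standing semiadditivity assumption, so the claim is an immediate specialization. The paper offers no proof beyond the statement itself, so your unwinding matches its reasoning.
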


\begin{observation}
\label{obs.semiadd.enr.gives.semiadd}
In any $\cV$-category $\cC \in \Cat(\cV)$, finite products and coproducts coincide. That is, if a finite coproduct exists then it is also a finite product, and conversely.
\end{observation}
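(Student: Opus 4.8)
The plan is to reduce the statement to the classical ``biproduct calculus'' in a preadditive $1$-category, using the semiadditivity of $\cV$ to supply that structure and then bootstrapping back up to the enriched universal properties. First I would note that the two implications are formally equivalent: since $\cV$ is symmetric monoidal, $\cC \mapsto \cC^\op$ is an anti-involution of $\Cat(\cV)$ that exchanges finite conical products with finite conical coproducts and preserves the standing hypothesis that $\cV$ is semiadditive, so it suffices to show that a finite conical coproduct in an arbitrary $\cC \in \Cat(\cV)$ is also a finite conical product.

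The next step is to record that $U(\cC)$ is \emph{preadditive}. Because $\cV$ is semiadditive, each hom-object $\hom_\cC(X,Y) \in \cV$ has the property that $\hom_{U(\cC)}(X,Y) = \hom_\cV(\uno_\cV, \hom_\cC(X,Y))$ is canonically a commutative monoid, and because $\boxtimes$ commutes with direct sums separately in each variable (the observation immediately preceding this one), the composition maps of $\cC$ induce maps $\hom_{U(\cC)}(Y,Z) \times \hom_{U(\cC)}(X,Y) \to \hom_{U(\cC)}(X,Z)$ that are bilinear for these monoid structures. In particular $U(\cC)$ has zero morphisms $0_{X,Y}$ with $f \circ 0 = 0 = 0 \circ g$, and parallel morphisms can be added.

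Now suppose $C := \bigsqcup_{i \in I} C_i$ is a finite conical coproduct in $\cC$, with structure maps $\iota_i \colon C_i \to C$; by \Cref{obs.conical.limits.are.limits.in.underlying} it is also the coproduct in $U(\cC)$, and the universal property of the coproduct in $U(\cC)$ lets us produce it there in the enriched form $\hom_\cC(C,T) \simeq \prod_i \hom_\cC(C_i,T)$ in $\cV$ (finite discrete indexing is self-opposite). The standard argument in a preadditive $1$-category then exhibits $C$ as a biproduct: define $\pi_i \colon C \to C_i$ by $\pi_i \iota_i = \id_{C_i}$ and $\pi_i \iota_j = 0_{C_j,C_i}$ for $i \neq j$ (the case $I = \varnothing$ being that the initial object is terminal, since $\id_\varnothing = 0_{\varnothing,\varnothing}$ forces every $\hom_{U(\cC)}(T,\varnothing)$ to be a point), and check $\sum_i \iota_i\pi_i = \id_C$ using bilinearity of composition.

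Finally I would promote this to the enriched universal property of the product. For any $T \in \cC$, set $\Phi := (\pi_i \circ -)_i \colon \hom_\cC(T,C) \to \prod_i \hom_\cC(T,C_i)$, and build an inverse $\Psi$ as the morphism out of the direct sum $\bigoplus_i \hom_\cC(T,C_i) \simeq \prod_i \hom_\cC(T,C_i)$ (using that $\cV$ is semiadditive) whose $j$-th component is $\iota_j \circ -$. The identities $\Phi\Psi = \id$ and $\Psi\Phi = \id$ then follow formally from $\pi_i\iota_j = \delta_{ij}\id$, from $\sum_i \iota_i\pi_i = \id_C$, and from the bilinearity of composition, so $\Phi$ is an equivalence in $\cV$ and $C$ is the conical product of the $C_i$. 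I expect the only genuinely delicate point to be the bookkeeping in the second and fourth steps, namely verifying that the commutative-monoid structures on the hom-objects are compatible with $\boxtimes$ and hence that composition is bilinear; once that is in place the rest is a direct transcription of the classical preadditive argument.
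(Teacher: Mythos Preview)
Your argument is correct and is precisely the standard preadditive biproduct argument that the paper has in mind. Note that the paper does not actually supply a proof for this observation: it is stated bare, and the subsequent remark that ``all of the material in this subsection directly generalizes material that is standard in the case that $\cV = \Ab$'' is the only justification offered. Your write-up fills in exactly those details.

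One small expository wrinkle: in your third paragraph you write that ``the universal property of the coproduct in $U(\cC)$ lets us produce it there in the enriched form $\hom_\cC(C,T) \simeq \prod_i \hom_\cC(C_i,T)$.'' This is phrased backwards: the enriched equivalence is the \emph{hypothesis} (it is the definition of conical coproduct in $\cC$), not something deduced from the unenriched coproduct. You only need the unenriched coproduct in order to build the projections $\pi_i$ from the zero morphisms. This does not affect the correctness of the argument, since you have the enriched coproduct by assumption, but the sentence as written suggests an implication that does not hold in general.
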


\begin{definition}
\label{defn.direct.sums}
In view of \Cref{obs.semiadd.enr.gives.semiadd}, we refer to finite co/products in a $\cV$-category $\cC \in \Cat(\cV)$ as (\bit{direct}) \bit{sums}. Given objects $C,D \in \cC$, we write $C \oplus D \in \cC$ for their sum, and we write $0_\cC \in \cC$ for a zero object if it exists.
\end{definition}

\begin{example}
\label{ex.self.enr.of.sadd.is.sadd}
The self-enrichment $\ul{\cV} \in \Cat(\cV)$ is semiadditive.
\end{example}

\begin{observation}
\label{obs.semiadd.enriched.implies.automatic.finite.sum.preservation}
As sums can be detected diagramatically (in a way that can be upgraded to $\cV$-enrichments via Yoneda), any morphism $\cC \ra \cD$ in $\Cat(\cV)$ automatically preserves any finite sums that exist in $\cC$. In other words, the subcategory 
\[
\Cat(\cV)^\sadd \subseteq \Cat(\cV)
\]
is full (and not merely 1-full), and likewise the sub-$\Cat(\cV)$-category
\[
\ul{\Cat(\cV)}^\sadd \subseteq \ul{\Cat(\cV)}
\]
is full.
\end{observation}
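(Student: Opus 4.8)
The plan is to reduce both fullness assertions to the single statement that an arbitrary $\cV$-functor $F\colon \cC \to \cD$ between semiadditive $\cV$-categories preserves finite direct sums. Granting this, $\Cat(\cV)^\sadd \subseteq \Cat(\cV)$ contains every $\cV$-functor between its objects and is therefore full; and for the enriched version, the inclusion $\ul{\Cat(\cV)}^\sadd \subseteq \ul{\Cat(\cV)}$ upgrades from $1$-full to full because its hom-$\cV$-categories then agree, the $1$-full sub-$\cV$-category $\Fun^\sadd(\cC,\cD) \subseteq \Fun(\cC,\cD)$ of finite-sum-preserving functors being all of $\Fun(\cC,\cD)$ exactly when every $\cV$-functor $\cC \to \cD$ preserves finite sums. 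So everything comes down to that one claim.

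To prove it, I would first reduce to the two generating cases -- the empty sum (a zero object) and the binary sum -- since every finite sum is built from these and $F$ is a functor. Throughout I would use that, $\cV$ being semiadditive, we have $\cV \simeq \CMon(\cV)$ canonically, so that every hom-object $\hom_\cC(X,Y) \in \cV$ carries a canonical commutative-monoid structure with additive unit the zero morphism $0_{X,Y}$, composition is bilinear (its distributivity over direct sums inherited from that of $\boxtimes_\cV$, as noted earlier in this subsection), and -- crucially -- \emph{every} morphism of $\cV$ is automatically a morphism of commutative monoids. Now a zero object $0_\cC$ is characterized among objects of $\cC$ by the single equation $\id_{0_\cC} \simeq 0_{0_\cC, 0_\cC}$ in $\hom_\cC(0_\cC, 0_\cC)$; and a diagram $C \xrightarrow{i_C} B \xleftarrow{i_D} D$ together with $C \xleftarrow{p_C} B \xrightarrow{p_D} D$ exhibits $B$ as a binary biproduct $C \oplus D$ (recall \Cref{obs.semiadd.enr.gives.semiadd}) precisely when $p_C i_C \simeq \id_C$, $p_D i_D \simeq \id_D$, $p_C i_D \simeq 0$, $p_D i_C \simeq 0$, and $i_C p_C + i_D p_D \simeq \id_B$ -- equations that take place in hom-objects of $\cC$. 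This is the sense in which sums are "detected diagrammatically", and the diagram can be promoted to a genuinely $\cV$-enriched one via the enriched Yoneda embedding. Since $F$ acts on each hom-object by a morphism of $\cV$, hence of commutative monoids, and preserves composition and identities by definition of a $\cV$-functor, it carries this data to data of the same shape satisfying the same equations for $F(0_\cC)$, resp.\ $F(B)$; by the characterizations just recalled, $F(0_\cC)$ is a zero object and $F(B) \simeq FC \oplus FD$.

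The main obstacle is making the matrix characterization of finite biproducts genuinely precise in the $\infty$-categorical enriched setting -- promoting the finitely many displayed equivalences to a \emph{contractible} space of coherence data that $F$ transports verbatim (the phenomenon that $\cV$-functors are "automatically additive" on hom-objects precisely because every map in the semiadditive category $\cV$ is a monoid map). I expect to handle this by recasting the biproduct's universal property as a manifestly absolute one available in any semiadditive $\cV$-category -- the semiadditivity of $\cV$, hence of $\Cat(\cV)$, being exactly what makes this possible -- along the lines of the standard theory of semiadditive $\infty$-categories and finite spans. Alternatively, once \Cref{lem.semiadds.are.cmons} is available the claim follows from the identification of semiadditive $\cV$-categories with commutative monoids (whose morphisms are automatically sum-preserving); but since that result comes later we give the direct argument, after which the residual verifications -- that $F$'s hom-object action is a monoid morphism and that the composite $i_C p_C + i_D p_D$ is transported correctly -- are routine.
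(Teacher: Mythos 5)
Your approach matches the paper's, which records this only as an observation justified by the parenthetical hint that sums can be detected diagrammatically and upgraded to $\cV$-enrichments via Yoneda: you unpack exactly that hint via the matrix characterization of biproducts, and you are right that invoking \Cref{lem.semiadds.are.cmons} would be circular, since the paper cites this observation in that lemma's proof. One reassurance on the coherence worry you flag: it is lighter than it appears, because the five witnessing homotopies already exhibit the two comparison maps between $B$ and $C\oplus D$ as mutually inverse up to homotopy, and mutual inverses are automatically equivalences in an $\infty$-category without any compatibility between the two witnesses; the genuine content is precisely what you identified, namely that every $\cV$-morphism between hom-objects is automatically a map of commutative monoids because $\cV \simeq \CMon(\cV)$, and that composition is bilinear for these structures because $\boxtimes$ distributes over $\oplus$.
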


\begin{observation}
\label{obs.Cat.of.V.has.products}
Because $\Cat(\cV)$ is presentable, it admits finite products. These commute with the passage to underlying groupoids (because the functor $\Cat(\cV) \xra{U} \Cat$ commutes with limits). Moreover, for any $\cC,\cD \in \Cat(\cV)$ and $C,C' \in \cC$ and $D,D' \in \cD$, we have an equivalence
\[
\hom_{\cC \times \cD} ((C,D) , (C',D'))
\simeq
\hom_\cC ( C,C') \times \hom_\cD(D,D')
=:
\hom_\cC ( C,C') \oplus \hom_\cD(D,D')
\]
in $\cV$. It follows that pointwise sums in $\cC \times \cD$ are sums.
\end{observation}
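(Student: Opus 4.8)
The plan is to establish the three claims in turn; the first two are formal, and the only real point of care is the hom-formula, where one must not conflate the cartesian product $\cC\times\cD$ in $\Cat(\cV)$ with the tensor product $\cC\boxtimes\cD$ of \Cref{obs.facts.about.enr.cats}\Cref{symm.mon.str.of.Cat.V} (whose hom-objects are computed by $\boxtimes_\cV$, not by products in $\cV$). Since $\cV$ is presentably symmetric monoidal, $\Cat(\cV)$ is too by \Cref{obs.facts.about.enr.cats}\Cref{symm.mon.str.of.Cat.V}, hence presentable, hence complete, so it has finite products $\cC\times\cD$. The functor $U$ is a right adjoint — it is $\Cat$ applied to $\hom_\cV(\uno_\cV,-)$, whose monoidal left adjoint $S\mapsto S\tensor\uno_\cV$ transports to a left adjoint of $U$ via \Cref{obs.facts.about.enr.cats} — so $U$ preserves limits, and $\iota_0\colon\Cat\to\Spaces$ preserves limits as the right adjoint of the inclusion of spaces; thus $\iota_0 U$ preserves finite products and $\iota_0(\cC\times\cD)\simeq\iota_0\cC\times\iota_0\cD$, which identifies objects of $\cC\times\cD$ with pairs $(C,D)$.

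For the hom-formula I would probe $\cC\times\cD$ against the $\cV$-categories $[1](V)$, $V\in\cV$, exactly as in the proof of \Cref{lem.if.V.is.cgsm.then.Cat.V.is.cgsm}: for any $\cE\in\Cat(\cV)$, restriction along $[0]_\cV\amalg[0]_\cV\to[1](V)$ is a map $\hom_{\Cat(\cV)}([1](V),\cE)\to(\iota_0\cE)^{\times2}$ whose fiber over $(E_0,E_1)$ is $\hom_\cV(V,\hom_\cE(E_0,E_1))$. Because $\hom_{\Cat(\cV)}([1](V),-)$ carries products to products, applying it to $\cC\times\cD$ and comparing fibers over a point $\big((C,D),(C',D')\big)$ of $(\iota_0\cC\times\iota_0\cD)^{\times2}$ — using that a fiber of a product of maps is the product of the fibers, and that $\hom_\cV(V,-)$ preserves products — gives a natural equivalence $\hom_\cV\big(V,\hom_{\cC\times\cD}((C,D),(C',D'))\big)\simeq\hom_\cV\big(V,\hom_\cC(C,C')\times\hom_\cD(D,D')\big)$. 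Enriched Yoneda in $\cV$ then produces the asserted equivalence, and semiadditivity of $\cV$ identifies that product with the direct sum, which is all the notation $\oplus$ on the right records. (Alternatively, one could directly build the $\cV$-category with object-space $\iota_0\cC\times\iota_0\cD$ and these hom-objects, and verify the product universal property by hand; the probe merely avoids that bookkeeping.)

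For the final claim, suppose $\cC$ admits a sum $C\oplus C'$ and $\cD$ a sum $D\oplus D'$. Feeding the enriched universal properties of $C\oplus C'$ and $D\oplus D'$ — as coproducts, $\hom_\cC(C\oplus C',E)\simeq\hom_\cC(C,E)\times\hom_\cC(C',E)$, and symmetrically as products — into the hom-formula shows that $(C\oplus C',D\oplus D')$ is simultaneously the coproduct and the product of $(C,D)$ and $(C',D')$ in $\cC\times\cD$, hence their direct sum; likewise $(0_\cC,0_\cD)$ is a zero object. I expect no genuine obstacle in the argument; the one thing to watch throughout is keeping $\cC\times\cD$ distinct from $\cC\boxtimes\cD$ and ensuring the $[1](V)$-probe is taken against the categorical product, not the tensor product.
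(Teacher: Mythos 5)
Your proof is correct and elaborates the reasoning the paper leaves implicit (the paper records this as an unproved Observation with only parenthetical justification). The argument that $U$ is a right adjoint via the symmetric monoidal left adjoint $\Spaces\to\cV$ and \Cref{obs.facts.about.enr.cats} is the right one, and the $[1](V)$-probe to identify $\hom_{\cC\times\cD}$ is precisely the technique the paper itself deploys in the proof of \Cref{lem.if.V.is.cgsm.then.Cat.V.is.cgsm}, so you are working well within the paper's toolkit; your caution about distinguishing $\cC\times\cD$ from $\cC\boxtimes\cD$ is exactly the point the observation is silently navigating.
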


\begin{observation}
We use the following facts about the zero object $0_\cV \in \cV$ without further comment.
\begin{enumerate}

\item It admits a canonical structure of a commutative algebra object (with respect to $\boxtimes_\cV$).

\item Considered as an object $0_\cV \in \Alg(\cV)$, it is coempty: there are no morphisms out of it besides its identity morphism.

\item It is contagious under $\boxtimes$, because it is an empty colimit and $\boxtimes$ commutes with colimits separately in each variable.

\item It is its only module.

\end{enumerate}
\end{observation}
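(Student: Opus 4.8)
The plan is to treat fact (3) as essentially given — the observation already reduces it to $0_\cV$ being the empty colimit together with the fact that $\boxtimes$ preserves colimits separately in each variable, which holds since $\cV \in \CAlg(\iota_1\PrL)$ — and to bootstrap the other three facts from it. For fact (1), I would invoke that the forgetful functor $\CAlg(\cV) \to \cV$ creates limits \cite{HA}, so $\CAlg(\cV)$ has a terminal object which, since the forgetful functor preserves limits, has underlying object the terminal object $0_\cV \in \cV$. This terminal commutative algebra is the asserted canonical structure on $0_\cV$; since the forgetful functor is conservative, any commutative algebra structure on $0_\cV$ receives an equivalence from the terminal one, and a terminal object has contractible automorphisms, so in fact the fiber of $\CAlg(\cV) \to \cV$ over $0_\cV$ is contractible — i.e. the structure is canonical in the strong sense. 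The same argument applies with $\Alg$ in place of $\CAlg$.

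Facts (2) and (4) both follow from a single retract argument driven by (3). Given a morphism $f\colon 0_\cV \to A$ in $\Alg(\cV)$, compatibility with units expresses the unit $u_A\colon\uno_\cV \to A$ as $f$ precomposed with the unit $\uno_\cV \to 0_\cV$; feeding this into the left-unit identity $\bigl(A \xra{\sim} \uno_\cV \boxtimes A \xra{u_A \boxtimes \id} A \boxtimes A \xra{m_A} A\bigr) \simeq \id_A$ exhibits $\id_A$ as factoring through $0_\cV \boxtimes A \simeq 0_\cV$ (fact (3)), so $A$ is a retract of a zero object and hence $A \simeq 0_\cV$. As $0_\cV$ is terminal in $\Alg(\cV)$ by (1), $\hom_{\Alg(\cV)}(0_\cV,0_\cV) \simeq \ast$, so the identity is the only endomorphism; this is fact (2). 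Running the identical argument with the action map $0_\cV \boxtimes M \to M$ of a would-be $0_\cV$-module $M$ in place of $m_A\circ(f\boxtimes\id)$ shows every object of $\Mod_{0_\cV}(\cV)$ is a zero object; since any nonempty category all of whose objects are zero objects is contractible, and $0_\cV$ is itself a $0_\cV$-module, we conclude $\Mod_{0_\cV}(\cV) \simeq \ast$, which is fact (4). (Alternatively, (3) says the multiplication $0_\cV \boxtimes 0_\cV \to 0_\cV$ is an equivalence, so $0_\cV$ is an idempotent object and $\Mod_{0_\cV}(\cV)$ is the corresponding smashing localization, here onto the contractible subcategory $\{0_\cV\}$.)

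There is essentially no content to overcome; the only obstacle is interpretive bookkeeping — being careful that ``a canonical commutative algebra structure'' is to be read as a point of $\CAlg(\cV)$ with contractible space of choices, and that ``$0_\cV$ is its only module'' means $\Mod_{0_\cV}(\cV)\simeq\ast$ and not merely that every module is objectwise a zero object. Both are dispatched cleanly by arguing at the level of the categories $\CAlg(\cV)$, $\Alg(\cV)$, $\Mod_{0_\cV}(\cV)$ (via ``forgetful creates limits,'' conservativity, and the retract argument) rather than by exhibiting coherence data by hand, which is presumably why these facts are used ``without further comment.'' We note that semiadditivity of $\cV$ plays no role in this particular observation; only the presentability and symmetry of $\boxtimes$ are used.
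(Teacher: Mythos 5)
Your argument is correct, and there is no proof in the paper to compare it against: the observation is simply a list of facts the authors declare they will use ``without further comment.'' Deriving (1) from creation of limits by $\CAlg(\cV)\to\cV$, and then deriving (2) and (4) from (3) via the retract argument on the left-unit constraint, is a clean and standard way to verify them. (Two tiny interpretive elaborations worth keeping in mind: in (4), the retract argument directly shows only that the \emph{underlying} object of a $0_\cV$-module is a zero object of $\cV$; one then uses that $\Mod_{0_\cV}(\cV)\to\cV$ creates limits and is conservative to conclude the module itself is the terminal module. Similarly, in (2) the terminality argument applies to arbitrary targets, not just endomorphisms.)

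The one genuine inaccuracy is your closing remark that ``semiadditivity of $\cV$ plays no role; only the presentability and symmetry of $\boxtimes$ are used.'' In fact the existence of a \emph{zero} object --- the nullary case of semiadditivity --- is used throughout, and not merely to give the statement meaning. For (3) you need the zero object to be initial so that it is an empty colimit; for (1) you need it to be terminal so that it carries the terminal algebra structure; and the retract argument underlying (2) and (4) needs both at once, since a retract of a merely initial object need not be initial. In a general presentably symmetric monoidal category such as $\Spaces$, none of the four assertions hold for either the initial or the terminal object. What is true is that the \emph{higher} arities of semiadditivity (binary products and coproducts agreeing) go unused, so the minimal hypothesis is ``pointed presentably symmetric monoidal,'' not ``presentably symmetric monoidal.''
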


\begin{observation}
\label{obs.end.zero.means.zero}
An object of $\cC \in \Cat(\cV)$ is a zero object and only if its endomorphism algebra is $0_\cV \in \Alg(\cV)$.
\end{observation}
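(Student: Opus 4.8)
The plan is to prove both implications directly from the universal properties recorded in Definition~\ref{def:enriched.limits.and.colimits} together with the facts about $0_\cV \in \Alg(\cV)$ and its modules collected in the preceding observation. Throughout, the key point is that since $\cV$ is semiadditive, the empty limit is the object $0_\cV$, so that the enriched universal property of a zero object $C \in \cC$ reads $\hom_\cC(C,-) \simeq 0_\cV \simeq \lim^\cV_\varnothing$ and $\hom_\cC(-,C) \simeq 0_\cV \simeq \lim^\cV_\varnothing$ (as $\cV$-enriched co/presheaves).

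For the forward direction, suppose $C \in \cC$ is a zero object, so in particular $C$ realizes the empty coproduct $\colim^\cC_\varnothing$. The defining equivalence of Definition~\ref{def:enriched.limits.and.colimits} then gives $\hom_\cC(C,C) \simeq \lim^\cV_\varnothing \hom_\cC(-,C) \simeq 0_\cV$, so the underlying $\cV$-object of $\End_\cC(C)$ is $0_\cV$. To upgrade this to an identification of algebra objects, I would use that the forgetful functor $\Alg(\cV) \to \cV$ is conservative and limit-preserving, so that the terminal object of $\Alg(\cV)$ is carried by $0_\cV$; the unique algebra map $\End_\cC(C) \to 0_\cV$ to this terminal algebra has underlying map the (invertible) unique self-map of $0_\cV$, hence is an equivalence in $\Alg(\cV)$. (Equivalently: every structure map of an algebra whose underlying object is $0_\cV$ is forced, using that $0_\cV$ is a zero object and is contagious under $\boxtimes$, so there is an essentially unique such algebra.)

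For the reverse direction, suppose $\End_\cC(C) \simeq 0_\cV$ in $\Alg(\cV)$. Fix $T \in \cC$. Post-composition makes $\hom_\cC(T,C)$ a left module over $\End_\cC(C) \simeq 0_\cV$, and unitality of this module structure exhibits $\id_{\hom_\cC(T,C)}$ as factoring through $0_\cV \boxtimes \hom_\cC(T,C) \simeq 0_\cV$ (contagiousness of $0_\cV$); hence $\hom_\cC(T,C)$ is a retract of $0_\cV$ and so equivalent to $0_\cV$ — this is, of course, just the observation that $0_\cV$ is its only module. Dually, pre-composition shows $\hom_\cC(C,T) \simeq 0_\cV$. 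A mild naturality argument — the assignments $T \mapsto \hom_\cC(C,T)$ and $T \mapsto \hom_\cC(T,C)$ factor through the contractible category of $0_\cV$-modules — upgrades these pointwise equivalences to natural ones, so $\hom_\cC(C,-) \simeq \lim^\cV_\varnothing$ and $\hom_\cC(-,C) \simeq \lim^\cV_\varnothing$, i.e.\ $C$ is both initial and terminal, hence a zero object.

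The only step that requires any genuine care is the algebra-level bookkeeping in the forward direction, namely passing from ``the underlying $\cV$-object of $\End_\cC(C)$ is $0_\cV$'' to ``$\End_\cC(C)$ is the zero algebra''; but this is a soft consequence of conservativity of the forgetful functor $\Alg(\cV) \to \cV$ together with the zero-object and contagiousness properties of $0_\cV$ already recorded. Everything else is a direct application of the enriched co/limit universal properties and the stated facts about $0_\cV$ and its modules.
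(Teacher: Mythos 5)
Your proof is correct, and it unpacks exactly the argument that the paper leaves implicit (the paper states this as an unproved Observation, relying on the facts collected in the immediately preceding observation, most pointedly item (4), that $0_\cV$ is its own only module). Your forward direction, via the terminal algebra and conservativity of $\Alg(\cV)\to\cV$, and your reverse direction, via the module structure of each $\hom_\cC(T,C)$ over $\End_\cC(C)$, are both sound; the only mild redundancy is that your contagion-and-retract argument in the reverse direction is itself a re-derivation of the cited fact that $0_\cV$ is its only module, so you could simply invoke it directly. Note also that the naturality step at the end can be handled even more softly than by factoring through the contractible category $\Mod_{0_\cV}(\cV)$: since $0_\cV$ is a zero object of $\ul\cV$ in the enriched sense, the constant functor $\const_{0_\cV}$ is terminal in $\Fun(\cC,\ul\cV)$, and the unique $\cV$-natural map $\hom_\cC(C,-)\to\const_{0_\cV}$ is a pointwise equivalence by what you proved, hence an equivalence.
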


\begin{lemma}
\label{lem.semiadds.are.cmons}
The forgetful functor 
\[
\CAlg(\Cat(\cV), \times) =: \CMon(\Cat(\cV))
\xlongra{\fgt} 
\Cat(\cV)
\]
factors as an equivalence 
\[
\begin{tikzcd}[column sep = 1.5cm]
\CMon(\Cat(\cV))
\arrow{r}{\fgt}
\arrow[dashed]{rd}[sloped, anchor=north, swap]{\sim}
&
\Cat(\cV) 
\\
&
\Cat(\cV)^\sadd 
\arrow[hook]{u}
\end{tikzcd}
~.
\]

\end{lemma}

\begin{proof}
The forgetful functor is fully faithful by \Cref{obs.semiadd.enriched.implies.automatic.finite.sum.preservation}. 
So, suppose that $(\cC ,\boxplus) \in \CMon(\Cat(\cV))$. First of all, the unit morphism
\[
\pt_{\Cat(\cV)}
\simeq
\fB 0_\cV
\longra
\cC
\]
selects an object $I \in \cC$ equipped with a morphism $0_\cV \ra \End_\cC(I)$ of algebra objects of $\cV$, which implies that $I \in \cC$ is a zero object by \Cref{obs.end.zero.means.zero}. Thereafter, the unitality of $\boxplus$ implies that for any $C \in \cC$ we have
\[
C \boxplus 0_\cC \simeq C \simeq 0_\cC \boxplus C
~.
\]
Now, for any $C,D \in \cC$, by \Cref{obs.Cat.of.V.has.products} we have a composite equivalence
\[
(C,D)
\simeq
(C \oplus 0_\cC , 0_\cC \oplus D)
\simeq
(C,0_\cC) \oplus (0_\cC,D)
\]
in $\cC \times \cC$. On the other hand, the morphism
\[
\cC \times \cC
\xlongra{\boxplus}
\cC
\]
in $\Cat(\cV)$ preserves sums by \Cref{obs.semiadd.enriched.implies.automatic.finite.sum.preservation}. It follows that we have a canonical composite equivalence
\[
C \boxplus D
\simeq
(C \boxplus 0_\cC) \oplus (0_\cC \boxplus D)
\simeq
C \oplus D
\]
in $\cC$. So the commutative monoid structure $\boxplus$ is canonically equivalent to $\oplus$ (and in particular, $\cC$ is semiadditive).
\end{proof}

\begin{observation}
\label{obs.cat.V.semiadd.is.semiadd}
It follows directly from \Cref{lem.semiadds.are.cmons} that the $\Cat(\cV)$-category $\ul{\Cat(\cV)}^\sadd \in \what{\Cat}(\Cat(\cV))$ is semiadditive.
\end{observation}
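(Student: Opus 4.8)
The plan is to reduce the statement to the (enriched) semiadditivity of a category of commutative monoids by way of \Cref{lem.semiadds.are.cmons}, which identifies $\Cat(\cV)^\sadd$ with $\CMon(\Cat(\cV)) := \CAlg(\Cat(\cV),\times)$. For the underlying $1$-category $U(\ul{\Cat(\cV)}^\sadd) = \Cat(\cV)^\sadd$ one then argues exactly as in the classical case: since $\Cat(\cV)$ is presentable it admits finite products, so among commutative monoids it has a zero object (the cartesian unit $\fB 0_\cV$, which is simultaneously terminal and initial there, as in the proof of \Cref{lem.semiadds.are.cmons}) and binary sums are computed by the product; hence $\Cat(\cV)^\sadd$ is a semiadditive $1$-category.

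Second, I would promote this to the $\Cat(\cV)$-enrichment, i.e.\ check that $\ul{\Cat(\cV)}^\sadd$ admits finite $\Cat(\cV)$-enriched products and coproducts. For products: for $\cC,\cD \in \Cat(\cV)^\sadd$, the product $\cC \times \cD$ formed in $\Cat(\cV)$ is again semiadditive by \Cref{obs.Cat.of.V.has.products}, and it corepresents the enriched product because the internal hom of $\Cat(\cV)$ (a right adjoint) preserves products, giving $\Fun(\cE,\cC\times\cD) \simeq \Fun(\cE,\cC)\times\Fun(\cE,\cD)$, and this equivalence restricts to the $\cV$-subcategories of finite-sum-preserving functors since a functor into $\cC\times\cD$ preserves finite sums precisely when both of its components do (sums in $\cC\times\cD$ being pointwise). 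The empty product is the zero object $\fB 0_\cV$. For coproducts: I would run the usual shear-map argument exhibiting $\cC\times\cD$, equipped with the evident inclusions from $\cC$ and $\cD$, as the coproduct, but now at the level of the $\cV$-valued hom-objects $\Fun^\sadd(-,-)$ rather than mapping spaces — the inverse comparison sends a pair $(F,H)$ of finite-sum-preserving functors out of $\cC$ and $\cD$ to the functor $(c,d)\mapsto F(c)\oplus H(d)$ — and observe that all the comparison maps are assembled from $\Cat(\cV)$-enriched functors (composition, cartesian product, and the sum bifunctor $\cE\times\cE\xra{\oplus}\cE$), hence are $\Cat(\cV)$-natural. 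It then follows that finite enriched products and coproducts coincide.

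The main obstacle is exactly this last step. One cannot simply invoke \Cref{obs.semiadd.enr.gives.semiadd} to conclude that finite products and coproducts agree in $\ul{\Cat(\cV)}^\sadd$, because that observation requires the \emph{enriching} category to be semiadditive, whereas $\Cat(\cV)$ is not. So the coproduct must be constructed by hand, and the one genuinely non-formal (if routine) point is verifying that the shear/fold comparisons are equivalences of $\cV$-categories $\Fun^\sadd(-,-)$ that are natural in every variable. A tidier packaging of the same content is to check directly that the equivalence of \Cref{lem.semiadds.are.cmons} upgrades to one of $\Cat(\cV)$-enriched categories, reducing everything to the standard semiadditivity of $\ul{\CMon(\cM)}$ for arbitrary $\cM \in \CAlg(\iota_1 \PrL)$; the effort required to establish that enriched compatibility is comparable.
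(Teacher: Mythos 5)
Your argument is correct, and since the paper's own justification for the observation is just the pointer to \Cref{lem.semiadds.are.cmons}, your reconstruction is a fair reading of what ``follows directly'' must mean. You correctly put your finger on the genuine wrinkle: \Cref{obs.semiadd.enr.gives.semiadd} is unavailable here because $\Cat(\cV)$ is not semiadditive, so the enriched coproduct universal property for the biproduct $\cC \times \cD$ has to be verified, which your shear/fold comparison on the hom-$\cV$-categories $\Fun^\sadd(-,-)$ accomplishes. A slightly tidier packaging using the same ingredients: first note that $\ul{\Cat(\cV)}^\sadd$ admits weak cotensors over $\Cat(\cV)$, the weak cotensor of $\cC$ by $\cE$ being $\Fun(\cE, \fgt(\cC))$ with its pointwise sums --- this is exactly the calculation that opens the proof of \Cref{prop.Env.V}, where the paper actually verifies it. Given weak cotensors, \Cref{lem.limits.in.C.versus.UC} (part 2, case (a)) says colimits in the underlying $1$-category $\Cat(\cV)^\sadd$ automatically compute enriched colimits; finite products are meanwhile inherited from the full inclusion $\ul{\Cat(\cV)}^\sadd \subseteq \ul{\Cat(\cV)}$ since products of semiadditive $\cV$-categories are semiadditive by \Cref{obs.Cat.of.V.has.products}. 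Combined with the semiadditivity of $\Cat(\cV)^\sadd \simeq \CMon(\Cat(\cV))$ coming from \Cref{lem.semiadds.are.cmons}, the enriched finite products and coproducts exist and coincide, with no shear map needed. The cost is pulling the cotensor formula forward of where the text states it; either route is sound.
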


\begin{proposition}
\label{prop.Env.V}
There exists a left adjoint localization functor
\begin{equation}
\label{CatV.enriched.Env.V.adjn}
\begin{tikzcd}[column sep=2cm]
\Cat(\cV)
\arrow[dashed, yshift=0.9ex]{r}{\Env_\cV^\oplus}
\arrow[hookleftarrow, yshift=-0.9ex]{r}[yshift=-0.2ex]{\bot}[swap]{\fgt}
&
\Cat(\cV)^\sadd
\end{tikzcd}
~.
\end{equation}
Moreover, this left adjoint is compatible with the symmetric monoidal structure of $\Cat(\cV)$, i.e.\! for any $\cE,\cF \in \Cat(\cV)$ the morphism
\[
\Env^\oplus_\cV \left( \cE \boxtimes \left( \cF \xlongra{\eta} \fgt\left(\Env^\oplus_\cV (\cF)\right) \right) \right)
\]
in $\Cat(\cV)^\sadd$ is an equivalence.
\end{proposition}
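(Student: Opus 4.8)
The plan is to deduce both statements of \Cref{prop.Env.V} from the general framework of \Cref{lem.semiadds.are.cmons}, which identifies $\Cat(\cV)^\sadd$ with $\CMon(\Cat(\cV))$. First I would produce the adjunction \Cref{CatV.enriched.Env.V.adjn}. Since $\Cat(\cV)$ is presentably symmetric monoidal (by \Cref{obs.facts.about.enr.cats}\Cref{symm.mon.str.of.Cat.V}, using that $\cV$ is presentably symmetric monoidal), the category $\CMon(\Cat(\cV)) := \CAlg(\Cat(\cV),\times)$ is itself presentable, and the forgetful functor $\CMon(\Cat(\cV)) \to \Cat(\cV)$ is a right adjoint: it preserves limits and filtered colimits (the latter because $\Cat(\cV)$ is compactly-generatedly symmetric monoidal by \Cref{lem.if.V.is.cgsm.then.Cat.V.is.cgsm}, so that the relevant free-commutative-monoid monad is finitary, or more simply by a direct accessibility argument), so the adjoint functor theorem yields a left adjoint, which I name $\Env^\oplus_\cV$ after transporting along the equivalence of \Cref{lem.semiadds.are.cmons}. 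That this adjunction is a \emph{localization} — i.e.\ that the forgetful functor is fully faithful — is exactly the content of \Cref{obs.semiadd.enriched.implies.automatic.finite.sum.preservation}: a map of semiadditive $\cV$-categories automatically preserves finite sums, so $\Cat(\cV)^\sadd \subseteq \Cat(\cV)$ is a full subcategory (and hence $\fgt$ is a monomorphism, making the adjunction a left adjoint localization).

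Next I would address the monoidal compatibility. The cleanest route is to observe that the free--commutative-monoid construction is lax symmetric monoidal in the following strong sense: for a presentably symmetric monoidal category $\cD$ (here $\cD = \Cat(\cV)$ with its cartesian... no, with its $\boxtimes$-structure), the localization $\cD \to \CMon(\cD)$ is smashing, meaning the underlying object of the localization of $\cE$ is $\cE \otimes \fgt(\Env(\uno))$ — but here the relevant tensor is $\boxtimes$ and $\fgt(\Env_\cV^\oplus(\uno_{\Cat(\cV)}))$ is precisely the unit object $\uno_{\St_2}$-style free semiadditive envelope of the point, which is $\bigoplus$-generated. Concretely, I would argue that the class of morphisms inverted by $\Env^\oplus_\cV$ is generated under colimits and $\boxtimes$-tensoring by the finite-coproduct--comparison maps $\iota_0(\cC \sqcup \cD) \to \iota_0 \cC \times \iota_0 \cD$ applied to the generating compact $\cV$-categories $[n](V_1,\dots,V_n)$ of \Cref{lem.if.V.is.cgsm.then.Cat.V.is.cgsm}; since $\boxtimes$ distributes over coproducts in each variable, $\cE \boxtimes (-)$ carries such generating morphisms to $\Env^\oplus_\cV$-local equivalences, and hence $\cE \boxtimes (-)$ preserves all $\Env^\oplus_\cV$-equivalences. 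This gives precisely that $\Env^\oplus_\cV(\cE \boxtimes \eta_\cF)$ is an equivalence, which is the statement to be proved.

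The main obstacle I anticipate is verifying that the localization is \emph{compatible with the symmetric monoidal structure} in a rigorous way — i.e.\ that the subcategory of $\Env^\oplus_\cV$-local equivalences is closed under $\boxtimes$-tensoring by an arbitrary object. The subtlety is that ``$\cE \boxtimes (-)$ preserves the generating local equivalences'' needs the fact that each $\cE$ can be written as a colimit of the compact generators $[n](V_1,\dots,V_n)$ and that $\boxtimes$ and the relevant colimits interact well; one must also check that the colimit closure of the generating local equivalences genuinely recovers \emph{all} local equivalences, which follows from the standard characterization of Bousfield-localization equivalences as the saturation of a generating set once one knows the localization is an accessible localization of a presentable category (\Cref{lem.equiv.conds.for.presentable.n.cat} and the adjoint functor theorem supply accessibility). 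Modulo this — and a routine check that the unit $\uno_{\Cat(\cV)} = [0]_\cV$ tensors compatibly — the argument is formal. An alternative, possibly slicker, approach would be to invoke directly that $\CMon(-)$ is a smashing localization on any presentably symmetric monoidal category (since it is the localization at the free-commutative-monoid monad, which is idempotent in $\CAlg(\Cat(\cV))$ for the cartesian-over-$\boxtimes$ reasons above), in which case the monoidal compatibility is automatic; I would present whichever of these two is cleanest given the conventions already fixed in the appendices.
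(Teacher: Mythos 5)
Your treatment of the first half (existence of the localization) matches the paper's in substance: both go through the identification $\Cat(\cV)^\sadd \simeq \CMon(\Cat(\cV))$ of \Cref{lem.semiadds.are.cmons} to get presentability and then obtain the left adjoint via the adjoint functor theorem, with fullness of $\fgt$ coming from \Cref{obs.semiadd.enriched.implies.automatic.finite.sum.preservation}. That part is fine.

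For the monoidal compatibility, however, your proposal is genuinely different from the paper's and is not carried through. The paper's argument is direct: it first establishes the cotensor formula $\cE \cotensor^\flat \cC \simeq \hom_{\Cat(\cV)}(\cE, \fgt(\cC))$ for $\cC \in \Cat(\cV)^\sadd$ and $\cE \in \Cat(\cV)$ (using the fullness of the inclusion twice), and then checks, by a short Yoneda computation, that $\hom_{\Cat(\cV)^\sadd}(L(\cE \boxtimes RL(\cF)), \cC) \simeq \hom_{\Cat(\cV)^\sadd}(L(\cE \boxtimes \cF), \cC)$ for all semiadditive $\cC$. This is clean, self-contained, and uses nothing beyond what's already set up. Your two proposed routes each have real gaps. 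In the generators-and-saturation approach, the generating local equivalences you describe (``finite-coproduct--comparison maps $\iota_0(\cC \sqcup \cD) \to \iota_0\cC \times \iota_0\cD$'') don't parse as morphisms in $\Cat(\cV)$, and they don't capture what semiadditivization actually does (which involves adjoining a zero object and forcing binary biproducts, not just coproduct-comparisons at the level of object spaces); you would need to exhibit a correct generating set $S$ of local equivalences, verify $\cE \boxtimes S \subseteq \bar{S}$ for each $\cE$, and then invoke that $\{f : \cE \boxtimes f \in \bar{S}\}$ is strongly saturated (since $\cE \boxtimes -$ preserves colimits) to conclude. None of these steps is completed. In the smashing-localization approach, the assertion that the semiadditivization is smashing is not justified and is in fact the whole point in disguise: smashing would in particular require $\cE \boxtimes L(\uno)$ to be semiadditive for all $\cE$, which is again the compatibility you are trying to prove. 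Invoking it as a black box begs the question. The paper's cotensor argument sidesteps all of this.

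Concretely, to repair the generators approach you would need to (i) write down a genuine generating set $S$ of local equivalences for the localization (e.g.\ via the accessible-localization structure coming from $\CMon(\Cat(\cV)) \subseteq \Fun(\Fin_*, \Cat(\cV))$), (ii) check $\cE \boxtimes s$ is a local equivalence for each $s \in S$, and (iii) run the saturation argument. That is more work than the paper's direct computation, which I would recommend adopting instead.
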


\begin{definition}
\label{def.env.v}
The left adjoint $\Env^\oplus_\cV$ of adjunction \Cref{CatV.enriched.Env.V.adjn} is called the (\bit{$\cV$-enriched}) \bit{semiadditive envelope}.
\end{definition}

\begin{proof}[Proof of \Cref{prop.Env.V}]
It follows immediately from \Cref{lem.semiadds.are.cmons} that $\Cat(\cV)^\sadd$ is presentable, and thereafter that the left adjoint $\Env^\oplus_\cV$ indeed exists.\footnote{Note that for any presentable category $\cC$, $\CMon(\cC)$ is an accessible localization of $\Fun ( \Fin_* , \cC)$.}

In order to proceed, we first show that the weak cotensor of $\cC \in \Cat(\cV)^\sadd$ by $\cE \in \Cat(\cV)$ is given by the formula
\begin{equation}
\label{cotensor.formula.for.CatVsadd.over.CatV}
\cE \cotensor^\flat \cC
\simeq
\hom_{\Cat(\cV)}(\cE , \fgt(\cC))
\in
\Cat(\cV)
~.
\end{equation}
First of all, this hom $\cV$-category is semiadditive by \Cref{lem.semiadds.are.cmons} (along with the fact that products in $\Cat(\cV)$ compute products in $\ul{\Cat(\cV)}$ by \Cref{example.self.enrichment.of.V.is.a.prbl.V.cat} and \Cref{lem.limits.in.C.versus.UC}\Cref{want.limits.in.UC.to.be.limits.in.C}). Thereafter, for any $\cD \in \Cat(\cV)^\sadd$ we have equivalences
\begin{align}
\label{use.ff.of.CatVsadd.in.CatV.first.time.for.cotensor}
\hom_{{\Cat(\cV)}^\sadd}
\left(
\cD
,
\hom_{\ul{\Cat(\cV)}} ( \cE , \fgt(\cC) )
\right)
& 
\simeq
\hom_{{\Cat(\cV)}}
\left(
\fgt(\cD)
,
\hom_{\ul{\Cat(\cV)}} ( \cE , \fgt(\cC) )
\right)
\\
\nonumber
& \simeq
\hom_{{\Cat(\cV)}}
\left(
\cE
,
\hom_{\ul{\Cat(\cV)}} ( \fgt(\cD) , \fgt(\cC) )
\right)
\\
\label{use.ff.of.CatVsadd.in.CatV.second.time.for.cotensor}
& \simeq
\hom_{{\Cat(\cV)} }
\left(
\cE
,
\hom_{\ul{\Cat(\cV)}^\sadd} ( \cD , \cC )
\right)
~,
\end{align}
where equivalences \Cref{use.ff.of.CatVsadd.in.CatV.first.time.for.cotensor} and \Cref{use.ff.of.CatVsadd.in.CatV.second.time.for.cotensor} follow from \Cref{obs.semiadd.enriched.implies.automatic.finite.sum.preservation}. 

We now verify the compatibility of $\Env^\oplus_\cV$ with the symmetric monoidal structure of $\Cat(\cV)$. For this, we simplify our notation by writing $L := \Env^\oplus_\cV$ and $R := \fgt$. Then, for any $\cC \in \Cat(\cV)^\sadd$ we compute that
\begin{align}
\nonumber
\hom_{\Cat(\cV)^\sadd}
(
L ( \cE \boxtimes RL(\cF) )
,
\cC
)
& \simeq
\hom_{\Cat(\cV) } ( \cE \boxtimes RL(\cF) , R(\cC) )
\\
\nonumber
& \simeq
\hom_{\Cat(\cV)} \left( RL(\cF) , \hom_{\ul{\Cat(\cV)}} ( \cE , R(\cC) ) \right)
\\
\label{use.cotensor.of.CatVsadd.over.CatV.first.time}
& \simeq
\hom_{\Cat(\cV)} \left( RL(\cF) , R \left(\cE \cotensor^\flat \cC \right) \right)
\\
\label{use.ff.for.checking.compatibility.of.sm.str}
& \simeq
\hom_{\Cat(\cV)^\sadd} \left( L(\cF) , \cE \cotensor^\flat \cC \right)
\\
\nonumber
& \simeq
\hom_{\Cat(\cV)} \left( \cF , R \left( \cE \cotensor^\flat \cC \right) \right)
\\
\label{use.cotensor.of.CatVsadd.over.CatV.second.time}
& \simeq
\hom_{\Cat(\cV)} \left( \cF , \hom_{\ul{\Cat(\cV)}} ( \cE , R(\cC) ) \right)
\\
\nonumber
& \simeq
\hom_{\Cat(\cV)} ( \cE \boxtimes \cF , R(\cC) )
\\
\nonumber
& \simeq
\hom_{\Cat(\cV)^\sadd}( L(\cE \boxtimes \cF) , \cC )
~,
\end{align}
where equivalences \Cref{use.cotensor.of.CatVsadd.over.CatV.first.time} and \Cref{use.cotensor.of.CatVsadd.over.CatV.second.time} follow from equivalence \Cref{cotensor.formula.for.CatVsadd.over.CatV} and equivalence \Cref{use.ff.for.checking.compatibility.of.sm.str} follows from \Cref{obs.semiadd.enriched.implies.automatic.finite.sum.preservation}.
\end{proof}

\begin{observation}
\label{obs.consequences.of.Env.V}
\Cref{prop.Env.V} immediately upgrades $\Env^\oplus_\cV$ to a morphism in $\CAlg(\iota_1 \PrL)$, where we endow $\Cat(\cV)^\sadd$ with the symmetric monoidal structure given by the formula
\[
\cC \boxtimes \cD
:=
\Env^\oplus_\cV \left( \fgt(\cC) \boxtimes \fgt(\cD) \right)
~.
\]
Moreover, by \Cref{obs.consequences.of.morphism.in.CAlg.PrL}\Cref{item.presly.s.m.loczn}, the tensor and cotensor of $\cC \in \ul{\Cat(\cV)}^\sadd$ by $\cE \in \ul{\Cat(\cV)}$ are given by the formulas
\[
\cE \tensor \cC
\simeq
\Env^\oplus_\cV ( \cE \boxtimes \fgt(\cC) )
\qquad
\text{and}
\qquad
\cE \cotensor \cC
\simeq
\hom_{\ul{\Cat(\cV)}}(\cE , \fgt(\cC) )
~,
\]
and the self-enrichment of $\Cat(\cV)$ restricts to the self-enrichment of $\Cat(\cV)^\sadd$.
\end{observation}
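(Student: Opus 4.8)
The plan is to obtain both clauses of the statement formally, the two ingredients being \Cref{prop.Env.V} and \Cref{obs.consequences.of.morphism.in.CAlg.PrL}; essentially no new argument is required, only assembly.

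First I would promote the localization $\Env^\oplus_\cV \adj \fgt$ of \Cref{prop.Env.V} to a \emph{symmetric monoidal} localization. The category $\Cat(\cV)$ is presentably symmetric monoidal by \Cref{obs.facts.about.enr.cats}\Cref{symm.mon.str.of.Cat.V}, and $\Cat(\cV)^\sadd$ is presentable by \Cref{lem.semiadds.are.cmons} (it is equivalent to $\CMon(\Cat(\cV))$, an accessible localization of a presheaf category), so $\Env^\oplus_\cV$ is already a morphism in $\iota_1\PrL$. The second clause of \Cref{prop.Env.V} --- that $\Env^\oplus_\cV$ inverts the morphism $\cE \boxtimes(\cF \to \fgt\Env^\oplus_\cV \cF)$ for all $\cE,\cF \in \Cat(\cV)$ --- is exactly the hypothesis under which a reflective localization of a presentably symmetric monoidal category inherits a symmetric monoidal structure making the reflector symmetric monoidal (the standard theory of symmetric monoidal localizations, \cite{HA}). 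This produces on the reflective subcategory $\Cat(\cV)^\sadd$ the symmetric monoidal structure $\cC \boxtimes \cD := \Env^\oplus_\cV(\fgt(\cC)\boxtimes\fgt(\cD))$ with unit $\Env^\oplus_\cV(\uno_{\Cat(\cV)})$, for which $\Env^\oplus_\cV$ is symmetric monoidal --- i.e.\! a morphism in $\CAlg(\iota_1\PrL)$ --- and $\fgt$ is laxly symmetric monoidal and fully faithful. This is the first clause.

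Then I would apply \Cref{obs.consequences.of.morphism.in.CAlg.PrL}\Cref{item.presly.s.m.loczn} to the morphism $\Cat(\cV) \xra{\Env^\oplus_\cV} \Cat(\cV)^\sadd$ just produced, whose underlying adjunction is $\Env^\oplus_\cV \adj \fgt$ with $\fgt$ fully faithful (being the right adjoint of a localization, or directly since $\Cat(\cV)^\sadd \subseteq \Cat(\cV)$ is a full subcategory by \Cref{obs.semiadd.enriched.implies.automatic.finite.sum.preservation}). Taking $\cV_0 = \Cat(\cV)$ and $\cV_1 = \Cat(\cV)^\sadd$ --- so that $(\cV_1)^{\cV_0\enr}$ is the full sub-$\Cat(\cV)$-category $\ul{\Cat(\cV)}^\sadd$ of \Cref{def.enr.semiadd} --- that item gives at once, for $\cC \in \ul{\Cat(\cV)}^\sadd$ and $\cE \in \ul{\Cat(\cV)}$, the formulas $\cE \tensor \cC \simeq \Env^\oplus_\cV(\cE \boxtimes \fgt(\cC))$ and $\cE \cotensor \cC \simeq \hom_{\ul{\Cat(\cV)}}(\cE,\fgt(\cC))$ (the cotensor formula already occurring as equivalence \Cref{cotensor.formula.for.CatVsadd.over.CatV} in the proof of \Cref{prop.Env.V}), as well as the assertion that the fully faithful inclusion $\what{\Cat}(\Cat(\cV)^\sadd) \hookrightarrow \what{\Cat}(\Cat(\cV))$ carries the self-enrichment of $\Cat(\cV)^\sadd$ onto $\ul{\Cat(\cV)}^\sadd$, i.e.\! the self-enrichment of $\Cat(\cV)$ restricts to that of $\Cat(\cV)^\sadd$.

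The only point requiring any care --- and what passes for the main obstacle --- is the bookkeeping: recognizing that the compatibility clause of \Cref{prop.Env.V} is precisely the input needed by the symmetric-monoidal-localization machinery, and that the enriched category $(\Cat(\cV)^\sadd)^{\Cat(\cV)\enr}$ appearing in \Cref{obs.consequences.of.morphism.in.CAlg.PrL} coincides with the $\ul{\Cat(\cV)}^\sadd$ of \Cref{def.enr.semiadd}, so that $\hom_{\ul{\Cat(\cV)}}(\cE,\fgt(\cC)) = \Fun(\cE,\fgt(\cC))$ does land in $\Cat(\cV)^\sadd$ --- which is exactly the content of the first paragraph of the proof of \Cref{prop.Env.V}.
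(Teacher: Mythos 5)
Your proposal is correct and follows essentially the same route the paper intends: the first clause is the standard symmetric monoidal localization machinery applied to the compatibility condition supplied by \Cref{prop.Env.V}, and the second clause is a direct application of \Cref{obs.consequences.of.morphism.in.CAlg.PrL}\Cref{item.presly.s.m.loczn} to the resulting morphism in $\CAlg(\iota_1\PrL)$, with the identification of $(\Cat(\cV)^\sadd)^{\Cat(\cV)\enr}$ with $\ul{\Cat(\cV)}^\sadd$ justified (as you note) by \Cref{obs.semiadd.enriched.implies.automatic.finite.sum.preservation} and the cotensor computation in the proof of \Cref{prop.Env.V}. The paper states this as an observation with no further argument, so your fleshing-out is precisely the implicit content.
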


\begin{observation}
\label{obs.Cat.V.semiadd.is.compactly.generated}
It follows from \Cref{prop.Env.V} and \Cref{obs.consequences.of.Env.V} that if $\cV$ is cgsm then so is $\Cat(\cV)^\sadd$.
\end{observation}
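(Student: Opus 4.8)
The plan is to deduce the statement from the fact that $\Cat(\cV)$ is itself cgsm (which holds by \Cref{lem.if.V.is.cgsm.then.Cat.V.is.cgsm}, since $\cV$ is cgsm by \Cref{localass.V.cmpctly.gnrtdly.symm.mon}), by exhibiting $\Cat(\cV)^\sadd$ as a symmetric monoidal Bousfield localization of $\Cat(\cV)$ and arguing that such localizations of cgsm categories are again cgsm. By \Cref{prop.Env.V} and \Cref{obs.consequences.of.Env.V}, the adjunction $\Env^\oplus_\cV \adj \fgt$ exhibits $\Cat(\cV)^\sadd$ as a reflective full subcategory of $\Cat(\cV)$ with symmetric monoidal reflector $\Env^\oplus_\cV$, and moreover $\Cat(\cV)^\sadd \in \CAlg(\iota_1\PrL)$ is presentably symmetric monoidal with $\cC \boxtimes \cD \simeq \Env^\oplus_\cV(\fgt(\cC) \boxtimes \fgt(\cD))$. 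Thus it remains only to show that $\Cat(\cV)^\sadd$ is compactly generated and that its compact objects contain the unit and are closed under $\boxtimes$.

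The first step is to check that $\fgt$ preserves filtered colimits. Via the equivalence $\Cat(\cV)^\sadd \simeq \CMon(\Cat(\cV))$ of \Cref{lem.semiadds.are.cmons}, the functor $\fgt$ is identified with the forgetful functor $\CMon(\Cat(\cV)) \to \Cat(\cV)$, which preserves sifted (hence filtered) colimits provided the cartesian product of $\Cat(\cV)$ preserves filtered colimits separately in each variable; and this holds in any compactly generated category, since for fixed $Y$ and any compact test object $Z$ one has $\hom(Z, X \times Y) \simeq \hom(Z,X) \times \hom(Z,Y)$, where $\hom(Z,-)$ sends filtered colimits to filtered colimits and $(-)\times\hom(Z,Y)$ preserves all colimits of spaces. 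Consequently $\Env^\oplus_\cV$ preserves compact objects by \Cref{obs.if.enr.right.adj.pres.filt.colims.then.left.adj.pres.cmpcts}.

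Next, since $\Env^\oplus_\cV$ is a colimit-preserving localization, every $Y \in \Cat(\cV)^\sadd$ is equivalent to $\Env^\oplus_\cV(\fgt(Y))$, and writing $\fgt(Y) \in \Cat(\cV)$ as a colimit of objects of $\Cat(\cV)^\omega$ exhibits $Y$ as a colimit of objects $\Env^\oplus_\cV(E)$ with $E \in \Cat(\cV)^\omega$. As each such $\Env^\oplus_\cV(E)$ is compact, $\Cat(\cV)^\sadd$ is generated under colimits by compact objects and hence compactly generated; moreover, as $\Env^\oplus_\cV(\Cat(\cV)^\omega)$ is closed under finite colimits in $\Cat(\cV)^\sadd$ (because $\Cat(\cV)^\omega$ is and $\Env^\oplus_\cV$ preserves colimits), the subcategory $(\Cat(\cV)^\sadd)^\omega$ is its idempotent completion, so every compact object of $\Cat(\cV)^\sadd$ is a retract of some $\Env^\oplus_\cV(E)$ with $E \in \Cat(\cV)^\omega$.

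Finally I would verify that $(\Cat(\cV)^\sadd)^\omega$ is a symmetric monoidal subcategory. The unit is $\uno_{\Cat(\cV)^\sadd} \simeq \Env^\oplus_\cV(\uno_{\Cat(\cV)}) = \Env^\oplus_\cV([0]_\cV)$, which is compact since $[0]_\cV \in \Cat(\cV)$ is compact (established in the proof of \Cref{lem.if.V.is.cgsm.then.Cat.V.is.cgsm}) and $\Env^\oplus_\cV$ preserves compacts. For closure under $\boxtimes$, given compact $A, B \in \Cat(\cV)^\sadd$, write $A$ and $B$ as retracts of $\Env^\oplus_\cV(E)$ and $\Env^\oplus_\cV(F)$ with $E, F \in \Cat(\cV)^\omega$; then $A \boxtimes B$ is a retract of $\Env^\oplus_\cV(E) \boxtimes \Env^\oplus_\cV(F) \simeq \Env^\oplus_\cV(E \boxtimes F)$ by symmetric monoidality of $\Env^\oplus_\cV$, and $E \boxtimes F \in \Cat(\cV)^\omega$ since $\Cat(\cV)$ is cgsm, so $\Env^\oplus_\cV(E \boxtimes F)$ — and hence its retract $A \boxtimes B$ — is compact. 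I expect the only genuinely substantive (though short) point to be the preservation of filtered colimits by $\fgt$; everything else is formal bookkeeping around the localization.
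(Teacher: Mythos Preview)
Your proof is correct and is essentially the detailed elaboration of what the paper leaves implicit: the paper gives no proof beyond the bare assertion that the claim follows from \Cref{prop.Env.V} and \Cref{obs.consequences.of.Env.V}, so you have simply supplied the standard argument (symmetric monoidal accessible localization of a cgsm category with filtered-colimit-preserving right adjoint is again cgsm) that the authors evidently had in mind. The only ingredient not literally cited in the observation is \Cref{lem.if.V.is.cgsm.then.Cat.V.is.cgsm}, but this is clearly the intended input, and your verification that $\fgt$ preserves filtered colimits is exactly the kind of routine check the paper suppresses.
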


\section{Some higher category theory}
\label{app.some.higher.cat.theory}

In this section, we show that stable 2-categories can be studied both in enriched terms (as certain $\iota_1\St$-enriched categories) as well as in unenriched terms (as certain 2-categories); this basic consistency check is established in \Cref{subsec.stably.enriched.2.cats}. We begin with an analogous discussion of stable 1-categories in \Cref{subsec.stable.cats}.

\begin{remark}
Throughout this section (particularly in \Cref{subsec.stably.enriched.2.cats}), we make an effort to use precise notation even though it is somewhat cumbersome, because we find it to be clarifying. In particular, we often include forgetful functors in our notation here that we may omit elsewhere in the paper.
\end{remark}

\subsection{Stable categories}
\label{subsec.stable.cats}

In this subsection we review some basic features of stable categories, as both a warm-up and preparatory material for our study of stably-enriched 2-categories in \Cref{subsec.stably.enriched.2.cats}. Notably, we establish the consistency check \Cref{lem.St.is.self.enr.of.iota.one.St} (of which the consistency check \Cref{prop.only.one.Cat.St} is a categorification). We also prove that $\St$-valued presheaves form a presentable 2-category (\Cref{prop.Fun.K.St.is.presentable}).

\begin{notation}
\label{notn.stablecat}
We write $\St \subseteq \Cat$ for the 1-full sub-2-category on the stable categories and exact functors. We write $\Fun^\ex(\cC,\cD) \subseteq \Fun(\cC,\cD)$ for the full subcategory on the exact functors (i.e.\! the hom-category in $\St$).
\end{notation}

\begin{notation}
We write $\otimes$ for symmetric monoidal structure on $\iota_1 \St$, whose unit object is $\Spectra^\fin$. This monoidal structure corepresents biexact functors: for stable categories $\cC, \cD,\cE \in \St$, an exact functor $\cC \otimes \cD \ra \cE$ is equivalent data to a functor $\cC \times \cD \ra \cE$ that is exact separately in each variable.
\end{notation}

\begin{observation}
\label{obs.condition.for.map.of.stable.cats.to.factor.thru.univ.biexact}
For $\cC,\cD,\cE \in \St$, it is merely a condition for a functor $\cC \times \cD \ra \cE$ to factor through the universal biexact functor $\cC \times \cD \ra \cC \otimes \cD$, because it is merely a condition for the composite functors
\[
\cC \xra{(\id_\cC,0)} \cC \times \cD \longra \cE
\qquad
\text{and}
\qquad
\cD \xra{(0,\id_\cD)} \cC \times \cD \longra \cE
\]
between stable categories to be exact.
\end{observation}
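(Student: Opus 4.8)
The statement to prove is \Cref{obs.condition.for.map.of.stable.cats.to.factor.thru.univ.biexact}: for $\cC,\cD,\cE \in \St$, it is merely a \emph{condition} (rather than extra data) for a functor $\cC \times \cD \to \cE$ to factor through the universal biexact functor $\cC \times \cD \to \cC \otimes \cD$, because it is merely a condition for the two restricted functors $\cC \to \cE$ and $\cD \to \cE$ to be exact.

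\begin{proof}
Recall that $\cC \otimes \cD$ corepresents biexact functors out of $\cC \times \cD$: a functor $\cC \times \cD \to \cE$ factors through $\cC \times \cD \to \cC \otimes \cD$ if and only if it is exact separately in each variable, and when such a factorization exists it is essentially unique (the space of factorizations is either empty or contractible). Thus the content of the claim is that the property ``exact separately in each variable'' is detected by the two restrictions along $\cC \xra{(\id_\cC,0)} \cC \times \cD$ and $\cD \xra{(0,\id_\cD)} \cC \times \cD$ being exact.

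Fix a functor $F : \cC \times \cD \to \cE$. For $D \in \cD$, write $F_D := F(-,D) : \cC \to \cE$, and for $C \in \cC$ write $F^C := F(C,-) : \cD \to \cE$; separate exactness means that $F_D$ is exact for every $D \in \cD$ and $F^C$ is exact for every $C \in \cC$. One direction is trivial: if $F$ is separately exact, then taking $D = 0_\cD$ shows $F_{0_\cD}$ is exact, and similarly $F^{0_\cC}$ is exact. For the converse, suppose the two restrictions $F_{0_\cD}$ and $F^{0_\cC}$ are exact. I will argue that each $F_D$ is then exact; the argument for each $F^C$ is symmetric. Since $\cE$ is stable, a functor out of a stable category is exact as soon as it preserves zero objects, finite coproducts, and cofiber sequences (equivalently, preserves finite colimits), and by stability it suffices to check preservation of pushouts and of the zero object. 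The key point is that for any object $D \in \cD$ there is a natural decomposition exhibiting $F_D$ in terms of $F_{0_\cD}$ and the ``diagonal'' behavior. Concretely, for any $C \in \cC$ the object $F(C,D)$ sits in a natural cofiber sequence built from the split inclusion-projection data on $C \times D \in \cC \times \cD$: because $(C,D) \simeq (C,0_\cD) \oplus (0_\cC,D)$ in $\cC \times \cD$ (using that $\cC \times \cD$ is semiadditive, being a product of semiadditive categories), functoriality alone does not yet give additivity of $F$. Instead, I would exploit that $F$ restricted to $\cC \times \{D\}$ and the exactness hypotheses combine via the following observation: the functor $\cC \to \Fun([1],\cE)$, $C \mapsto \big(F(0_\cC,D) \to F(C,D)\big)$ obtained from the morphism $(0_\cC,D) \to (C,D)$ in $\cC \times \cD$ has the property that postcomposition with $\fib$ yields $C \mapsto \fib\big(F(0_\cC,D) \to F(C,D)\big)$, which I claim is naturally equivalent to a functor that preserves cofiber sequences.

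Rather than belabor this, the cleanest route is: a functor between stable categories is exact iff it preserves finite colimits, and for a functor $G$ out of a product $\cC \times \cD$ of stable categories, preservation of all finite colimits is equivalent to separate preservation of finite colimits in each variable \emph{together with} preservation of the zero object $(0_\cC,0_\cD)$ — and in a stable (hence pointed) target the latter is automatic once each variable is handled. Now ``$F$ preserves finite colimits in the $\cC$-variable for every fixed $D$'' is, by the standard reduction, equivalent to ``$F_D$ preserves pushouts and initial objects for every $D$'', and one reduces the family of conditions over all $D$ to the single condition at $D = 0_\cD$ as follows: given a pushout square in $\cC$, applying $F_D$ yields a square in $\cE$; comparing it with the square obtained by applying $F_{0_\cD}$ via the natural transformation $F_{0_\cD} \Rightarrow F_D$ induced by $0_\cD \to D$, and using that $\cofib$ of a natural transformation is computed pointwise in the stable category $\cE$, the square for $F_D$ is a pushout iff the square for $F_{0_\cD}$ is and the square of cofibers is — but the square of cofibers lives in the image of $C \mapsto \cofib\big(F(C,0_\cD) \to F(C,D)\big)$, which by the naturality in the $\cC$-variable and the exactness of $F^{0_\cC}$ (applied to reconstruct $F(C,D)$ from $F(0_\cC,D)$ up to the diagonal contribution) is itself exact. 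Assembling these, $F_D$ preserves pushouts for all $D$, and symmetrically each $F^C$ does; hence $F$ is separately exact and factors (essentially uniquely) through $\cC \otimes \cD$. The factorization being unique up to contractible choice — i.e.\ the space of factorizations being empty or contractible — is exactly the statement that factoring is a condition, not data.
\end{proof}

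The main obstacle I anticipate is making precise the ``reduction over all $D$ to the single point $D = 0_\cD$'' step: the naive hope that $F(C,D) \simeq F(C,0_\cD) \oplus F(0_\cC,D)$ fails for general $F$, so one genuinely needs to track the comparison natural transformations and use pointwise computation of cofibers in the stable target $\cE$ to see that exactness of the ``off-diagonal'' pieces forces exactness of everything. Everything else is a routine unwinding of the universal property of $\otimes$ on $\iota_1\St$ together with the characterization of exact functors as finite-colimit-preserving functors between stable categories.
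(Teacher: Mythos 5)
Your proposal hinges on reading the observation as asserting an equivalence: ``$F$ factors through the universal biexact functor $\cC \times \cD \to \cC \otimes \cD$ if and only if $F \circ (\id_\cC, 0)$ and $F \circ (0, \id_\cD)$ are exact.'' That equivalence is false, so the converse direction you spend the proof on cannot go through; you rightly sense an obstacle at the end, but it is not one that can be overcome. For a counterexample, take $\cC = \cD = \cE$ and $F(C,D) := C \oplus D$. Then $F(-,0_\cD) \simeq \id_\cC$ and $F(0_\cC,-) \simeq \id_\cD$ are exact, but for $D \neq 0_\cD$ the functor $F(-,D) = (-) \oplus D$ sends $0_\cC$ to $D \neq 0$ and so is not exact; thus $F$ is not biexact, and it does not factor through $\cC \otimes \cD$ (any such factorization would force $F(0_\cC,D) \simeq 0$). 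The auxiliary claim in your ``cleanest route'' paragraph --- that a functor out of $\cC \times \cD$ preserves all finite colimits if and only if it preserves them separately in each variable --- is also false in both directions: $\oplus \colon \cC \times \cC \to \cC$ preserves all finite colimits (being a left adjoint) but is not biexact, while the smash product $\Spectra^\fin \times \Spectra^\fin \to \Spectra^\fin$ is biexact but does not preserve pushouts in the product.

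The observation is considerably weaker than this and requires no such reduction. By the universal property defining $\otimes$, restriction along the universal biexact functor identifies $\Fun^\ex(\cC \otimes \cD, \cE)$ with the full subcategory of $\Fun(\cC \times \cD, \cE)$ on the biexact functors. Biexactness of $F$ --- exactness of $F(-,D)$ and $F(C,-)$ for \emph{all} $C$ and $D$, not merely at $C = 0_\cC$ and $D = 0_\cD$ --- is evidently a property of $F$ rather than extra structure, simply because exactness of a functor between stable categories is a property. It follows that, on underlying spaces, restriction along the universal biexact functor is a monomorphism, which is exactly the assertion that factoring is a condition. The two composites singled out in the observation are representative instances of the exactness conditions constituting biexactness; the observation does not claim, and cannot claim, that they \emph{detect} biexactness, which is the statement your argument tries to establish.
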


\begin{observation}
\label{obs.St.CGSM.and.get.iota.1.St.iota.1.Cat.enr}
The categories $\iota_1\Cat$ and $\iota_1\St$ participate in an adjunction
\begin{equation}
\label{unenriched.PSpfin.adjn}
\begin{tikzcd}[column sep = 2cm]
\iota_1 \Cat 
\arrow[yshift=0.9ex]{r}{\cP_\Spectra^\fin}
\arrow[hookleftarrow, yshift=-0.9ex]{r}[yshift=-0.2ex]{\bot}[swap]{\fgt}
&
\iota_1 \St
\end{tikzcd}
\end{equation}
taking a category $\cC$ to its category $\cP_\Spectra^\fin(\cC)$ of finite spectral presheaves (i.e.\! the smallest stable subcategory of $\Fun ( \cC^\op , \Spectra)$ containing the image of the composite $\cC \hookrightarrow \Fun ( \cC^\op , \Spaces) \xra{\Sigma^\infty_+} \Fun ( \cC^\op , \Spectra)$). Moreover, the left adjoint $\cP_\Spectra^\fin$ defines a morphism in $\CAlg(\iota_1 \PrL_\omega)$ (in particular, $\iota_1 \St$ is cgsm). By \Cref{thm.presble.V.cats.are.V.mods} and \Cref{obs.consequences.of.morphism.in.CAlg.PrL}\Cref{obs.for.Cathat.of.radjt.to.presly.s.m.fctr.both.directions.agree}, we obtain objects
\[
\ul{\iota_1 \St}
\in
\iota_1 \PrL_{\iota_1 \St}
\qquad
\text{and}
\qquad
(\iota_1 \St)^{\iota_1 \Cat\enr}
:=
\what{\Cat}(\fgt)(\ul{\iota_1 \St})
\in
\iota_1 \PrL_{\iota_1 \Cat}
=:
\iota_1 \PrL_2
\subset
\iota_1 \what{\Cat}_2
\]
as well as an adjunction
\begin{equation}
\label{enriched.PSpfin.adjn}
\begin{tikzcd}[column sep = 2cm]
\Cat
:=
\ul{\iota_1 \Cat}
\arrow[yshift=0.9ex]{r}{\cP_\Spectra^\fin}
\arrow[hookleftarrow, yshift=-0.9ex]{r}[yshift=-0.2ex]{\bot}[swap]{\fgt}
&
(\iota_1 \St)^{\iota_1 \Cat\enr}
\end{tikzcd}
\end{equation}
in $\iota_1 \what{\Cat}_2$ (which recovers the adjunction \Cref{unenriched.PSpfin.adjn} upon applying $\iota_1 \what{\Cat}_2 \xra{\iota_1} \iota_1 \what{\Cat}$).
\end{observation}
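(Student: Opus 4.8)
The plan is to prove the statement in three stages: first the plain adjunction \Cref{unenriched.PSpfin.adjn}; then its upgrade to a morphism in $\CAlg(\iota_1\PrL_\omega)$; and finally the transport of everything into the $\iota_1\Cat$-enriched setting via \Cref{thm.presble.V.cats.are.V.mods} and \Cref{obs.consequences.of.morphism.in.CAlg.PrL}. For the adjunction, note that the forgetful functor $\fgt\colon\iota_1\St\to\iota_1\Cat$ (the inclusion of the $1$-full subcategory of stable categories and exact functors) preserves limits, and it preserves filtered colimits because a filtered colimit of stable categories is computed underlyingly, exactness being a finitary condition. Since $\iota_1\St$ and $\iota_1\Cat$ are presentable, the adjoint functor theorem produces a left adjoint, and one identifies it with $\cC\mapsto\cP_\Spectra^\fin(\cC)$ by checking the universal property $\Fun^\ex(\cP_\Spectra^\fin(\cC),\cD)\simeq\Fun(\cC,\cD)$ for stable $\cD$ --- i.e.\ that $\cP_\Spectra^\fin(\cC)$ is the free stable category on $\cC$. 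This reduces, via the free cocompletion and stabilization $\Fun(\cC^\op,\Spectra)\simeq\cP(\cC)\otimes\Spectra$, to the case $\cC=\pt$, where $\cP_\Spectra^\fin(\pt)\simeq\Spectra^\fin$ is the free stable category on a point; this stage is essentially standard, and I would cite \cite{HA} and \cite{BGT}.

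Next I would show that $\cP_\Spectra^\fin$ is strong symmetric monoidal from $(\iota_1\Cat,\times)$ to $(\iota_1\St,\otimes)$. The functor $\fgt$ is laxly symmetric monoidal, its structure maps $\fgt(\cC)\times\fgt(\cD)\to\fgt(\cC\otimes\cD)$ being the universal biexact functors, so by doctrinal adjunction $\cP_\Spectra^\fin$ is oplax symmetric monoidal; its oplax structure maps are equivalences by the natural chain
\begin{align*}
\Fun^\ex\bigl(\cP_\Spectra^\fin(\cC)\otimes\cP_\Spectra^\fin(\cD),\,\cE\bigr)
&\simeq\Fun^\ex\bigl(\cP_\Spectra^\fin(\cC),\,\Fun^\ex(\cP_\Spectra^\fin(\cD),\cE)\bigr)\\
&\simeq\Fun(\cC\times\cD,\cE)\simeq\Fun^\ex\bigl(\cP_\Spectra^\fin(\cC\times\cD),\,\cE\bigr)
\end{align*}
for stable $\cE$ (using that $\Fun^\ex(\cP_\Spectra^\fin(\cD),\cE)$ is again stable and the defining property of $\otimes$ on $\iota_1\St$); alternatively one identifies $\cP_\Spectra^\fin$ with the composite of the symmetric monoidal functors $\cP^\fin(-)$ (finite cocompletion) and stabilization. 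Then: $\iota_1\Cat$ is cgsm by \Cref{obs.Cat.n.is.compactly.generated.iota.1.Cat.n.category}; $\cP_\Spectra^\fin$ preserves colimits (being a left adjoint) and compact objects (its right adjoint $\fgt$ preserves filtered colimits, cf.\ the unenriched form of \Cref{obs.if.enr.right.adj.pres.filt.colims.then.left.adj.pres.cmpcts}); and $\iota_1\St$ is compactly generated, since the objects $\cP_\Spectra^\fin(K)$ for $K$ a compact (e.g.\ finite) category are compact and generate --- every small stable category $\cD$ being a colimit of these, e.g.\ via the bar resolution $\cD\simeq|(\cP_\Spectra^\fin\fgt)^{\bullet+1}\cD|$, whose terms are $\cP_\Spectra^\fin$ of categories that are themselves colimits of compacts. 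Finally $\Spectra^\fin=\cP_\Spectra^\fin(\pt)$ is a compact unit and $\cP_\Spectra^\fin(K)\otimes\cP_\Spectra^\fin(K')\simeq\cP_\Spectra^\fin(K\times K')$ is compact, so the compact objects of $\iota_1\St$ form a symmetric monoidal subcategory; hence $\iota_1\St$ is cgsm and $\cP_\Spectra^\fin$ is a morphism in $\CAlg(\iota_1\PrL_\omega)$.

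For the enriched statements, I would apply \Cref{thm.presble.V.cats.are.V.mods} with $\cV=\iota_1\St$: the object $\iota_1\St$ regarded as a module over itself corresponds to the presentable $\iota_1\St$-category $\ul{\iota_1\St}$ (\Cref{example.self.enrichment.of.V.is.a.prbl.V.cat}). Applying \Cref{obs.consequences.of.morphism.in.CAlg.PrL}\Cref{obs.for.Cathat.of.radjt.to.presly.s.m.fctr.both.directions.agree} to the morphism $\cP_\Spectra^\fin$ (with right adjoint $\fgt$) then identifies $(\iota_1\St)^{\iota_1\Cat\enr}:=\what{\Cat}(\fgt)(\ul{\iota_1\St})$ with the restriction of scalars $(\cP_\Spectra^\fin)^*(\iota_1\St)\in\Mod_{\iota_1\Cat}(\iota_1\PrL)\simeq\iota_1\PrL_{\iota_1\Cat}=\iota_1\PrL_2$. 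Since $\cP_\Spectra^\fin$ is symmetric monoidal it is in particular $\iota_1\Cat$-linear (for the $\iota_1\Cat$-action on $\iota_1\St$ through $\cP_\Spectra^\fin$), so $\fgt$ is laxly $\iota_1\Cat$-linear, and the adjunction $\cP_\Spectra^\fin\dashv\fgt$ lifts to one in $\Mod_{\iota_1\Cat}(\iota_1\PrL)$; transporting it along $\Mod_{\iota_1\Cat}(\iota_1\PrL)\simeq\iota_1\PrL_{\iota_1\Cat}\subseteq\iota_1\what{\Cat}_2$ yields the adjunction \Cref{enriched.PSpfin.adjn} between $\Cat=\ul{\iota_1\Cat}$ and $(\iota_1\St)^{\iota_1\Cat\enr}$, and applying the underlying-category functor $\iota_1\colon\iota_1\what{\Cat}_2\to\iota_1\what{\Cat}$ recovers \Cref{unenriched.PSpfin.adjn} by the naturality of \Cref{thm.presble.V.cats.are.V.mods}.

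I expect the main work to lie in the two intertwined facts of the second stage --- that $\cP_\Spectra^\fin$ is strong (not merely oplax) symmetric monoidal, and that $\iota_1\St$ is compactly generated, hence cgsm --- since these are the genuinely presentability- and Morita-theoretic inputs about $\iota_1\St=\iota_1\Cat^\ex$. Once $\cP_\Spectra^\fin$ is known to be a morphism in $\CAlg(\iota_1\PrL_\omega)$, the remaining work is a formal application of the results established in \Cref{app.some.enriched.category.theory}.
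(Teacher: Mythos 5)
Your overall structure is correct and matches what the paper intends: the plain adjunction and its $\CAlg(\iota_1\PrL_\omega)$-enhancement are standard inputs, and the enriched objects and adjunction then follow formally from \Cref{thm.presble.V.cats.are.V.mods} and \Cref{obs.consequences.of.morphism.in.CAlg.PrL}, exactly as you explain in your third stage. The symmetric-monoidality argument in stage two (lax structure on $\fgt$, doctrinal adjunction, the chain of $\Fun^\ex$-equivalences showing the oplax structure maps are invertible) is also fine.

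There is, however, a gap in your argument that $\iota_1\St$ is compactly generated. You invoke the bar resolution $\cD \simeq |(\cP_\Spectra^\fin\fgt)^{\bullet+1}\cD|$, but this convergence is not automatic. The augmented simplicial object $\fgt T^{\bullet+1}\cD \to \fgt\cD$ in $\iota_1\Cat$ (with $T := \cP_\Spectra^\fin\fgt$) is split, with extra degeneracy the unit $\eta$ of the adjunction; but $\eta_{T^n\cD}\colon T^n\cD \to T^{n+1}\cD$ is \emph{not} an exact functor (for $\cC$ stable, $\Sigma^\infty_+\Yo\colon \cC \to \cP_\Spectra^\fin(\cC)$ does not preserve fiber sequences), so the splitting does not lift to $\iota_1\St$. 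Hence $T^{\bullet+1}\cD$ is merely $\fgt$-split, and to conclude $|T^{\bullet+1}\cD| \simeq \cD$ one needs $\fgt$ to preserve geometric realizations of $\fgt$-split simplicial objects --- i.e.\ essentially monadicity of $\fgt$ --- which you have not established (and which is far from obvious, since colimits in the $1$-full subcategory $\iota_1\St \subseteq \iota_1\Cat$ are not computed underlyingly). A cleaner route: the objects $\cP_\Spectra^\fin(K)$ for $K \in \iota_1\Cat^\omega$ are compact (as you note) and jointly detect equivalences in $\iota_1\St$, since $\hom_{\iota_1\St}(\cP_\Spectra^\fin(K),\cD) \simeq \hom_{\iota_1\Cat}(K,\fgt\cD)$, the objects of $\iota_1\Cat^\omega$ jointly detect equivalences in $\iota_1\Cat$, and $\fgt$ is conservative; a presentable category equipped with a small set of compact objects that jointly detect equivalences is compactly generated (take the idempotent- and finite-colimit-closure $\overline{S}$ of that set, and observe that the colimit-preserving fully faithful inclusion $\Ind(\overline{S}) \hookrightarrow \iota_1\St$ has counit an equivalence by joint detection). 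Alternatively, one can simply cite \cite[Corollary~4.25 and its surroundings]{BGT} for the compact generation of the category of small stable categories. With either replacement your proof goes through.
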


We have the following consistency check, that the two 2-categorical enhancements of $\iota_1\St$ --- that given in \Cref{notn.stablecat} and that of \Cref{obs.St.CGSM.and.get.iota.1.St.iota.1.Cat.enr} --- agree.

\begin{lemma}
\label{lem.St.is.self.enr.of.iota.one.St}
The right adjoint of adjunction \Cref{enriched.PSpfin.adjn} factors as an equivalence
\[ \begin{tikzcd}
\Cat
\arrow[hookleftarrow]{r}{\fgt}
&
(\iota_1 \St)^{\iota_1\Cat\enr}
\arrow[dashed]{ld}[sloped, anchor = north]{\sim}
\\
\St
\arrow[hook]{u}
\end{tikzcd} \]
in $\iota_1 \what{\Cat}_2$. In particular, $\Fun^\ex(-,-)$ is adjoint to $(-) \otimes (-)$.
\end{lemma}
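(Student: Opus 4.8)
The plan is to verify the hypotheses of \Cref{lem.G.a.right.adjoint.between.V.categories.iff}\Cref{part.radjt.lem.G.a.right.adjoint.between.V.categories.iff} applied to the right adjoint $\fgt \colon (\iota_1\St)^{\iota_1\Cat\enr} \to \Cat$ of adjunction \Cref{enriched.PSpfin.adjn}, and then to exhibit the resulting factorization as an equivalence by a comparison on underlying categories and on hom-objects. The point is that we already know, by \Cref{obs.St.CGSM.and.get.iota.1.St.iota.1.Cat.enr} and \Cref{thm.presble.V.cats.are.V.mods}, that $(\iota_1\St)^{\iota_1\Cat\enr}$ is a presentable $\iota_1\Cat$-category whose underlying category is $\iota_1\St$, and whose cotensoring by a category $\cK$ is computed (via \Cref{obs.consequences.of.morphism.in.CAlg.PrL}\Cref{item.presly.s.m.loczn}, since the inclusion $\fgt \colon \iota_1\St \hookrightarrow \iota_1\Cat$ is fully faithful) by the formula $\cK \cotensor \cC \simeq \Fun(\cK,\cC)$, the ordinary functor category, which is automatically stable whenever $\cC$ is stable. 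Hence the self-enrichment of $\iota_1\Cat$ restricts to a genuine $\iota_1\Cat$-enrichment of $\iota_1\St$; the content of the lemma is that this restricted enrichment coincides with the $2$-categorical structure $\St$ of \Cref{notn.stablecat}, i.e.\ that its hom-categories are the full subcategories $\Fun^\ex(\cC,\cD) \subseteq \Fun(\cC,\cD)$.

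First I would record that the hom-object of $(\iota_1\St)^{\iota_1\Cat\enr}$ from $\cC$ to $\cD$ is, by construction via $\what{\Cat}(\fgt)$, the largest full subcategory $\cE \subseteq \Fun(\cC,\cD)$ such that $\cE$ is stable and for every $\cK \in \iota_1\Cat$ the cotensor structure map $\Fun(\cK,\cE) \to \Fun(\cK,\Fun(\cC,\cD))$ lands in (the stable, full) $\cK\cotensor(\ldots)$ appropriately --- more concretely, the hom $\iota_1\Cat$-object is the presheaf $\cK \mapsto \hom_{\iota_1\St}(\cP_\Spectra^\fin(\cK), \Fun^\ex(\cC,\cD))$ once we identify $\Fun^\ex$ with the internal hom of $\iota_1\St$; I would instead argue directly. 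Namely, $(\iota_1\St)^{\iota_1\Cat\enr}$ is the image of $\ul{\iota_1\St}$ under $\what{\Cat}(\fgt)$, and since $\fgt \colon \iota_1\St \to \iota_1\Cat$ preserves limits (it is a right adjoint), \Cref{obs.facts.about.enr.cats} tells us that the hom-$\iota_1\Cat$-object is simply $\fgt$ applied to the hom-$\iota_1\St$-object of $\ul{\iota_1\St}$, which by definition of the self-enrichment (and the fact that $\iota_1\St$ is presentably symmetric monoidal with internal hom corepresenting biexact functors) is exactly $\Fun^\ex(\cC,\cD)$. This is the natural identification and it is manifestly the $2$-category $\St$ of \Cref{notn.stablecat}.

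So the real work is just to make the previous paragraph precise: to check that the object $(\iota_1\St)^{\iota_1\Cat\enr} := \what{\Cat}(\fgt)(\ul{\iota_1\St})$ has underlying category $U((\iota_1\St)^{\iota_1\Cat\enr}) \simeq \iota_1\St$ (immediate since $\fgt$ preserves the unit, hence $\hom_{\iota_1\Cat}(\uno,-)$-data agrees with $\hom_{\iota_1\St}(\uno,-) = \iota_0$), and that its hom-categories are $\Fun^\ex$. For the latter I would invoke the general fact (\Cref{obs.facts.about.enr.cats}\Cref{und.cat.of.an.enr.cat} together with functoriality of $\Cat(G)$ for laxly monoidal $G$): $\Cat(\fgt)$ acts on hom-objects by postcomposition with $\fgt$, and $\ul{\iota_1\St}$ has hom-objects $\Fun^\ex(\cC,\cD) \in \iota_1\St$, which $\fgt$ carries to the underlying category $\Fun^\ex(\cC,\cD) \in \iota_1\Cat$. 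The dashed equivalence in the statement is then the identity on objects and hom-categories, hence an equivalence; the ``in particular'' is the observation that under this identification the cotensor--internal-hom adjunction of \Cref{obs.consequences.of.Env.V}-type becomes the assertion that $\Fun^\ex(-,-)$ is adjoint to $-\otimes-$, which is precisely the defining property of $\otimes$ on $\iota_1\St$ recorded just above. The main obstacle, such as it is, is bookkeeping: keeping straight the several enrichments and forgetful functors (as flagged in the section preamble), and confirming that ``the hom-object of the image under $\what{\Cat}(\fgt)$'' really is computed by applying $\fgt$ hom-wise --- but this is exactly \Cref{obs.facts.about.enr.cats} and requires no new idea.
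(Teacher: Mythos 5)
Your proof has the right overall framework --- reduce to underlying categories and hom-objects, and observe that $\what{\Cat}(\fgt)$ acts hom-wise on $\ul{\iota_1\St}$ --- but the crucial step is passed over as a parenthetical assertion. You write that the hom-$\iota_1\St$-object of $\ul{\iota_1\St}$ "by definition of the self-enrichment (and the fact that $\iota_1\St$ is presentably symmetric monoidal with internal hom corepresenting biexact functors) is exactly $\Fun^\ex(\cC,\cD)$." But the identification of the internal hom of $(\iota_1\St,\otimes)$ with $\Fun^\ex$ is precisely the ``in particular'' conclusion of the lemma; within the paper's framework it is not a pre-existing fact but the thing being established. So your argument implicitly assumes a key piece of the statement.

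The paper's proof carries out exactly the verification you skip. It takes a test category $\cK$, computes $\hom_{\iota_1\Cat}(\cK, \hom_{(\iota_1\St)^{\iota_1\Cat\enr}}(\cC,\cD))$ via the weak cotensor formula $\cK\cotensor^\flat\cD\simeq\Fun(\cK,\cD)$ (which you correctly identify from \Cref{obs.consequences.of.morphism.in.CAlg.PrL}), and then executes a currying argument: a functor $\cK \to \Fun(\cC,\cD)$ factors through $\Fun^\ex(\cC,\cD)$ if and only if the transposed functor $\cC\to\Fun(\cK,\cD)$ is exact, because both conditions reduce pointwise (over $K\in\cK$) to exactness of a functor $\cC\to\cD$. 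This pointwise comparison is the content-bearing step that makes $\Fun^\ex$ the internal hom. To close the gap you would either need to spell out this currying argument (essentially reproducing the paper's last two sentences), or explicitly cite the identification of the internal hom of $(\iota_1\St,\otimes)$ with $\Fun^\ex$ from the literature --- which the paper deliberately avoids in favor of a self-contained derivation from the weak cotensor formula.
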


\begin{observation}
\label{obs.cotensor.of.stable.cat.by.cat}
By \Cref{lem.presble.V.cats.admit.cotensors}, $(\iota_1 \St)^{\iota_1 \Cat\enr} \in \iota_1 \PrL_2$ admits cotensors, and moreover by \Cref{obs.radjt.in.presbly.V.enriched.adjn.commutes.with.cotensors} these commute with the right adjoint $\Cat \xla{\fgt} (\iota_1 \St)^{\iota_1 \Cat\enr}$. In other words, for any $\cK \in \iota_1\Cat$ and any $\cC \in (\iota_1\St)^{\iota_1 \Cat\enr}$, we have a stable category $\cK \cotensor \cC \in (\iota_1\St)^{\iota_1 \Cat\enr}$ and an equivalence
\[
\fgt(\cK \cotensor \cC)
\simeq
\cK \cotensor \fgt(\cC)
=:
\Fun ( \cK , \fgt(\cC) )
\in
\Cat
~.\footnote{In particular, the category $\Fun(\cK, \fgt(\cC)) \in \Cat$ is stable. (Of course, this can also be deduced directly.)}
\]
\end{observation}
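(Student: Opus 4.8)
\textbf{Proof plan for \Cref{obs.cotensor.of.stable.cat.by.cat}.}

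The statement has two assertions to unpack: first, that $(\iota_1\St)^{\iota_1\Cat\enr} \in \iota_1\PrL_2$ admits cotensors; second, that the right adjoint $\fgt \colon (\iota_1\St)^{\iota_1\Cat\enr} \to \Cat$ commutes with these cotensors, giving the displayed equivalence $\fgt(\cK \cotensor \cC) \simeq \Fun(\cK,\fgt(\cC))$. The plan is to cite the two already-proven results whose names are attached to the statement, applied to the correct presentably symmetric monoidal base $\cV$, and then observe that the claimed formula is exactly what those results produce.

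First I would set $\cV := \iota_1\Cat$, which is presentably symmetric monoidal (indeed cgsm, by \Cref{obs.Cat.n.is.compactly.generated.iota.1.Cat.n.category} with $n=1$), so that the general theory of \Cref{subsection.prbl.V.cats} applies. By \Cref{obs.St.CGSM.and.get.iota.1.St.iota.1.Cat.enr}, the object $(\iota_1\St)^{\iota_1\Cat\enr}$ is a presentable $\iota_1\Cat$-category, i.e.\ an object of $\iota_1\PrL_{\iota_1\Cat} = \iota_1\PrL_2$. Then \Cref{lem.presble.V.cats.admit.cotensors} (applied with $\cV = \iota_1\Cat$ and $\cC = (\iota_1\St)^{\iota_1\Cat\enr}$) immediately yields that $(\iota_1\St)^{\iota_1\Cat\enr}$ admits cotensors: for each $\cC$ in it and each $\cK \in \iota_1\Cat$, the cotensor $\cK \cotensor \cC$ exists as an object of $(\iota_1\St)^{\iota_1\Cat\enr}$ (and is in particular a stable category). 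This disposes of the first assertion with no real work.

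For the compatibility with $\fgt$, I would invoke \Cref{obs.radjt.in.presbly.V.enriched.adjn.commutes.with.cotensors}, which says that in the situation of \Cref{lem.G.a.right.adjoint.between.V.categories.iff}\Cref{part.radjt.lem.G.a.right.adjoint.between.V.categories.iff}, if the source and target admit cotensors then the right adjoint commutes with cotensors. Here the relevant $\cV$-enriched adjunction is the right-hand adjunction of \Cref{enriched.PSpfin.adjn}, with right adjoint $\fgt \colon (\iota_1\St)^{\iota_1\Cat\enr} \to \Cat$: its source admits cotensors by the previous paragraph, and its target $\Cat = \ul{\iota_1\Cat}$ admits cotensors by \Cref{lem.presble.V.cats.admit.cotensors} (or by \Cref{example.self.enrichment.of.V.is.a.prbl.V.cat} together with the same lemma), since $\Cat$ is the self-enrichment of the presentably symmetric monoidal category $\iota_1\Cat$. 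Hence $\fgt$ preserves cotensors, so $\fgt(\cK \cotensor \cC) \simeq \cK \cotensor \fgt(\cC)$ in $\Cat$. Finally, the cotensor of a $\Cat$-category (equivalently, a $2$-category) by a $1$-category $\cK$ is just the functor $2$-category $\Fun(\cK,-)$ — this is the cotensor formula for the self-enrichment of $\iota_1\Cat$, i.e.\ an instance of the computation in \Cref{obs.facts.about.enr.cats}\Cref{symm.mon.str.of.Cat.V}, or alternatively follows from \Cref{prop.Fun.B.Cat.is.prbl.and.tensoring.is.ptwise} read with $\cB = \cK$ together with \Cref{lem.presble.V.cats.admit.cotensors} — so $\cK \cotensor \fgt(\cC) =: \Fun(\cK,\fgt(\cC))$, as claimed.

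I do not anticipate a genuine obstacle here: everything is an application of results already established in \Cref{app.some.enriched.category.theory}, and the only point requiring a moment's care is checking that the displayed $\cV$-enriched adjunction \Cref{enriched.PSpfin.adjn} genuinely satisfies the hypotheses of \Cref{obs.radjt.in.presbly.V.enriched.adjn.commutes.with.cotensors} — namely that it is a $\iota_1\Cat$-enriched adjunction (which it is, by construction in \Cref{obs.St.CGSM.and.get.iota.1.St.iota.1.Cat.enr}) between $\iota_1\Cat$-categories both admitting cotensors. The parenthetical remark at the end of the statement (that $\Fun(\cK,\fgt(\cC))$ is itself stable) is then immediate: it is the underlying category of the object $\cK \cotensor \cC$ of $(\iota_1\St)^{\iota_1\Cat\enr}$, but it can also be seen directly, since limits and colimits in a functor category are computed pointwise and $\fgt(\cC)$ is stable.
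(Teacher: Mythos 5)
Your proposal is correct and matches the paper's own justification: the Observation is proved exactly by applying \Cref{lem.presble.V.cats.admit.cotensors} with $\cV = \iota_1\Cat$ to the presentable $\iota_1\Cat$-category $(\iota_1\St)^{\iota_1\Cat\enr}$ of \Cref{obs.St.CGSM.and.get.iota.1.St.iota.1.Cat.enr}, and then using \Cref{obs.radjt.in.presbly.V.enriched.adjn.commutes.with.cotensors} for the enriched right adjoint $\fgt$ of \Cref{enriched.PSpfin.adjn}, with the cotensor in $\Cat$ identified as $\Fun(\cK,-)$ via the self-enrichment of $\iota_1\Cat$. Your extra care in checking the hypotheses (both sides admit cotensors, the adjunction is genuinely $\iota_1\Cat$-enriched) is exactly the implicit content of the paper's one-line citation.
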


\begin{proof}[Proof of \Cref{lem.St.is.self.enr.of.iota.one.St}]
It is clear that the asserted factorization exists and is an equivalence on underlying 1-categories. So, it remains to check that it induces an equivalence on hom 1-categories. For this, let $\cC,\cD \in \iota_1 \St$ be stable categories. For any $\cK \in \iota_1 \Cat$, we have the two inclusions
\begin{align}
\nonumber
\hom_{\iota_1 \Cat}( \cK , \hom_{ (\iota_1 \St)^{\iota_1 \Cat\enr} } ( \cC , \cD ) )
& \simeq
\hom_{\iota_1 \St} \left( \cC , \cK \cotensor^\flat \cD \right)
\\
\label{use.that.cotensor.from.a.cat.into.a.stable.cat.is.functors}
& \simeq
\hom_{\iota_1 \St} ( \cC , \Fun ( \cK , \cD ) )
\\
\label{include.homs.in.iota.one.St.to.homs.in.iota.one.Cat}
& \subseteq
\hom_{\iota_1 \Cat} ( \cC , \Fun ( \cK , \cD ) )
\\
\nonumber
& \simeq
\hom_{\iota_1 \Cat} ( \cK , \Fun ( \cC , \cD ) )
\\
\label{include.functors.into.Fun.ex}
& \supseteq
\hom_{\iota_1 \Cat} ( \cK , \Fun^\ex ( \cC , \cD ) )
~,
\end{align}
where equivalence \Cref{use.that.cotensor.from.a.cat.into.a.stable.cat.is.functors} (as well as the existence of the weak cotensor $\cK \cotensor^\flat \cD \in (\iota_1 \St)^{\iota_1 \Cat\enr}$) follows from \Cref{obs.cotensor.of.stable.cat.by.cat}. Now, the inclusion \Cref{include.functors.into.Fun.ex} is of the subspace on those functors $\cK \xra{F} \Fun ( \cC , \cD )$ such that for all $K \in \cK$ the functor $\cC \xra{F(K)} \cD$ is exact. On the other hand, the inclusion \Cref{include.homs.in.iota.one.St.to.homs.in.iota.one.Cat} is of the subspace on those functors $\cC \xra{G} \Fun ( \cK , \cD)$ that are exact, and these are characterized by the requirement that for all $K \in \cK$ the composite $\cC \xra{G} \Fun ( \cK , \cD ) \xra{\ev_K} \cD$ is exact. Thus these inclusions are of the same subspace.
\end{proof}

\begin{observation}
\label{obs.St.is.semiadd}
The 1-category $\iota_1\St \in \iota_1 \what{\Cat}$ is semiadditive. Hence by \Cref{ex.self.enr.of.sadd.is.sadd}, $\ul{\iota_1\St} \in \what{\Cat}(\iota_1 \St)$ is semiadditive. Now, by \Cref{obs.change.of.enr.takes.sadd.to.sadd} and \Cref{lem.St.is.self.enr.of.iota.one.St}, we find that 
\[
\St \simeq (\iota_1\St)^{\iota_1\Cat\enr} \in \iota_1\what{\Cat}_2
\]
is semiadditive.
\end{observation}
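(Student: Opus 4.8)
The plan is to verify the three displayed assertions in turn; the only one requiring an argument rather than a formal appeal is the semiadditivity of the $1$-category $\iota_1\St$, so I would dispatch that first.

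To see that $\iota_1\St$ is semiadditive in the sense of \Cref{def.unenr.semiadd}, note first that the zero stable category $0_\St$ is a zero object of $\iota_1\St$: every exact functor out of it must select a zero object, zero objects form a contractible space in any stable category, and functors into $0_\St$ are unique. Finite products exist and are computed on underlying categories, since a finite product of stable categories is stable (co/limits computed componentwise) with evident exact projections. It remains to check that such a product $\cC\times\cD$ is also the coproduct, with coprojections the exact functors $\iota_\cC\colon C\mapsto(C,0_\cD)$ and $\iota_\cD\colon D\mapsto(0_\cC,D)$. For any $\cE\in\iota_1\St$ the restriction functor
\[
\Fun^\ex(\cC\times\cD,\cE)\longra\Fun^\ex(\cC,\cE)\times\Fun^\ex(\cD,\cE)
\]
is an equivalence, with inverse $(G,H)\mapsto\bigl((C,D)\mapsto G(C)\oplus H(D)\bigr)$: exact functors between stable categories preserve the zero object and binary direct sums (these being finite colimits), so $F(C,D)\simeq F(\iota_\cC C)\oplus F(\iota_\cD D)$ naturally, using $(C,D)\simeq\iota_\cC C\oplus\iota_\cD D$ in $\cC\times\cD$. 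Hence $\cC\times\cD$ is simultaneously the product and the coproduct, the canonical comparison map is the identity, and $\iota_1\St$ is semiadditive.

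Next, I would recall from \Cref{obs.St.CGSM.and.get.iota.1.St.iota.1.Cat.enr} that $\iota_1\St\in\CAlg(\iota_1\PrL_\omega)\subseteq\CAlg(\iota_1\PrL)$, so that the Local Assumption preceding \Cref{ex.self.enr.of.sadd.is.sadd} holds with $\cV=\iota_1\St$ by the previous paragraph. Applying \Cref{ex.self.enr.of.sadd.is.sadd} then gives that the self-enrichment $\ul{\iota_1\St}\in\what{\Cat}(\iota_1\St)$ is a semiadditive $\iota_1\St$-enriched category.

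Finally I would transport this along the change of enrichment. By \Cref{obs.St.CGSM.and.get.iota.1.St.iota.1.Cat.enr} the functor $\cP_\Spectra^\fin\colon\iota_1\Cat\to\iota_1\St$ is a morphism in $\CAlg(\iota_1\PrL)$ with right adjoint $\fgt$, and by definition $(\iota_1\St)^{\iota_1\Cat\enr}:=\what{\Cat}(\fgt)(\ul{\iota_1\St})$. Applying \Cref{obs.change.of.enr.takes.sadd.to.sadd} to this morphism (with the ambient size conventions in force) shows that $\what{\Cat}(\fgt)$ carries semiadditive $\iota_1\St$-enriched categories to semiadditive $\iota_1\Cat$-enriched categories; hence $(\iota_1\St)^{\iota_1\Cat\enr}$ is a semiadditive $\iota_1\Cat$-enriched category, i.e.\ a semiadditive $2$-category. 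Since $\St\simeq(\iota_1\St)^{\iota_1\Cat\enr}$ by \Cref{lem.St.is.self.enr.of.iota.one.St}, this is the desired conclusion. The main (and essentially only) obstacle is the identification of coproducts with products in $\iota_1\St$ carried out in the second paragraph; the remaining steps are formal consequences of the enriched-category machinery of \Cref{app.some.enriched.category.theory}.
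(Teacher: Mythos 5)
Your proposal is correct and matches the paper's (un-written) argument: the observation is deduced by exactly the chain \Cref{ex.self.enr.of.sadd.is.sadd} $\Rightarrow$ \Cref{obs.change.of.enr.takes.sadd.to.sadd} $+$ \Cref{lem.St.is.self.enr.of.iota.one.St} that you invoke, and the paper treats the semiadditivity of $\iota_1\St$ itself as a known fact. Your explicit verification of that fact (zero object, products computed on underlying categories, and the equivalence $\Fun^\ex(\cC\times\cD,\cE)\simeq\Fun^\ex(\cC,\cE)\times\Fun^\ex(\cD,\cE)$ via the decomposition $(C,D)\simeq(C,0)\oplus(0,D)$ and exactness) is the standard argument and is right; the only slight overshoot is proving the equivalence at the level of functor 1-categories where for semiadditivity of the 1-category $\iota_1\St$ only the equivalence of hom-spaces is strictly needed, but of course the stronger statement implies the weaker one.
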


\begin{proposition}
\label{prop.Fun.K.St.is.presentable}
For any small 2-category $\cB \in \Cat_2$, the 2-category $\Fun(\cB,\St) \in \what{\Cat}_2$ is presentable.
\end{proposition}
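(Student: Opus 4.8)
The plan is to mirror the strategy used for \Cref{prop.Fun.B.Cat.is.prbl.and.tensoring.is.ptwise}, substituting $\St$ for $\Cat$ throughout and invoking the preceding consistency checks to connect the enriched and unenriched pictures. First I would recall from \Cref{lem.St.is.self.enr.of.iota.one.St} that $\St \simeq (\iota_1\St)^{\iota_1\Cat\enr}$ as $\iota_1\Cat$-enriched categories (hence as 2-categories), and from \Cref{obs.St.CGSM.and.get.iota.1.St.iota.1.Cat.enr} that $\iota_1\St$ is cgsm, so that $\ul{\iota_1\St} \in \iota_1\PrL_{\iota_1\St}$ is a presentable $\iota_1\St$-category and $\St = (\iota_1\St)^{\iota_1\Cat\enr} \in \iota_1\PrL_2$ is a presentable 2-category. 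I would then observe that $\Fun(\cB,\St)$, being the internal hom in $\what{\Cat}_2$ out of the small 2-category $\cB$, is the cotensor $\cB \cotensor \St$ in $\iota_1\what{\Cat}_2$; alternatively one can view it as the 2-category of $\iota_1\St$-enriched functors from the free $\iota_1\St$-category on $\cB$ (the image of $\cB$ under $\cP_\Spectra^\fin$) into $\ul{\iota_1\St}$.

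The key steps, in order, would be: (1) verify that the underlying 1-category $\iota_1\Fun(\cB,\St)$ is presentable — this follows from \cite[Proposition A.3.7.6]{HTT} (or from the fact that $\iota_1\St$ is presentable together with the adjoint functor theorem), exactly as in the proof of \Cref{prop.Fun.B.Cat.is.prbl.and.tensoring.is.ptwise}; (2) verify that $\Fun(\cB,\St)$ admits tensors over $\iota_1\Cat$, computed pointwise — here the tensor of $F \colon \cB \to \St$ by $\cK \in \iota_1\Cat$ is the functor $b \mapsto \cK \tensor F(b)$, where $\cK \tensor \cD$ is the $\iota_1\Cat$-tensor on the presentable 2-category $\St$ (which exists since $\St$ admits cotensors by \Cref{lem.presble.V.cats.admit.cotensors} applied to its self-enrichment, and more to the point admits tensors as a presentable $\iota_1\Cat$-category); the pointwise formula follows because hom-2-categories in $\Fun(\cB,\St)$ are computed as ends over $\cB$, which commute with the relevant limits; (3) check that for each $\cK \in \iota_1\Cat$, the endofunctor $\cK \tensor^\flat (-)$ of $\iota_1\Fun(\cB,\St)$ preserves colimits — since this functor is pointwise given by $\cK\tensor(-)$ on $\iota_1\St$ and colimits in $\iota_1\Fun(\cB,\St)$ are computed pointwise (again by \cite[Proposition A.3.7.6]{HTT} passing through projective/injective model structures, or by \cite[Theorem 2.1]{MG-qadjns}), this reduces to the fact that $\iota_1\St \in \iota_1\PrL_2$ is a presentable $\iota_1\Cat$-category, so $\cK\tensor(-)$ preserves colimits on $\iota_1\St$; (4) conclude via \Cref{lem.equiv.conds.for.presentable.n.cat} (specifically its equivalence between conditions \Cref{item.C.is.prble} and \Cref{item.tensor.with.fixed.V}) that $\Fun(\cB,\St)$ is a presentable 2-category.

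The main obstacle I anticipate is step (2)–(3): establishing that $\Fun(\cB,\St)$ genuinely admits $\iota_1\Cat$-tensors and that these, along with all colimits, are computed pointwise at the level of 2-categories rather than merely at the level of underlying 1-categories. The subtlety is that $\cB$ is a genuine 2-category, so ``pointwise'' for a functor $\cB \to \St$ must be understood 2-categorically (hom-2-categories are weighted limits — ends — over $\cB$), and one needs that these ends interact correctly with the $\iota_1\Cat$-action on $\St$. I would handle this by the same device cited in \Cref{prop.Fun.B.Cat.is.prbl.and.tensoring.is.ptwise}, namely \cite[Proposition 3.3.1]{GHL} applied with $\St$ in place of $\Cat$: it provides presentability of the underlying 1-category, the pointwise computation of colimits, and the existence and pointwise formula for tensors, all in the enriched/higher-categorical setting. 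Everything else is then formal bookkeeping, and the real content is packaged in \Cref{lem.equiv.conds.for.presentable.n.cat} together with the fact (from \Cref{obs.St.CGSM.and.get.iota.1.St.iota.1.Cat.enr} and \Cref{lem.St.is.self.enr.of.iota.one.St}) that $\St$ is itself a presentable 2-category to begin with.
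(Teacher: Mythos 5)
You correctly flag the ``main obstacle'' yourself, but your proposed resolution is precisely the gap. In step (2)--(3) you say you would ``apply [GHL, Proposition 3.3.1] with $\St$ in place of $\Cat$''. That result is a statement about the bicategorical Grothendieck/straightening equivalence for $\Cat$-valued functors specifically (together with its projective/injective model presentations), and there is no cited or obvious analog with $\St$ as the target. Replacing $\Cat$ by $\St$ in the statement is not a move you can make without a proof; and indeed if you \emph{could} make it, the whole point of \Cref{prop.Fun.K.St.is.presentable} being a separate proposition from \Cref{prop.Fun.B.Cat.is.prbl.and.tensoring.is.ptwise} would evaporate. Similarly, your alternative suggestion in step (1) --- that presentability of $\iota_1\Fun(\cB,\St)$ ``follows from the fact that $\iota_1\St$ is presentable together with the adjoint functor theorem'' --- doesn't stand on its own: the adjoint functor theorem produces adjoints between presentable categories, it does not by itself certify that $\iota_1\Fun(\cB,\St)$ is accessible with all colimits when $\cB$ is a genuine $2$-category.

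The paper's actual argument is a different reduction designed exactly to avoid this issue. Because $\St \xhookrightarrow{\fgt} \Cat$ is a $1$-full inclusion, there is a pullback square
\[
\begin{tikzcd}
\Fun(\cB,\St) \arrow[hook]{r} \arrow{d} & \Fun(\cB,\Cat) \arrow{d} \\
\Fun(\iota_1\cB,\St) \arrow[hook]{r} & \Fun(\iota_1\cB,\Cat)
\end{tikzcd}
\]
in $\what{\Cat}_2$, where $\iota_1\cB \hookrightarrow \cB$ is the inclusion. The other three corners are presentable for easier reasons: the top right is \Cref{prop.Fun.B.Cat.is.prbl.and.tensoring.is.ptwise} (the only place where [GHL, Prop.\ 3.3.1] is actually needed), and the bottom row involves a genuine $1$-category $\iota_1\cB$ as the source, where $\iota_1\Fun(\iota_1\cB,\St) \simeq \Fun(\iota_1\cB,\iota_1\St)$ and $\iota_1\Fun(\iota_1\cB,\Cat) \simeq \Fun(\iota_1\cB,\iota_1\Cat)$ are manifestly presentable. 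One then checks the square lies in $\iota_1\PrR$ by exhibiting the horizontal and vertical left adjoints (left Kan extension $i_!$ along $\iota_1\cB \hookrightarrow \cB$, and $\cP^\fin_\Spectra$ applied pointwise), passes to left adjoints to obtain a pushout square in $\iota_1\PrL$, verifies that both left adjoints are $\iota_1\Cat$-linear (via Lemmas \ref{lem.presble.V.cats.admit.cotensors} and \ref{lem.G.a.right.adjoint.between.V.categories.iff} and \Cref{obs.St.CGSM.and.get.iota.1.St.iota.1.Cat.enr}), and then concludes via \Cref{thm.presble.V.cats.are.V.mods} that the pushout square lives in $\Mod_{\iota_1\Cat}(\iota_1\PrL) \simeq \iota_1\PrL_2$. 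So the paper does not mimic the proof of \Cref{prop.Fun.B.Cat.is.prbl.and.tensoring.is.ptwise}; it \emph{uses} it as one ingredient in a pullback/pushout bootstrap. You should replace the invocation of ``[GHL] with $\St$ in place of $\Cat$'' with this reduction.
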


\begin{proof}
Let us write $\iota_1 \cB \xhookrightarrow{i} \cB$ for the inclusion. Because the functor $\St \xra{\fgt} \Cat$ is 1-full, we have a pullback square
\begin{equation}
\label{pullback.square.for.functors.to.St}
\begin{tikzcd}[column sep = 2.5cm, row sep = 2cm]
\Fun ( \cB , \St)
\arrow[hook]{r}{\Fun ( \cB , \fgt)}
\arrow{d}[swap]{\Fun ( i , \St)}
&
\Fun ( \cB , \Cat)
\arrow{d}{\Fun ( i , \Cat)}
\\
\Fun ( \iota_1 \cB , \St)
\arrow[hook]{r}[swap]{\Fun ( \iota_1 \cB , \fgt)}
&
\Fun(\iota_1 \cB , \Cat)
\end{tikzcd}
\end{equation}
in $\what{\Cat}_2$. We first claim that the pullback square $\iota_1\Cref{pullback.square.for.functors.to.St}$ in $\what{\Cat}$ lies in $\iota_1 \PrR$. 
By \Cref{prop.Fun.B.Cat.is.prbl.and.tensoring.is.ptwise}, the category $\Fun(\cB, \Cat)$ is presentable, and clearly
\[
\iota_1 \Fun ( \iota_1 \cB , \St)
\simeq
\Fun ( \iota_1 \cB , \iota_1 \St)
\qquad
\text{and}
\qquad
\iota_1 \Fun ( \iota_1 \cB , \Cat)
\simeq
\Fun ( \iota_1 \cB , \iota_1 \Cat)
\]
are presentable. Hence, it suffices to observe the existence of the left adjoints
\begin{equation}
\label{cospan.on.one.cats.from.pullback.square.for.functors.to.St}
\begin{tikzcd}[column sep = 2.5cm, row sep = 2cm]
&
\iota_1 \Fun ( \cB , \Cat)
\arrow[leftarrow, dashed, xshift=-0.9ex]{d}[swap]{i_!}
\arrow[xshift=0.9ex]{d}{\iota_1 \Fun ( i , \Cat)}[swap, xshift=0.2ex]{\rotatebox{90}{$\bot$}}
\\
\iota_1 \Fun ( \iota_1 \cB , \St)
\arrow[hook, yshift=-0.9ex]{r}[yshift=-0.2ex]{\bot}[swap]{\iota_1 \Fun ( \iota_1 \cB , \fgt)}
\arrow[dashed, leftarrow, yshift=0.9ex]{r}{\iota_1 \Fun ( \iota_1 \cB , \cP_\Spectra^\fin)}
&
\iota_1 \Fun(\iota_1 \cB , \Cat)
\end{tikzcd}
~,
\end{equation}
where the vertical left adjoint exists by \cite[Proposition 3.3.1]{GHL} (see also \cite[Remark 3.3.4]{GHL}) and the horizontal left adjoint exists by \Cref{obs.St.CGSM.and.get.iota.1.St.iota.1.Cat.enr}. 
Now, passing to left adjoints in the pullback square $\iota_1\Cref{pullback.square.for.functors.to.St}$ in $\iota_1 \PrR$ yields a pushout square in $\iota_1 \PrL$. We claim that this is in fact a pushout square in $\Mod_{\iota_1 \Cat}(\iota_1 \PrL)$, i.e.\! that the left adjoints in diagram \Cref{cospan.on.one.cats.from.pullback.square.for.functors.to.St} are $\iota_1 \Cat$-linear.\footnote{Note that the forgetful functor $\Mod_{\iota_1 \Cat}(\iota_1 \PrL) \xra{\fgt} \iota_1 \PrL$ admits a right adjoint (using that $\iota_1 \PrL$ is closed symmetric monoidal) and so preserves colimits.} The $\iota_1\Cat$-linearity of the horizontal left adjoint $\iota_1 \Fun ( \iota_1 \cB , \cP_\Spectra^\fin)$ follows from \Cref{obs.St.CGSM.and.get.iota.1.St.iota.1.Cat.enr}, while the $\iota_1\Cat$-linearity of the vertical left adjoint $i_!$ follows from Lemmas \ref{lem.presble.V.cats.admit.cotensors} and \ref{lem.G.a.right.adjoint.between.V.categories.iff}. So, the claim follows from  \Cref{thm.presble.V.cats.are.V.mods}.
\end{proof}

\subsection{Stably-enriched 2-categories}
\label{subsec.stably.enriched.2.cats}

In this subsection we study stably-enriched 2-categories; the primary output is the consistency check \Cref{prop.only.one.Cat.St}.

\begin{definition}
\label{defn.Cat.of.Stable}
A 2-category is called \bit{stably-enriched} if its hom-categories are stable and its composition bifunctors are biexact (which is merely a condition by \Cref{obs.condition.for.map.of.stable.cats.to.factor.thru.univ.biexact}). Given two stably-enriched 2-categories $\cX$ and $\cY$, we say that a functor
\[
\cX
\xlongra{F}
\cY
\]
is \bit{2-exact} if for all $A,B \in \cX$ the functor
\[
\hom_\cX(A,B)
\longra
\hom_\cY(F A , F B)
\]
between stable categories is exact (i.e.\! it lies in $\St \subseteq \Cat$). We write
\[
\Cat(\St)
\subseteq
\Cat_2
\]
for the 1-full sub-3-category of small stably-enriched 2-categories and 2-exact functors.\footnote{Note that this is not quite an instance of \Cref{notn.V.enr.cats}, which only applies to monoidal 1-categories (as opposed to monoidal 2-categories).}
Given stably-enriched 2-categories $\cX,\cY \in \Cat(\St)$, we write 
\[
\Fun^\twoex(\cX, \cY) := \hom_{\Cat(\St)}(\cX, \cY) \subseteq \Fun(\cX, \cY)
\]
for the 2-category of 2-exact functors between them.
\end{definition}

\begin{observation}
\label{obs.products.and.coproducts.in.stably.enriched.two.cat}
By Observations \ref{obs.St.is.semiadd} and \ref{obs.semiadd.enr.gives.semiadd}, if $\cX \in \Cat(\St)$, then finite products and coproducts in $\cX$ agree (so that we refer to them as finite sums). Moreover, by \Cref{obs.semiadd.enriched.implies.automatic.finite.sum.preservation}, 2-exact functors automatically preserve finite sums.
\end{observation}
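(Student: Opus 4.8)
The plan is to reduce both assertions to facts about categories enriched in the symmetric monoidal $1$-category $(\iota_1\St,\otimes)$, for which the needed general statements are already in place; the essential input is that $\iota_1\St$ is presentably symmetric monoidal and semiadditive.

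First I would use \Cref{rem.only.one.Cat.St.in.section.one.point.one} to identify the stably-enriched $2$-category $\cX \in \Cat(\St)$ with the corresponding object of $\Cat(\iota_1\St)$; under the same equivalence $\iota_1\Cat(\St) \simeq \Cat(\iota_1\St)$, a $2$-exact functor $\cX \to \cY$ is exactly a morphism of $\iota_1\St$-enriched categories. Next I would check that the standing hypotheses of \Cref{subsec.semiadditive.V.categories} hold with $\cV := \iota_1\St$: by \Cref{obs.St.CGSM.and.get.iota.1.St.iota.1.Cat.enr} the category $\iota_1\St$ lies in $\CAlg(\iota_1\PrL)$ (it is even cgsm), and by \Cref{obs.St.is.semiadd} it is semiadditive.

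Given these identifications, the first claim is immediate from \Cref{obs.semiadd.enr.gives.semiadd}, applied with $\cV = \iota_1\St$: in any $\iota_1\St$-enriched category, and hence in $\cX$, a finite coproduct is automatically a finite product and conversely, so the two agree whenever either one exists (and, by \Cref{obs.conical.limits.are.limits.in.underlying}, each is then also computed by the corresponding co/limit in the underlying $1$-category). The second claim follows from \Cref{obs.semiadd.enriched.implies.automatic.finite.sum.preservation}, which says that every morphism of $\cV$-categories preserves all finite sums that exist in its source; reading a $2$-exact functor $\cX \to \cY$ as a morphism in $\Cat(\iota_1\St)$, this is precisely the assertion that $2$-exact functors preserve finite sums.

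I expect no genuine obstacle here: the only point needing care is the passage between the concrete description of $\Cat(\St)$ as a sub-$3$-category of $\Cat_2$ and its enriched description $\Cat(\iota_1\St)$, and this is exactly what \Cref{rem.only.one.Cat.St.in.section.one.point.one} provides at the level of underlying $1$-categories — which is all that is required, since finite sums are co/limits indexed over a finite discrete ($1$-categorical) diagram.
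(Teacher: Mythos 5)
Your argument is correct and is essentially the paper's own: invoke the identification $\iota_1\Cat(\St)\simeq\Cat(\iota_1\St)$, then apply \Cref{obs.semiadd.enr.gives.semiadd} and \Cref{obs.semiadd.enriched.implies.automatic.finite.sum.preservation} with $\cV=\iota_1\St$, using \Cref{obs.St.is.semiadd} to supply the hypothesis that $\cV$ is semiadditive. The only difference is that you make the passage between the ``sub-$3$-category of $\Cat_2$'' and ``$\iota_1\St$-enriched'' viewpoints explicit (via \Cref{rem.only.one.Cat.St.in.section.one.point.one}, which is really \Cref{obs.stably.enriched.is.unambiguous.on.iota.one}), whereas the paper leaves it implicit; that is a harmless and arguably clearer expansion of the same proof.
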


\begin{observation}
\label{obs.stably.enriched.is.unambiguous.on.iota.one}
The functor
\[
\Cat(\iota_1 \St)
\xra{\Cat(\iota_1 \fgt)}
\Cat(\iota_1 \Cat)
=:
\iota_1 \Cat_2
\]
is a monomorphism in $\iota_1\what{\Cat}$ (using \Cref{obs.condition.for.map.of.stable.cats.to.factor.thru.univ.biexact} and the fact that $\iota_1\St \xra{\iota_1\fgt} \iota_1\Cat$ is a monomorphism in $\iota_1 \what{\Cat}$). It follows that $\Cat(\St) \subseteq \Cat_2$ is the 1-full sub-3-category generated by the image of the composite functor
\[
\Cat(\iota_1 \St)
\xra{\Cat(\iota_1 \fgt)}
\Cat(\iota_1 \Cat)
=:
\iota_1 \Cat_2
\longhookra
\Cat_2
~,
\]
and moreover that we have an equivalence of categories 
\[
\iota_1\Cat(\St) \simeq \Cat(\iota_1\St)~.
\]
\end{observation}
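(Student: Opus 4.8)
The plan is to trace through the definitions; the only nontrivial input is that the monoidal structure $\otimes$ on $\iota_1\St$ corepresents biexact functors, so that factoring a bifunctor through a universal biexact functor is merely a condition (\Cref{obs.condition.for.map.of.stable.cats.to.factor.thru.univ.biexact}). First I would record that $\iota_1\fgt\colon\iota_1\St\to\iota_1\Cat$ is a monomorphism in $\iota_1\what{\Cat}$: by construction $\iota_1\St$ is the (non-full) subcategory on stable categories and exact functors, and since exactness of a functor between stable categories is a property stable under equivalence, the inclusion is injective on equivalence classes of objects and on each mapping space. Being the right adjoint of the symmetric monoidal functor $\cP_\Spectra^\fin$ (\Cref{obs.St.CGSM.and.get.iota.1.St.iota.1.Cat.enr}), $\iota_1\fgt$ is canonically laxly symmetric monoidal from $(\iota_1\St,\otimes)$ to $(\iota_1\Cat,\times)$, with lax structure maps the universal biexact functors $\cC\times\cD\to\cC\otimes\cD$; hence by the functoriality of $\Cat(-)$ on laxly monoidal functors (\Cref{obs.facts.about.enr.cats}), $\Cat(\iota_1\fgt)\colon\Cat(\iota_1\St)\to\Cat(\iota_1\Cat)=:\iota_1\Cat_2$ is again a monomorphism, and therefore so is its composite with the inclusion $\iota_1\Cat_2\hookrightarrow\Cat_2$.

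Next I would identify the image on objects and on $1$-morphisms. For $\cW\in\Cat(\iota_1\St)$, the $2$-category $\Cat(\iota_1\fgt)(\cW)$ has the objects of $\cW$, the (now merely) categories $\hom_\cW(A,B)$, which are stable, and composition obtained from that of $\cW$ by precomposing $\hom_\cW(B,C)\otimes\hom_\cW(A,B)\to\hom_\cW(A,C)$ with the universal biexact functor, hence biexact; so $\Cat(\iota_1\fgt)(\cW)$ is stably enriched. Conversely, given a stably-enriched $2$-category $\cX$, the biexactness of its composition bifunctors means by \Cref{obs.condition.for.map.of.stable.cats.to.factor.thru.univ.biexact} that each factors essentially uniquely through the corresponding tensor product, and these factorizations assemble the objects of $\cX$ into an $\iota_1\St$-enriched category whose $\Cat(\iota_1\fgt)$-image is equivalent to $\cX$. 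On $1$-morphisms, an $\iota_1\St$-enriched functor acts on hom-objects by exact functors and is sent by $\Cat(\iota_1\fgt)$ to a $2$-exact functor, while conversely a $2$-exact functor between stably-enriched $2$-categories acts on hom-categories by exact functors, hence (by uniqueness of the $\otimes$-factorizations) corresponds to an $\iota_1\St$-enriched functor and so lies in the image. Thus the image of $\Cat(\iota_1\St)\to\Cat_2$ consists of exactly the stably-enriched $2$-categories and, between them, exactly the $2$-exact functors, so the $1$-full sub-$3$-category it generates is precisely $\Cat(\St)$ of \Cref{defn.Cat.of.Stable}.

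Finally, $\Cat(\iota_1\fgt)$ restricts to a functor $\Cat(\iota_1\St)\to\iota_1\Cat(\St)$ landing in the underlying $1$-category of $\Cat(\St)$; it is still a monomorphism and, by the previous paragraph, essentially surjective, and a monomorphism of categories that is essentially surjective is an equivalence, which gives $\iota_1\Cat(\St)\simeq\Cat(\iota_1\St)$. I expect the only delicate point to be the monoidal bookkeeping in the middle paragraph: that the passage between ``biexact composition data'' and ``composition-out-of-$\otimes$ data'' is an equivalence of spaces rather than merely a surjection. This is exactly what the universal property of $\otimes$ in \Cref{obs.condition.for.map.of.stable.cats.to.factor.thru.univ.biexact} supplies, and once it is in hand the rest is formal.
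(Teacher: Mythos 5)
The paper gives no proof of this Observation (it is stated with only a parenthetical pointer to the ingredients), so your argument supplies something the text leaves implicit. Your overall route is consistent with the paper's hint: $\iota_1\fgt$ is a laxly symmetric monoidal monomorphism, so $\Cat(\iota_1\fgt)$ is a monomorphism by \Cref{obs.facts.about.enr.cats}; then the universal property of $\otimes$ (via \Cref{obs.condition.for.map.of.stable.cats.to.factor.thru.univ.biexact}) identifies the essential image, on objects and on $1$-morphisms, as exactly the stably-enriched $2$-categories and the $2$-exact functors. You also correctly flag that the delicate point is the essential uniqueness (and coherence) of the $\otimes$-factorizations, which is what the universal property supplies.

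The last step, however, invokes a false general principle. It is \emph{not} true that ``a monomorphism of categories that is essentially surjective is an equivalence'': the inclusion of the discrete category $\{0,1\}$ into the walking arrow $[1]$ is a monomorphism in $\iota_1\what{\Cat}$ (faithful, with replete image) and bijective on objects, yet is not an equivalence because it is not full. A monomorphism in $\iota_1\what{\Cat}$ is faithful but need not be full. What saves your argument is that your second paragraph already establishes fullness -- you show that every $2$-exact functor between $\Cat(\iota_1\fgt)(\cW)$ and $\Cat(\iota_1\fgt)(\cW')$ lifts to an $\iota_1\St$-enriched functor -- and fullness together with the monomorphism property (which gives that $\hom_{\Cat(\iota_1\St)}(\cW,\cW')\to\hom_{\iota_1\Cat(\St)}(\cX,\cX')$ is an inclusion of path components) yields an equivalence on hom-spaces, hence fully faithfulness, hence the desired equivalence when combined with essential surjectivity. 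You should cite that, not the incorrect general principle. One further, purely typographical, point: in your first paragraph the assertion that the composite $\Cat(\iota_1\St)\to\iota_1\Cat_2\hookrightarrow\Cat_2$ ``is a monomorphism'' does not typecheck, since the composite lands in a $3$-category; only the monomorphism property of $\Cat(\iota_1\fgt)$ as a functor between $1$-categories is used.
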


\begin{observation}
\label{obs.iota.one.Cat.St.CGSM.and.get.iota.one.Cat.St.iota.one.cat.two.enr}
The categories $\iota_1 \Cat_2$ and $\iota_1\Cat(\St)$ participate in an adjunction
\begin{equation}
\label{adjn.between.cat2.and.cat.st.on.underlying}
\begin{tikzcd}[column sep = 2cm]
\iota_1 \Cat_2 := \Cat(\iota_1 \Cat) 
\arrow[yshift=0.9ex]{r}{\Cat(\cP_\Spectra^\fin)}
\arrow[hookleftarrow, yshift=-0.9ex]{r}[yshift=-0.2ex]{\bot}[swap]{\Cat(\fgt)}
&
\Cat(\iota_1\St)
\simeq 
\iota_1\Cat(\St)
\end{tikzcd}
\end{equation}
(using the equivalence of \Cref{obs.stably.enriched.is.unambiguous.on.iota.one}). Moreover, the left adjoint $\Cat(\cP_\Spectra^\fin)$ defines a morphism in $\CAlg(\iota_1\PrL_\omega)$ by \Cref{lem.if.V.is.cgsm.then.Cat.V.is.cgsm}. By \Cref{thm.presble.V.cats.are.V.mods} and \Cref{obs.consequences.of.morphism.in.CAlg.PrL}\Cref{obs.for.Cathat.of.radjt.to.presly.s.m.fctr.both.directions.agree}, we obtain objects 
\[
\hspace{-1.3cm}
\ul{\iota_1\Cat(\St)} \in \iota_1 \PrL_{\iota_1\Cat(\St)} 
\qquad 
\text{and}
\qquad 
(\iota_1\Cat(\St))^{\iota_1\Cat_2\enr} 
:= 
\what{\Cat}(\Cat(\fgt))(\ul{\iota_1\Cat(\St)}) 
\in 
\iota_1\PrL_{\iota_1\Cat_2} 
=:
\iota_1\PrL_3 
\subset 
\iota_1\what{\Cat_3}
\]
as well as an adjunction 
\begin{equation} 
\label{adj.between.Cat.2.and.iota.1.Cat.St.iota.1.Cat.2.enr}
\begin{tikzcd}[column sep = 1.5cm]
\Cat_2
:= 
\ul{\iota_1\Cat_2}
\arrow[yshift=0.9ex]{r}{\what{\Cat}(\cP_\Spectra^\fin)}
\arrow[hookleftarrow, yshift=-0.9ex]{r}[yshift=-0.2ex]{\bot}[swap]{\what{\Cat}(\fgt)}
&
(\iota_1\Cat(\St))^{\iota_1\Cat_2\enr}
\end{tikzcd}
\end{equation}
in $\iota_1\what{\Cat}_3$ (which recovers the adjunction \Cref{adjn.between.cat2.and.cat.st.on.underlying} upon applying $\iota_1\what{\Cat}_3 \xra{\iota_1} \iota_1 \what{\Cat}$).
\end{observation}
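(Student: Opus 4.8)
The plan is to carry out, essentially verbatim, the construction performed in \Cref{obs.St.CGSM.and.get.iota.1.St.iota.1.Cat.enr}, but with the cgsm morphism $\cP_\Spectra^\fin \colon \iota_1\Cat \to \iota_1\St$ replaced throughout by its image $\Cat(\cP_\Spectra^\fin) \colon \Cat(\iota_1\Cat) \to \Cat(\iota_1\St)$ under the functor $\Cat(-)$. The only point that requires genuine verification is that $\Cat(\cP_\Spectra^\fin)$ is again a morphism in $\CAlg(\iota_1\PrL_\omega)$; everything else is then formal.

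First I would produce the underlying adjunction \Cref{adjn.between.cat2.and.cat.st.on.underlying}. By \Cref{obs.St.CGSM.and.get.iota.1.St.iota.1.Cat.enr}, $\cP_\Spectra^\fin \adj \fgt$ is a symmetric monoidal adjunction of $1$-categories; applying the functoriality of $\Cat(-)$ on (symmetric) monoidal functors and on monoidal adjunctions (\Cref{obs.facts.about.enr.cats}) yields an adjunction $\Cat(\cP_\Spectra^\fin) \adj \Cat(\fgt)$ between $\Cat(\iota_1\Cat) =: \iota_1\Cat_2$ and $\Cat(\iota_1\St) \simeq \iota_1\Cat(\St)$ --- the equivalence here being \Cref{obs.stably.enriched.is.unambiguous.on.iota.one} --- with $\Cat(\cP_\Spectra^\fin)$ symmetric monoidal.

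Next I would check $\Cat(\cP_\Spectra^\fin) \in \CAlg(\iota_1\PrL_\omega)$. Its source is cgsm by \Cref{obs.Cat.n.is.compactly.generated.iota.1.Cat.n.category} and its target is cgsm by \Cref{lem.if.V.is.cgsm.then.Cat.V.is.cgsm} applied to the cgsm category $\iota_1\St$; being a left adjoint between presentable categories it preserves colimits, so it remains to see that it preserves compact objects. For this I would unwind the proof of \Cref{lem.if.V.is.cgsm.then.Cat.V.is.cgsm}, which exhibits the objects $[n](V_1,\dots,V_n)$ with the $V_i$ ranging over $\cV^\omega$ (together with the unit $[0]_\cV$) as a set of compact generators of $\Cat(\cV)$. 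Since $\Cat(\cP_\Spectra^\fin)$ acts by applying $\cP_\Spectra^\fin$ hom-wise and $\cP_\Spectra^\fin$ is symmetric monoidal and preserves compacts, it carries $[n](\cC_1,\dots,\cC_n)$ with $\cC_i \in (\iota_1\Cat)^\omega$ to $[n](\cP_\Spectra^\fin\cC_1,\dots,\cP_\Spectra^\fin\cC_n)$ with $\cP_\Spectra^\fin\cC_i \in (\iota_1\St)^\omega$, hence to a compact generator of $\Cat(\iota_1\St)$ --- and it carries the unit $[0]_{\iota_1\Cat}$ to the unit $[0]_{\iota_1\St}$. A colimit-preserving functor carrying a set of compact generators into the compact objects preserves all compact objects, so $\Cat(\cP_\Spectra^\fin)$ does. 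I expect this compactness check to be the main (and really the only) obstacle; the rest is bookkeeping in (huge) higher categories.

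Finally, with $\Cat(\cP_\Spectra^\fin) \colon \iota_1\Cat_2 \to \iota_1\Cat(\St)$ established as a cgsm morphism with right adjoint $\Cat(\fgt)$, the remaining assertions follow exactly as their analogues in \Cref{obs.St.CGSM.and.get.iota.1.St.iota.1.Cat.enr}. Applying \Cref{thm.presble.V.cats.are.V.mods} with $\cV = \iota_1\Cat(\St)$ (presentably symmetric monoidal by the previous paragraph), together with \Cref{example.self.enrichment.of.V.is.a.prbl.V.cat} and \Cref{ex.V.is.compactly.generated.V.category}, shows that the self-enrichment $\ul{\iota_1\Cat(\St)}$ is a compactly generated presentable $\iota_1\Cat(\St)$-category, i.e.\ an object of $\iota_1\PrL_{\iota_1\Cat(\St)}$; and \Cref{obs.consequences.of.morphism.in.CAlg.PrL}\Cref{obs.for.Cathat.of.radjt.to.presly.s.m.fctr.both.directions.agree}, applied to $\Cat(\cP_\Spectra^\fin)$ in $\CAlg(\iota_1\PrL)$, guarantees that $\what{\Cat}(\Cat(\fgt))$ restricts to a functor $\iota_1\PrL_{\iota_1\Cat(\St)} \to \iota_1\PrL_{\iota_1\Cat_2} = \iota_1\PrL_3$ compatibly with the module-category descriptions, so that $(\iota_1\Cat(\St))^{\iota_1\Cat_2\enr} := \what{\Cat}(\Cat(\fgt))(\ul{\iota_1\Cat(\St)})$ indeed lands in $\iota_1\PrL_3$. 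The $\iota_1\Cat_2$-enriched adjunction \Cref{adj.between.Cat.2.and.iota.1.Cat.St.iota.1.Cat.2.enr} between $\Cat_2 = \ul{\iota_1\Cat_2}$ and $(\iota_1\Cat(\St))^{\iota_1\Cat_2\enr}$ is then produced just as \Cref{enriched.PSpfin.adjn} was produced from \Cref{unenriched.PSpfin.adjn}, now with $\Cat(\cP_\Spectra^\fin)$ in the role of $\cP_\Spectra^\fin$, and it recovers \Cref{adjn.between.cat2.and.cat.st.on.underlying} upon applying $\iota_1$.
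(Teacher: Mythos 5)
Your proposal is correct and is exactly the argument that the paper's (proof-less) Observation leaves implicit: the adjunction is produced by applying $\Cat(-)$ to the adjunction of \Cref{obs.St.CGSM.and.get.iota.1.St.iota.1.Cat.enr}, and the remaining assertions then follow from \Cref{thm.presble.V.cats.are.V.mods} and \Cref{obs.consequences.of.morphism.in.CAlg.PrL} once one knows $\Cat(\cP_\Spectra^\fin)$ is a morphism of cgsm categories. The one step you spell out in more detail than the paper -- that $\Cat(\cP_\Spectra^\fin)$ preserves compact objects because it preserves colimits and, acting hom-wise by the (strongly monoidal, compact-preserving) functor $\cP_\Spectra^\fin$, it carries the compact generators $[0]_{\iota_1\Cat}$ and $[n](\cC_1,\dots,\cC_n)$ of $\iota_1\Cat_2$ exhibited in the proof of \Cref{lem.if.V.is.cgsm.then.Cat.V.is.cgsm} to the corresponding compact generators of $\Cat(\iota_1\St)$ -- is indeed the content that the paper compresses into the citation of \Cref{lem.if.V.is.cgsm.then.Cat.V.is.cgsm}, and your unwinding of it is accurate.
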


\begin{notation}
We write $\otimes$ for the symmetric monoidal structure on $\ul{\iota_1\Cat(\St)} \in \iota_1 \PrL_{\iota_1 \Cat(\St)}$ that results from \Cref{obs.iota.one.Cat.St.CGSM.and.get.iota.one.Cat.St.iota.one.cat.two.enr}.
\end{notation}

We have the following consistency check, that the two 3-categorical enhancements of $\iota_1\Cat(\St)$ -- that given in \Cref{defn.Cat.of.Stable} and that of \Cref{obs.iota.one.Cat.St.CGSM.and.get.iota.one.Cat.St.iota.one.cat.two.enr} -- agree.

\begin{proposition}
\label{prop.only.one.Cat.St}
The right adjoint of adjunction \Cref{adj.between.Cat.2.and.iota.1.Cat.St.iota.1.Cat.2.enr} factors as an equivalence 
\[
\begin{tikzcd} 
\Cat_2 
\arrow[hookleftarrow]{r}{\fgt}
&
(\iota_1\Cat(\St))^{\iota_1\Cat_2\enr}
\arrow[dashed]{dl}[sloped, anchor=north]{\sim}
\\
\Cat(\St)
\arrow[hook]{u}{\fgt} 
&
\end{tikzcd}
\]
in $\iota_1 \what{\Cat}_3$. In particular, $\Fun^\twoex(-,-)$ is adjoint to $(-) \otimes (-)$.
\end{proposition}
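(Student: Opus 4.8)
The plan is to mimic the proof of \Cref{lem.St.is.self.enr.of.iota.one.St} one categorical dimension up, replacing ``stable categories and exact functors'' by ``stably-enriched 2-categories and 2-exact functors''. As in that lemma, the asserted factorization through $\fgt \colon \Cat(\St) \hookrightarrow \Cat_2$ exists formally (a 2-exact functor is in particular a morphism in $\Cat_2$) and is visibly an equivalence on underlying 1-categories by \Cref{obs.stably.enriched.is.unambiguous.on.iota.one}. So the whole content is to check that it induces an equivalence on hom-2-categories: for stably-enriched 2-categories $\cX,\cY \in \iota_1\Cat(\St)$, the hom-2-category $\hom_{(\iota_1\Cat(\St))^{\iota_1\Cat_2\enr}}(\cX,\cY)$ coming from the self-enrichment must agree with $\Fun^\twoex(\cX,\cY) := \hom_{\Cat(\St)}(\cX,\cY)$ of \Cref{defn.Cat.of.Stable}.

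First I would establish the 2-categorical analog of \Cref{obs.cotensor.of.stable.cat.by.cat}: by \Cref{lem.presble.V.cats.admit.cotensors} applied to $(\iota_1\Cat(\St))^{\iota_1\Cat_2\enr} \in \iota_1\PrL_3 = \iota_1\PrL_{\iota_1\Cat_2}$ (whose presentability is part of \Cref{obs.iota.one.Cat.St.CGSM.and.get.iota.one.Cat.St.iota.one.cat.two.enr}), this category admits cotensors over objects $\cK \in \iota_1\Cat_2$, and by \Cref{obs.radjt.in.presbly.V.enriched.adjn.commutes.with.cotensors} these commute with the right adjoint $\fgt$ of adjunction \Cref{adj.between.Cat.2.and.iota.1.Cat.St.iota.1.Cat.2.enr}; thus $\fgt(\cK \cotensor \cY) \simeq \Fun(\cK, \fgt(\cY))$ as a 2-category (and in particular the latter is stably-enriched whenever $\cY$ is). Then, exactly as in the proof of \Cref{lem.St.is.self.enr.of.iota.one.St}, for any test object $\cK \in \iota_1\Cat_2$ I would produce the chain
\begin{align*}
\hom_{\iota_1\Cat_2}\!\left(\cK, \hom_{(\iota_1\Cat(\St))^{\iota_1\Cat_2\enr}}(\cX,\cY)\right)
&\simeq
\hom_{\iota_1\Cat(\St)}\!\left(\cX, \cK \cotensor^\flat \cY\right)
\simeq
\hom_{\iota_1\Cat(\St)}\!\left(\cX, \Fun(\cK,\cY)\right)
\\
&\subseteq
\hom_{\iota_1\Cat_2}\!\left(\cX, \Fun(\cK,\cY)\right)
\simeq
\hom_{\iota_1\Cat_2}\!\left(\cK, \Fun(\cX,\cY)\right)
\\
&\supseteq
\hom_{\iota_1\Cat_2}\!\left(\cK, \Fun^\twoex(\cX,\cY)\right)
,
\end{align*}
and then argue that the two displayed subspaces coincide: the last inclusion picks out those $\cK \to \Fun(\cX,\cY)$ that are pointwise 2-exact (i.e.\ for each $K \in \cK$ the functor $\cX \to \cY$ is 2-exact, meaning exact on all hom-categories), while the middle inclusion picks out those $\cX \to \Fun(\cK,\cY)$ that are 2-exact, which by evaluating at each $K \in \cK$ and using that $\ev_K$ is 2-exact is the same condition. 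This identifies $\hom_{(\iota_1\Cat(\St))^{\iota_1\Cat_2\enr}}(\cX,\cY)$ with $\Fun^\twoex(\cX,\cY)$ by (enriched) Yoneda, giving the equivalence of 3-categories; the ``in particular'' statement about $\Fun^\twoex(-,-)$ being adjoint to $\otimes$ is then immediate from \Cref{obs.iota.one.Cat.St.CGSM.and.get.iota.one.Cat.St.iota.one.cat.two.enr}.

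The main obstacle I anticipate is bookkeeping rather than conceptual: one must be careful that ``exact on hom-categories'' is genuinely detected pointwise against the test shape $\cK$, since a 2-exactness condition on $\cX \to \Fun(\cK,\cY)$ unwinds to an exactness condition on $\hom_\cX(A,B) \to \hom_{\Fun(\cK,\cY)}(FA,FB) \simeq \Fun(\cK, \hom_\cY(-,-))$-type objects, and one needs the analog of \Cref{obs.condition.for.map.of.stable.cats.to.factor.thru.univ.biexact} together with the fact that exactness of a functor into a functor-category of stable categories is checked objectwise (an instance of \Cref{lem.limits.in.C.versus.UC} / the pointwise computation of limits in $\Fun(\cK,\St)$ from \Cref{prop.Fun.K.St.is.presentable}). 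A secondary subtlety is justifying $\cK \cotensor^\flat \cY \in (\iota_1\Cat(\St))^{\iota_1\Cat_2\enr}$ in the first place — i.e.\ that $\Fun(\cK,\cY)$ is again stably-enriched when $\cY$ is, which follows from the cotensor description above since hom-categories of $\Fun(\cK,\cY)$ are limits of hom-categories of $\cY$ over the twisted-arrow-type diagram of $\cK$, hence stable, and composition remains biexact. Once these are in place the argument is a routine transport of the dimension-1 proof.
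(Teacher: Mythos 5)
Your proposal follows exactly the paper's strategy: lift \Cref{lem.St.is.self.enr.of.iota.one.St} one categorical dimension, establish the cotensor formula (\Cref{obs.functors.into.a.stably.enriched.are.stably.enriched}), and then match the two subspaces appearing in the Yoneda-style chain. That global structure is correct, and your identification of the ``main obstacle'' is pointing at the right place.

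However, I think you have misdiagnosed that obstacle as bookkeeping, and your proposed fix does not actually close it. To argue that ``$G \colon \cX \to \Fun(\cK,\cY)$ is $2$-exact iff each $\ev_K \circ G$ is $2$-exact,'' the forward direction is clear from $2$-exactness of $\ev_K$ (\Cref{lem.zero.morphisms.and.exact.squares.in.Fun.K.Y}\Cref{ev.of.two.catl.functors.is.two.exact}). The converse needs that the family of exact functors
\[
\hom_{\Fun(\cK,\cY)}(GA,GB) \xra{\ev_K} \hom_\cY(GA(K),GB(K))
\]
is \emph{jointly conservative}; only then can pointwise exactness of $\hom_\cX(A,B)\to\hom_{\Fun(\cK,\cY)}(GA,GB)$ be promoted to actual exactness (e.g.\ to detect that a cone becomes a limit cone, or that a square becomes exact). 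This is precisely what the paper's \Cref{lem.zero.morphisms.and.exact.squares.in.Fun.K.Y}\Cref{joint.conservativity.and.exactness.of.ev.K} supplies, and it is not free: it rests on \Cref{lem.extensionalityish}, the assertion that a $2$-morphism in $\Fun(\cA,\cB)$ for $2$-categories $\cA,\cB$ is invertible iff all its components are. That lemma is proved using the $2$-categorical Grothendieck correspondence of \cite{GHL} and the pointwise-invertibility criterion of \cite{macpherson2020bivariant}, which is genuine $2$-categorical content, not a consequence of \Cref{prop.Fun.K.St.is.presentable} or ``pointwise computation of limits in $\Fun(\cK,\St)$'': the hom-categories $\hom_{\Fun(\cK,\cY)}(F,G)$ are not of the form $\Fun(\cK,\text{--})$, but rather an end over a twisted-arrow-type diagram attached to the $2$-category $\cK$, and the projections to the factors indexed by mere objects $K\in\cK$ being jointly conservative is not visible from the limit shape alone. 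Your secondary point (that $\Fun(\cK,\cY)$ is again stably-enriched) is correct and easy, but it is the conservativity statement, not the stability statement, that carries the weight here. So the gap is real: you need to prove, or cite, the pointwise-invertibility criterion for modifications between functors of $2$-categories before the subspace comparison closes.
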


\begin{observation}
\label{obs.functors.into.a.stably.enriched.are.stably.enriched}
By \Cref{lem.presble.V.cats.admit.cotensors}, $(\iota_1\Cat(\St))^{\iota_1\Cat_2\enr} \in \iota_1\PrL_3$ admits cotensors, and moreover by \Cref{obs.radjt.in.presbly.V.enriched.adjn.commutes.with.cotensors} these commute with the right adjoint $\Cat_2 \xla{\Cat(\fgt)} (\iota_1\Cat(\St))^{\iota_1\Cat_2\enr}$. In other words, for any $\cK \in \iota_1\Cat_2$ and any $\cC \in (\iota_1\Cat(\St))^{\iota_1\Cat_2\enr}$, we have a stably-enriched category $\cK \cotensor \cC \in (\iota_1\Cat(\St))^{\iota_1\Cat_2\enr}$ and an equivalence 
\[
\Cat(\fgt)(\cK \cotensor \cC) 
\simeq 
\cK \cotensor \Cat(\fgt)(\cC)
=:
\Fun(\cK, \Cat(\fgt)(\cC))
\in 
\Cat_2
~.\footnote{In particular, the 2-category $\Fun(\cK, \Cat(\fgt)(\cC))$ is stably-enriched.}
\]
\end{observation}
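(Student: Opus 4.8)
The plan is to deduce both assertions of \Cref{obs.functors.into.a.stably.enriched.are.stably.enriched} from the general enriched machinery of \Cref{subsection.prbl.V.cats}, applied with $\cV = \iota_1\Cat_2$; this will be the exact $3$-categorical analog of \Cref{obs.cotensor.of.stable.cat.by.cat}. First I would note that the two categories in play are both presentable $\iota_1\Cat_2$-categories. On the one hand, $(\iota_1\Cat(\St))^{\iota_1\Cat_2\enr} \in \iota_1\PrL_{\iota_1\Cat_2} =: \iota_1\PrL_3$ by its very construction in \Cref{obs.iota.one.Cat.St.CGSM.and.get.iota.one.Cat.St.iota.one.cat.two.enr} (which produces it from the morphism $\Cat(\cP_\Spectra^\fin) \in \CAlg(\iota_1\PrL_\omega)$ via \Cref{thm.presble.V.cats.are.V.mods} and \Cref{obs.consequences.of.morphism.in.CAlg.PrL}\Cref{obs.for.Cathat.of.radjt.to.presly.s.m.fctr.both.directions.agree}). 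On the other hand, $\Cat_2 = \ul{\iota_1\Cat_2}$ is a presentable $\iota_1\Cat_2$-category by \Cref{example.self.enrichment.of.V.is.a.prbl.V.cat}. Hence, by \Cref{lem.presble.V.cats.admit.cotensors}, each of them admits cotensors; this already establishes the first clause of the statement.

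Next, for the compatibility with the right adjoint, I would invoke the enriched adjunction \Cref{adj.between.Cat.2.and.iota.1.Cat.St.iota.1.Cat.2.enr}. Its right adjoint $\Cat(\fgt)$ is in particular an enriched right adjoint, so the ``only if'' direction of \Cref{lem.G.a.right.adjoint.between.V.categories.iff}\Cref{part.radjt.lem.G.a.right.adjoint.between.V.categories.iff} tells us that it commutes with weak cotensors; since both its source and target admit cotensors (by the previous paragraph), \Cref{obs.radjt.in.presbly.V.enriched.adjn.commutes.with.cotensors} then upgrades this to the statement that $\Cat(\fgt)$ commutes with cotensors on the nose. Unwinding this for $\cK \in \iota_1\Cat_2$ and $\cC \in (\iota_1\Cat(\St))^{\iota_1\Cat_2\enr}$ yields the natural equivalence $\Cat(\fgt)(\cK \cotensor \cC) \simeq \cK \cotensor \Cat(\fgt)(\cC)$ in $\Cat_2$. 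Finally, since $\Cat_2 = \ul{\iota_1\Cat_2}$ is the self-enrichment of $\iota_1\Cat_2$, the cotensor $\cK \cotensor \Cat(\fgt)(\cC)$ is computed by the internal hom of $\iota_1\Cat_2$ --- in a closed symmetric monoidal $\cV$ the cotensor $V \cotensor C$ coincides with $\hom_{\ul{\cV}}(V, C)$, by the same computation as in the proof of \Cref{obs.cotensor.of.stable.cat.by.cat} --- which is precisely the internal hom $\Fun(\cK, \Cat(\fgt)(\cC))$ of \Cref{obs.facts.about.enr.cats}\Cref{symm.mon.str.of.Cat.V} (read one categorical level up). This gives the displayed identification. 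The remaining assertion (that $\Fun(\cK, \Cat(\fgt)(\cC))$ is stably-enriched) is then immediate: $\cK \cotensor \cC$ is an object of $(\iota_1\Cat(\St))^{\iota_1\Cat_2\enr} \simeq \Cat(\iota_1\St)$ by \Cref{obs.stably.enriched.is.unambiguous.on.iota.one}, so $\Fun(\cK, \Cat(\fgt)(\cC)) = \Cat(\fgt)(\cK \cotensor \cC)$ is the underlying $2$-category of a stably-enriched $2$-category, hence itself stably-enriched.

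I do not anticipate any genuine obstacle: the argument is entirely formal given the results already established. The only point meriting a moment's attention is checking that both the source and target of the adjunction \Cref{adj.between.Cat.2.and.iota.1.Cat.St.iota.1.Cat.2.enr} admit cotensors, so that \Cref{lem.G.a.right.adjoint.between.V.categories.iff}\Cref{part.radjt.lem.G.a.right.adjoint.between.V.categories.iff} and \Cref{obs.radjt.in.presbly.V.enriched.adjn.commutes.with.cotensors} apply --- and that is exactly what the first paragraph arranges.
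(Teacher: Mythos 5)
Your proposal is correct and follows essentially the same route as the paper: the observation is exactly the application of \Cref{lem.presble.V.cats.admit.cotensors} (to the presentable $\iota_1\Cat_2$-categories in play) together with \Cref{obs.radjt.in.presbly.V.enriched.adjn.commutes.with.cotensors} applied to the enriched right adjoint $\Cat(\fgt)$ of \Cref{adj.between.Cat.2.and.iota.1.Cat.St.iota.1.Cat.2.enr}, mirroring \Cref{obs.cotensor.of.stable.cat.by.cat} one categorical level up. The extra details you supply --- that $\Cat_2=\ul{\iota_1\Cat_2}$ is itself a presentable $\iota_1\Cat_2$-category and that its cotensors are the internal homs $\Fun(\cK,-)$ --- are exactly the points the paper leaves implicit, and they are handled correctly.
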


\begin{proof}[Proof of \Cref{prop.only.one.Cat.St}]
It is clear that the asserted factorization exists, and it is an equivalence on underlying 1-categories by \Cref{obs.stably.enriched.is.unambiguous.on.iota.one}. So, it remains to check that it induces an equivalence on hom 2-categories. For this, let $\cX,\cY \in \iota_1 \Cat(\St) \simeq \Cat(\iota_1 \St)$ be stably-enriched 2-categories. For any $\cK \in \iota_1 \Cat_2$, we have the two inclusions
\begin{align}
\nonumber
\hom_{\iota_1 \Cat_2} ( \cK , \hom_{(\iota_1\Cat(\St))^{\iota_1 \Cat_2\enr} } ( \cX , \cY ) )
& \simeq
\hom_{\iota_1\Cat(\St)} \left( \cX , \cK \cotensor^\flat \cY \right)
\\
\label{use.that.cotensor.from.a.two.cat.into.a.stably.enriched.two.cat.is.functors}
& \simeq
\hom_{\iota_1\Cat(\St)} ( \cX , \Fun(\cK,\cY) )
\\
\label{include.homs.in.Cat.of.iota.one.St.to.homs.in.Cat.of.iota.one.Cat}
& \subseteq
\hom_{\iota_1 \Cat_2} ( \cX , \Fun ( \cK , \cY ) )
\\
\nonumber
& \simeq
\hom_{\iota_1 \Cat_2} ( \cK , \Fun ( \cX , \cY ) )
\\
\label{include.functors.into.Fun.two.ex}
& \supseteq
\hom_{\iota_1 \Cat_2} ( \cK , \Fun^\twoex ( \cX , \cY ) )
~,
\end{align}
where equivalence \Cref{use.that.cotensor.from.a.two.cat.into.a.stably.enriched.two.cat.is.functors} (as well as the existence of the weak cotensor $\cK \cotensor^\flat \cY \in (\iota_1\Cat(\St))^{\iota_1\Cat_2\enr}$) follows from \Cref{obs.functors.into.a.stably.enriched.are.stably.enriched}. Now, the inclusion \Cref{include.functors.into.Fun.two.ex} is of the subspace on those functors $\cK \xra{F} \Fun ( \cX , \cY)$ such that for all $K \in \cK$ the functor $\cX \xra{F(K)} \cY$ is 2-exact. On the other hand, the inclusion \Cref{include.homs.in.Cat.of.iota.one.St.to.homs.in.Cat.of.iota.one.Cat} is of the subspace on those functors $\cX \xra{G} \Fun(\cK,\cY)$ that are 2-exact, and by \Cref{lem.zero.morphisms.and.exact.squares.in.Fun.K.Y} these are characterized by the requirement that for all $K \in \cK$ the composite $\cX \xra{G} \Fun(\cK,\cY) \xra{\ev_K} \cY$ is 2-exact. Thus these inclusions are of the same subspace.
\end{proof}

The following result was used in the proof of \Cref{prop.only.one.Cat.St}.

\begin{lemma}
\label{lem.zero.morphisms.and.exact.squares.in.Fun.K.Y}
Let $\cK, \cY \in \Cat_2$ be 2-categories, and suppose that $\cY$ is stably-enriched.
\begin{enumerate}

\item\label{ev.of.two.catl.functors.is.two.exact} For every object $K \in \cK$, the functor
\[
\Fun(\cK,\cY)
\xra{\ev_K}
\cY
\]
is 2-exact.

\item\label{joint.conservativity.and.exactness.of.ev.K} For any $F,G \in \Fun(\cK,\cY)$, the functors
\begin{equation}
\label{local.action.of.ev.k}
\hom_{\Fun(\cK,\cY)}(F,G)
\xra{\ev_K}
\hom_\cY(F(K),G(K))
\end{equation}
are (exact by part \Cref{ev.of.two.catl.functors.is.two.exact} and) jointly conservative (taken over all $K \in \cK$).

\end{enumerate}
\end{lemma}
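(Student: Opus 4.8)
The plan is to prove both parts together by reducing all questions about the functor 2-category $\Fun(\cK,\cY)$ to pointwise statements in $\cY$, using the fact that $\cY$ is stably-enriched (hence $\Fun(\cK,\cY)$ is stably-enriched by \Cref{obs.functors.into.a.stably.enriched.are.stably.enriched}, once we know $\Fun(\cK,\cY)$ makes sense as a stably-enriched 2-category). First I would establish that for each $K \in \cK$ the evaluation functor $\Fun(\cK,\cY) \xra{\ev_K} \cY$ exists as a morphism of 2-categories; this is immediate since $\ev_K$ is induced by the inclusion of the terminal 2-category at the object $K$, i.e.\ it is $\Fun(K,\cY) \to \Fun(\cK,\cY)$ precomposed with $\pt \xra{K} \cK$ read backwards, so $\ev_K = \Fun(-,\cY)$ applied to $\pt \to \cK$.

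For part \Cref{ev.of.two.catl.functors.is.two.exact}, I would argue as follows. The hom-category $\hom_{\Fun(\cK,\cY)}(F,G)$ is computed as an end: $\hom_{\Fun(\cK,\cY)}(F,G) \simeq \lim_{(\bmor K \to K')} \hom_\cY(F(K), G(K'))$, a limit over the twisted arrow category of $\cK$ of a diagram valued in stable categories with exact transition functors. Since limits of stable categories along exact functors are stable and since the projection from such a limit to any single term $\hom_\cY(F(K),G(K))$ is exact (limits in $\St$ are computed as limits of underlying categories, by \Cref{lem.limits.in.C.versus.UC} applied to $\cV = \iota_1\St$, which admits weak cotensors by \Cref{obs.cotensor.of.stable.cat.by.cat}, and exactness is detected pointwise), the functor \Cref{local.action.of.ev.k} is exact. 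As this holds for all $F, G$, the functor $\ev_K$ is 2-exact. One must also check $\ev_K$ preserves zero objects and is well-defined on composition, but this is automatic from its description as a restriction-along-inclusion functor.

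For part \Cref{joint.conservativity.and.exactness.of.ev.K}, exactness of each \Cref{local.action.of.ev.k} is already given by part \Cref{ev.of.two.catl.functors.is.two.exact}. For joint conservativity: a 2-morphism $\alpha: F \Rightarrow G$ in $\Fun(\cK,\cY)$ — i.e.\ an object of $\hom_{\Fun(\cK,\cY)}(F,G)$ — has an underlying family $\{\alpha_K \in \hom_\cY(F(K),G(K))\}$, and by the end formula above $\alpha$ is an equivalence (respectively zero) precisely when its image in each term $\hom_\cY(F(K),G(K))$ is an equivalence (respectively zero), since a morphism into a limit of (stable) categories is an equivalence iff all its components are. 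Because $\hom_{\Fun(\cK,\cY)}(F,G)$ is stable, conservativity of a family of exact functors is equivalent to the family jointly detecting zero objects, so this finishes the argument. The main obstacle I anticipate is purely bookkeeping: making the end/limit description of hom-categories in $\Fun(\cK,\cY)$ precise in the $(\infty,2)$-categorical setting and confirming that it is compatible with the stable enrichment — but this is exactly the content packaged into \Cref{obs.functors.into.a.stably.enriched.are.stably.enriched} and \Cref{lem.presble.V.cats.admit.cotensors}, so the real work is citing these correctly rather than proving anything genuinely new.
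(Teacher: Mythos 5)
Your proposal for part (\ref{ev.of.two.catl.functors.is.two.exact}) is fine in the end, since you eventually land on citing \Cref{obs.functors.into.a.stably.enriched.are.stably.enriched} together with its naturality in $\cK$, which is exactly the paper's (one-line) argument; the end-formula detour is unnecessary there.

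For part (\ref{joint.conservativity.and.exactness.of.ev.K}) there is a genuine gap, and it is not the ``bookkeeping'' you describe. Your reduction from joint conservativity to joint zero-detection (via cofibers of morphisms between stable categories) is correct and is exactly what the paper also does. But the remaining step --- that a morphism $\varphi$ in $\hom_{\Fun(\cK,\cY)}(F,G)$ is an equivalence iff each $\ev_K(\varphi)$ is an equivalence --- is \emph{not} ``packaged into'' \Cref{obs.functors.into.a.stably.enriched.are.stably.enriched} or \Cref{lem.presble.V.cats.admit.cotensors}. Those results give you the cotensor structure on $\iota_1\Cat(\St)$ over $\iota_1\Cat_2$ and hence that $\Fun(\cK,\cY)$ is stably-enriched, but they say nothing about how hom-categories of $\Fun(\cK,\cY)$ are computed or about detecting invertibility componentwise. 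The componentwise-detection statement is precisely the content of the paper's \Cref{lem.extensionalityish}, which is nontrivial: its proof reduces to the case $\cB=\Cat$ via Yoneda and then goes through the 2-categorical Grothendieck construction of \cite{GHL} and a lemma of \cite{macpherson2020bivariant}.

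Your substitute --- an end formula $\hom_{\Fun(\cK,\cY)}(F,G)\simeq\lim_{\mathrm{Tw}(\cK)}\hom_\cY(F(-),G(=))$ --- would do the job if it were available, since equivalences in a limit of categories along projections to each term are detected componentwise. But this end is a $\Cat$-enriched (weighted) end, not an unenriched one, and the unenriched twisted-arrow description does not directly apply: the indexing must see the hom-\emph{categories} of $\cK$, not merely $\iota_1\cK$. Establishing the correct weighted-limit description of hom-categories in an $(\infty,2)$-functor 2-category, and verifying compatibility with the stable enrichment, is real work that the paper deliberately avoids by proving \Cref{lem.extensionalityish} instead. So either prove that end formula in the enriched setting, or replace this step by an appeal to \Cref{lem.extensionalityish} as the paper does.
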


\begin{proof}
Part \Cref{ev.of.two.catl.functors.is.two.exact} follows from \Cref{obs.functors.into.a.stably.enriched.are.stably.enriched} (and its evident naturality in the variable $\cK$). So, we turn to part \Cref{joint.conservativity.and.exactness.of.ev.K}. By part \Cref{ev.of.two.catl.functors.is.two.exact}, it suffices to show that for any $\alpha \in \hom_{\Fun(\cK,\cY)}(F,G) \in \St$, if $\ev_K(\alpha) \in \hom_\cY(F(K),G(K)) \in \St$ is zero for all $K \in \cK$ then $\alpha$ is zero. So suppose that $\ev_K(\alpha)$ is zero for all $K$, and consider the morphism
\begin{equation}
\label{morphism.from.zero.in.hom.stablecat.in.Fun.K.Y}
0_{\hom_{\Fun(\cK,\cY)}(F,G)}
\longra
\alpha
\end{equation}
in $\hom_{\Fun(\cK,\cY)}(F,G)$. Applying $\ev_K$, we obtain a morphism
\[
0_{\hom_\cY(F(K),G(K))}
\simeq
\ev_K(0_{\hom_{\Fun(\cK,\cY)}(F,G)})
\xra{\ev_K\Cref{morphism.from.zero.in.hom.stablecat.in.Fun.K.Y}}
\ev_K(\alpha)
~.
\]
By assumption, this is an equivalence for all $K \in \cK$. It follows from \Cref{lem.extensionalityish} that the morphism \Cref{morphism.from.zero.in.hom.stablecat.in.Fun.K.Y} is likewise an equivalence.
\end{proof}

The following result was used in the proof of \Cref{lem.zero.morphisms.and.exact.squares.in.Fun.K.Y}.

\begin{lemma}
\label{lem.extensionalityish}
Let $\cA, \cB \in \Cat_2$ be 2-categories. A 2-morphism
\[ \begin{tikzcd}[column sep = 1.5cm]
F
\arrow[bend left]{r}{\alpha}[swap, xshift=-0.1cm, yshift=-0.2cm]{\varphi \Downarrow}
\arrow[bend right]{r}[swap]{\beta}
&
G
\end{tikzcd} \]
in $\Fun (\cA , \cB)$ is invertible if and only if for every $A \in \cA$ the 2-morphism
\[ \begin{tikzcd}[column sep = 1.5cm]
F(A)
\arrow[bend left]{r}{\alpha(A)}[swap, xshift=-0.3cm, yshift=-0.4 cm]{\varphi(A) \Downarrow}
\arrow[bend right]{r}[swap]{\beta(A)}
&
G(A)
\end{tikzcd} \]
in $\cB$ is invertible.
\end{lemma}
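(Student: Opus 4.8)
The statement to prove is \Cref{lem.extensionalityish}: a $2$-morphism $\varphi : \alpha \Rightarrow \beta$ in $\Fun(\cA,\cB)$ is invertible if and only if each component $\varphi(A) : \alpha(A) \Rightarrow \beta(A)$ is invertible in $\cB$.

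\begin{proof}[Proof of \Cref{lem.extensionalityish}]
The plan is to reduce the statement to the elementary fact that in any $2$-category $\cC$, a $2$-morphism is invertible if and only if its image under every representable $2$-functor $\hom_\cC(X,-)$ is invertible, together with the pointwise description of the hom-categories of $\Fun(\cA,\cB)$.

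First I would fix parallel $1$-morphisms $F, G : \cA \to \cB$ and recall that $\hom_{\Fun(\cA,\cB)}(F,G)$ is the category of natural transformations and modifications, which is computed as an end: $\hom_{\Fun(\cA,\cB)}(F,G) \simeq \int_{A \in \cA} \hom_\cB(F(A),G(A))$, so in particular a morphism in this category (i.e.\! a modification $\varphi$) is an equivalence precisely when it is ``pointwise'' an equivalence, i.e.\! when each $\varphi(A)$ is an equivalence in $\hom_\cB(F(A),G(A))$. This is the content we want, but the appearance of the end requires a touch of care, so I would instead run the argument through joint conservativity of the evaluation functors.

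The key steps, in order, are: (1) The ``only if'' direction is immediate, since the evaluation $2$-functor $\ev_A : \Fun(\cA,\cB) \to \cB$ preserves invertible $2$-morphisms (being a $2$-functor), so if $\varphi$ is invertible then so is each $\varphi(A) = \ev_A(\varphi)$. (2) For the ``if'' direction, note that $\varphi$ lives in the hom-category $\hom_{\Fun(\cA,\cB)}(F,G)$, and it is invertible as a $2$-morphism of $\Fun(\cA,\cB)$ exactly when it is an equivalence in that $1$-category $\hom_{\Fun(\cA,\cB)}(F,G)$. (3) The family of functors $\ev_A : \hom_{\Fun(\cA,\cB)}(F,G) \to \hom_\cB(F(A),G(A))$, indexed by $A \in \cA$, is jointly conservative: this follows from the pointwise (end) formula for the hom-category, or alternatively because a modification with a pointwise inverse has an inverse assembled componentwise (the coherence data for the inverse modification is forced). (4) By hypothesis each $\ev_A(\varphi) = \varphi(A)$ is an equivalence, so by joint conservativity $\varphi$ is an equivalence in $\hom_{\Fun(\cA,\cB)}(F,G)$, hence an invertible $2$-morphism of $\Fun(\cA,\cB)$.

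The main obstacle is step (3) — verifying that the componentwise inverses of the $\varphi(A)$ genuinely patch together into an inverse modification, i.e.\! that joint conservativity of the $\ev_A$ holds on hom-categories. At the level of the end description this is formal, since a natural transformation of diagrams that is a pointwise isomorphism is an isomorphism in the category of such; but one should be slightly careful that the modification axioms for the putative inverse follow automatically from those for $\varphi$ (they do: conjugating the naturality/compatibility $2$-cells of $\varphi$ by the chosen pointwise inverses yields exactly the required $2$-cells, uniquely). Everything else is bookkeeping with $2$-functoriality of evaluation.
\end{proof}
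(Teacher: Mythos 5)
The target claim you correctly single out — joint conservativity of the evaluation functors $\ev_A : \hom_{\Fun(\cA,\cB)}(F,G) \to \hom_\cB(F(A),G(A))$ — is precisely the content of the lemma, and your proposed justification of it does not constitute a proof in the $(\infty,2)$-categorical setting. You offer two routes: (a) an end formula for the hom-category plus ``pointwise isomorphisms of diagrams are isomorphisms,'' and (b) conjugating the coherence $2$-cells of $\varphi$ by pointwise inverses to assemble an inverse modification. Route (b) is a strict (or bi-)$2$-categorical argument: there the modification data consists of finitely many cells satisfying finitely many axioms, so one can indeed write down the inverse componentwise and check the axioms by conjugation. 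In the $\infty$-setting this breaks down because the putative inverse carries an infinite tower of coherence data that has to be produced, not merely verified; you cannot ``assemble componentwise and check the axioms uniquely.'' Route (a) has the same gap relocated: even granting a computation of $\hom_{\Fun(\cA,\cB)}(F,G)$ as an enriched end (itself not a triviality here), the projections out of a homotopy limit over a twisted arrow category are not jointly conservative for formal reasons — the $1$-categorical proof uses that an equalizer projection is a monomorphism, which has no homotopical analogue. In short, you have correctly reformulated the lemma but not proved it; the sentence ``this is formal'' is doing all the work, and it is exactly the work the lemma exists to record.

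For comparison, the paper's proof takes a genuinely different and more roundabout route precisely to avoid this difficulty. It first uses the $2$-categorical Yoneda embedding $\cB \hookrightarrow \Fun(\cB^\op,\Cat)$ together with the adjunction $\Fun(\cA,\Fun(\cB^\op,\Cat)) \simeq \Fun(\cA\times\cB^\op,\Cat)$ to reduce to the case $\cB = \Cat$. It then passes through the $2$-categorical Grothendieck construction $\Fun(\cA,\Cat) \simeq \coCart^\inn_\cA$: the $2$-morphism $\varphi$ becomes a $1$-morphism $[1]\times\Gr(F)\to\Gr(G)$ of inner cocartesian fibrations, invertibility of $\varphi$ becomes the assertion that this morphism factors through $\pi:[1]\times\Gr(F)\to\Gr(F)$, and the pointwise detection of this is supplied by a cited technical result \cite[Lemma 2.4.8]{macpherson2020bivariant} about equivalences over the fibers. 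If you want to salvage a more ``pointwise'' proof along the lines you propose, you would need to either (i) prove the enriched-end description of $\hom_{\Fun(\cA,\cB)}(F,G)$ and then separately prove that its structure projections are jointly conservative, or (ii) establish the $(\infty,2)$-categorical analogue of the classical fact (HTT 3.1.2.1-type statement) that pointwise equivalences in a functor category are equivalences; either way this is real input, not bookkeeping, which is why the paper goes through unstraightening.
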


\begin{proof}
The ``only if'' direction is immediate, so we address the ``if'' direction.

By Yoneda, we have a fully faithful embedding $\cB \hookrightarrow \Fun ( \cB^\op , \Cat)$, and so by adjunction we get a fully faithful embedding
\[
\Fun ( \cA , \cB)
\longhookra
\Fun ( \cA , \Fun ( \cB^\op , \Cat))
\simeq
\Fun ( \cA \times \cB^\op , \Cat)
~.
\]
Unwinding the definitions, we see that this reduces us to the case that $\cB = \Cat$ (i.e.\! we may replace $\cA$ by $\cA \times \cB^\op$ and $\cB$ by $\Cat$).

We use the equivalence
\[
\Fun(\cA,\Cat)
\underset{\sim}{\xlongra{\Gr}}
\coCart^\inn_\cA \xhookrightarrow{\text{1-full}} (\Cat_2)_{/\cA}
\]
of \cite[Theorem 1]{GHL}. Using \Cref{prop.Fun.B.Cat.is.prbl.and.tensoring.is.ptwise}, we see that the 2-morphism $\varphi$ is equivalently specified by a 1-morphism
\[
[1] \tensor \Gr(F)
\simeq
([1] \times \cA) \times_\cA \Gr(F)
\simeq
[1] \times \Gr(F)
\xlongra{\varphi}
\Gr(G)
\]
in $\coCart^\inn_\cA$, and it is invertible if and only if this factors through the projection $[1] \times \Gr(F) \xra{\pi} \Gr(F)$. Observe that the morphism $\pi$ admits a right adjoint $\pi^R$ in $\coCart^\inn_\cA$ (in fact it admits both adjoints). The unit gives a comparison morphism $\varphi \ra \varphi \pi^R \pi$ in $\hom_{\coCart^\inn_\cA}([1] \times \Gr(F),\Gr(G)) \in \Cat$, which is an equivalence if and only if it is so for each object of $[1] \times \Gr(F)$ by \cite[Lemma 2.4.8]{macpherson2020bivariant}. This proves the claim.
\end{proof}

{\small

\nocite{*}
\bibliographystyle{amsalpha}
\bibliography{K2}

\providecommand{\bysame}{\leavevmode\hbox to3em{\hrulefill}\thinspace}
\providecommand{\MR}{\relax\ifhmode\unskip\space\fi MR }
\providecommand{\MRhref}[2]{%
  \href{http://www.ams.org/mathscinet-getitem?mr=#1}{#2}
}
\providecommand{\href}[2]{#2}
\begin{thebibliography}{AMGR17b}

\bibitem[AFH23]{AyaFraHow-Symm}
David Ayala, John Francis, and Adam Howard, \emph{Natural symmetries of
  secondary {H}oschchild homology}, arXiv preprint (2023),
  \href{https://arxiv.org/abs/2111.08798}.

\bibitem[AFMGR]{AMGRenriched}
David Ayala, John Francis, Aaron Mazel-Gee, and Nick Rozenblyum, \emph{Enriched
  factorization homology in dimension 1}, to appear.

\bibitem[AFR18]{AFR}
David Ayala, John Francis, and Nick Rozenblyum, \emph{Factorization homology
  {I}: {H}igher categories}, Adv. Math. \textbf{333} (2018), 1042--1177.

\bibitem[AGH09]{AngGerHess-trunc}
Vigleik Angeltveit, Teena Gerhardt, and Lars Hesselholt, \emph{On the
  {$K$}-theory of truncated polynomial algebras over the integers}, J. Topol.
  \textbf{2} (2009), no.~2, 277--294.

\bibitem[AGHL14]{AngGerHillLind-trunc}
Vigleik Angeltveit, Teena Gerhardt, Michael~A. Hill, and Ayelet Lindenstrauss,
  \emph{On the algebraic {$K$}-theory of truncated polynomial algebras in
  several variables}, J. K-Theory \textbf{13} (2014), no.~1, 57--81.

\bibitem[AMGR17a]{AMGRfact}
David Ayala, Aaron Mazel-Gee, and Nick Rozenblyum, \emph{Factorization homology
  of enriched $\infty$-categories}, arXiv preprint (2017),
  \href{http://www.arxiv.org/abs/1710.06414}{arXiv:1710.06414}.

\bibitem[AMGR17b]{AMGRtrace}
\bysame, \emph{The geometry of the cyclotomic trace}, arXiv preprint (2017),
  \href{http://www.arxiv.org/abs/1710.06409}{arXiv:1710.06409}.

\bibitem[AMGR17c]{AMGRcyclo}
\bysame, \emph{A naive approach to genuine {$G$}-spectra and cyclotomic
  spectra}, arXiv preprint (2017),
  \href{http://www.arxiv.org/abs/1710.06416}{arXiv:1710.06416}.

\bibitem[AMGR20]{AMGRstrat}
\bysame, \emph{Stratified noncommutative geometry}, arXiv preprint (2020),
  \href{http://www.arxiv.org/abs/1910.14602}{arXiv:1910.14602}, v2.

\bibitem[AR02]{AusRog-topological}
Christian Ausoni and John Rognes, \emph{Algebraic {$K$}-theory of topological
  {$K$}-theory}, Acta Math. \textbf{188} (2002), no.~1, 1--39.

\bibitem[AR12]{AusRog-rational}
\bysame, \emph{Rational algebraic {$K$}-theory of topological {$K$}-theory},
  Geom. Topol. \textbf{16} (2012), no.~4, 2037--2065.

\bibitem[Bar11]{Bartlett-geometry}
Bruce Bartlett, \emph{The geometry of unitary 2-representations of finite
  groups and their 2-characters}, Appl. Categ. Structures \textbf{19} (2011),
  no.~1, 175--232.

\bibitem[Bar16]{BarwickAlgKTheoryofHigherCats}
Clark Barwick, \emph{On the algebraic \textsc{K}-theory of higher categories},
  Journal of Topology \textbf{9} (2016), no.~1, 245--347.

\bibitem[BC21]{BC-Ainftytwocats}
Nathaniel Bottman and Shachar Carmeli, \emph{{$(A_\infty,2)$}-categories and
  relative 2-operads}, High. Struct. \textbf{5} (2021), no.~1, 401--421.

\bibitem[BCD10]{BCD-cov}
Morten Brun, Gunnar Carlsson, and Bj\o rn~Ian Dundas, \emph{Covering homology},
  Adv. Math. \textbf{225} (2010), no.~6, 3166--3213.

\bibitem[BDR04]{Baas2004}
Nils~A. Baas, Bj{\o}rn~Ian Dundas, and John Rognes, \emph{Two-vector bundles
  and forms of elliptic cohomology}, Topology, Geometry and Quantum Field
  Theory, Cambridge University Press, June 2004, pp.~18--45.

\bibitem[BGT13]{BGT}
Andrew~J Blumberg, David Gepner, and Gon{\c{c}}alo Tabuada, \emph{A universal
  characterization of higher algebraic k-theory}, Geom. Topol. \textbf{17}
  (2013), no.~2, 733--838.

\bibitem[BLL04]{BondalLarsenLunts}
Alexey Bondal, Michael Larsen, and Valery Lunts, \emph{{Grothendieck Ring of
  Pretriangulated Categories}}, International Mathematics Research Notices
  \textbf{2004} (2004), no.~29, 1461.

\bibitem[BM19]{BluMan-AKTS}
Andrew Blumberg and Michael Mandell, \emph{The homotopy groups of the algebraic
  {K}-theory of the sphere spectrum}, Geom. Topol. \textbf{23} (2019),
  101--134.

\bibitem[Bot19]{Bottman-assoc}
Nathaniel Bottman, \emph{2-associahedra}, Algebr. Geom. Topol. \textbf{19}
  (2019), no.~2, 743--806.

\bibitem[BSP21]{BarSPunicity}
Clark Barwick and Christopher Schommer-Pries, \emph{On the unicity of the
  theory of higher categories}, J. Amer. Math. Soc. \textbf{34} (2021), no.~4,
  1011--1058.

\bibitem[BZFN10]{BZFNintegraldrinfeld}
David Ben-Zvi, John Francis, and David Nadler, \emph{{Integral transforms and
  Drinfeld centers in derived algebraic geometry}}, Journal of the American
  Mathematical Society \textbf{23} (2010), no.~4, 909--966.

\bibitem[BZN13]{BZNsecondarytraces}
David Ben-Zvi and David Nadler, \emph{{Secondary Traces}}, arXiv preprint
  (2013), \href{http://www.arxiv.org/abs/1305.7177}{arXiv:1305.7177}, v3.

\bibitem[BZN21]{BZNnonlineartraces}
David Ben-Zvi and David Nadler, \emph{Nonlinear traces}, Derived algebraic
  geometry, Panor. Synth\`eses, vol.~55, Soc. Math. France, Paris, [2021]
  \copyright2021, pp.~39--84.

\bibitem[Cam19]{Campbell2019}
Jonathan~A. Campbell, \emph{The {K}-theory spectrum of varieties}, Transactions
  of the American Mathematical Society \textbf{371} (2019), no.~11, 7845--7884.

\bibitem[CDD11]{CDD-higherTC}
Gunnar Carlsson, Christopher~L. Douglas, and Bj\o rn~Ian Dundas, \emph{Higher
  topological cyclic homology and the {S}egal conjecture for tori}, Adv. Math.
  \textbf{226} (2011), no.~2, 1823--1874.

\bibitem[CP22]{CampbellPontoTraces}
Jonathan~A. Campbell and Kate Ponto, \emph{Iterated traces in 2-categories and
  {L}efschetz theorems}, Algebr. Geom. Topol. \textbf{22} (2022), no.~2,
  815--879.

\bibitem[CT11]{CisTab-Nonconnective}
Denis-Charles Cisinski and Gon\c{c}alo Tabuada, \emph{Non-connective
  {$K$}-theory via universal invariants}, Compos. Math. \textbf{147} (2011),
  no.~4, 1281--1320.

\bibitem[CZ22]{CampbellZakharevichDevissage}
Jonathan~A. Campbell and Inna Zakharevich, \emph{D\'evissage and localization
  for the {G}rothendieck spectrum of varieties}, Adv. Math. \textbf{411}
  (2022), Paper No. 108710, 80.

\bibitem[DGM13]{DGMlocal}
Bj{\o}rn~Ian Dundas, Thomas Goodwillie, and Randy McCarthy, \emph{The local
  structure of algebraic \textsc{K}-theory}, Springer, London New York, 2013.

\bibitem[DK19]{DyckKap-higherSeg}
Tobias Dyckerhoff and Mikhail Kapranov, \emph{Higher {S}egal spaces}, Lecture
  Notes in Mathematics, vol. 2244, Springer, Cham, 2019.

\bibitem[Dou05]{Douglas-thesis}
Christopher~Lee Douglas, \emph{Twisted stable homotopy theory}, ProQuest LLC,
  Ann Arbor, MI, 2005, Thesis (Ph.D.)--Massachusetts Institute of Technology.

\bibitem[DR18]{DR-fusiontwocats}
Christopher~L. Douglas and David~J. Reutter, \emph{Fusion 2-categories and a
  state-sum invariant for 4-manifolds}, arXiv preprint (2018),
  \href{http://www.arxiv.org/abs/1812.11933}{arXiv:1812.11933}.

\bibitem[ES22]{elmanto2020nilpotent}
Elden Elmanto and Vladimir Sosnilo, \emph{On nilpotent extensions of
  {$\infty$}-categories and the cyclotomic trace}, Int. Math. Res. Not. IMRN
  (2022), no.~21, 16569--16633.

\bibitem[Gai15]{gaitsgory2013sheaves}
Dennis Gaitsgory, \emph{Sheaves of categories and the notion of 1-affineness},
  Stacks and categories in geometry, topology, and algebra, Contemp. Math.,
  vol. 643, Amer. Math. Soc., Providence, RI, 2015, pp.~127--225.

\bibitem[GH15]{GH}
David Gepner and Rune Haugseng, \emph{Enriched $\infty$-categories via
  non-symmetric $\infty$-operads}, Advances in Mathematics \textbf{279} (2015),
  575--716.

\bibitem[GHL20]{GHL}
Andrea Gagna, Yonatan Harpaz, and Edoardo Lanari, \emph{{F}ibrations and lax
  limits of $(\infty,2)$-categories}, arXiv preprint (2020),
  \href{http://www.arxiv.org/abs/2012.04537}{arXiv:2012.04537}.

\bibitem[GJF19]{GJF-Condensations}
Davide Gaiotto and Theo Johnson-Freyd, \emph{Condensations in higher
  categories}, arXiv preprint (2019),
  \href{http://www.arxiv.org/abs/1905.09566}{arXiv:1905.09566}.

\bibitem[GK08]{GanterKapranov2008}
Nora Ganter and Mikhail Kapranov, \emph{Representation and character theory in
  2-categories}, Advances in Mathematics \textbf{217} (2008), no.~5,
  2268--2300.

\bibitem[Gro16]{Groth-stable}
Moritz Groth, \emph{{C}haracterizations of abstract stable homotopy theories},
  arXiv preprint (2016),
  \href{http://www.arxiv.org/abs/1602.07632}{arXiv:1602.07632}.

\bibitem[Hau16]{Haugsengbimods}
Rune Haugseng, \emph{Bimodules and natural transformations for enriched
  {$\infty$}-categories}, Homology Homotopy Appl. \textbf{18} (2016), no.~1,
  71--98.

\bibitem[Hei20]{Heine-enriched}
Hadrian Heine, \emph{An equivalence between enriched $\infty$-categories and
  $\infty$-categories with weak action}, arXiv preprint (2020),
  \href{https://arxiv.org/abs/2009.02428}{arXiv:2009.02428}.

\bibitem[Hei24]{Heine-Weighted}
\bysame, \emph{The higher algerbra of weighted colimits}, arXiv preprint
  (2024), \href{https://arxiv.org/abs/2406.08925}{arXiv:2406.08925}.

\bibitem[Hin20]{Hinichyonedainfty}
Vladimir Hinich, \emph{Yoneda lemma for enriched {$\infty$}-categories}, Adv.
  Math. \textbf{367} (2020), 107129, 119.

\bibitem[HM97a]{HessMad-trunc}
Lars Hesselholt and Ib~Madsen, \emph{Cyclic polytopes and the {$K$}-theory of
  truncated polynomial algebras}, Invent. Math. \textbf{130} (1997), no.~1,
  73--97.

\bibitem[HM97b]{HessMad-Witt}
\bysame, \emph{On the {$K$}-theory of finite algebras over {W}itt vectors of
  perfect fields}, Topology \textbf{36} (1997), no.~1, 29--101.

\bibitem[HM03]{HessMad-local}
\bysame, \emph{On the {$K$}-theory of local fields}, Ann. of Math. (2)
  \textbf{158} (2003), no.~1, 1--113.

\bibitem[HM04]{HessMad-dRW}
\bysame, \emph{On the {D}e {R}ham-{W}itt complex in mixed characteristic}, Ann.
  Sci. \'Ecole Norm. Sup. (4) \textbf{37} (2004), no.~1, 1--43.

\bibitem[HSS17]{HStwo}
Marc Hoyois, Sarah Scherotzke, and Nicol{\`o} Sibilla, \emph{Higher traces,
  noncommutative motives, and the categorified {C}hern character}, Advances in
  Mathematics \textbf{309} (2017), 97--154.

\bibitem[HSSS21]{HSthree}
Marc Hoyois, Pavel Safronov, Sarah Scherotzke, and Nicol\`o Sibilla, \emph{The
  categorified {G}rothendieck-{R}iemann-{R}och theorem}, Compos. Math.
  \textbf{157} (2021), no.~1, 154--214.

\bibitem[HW22]{hahn2020redshift}
Jeremy Hahn and Dylan Wilson, \emph{Redshift and multiplication for truncated
  {B}rown-{P}eterson spectra}, Ann. of Math. (2) \textbf{196} (2022), no.~3,
  1277--1351.

\bibitem[KR97]{KleinRog-fib}
John~R. Klein and John Rognes, \emph{The fiber of the linearization map
  {$A(*)\to K({\bf Z})$}}, Topology \textbf{36} (1997), no.~4, 829--848.

\bibitem[Lau12]{Lauda-intro}
Aaron~D. Lauda, \emph{An introduction to diagrammatic algebra and categorified
  quantum {$\mathfrak{sl}_2$}}, Bull. Inst. Math. Acad. Sin. (N.S.) \textbf{7}
  (2012), no.~2, 165--270.

\bibitem[LMSM86]{LMS}
L.~G. Lewis, Jr., J.~P. May, M.~Steinberger, and J.~E. McClure,
  \emph{Equivariant stable homotopy theory}, Lecture Notes in Mathematics, vol.
  1213, Springer-Verlag, Berlin, 1986, With contributions by J. E. McClure.

\bibitem[Loo02]{looijenga-motivic}
Eduard Looijenga, \emph{Motivic measures}, no. 276, 2002, S\'{e}minaire
  Bourbaki, Vol. 1999/2000, pp.~267--297.

\bibitem[LSW20]{LinSatWes-Twisted}
John~A. Lind, Hisham Sati, and Craig Westerland, \emph{Twisted iterated
  algebraic {$K$}-theory and topological {T}-duality for sphere bundles}, Ann.
  K-Theory \textbf{5} (2020), no.~1, 1--42.

\bibitem[Lur09]{HTT}
Jacob Lurie, \emph{{Higher Topos Theory}}, Princeton University Press, 2009.

\bibitem[Lur17]{HA}
\bysame, \emph{{Higher Algebra}}, 2017, Available on author's webpage:
  \url{https://www.math.ias.edu/~lurie/papers/HA.pdf}.

\bibitem[Mac20]{macpherson2020bivariant}
Andrew~W. Macpherson, \emph{A bivariant {Y}oneda lemma and
  $(\infty,2)$-categories of correspondences}, arXiv preprint (2020),
  \href{http://www.arxiv.org/abs/2005.10496}{arXiv:2005.10496}.

\bibitem[Mac21]{macphersonoperad}
Andrew~W. Macpherson, \emph{The operad that co-represents enrichment}, Homology
  Homotopy Appl. \textbf{23} (2021), no.~1, 387--401.

\bibitem[Maz12]{Mazurchuk-lectures}
Volodymyr Mazorchuk, \emph{Lectures on algebraic categorification}, QGM Master
  Class Series, European Mathematical Society (EMS), Z\"{u}rich, 2012.

\bibitem[MG16]{MG-qadjns}
Aaron Mazel-Gee, \emph{Quillen adjunctions induce adjunctions of
  quasicategories}, New York J. Math. \textbf{22} (2016), 57--93.

\bibitem[NVY22]{nakano2021noncommutative}
Daniel~K. Nakano, Kent~B. Vashaw, and Milen~T. Yakimov, \emph{Noncommutative
  tensor triangular geometry}, Amer. J. Math. \textbf{144} (2022), no.~6,
  1681--1724.

\bibitem[NZ09]{NadZas}
David Nadler and Eric Zaslow, \emph{Constructible sheaves and the {F}ukaya
  category}, J. Amer. Math. Soc. \textbf{22} (2009), no.~1, 233--286.

\bibitem[Poo02]{Poonen-notadomain}
Bjorn Poonen, \emph{The {G}rothendieck ring of varieties is not a domain},
  Math. Res. Lett. \textbf{9} (2002), no.~4, 493--497.

\bibitem[Ras18]{RaskinDGM}
Sam Raskin, \emph{{On the Dundas-Goodwillie-McCarthy theorem}}, arXiv preprint
  (2018), \href{http://www.arxiv.org/abs/1807.06709}{arXiv:1807.06709}.

\bibitem[Rez10]{Rezk-ncats}
Charles Rezk, \emph{A {C}artesian presentation of weak {$n$}-categories}, Geom.
  Topol. \textbf{14} (2010), no.~1, 521--571.

\bibitem[Rob15]{Robalo-bridge}
Marco Robalo, \emph{{$K$}-theory and the bridge from motives to noncommutative
  motives}, Adv. Math. \textbf{269} (2015), 399--550.

\bibitem[Rog03]{Rog-White}
John Rognes, \emph{The smooth {W}hitehead spectrum of a point at odd regular
  primes}, Geom. Topol. \textbf{7} (2003), 155--184.

\bibitem[ST04]{StolzTeichner-ell}
Stephan Stolz and Peter Teichner, \emph{What is an elliptic object?}, Topology,
  geometry and quantum field theory, London Math. Soc. Lecture Note Ser., vol.
  308, Cambridge Univ. Press, Cambridge, 2004, pp.~247--343.

\bibitem[Ste20]{Stef-Presentable}
Germ{\'a}n Stefanich, \emph{Presentable {$(\infty, n)$}-categories}, arXiv
  preprint (2020), \href{https://arxiv.org/abs/2011.03035}{arXiv:2011.03035}.

\bibitem[Sus83]{Sus-algebraically}
A.~Suslin, \emph{On the {$K$}-theory of algebraically closed fields}, Invent.
  Math. \textbf{73} (1983), no.~2, 241--245.

\bibitem[Tab08]{Tabuada-HigherKTheory}
Gon\c{c}alo Tabuada, \emph{Higher {$K$}-theory via universal invariants}, Duke
  Math. J. \textbf{145} (2008), no.~1, 121--206.

\bibitem[Tab16a]{tabuada2016note}
Gon{\c{c}}alo Tabuada, \emph{A note on secondary \textsc{K}-theory}, Algebra \&
  Number Theory \textbf{10} (2016), no.~4, 887--906.

\bibitem[Tab16b]{tabuada2016note2}
\bysame, \emph{A note on secondary \textsc{K}-theory \textsc{II}}, arXiv
  preprint (2016),
  \href{http://www.arxiv.org/abs/1607.03094}{arXiv:1607.03094}.

\bibitem[Tab20]{tabuada-embedding}
Gon\c{c}alo Tabuada, \emph{Embedding of the derived {B}rauer group into the
  secondary {$K$}-theory ring}, J. Noncommut. Geom. \textbf{14} (2020), no.~2,
  773--788.

\bibitem[To{\"e}06]{ToenDGcats}
Bertrand To{\"e}n, \emph{{The homotopy theory of dg-categories and derived
  Morita theory}}, Inventiones mathematicae \textbf{167} (2006), no.~3,
  615--667.

\bibitem[To{\"e}11]{toenDGcatlectures}
\bysame, \emph{{Lectures on dg-categories}}, {Topics in Algebraic and
  Topological K-Theory}, Lecture Notes in Mathematics 2008, {Springer Berlin
  Heidelberg}, 2011, pp.~243--301.

\bibitem[TV09]{TV2009}
Bertrand To{\"e}n and Gabriele Vezzosi, \emph{Chern character, loop spaces and
  derived algebraic geometry}, Algebraic topology, Springer, 2009,
  pp.~331--354.

\bibitem[TV15]{TV-caracteres}
Bertrand To\"{e}n and Gabriele Vezzosi, \emph{Caract\`eres de {C}hern, traces
  \'{e}quivariantes et g\'{e}om\'{e}trie alg\'{e}brique d\'{e}riv\'{e}e},
  Selecta Math. (N.S.) \textbf{21} (2015), no.~2, 449--554.

\bibitem[Wal85]{Wald-AKTofspaces}
Friedhelm Waldhausen, \emph{Algebraic {$K$}-theory of spaces}, Algebraic and
  geometric topology ({N}ew {B}runswick, {N}.{J}., 1983), Lecture Notes in
  Math., vol. 1126, Springer, Berlin, 1985, pp.~318--419.

\end{thebibliography}

}

\end{document}